\newtheorem{theorem}{Theorem}
\newtheorem{assumption}[theorem]{Assumption}
\newtheorem{lemma}[theorem]{Lemma}
\newtheorem{notation}[theorem]{Notation}
\newtheorem{proposition}[theorem]{Proposition}
\newtheorem{remark}[theorem]{Remark}
\newcommand{\red}[1]{{#1}}
\newcommand{\stkout}[1]{\ifmmode\text{\sout{\ensuremath{#1}}}\else\sout{#1}\fi}
\newcommand{\at}{\tilde{a}}
\newcommand{\eps}{\varepsilon}
\newcommand{\Deltild}{\Delta \tilde{\mathbf{x}}}
\newcommand{\Delt}{\Delta \mathbf{x}}
\newcommand{\dx}{dx}
\newcommand{\dt}{dt}
\newcommand{\ddt}{\frac{\rm{d}}{\dt}}
\newcommand{\supp}{\operatorname{supp}}
\newcommand{\half}{\frac{1}{2}}
\newcommand{\N}{\mathcal{N}}
\newcommand{\Lam}{L}
\newcommand*\xbar[1]{%
   \hbox{%
     \vbox{%
       \hrule height 0.5pt 
       \kern0.5ex
       \hbox{%
         \kern-0.2em
         \ensuremath{#1}%
         \kern-0.1em
       }%
     }%
   }%
}
\DeclarePairedDelimiter\abs{\lvert}{\rvert}
\DeclarePairedDelimiter\norm{\lVert}{\rVert}
\newcommand{\energy}{\ensuremath{E}}
\newcommand{\energygap}{\ensuremath{\mathcal{E}}}
\newcommand{\E}{\energygap}
\newcommand{\dissipation}{\ensuremath{{D}}}
\newcommand{\D}{\dissipation}
\newcommand{\W}{{{V}}}
\newcommand{\Wb}{\bar{{V}}}
\newcommand{\Wt}{\widetilde{{V}}}
\newcommand{\Wtz}{W_0}
\newcommand{\Wm}{{V}_-}
\newcommand{\V}{{V}}
\newcommand{\leqsim}{\ensuremath{\lesssim}}
\newcommand{\co}{e_*}
\newcommand{\loc}{\mathrm{loc}}
\newcommand{\R}{\mathbb{R}}
\newcommand{\Hpkt}{{\dot{H}^{-1}}}
\newcommand{\wt}{\tilde{w}}
\newcommand{\Et}{\tilde{\E}}
\newcommand{\ft}{\tilde{f}}
\newcommand{\Tt}{\tilde{T}}
\definecolor{darkred}{rgb}{0.9,0.1,0.1}
\def\Xint#1{\mathchoice
{\XXint\displaystyle\textstyle{#1}}%
{\XXint\textstyle\scriptstyle{#1}}%
{\XXint\scriptstyle\scriptscriptstyle{#1}}%
{\XXint\scriptscriptstyle\scriptscriptstyle{#1}}%
\!\int}
\def\XXint#1#2#3{{\setbox0=\hbox{$#1{#2#3}{\int}$}
\vcenter{\hbox{$#2#3$}}\kern-.5\wd0}}
\def\dashint{\Xint-}
\numberwithin{equation}{section}   
\numberwithin{theorem}{section}
\begin{document}

\title{
	Optimal relaxation of bump-like solutions of the one-dimensional Cahn--Hilliard equation}
\author{Sarah Biesenbach\begin{footnote}{RWTH Aachen University, E-mail address: \href{mailto:biesenbach@eddy.rwth-aachen.de}{biesenbach@eddy.rwth-aachen.de}.}\end{footnote},
Richard Schubert\begin{footnote}{RWTH Aachen University, E-mail address: \href{mailto:schubert@math1.rwth-aachen.de}{schubert@math1.rwth-aachen.de}.}\end{footnote},
 and
	Maria G. Westdickenberg\begin{footnote}{RWTH Aachen University, E-mail address: \href{mailto:maria@math1.rwth-aachen.de}{maria@math1.rwth-aachen.de}.}\end{footnote}
}
\date{\today}
\maketitle
\begin{abstract}
\begin{center}
REVISED VERSION INCORPORATING THE ERRATUM ON LEMMA 2.1 \\
AND WITH A CORRECTION TO LEMMA 2.8 
\end{center}

In this paper we derive optimal relaxation rates for the Cahn-Hilliard equation  on the one-dimensional torus and the line. We consider initial conditions with a finite (but not small) $L^1$-distance to an appropriately defined bump. The result extends the relaxation method developed previously for a single transition layer (the ``kink'') to the case of two transition layers (the ``bump''). As in the previous work, the tools include Nash-type inequalities, duality arguments, and Schauder estimates. For both the kink and the bump, the energy gap is translation invariant and its decay alone cannot specify to which member of the family of minimizers the solution converges. Whereas in the case of the kink, the conserved quantity singles out the longtime limit, in the case of a bump, a new argument is needed. On the torus, we quantify the \red{(initially algebraic and ultimately exponential)} convergence to the bump that is the longtime limit; on the line, the bump-like states are merely metastable and we quantify the initial algebraic relaxation behavior.\\

\noindent Keywords: energy--energy--dissipation, nonlinear pde, gradient flow, relaxation rates.\\

\noindent AMS subject classifications: 35K55, 49N99.\\

\noindent Copyright information: This is an original manuscript of an article published by Taylor \& Francis in Communications in Partial Differential Equations on 14 November, 2021, available at: https://doi.org/10.1080/03605302.2021.1987458 and an erratum published by Taylor \& Francis in the same journal on 05 May, 2023, available at: https://doi.org/10.1080/03605302.2023.2202732.
\end{abstract}

\section{Introduction}\label{S:intro}
In this paper we use a nonlinear, energy-based method to study relaxation and metastability of the one-dimensional Cahn-Hilliard equation
\begin{align}\label{ch}
\left\{
\begin{aligned}
	u_t&=-\bigl(u_{xx}-G'(u)\bigr)_{xx} && \mbox{for }\, t\in(0,\infty),&& x\in I,\\
 u&=u_0 &&\mbox{for }\, t=0,&&x\in I,
\end{aligned}\right.
\end{align}
on an interval $I\subset \R$, where $G$ is a double-well potential with nondegenerate absolute minima at $\pm 1$ (cf.\ Assumption \ref{ass:G};  a canonical choice is $G(u)=\frac{1}{4}(1-u^2)^2$).
Introduced by Cahn and Hilliard in the mid-twentieth century \cite{CH}, \eqref{ch} is a well-known model for phase separation and is used widely in materials science and other application areas. The Cahn-Hilliard equation, and its second-order cousin, the Allen-Cahn equation, are also canonical examples of \emph{metastability} in $d=1$. Metastability, also called dynamic metastability to distinguish it from the (very different) phenomenon of stochastic metastability, refers to a system that exhibits a fast relaxation to a lower dimensional manifold of phase space followed by slow changes near the manifold.

Surprisingly few sharp results on metastability exist within the analysis literature. Most of the results that do exist characterize behavior for initial data in a small neighborhood of the slow manifold (``well-prepared'' initial data), for which methods based on linearization can be employed. However the rapid relaxation to the slow manifold is essential for metastability, is observed in physical and numerical experiments, and poses the most nonlinear behavior and hence perhaps the most interesting challenge for the analysis. Metastability of the $1$-d Allen-Cahn equation was pointed out in unpublished notes of John Neu and analyzed for well-prepared initial data in \cites{CP,FH,BK}. Order-one (or poorly prepared) initial data were analyzed in \cites{C,OR}; see also \cite{W}. Metastability of the $1$-d Cahn-Hilliard equation was observed in \cite{EF} and analyzed for well-prepared initial data in \cites{ABF,BH,BX1,BX2}; see Subsection~\ref{ss:lit} for more detail. In order to capture the metastability of poorly prepared initial data, one needs to understand the initial, algebraic relaxation. One method to do so is introduced in \cite{OW} and used to establish metastability in \cite{SW}, however as explained in more detail below, an $L^1$-based approach is better suited for coarsening. Such a method is developed in \cite{OSW} to explain algebraic relaxation to a kink on the line; we will recall this result shortly and explain how the current paper extends that work.

Our interest in this paper is in initial data that is order-one away from a ``bump.'' We make this question precise in two settings:

\noindent
\uline{Problem 1}: We study the problem on $[-\Lam,\Lam]$ subject to periodic boundary conditions (from now on, the ``torus with sidelength $L$'') and the mean constraint
\begin{align}
\dashint_{[-\Lam,\Lam]} u\, \dx=m\in[-3/4,3/4].\label{meanconstr}
\end{align}
(The choice  of $3/4$ is to make things concrete; any fixed cut-off less than $1$ would do.)

\noindent
\uline{Problem 2}: We study the problem on the real line
subject to $-1$ boundary conditions at $\pm \infty$ and the integral constraint
\begin{align}
  \int_{\R}(u+1)\,dx=2L.\label{intconstr}
\end{align}
\begin{figure} \begin{center}\includegraphics[width=11cm,height=3cm,keepaspectratio]{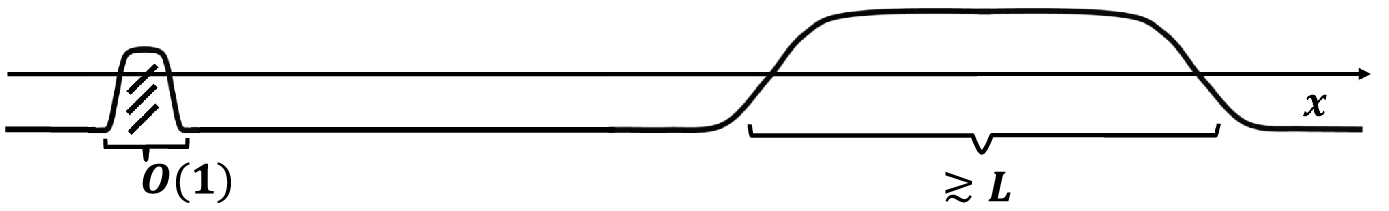}\end{center}
	\caption{Our method handles configurations with order one disturbances far away from the large bump.}\label{fig:2}
\end{figure}
In both cases $L$ represents the lengthscale of the bump. We are interested in initial data such as that depicted in Figure~\ref{fig:2}, for which an order-one $L^1$ disturbance occurs at an arbitrary location in the system. Whereas the HED method of \cite{OW} relies on the initial $\dot{H}^{-1}$ distance of the disturbance, which for the example in Figure~\ref{fig:2} would be large,
we seek relaxation results that are \emph{independent of the distance between the disturbance and the bump}. Profiles such as that depicted in Figure~\ref{fig:2} arise for instance as intermediate snapshots during coarsening of multi-bump solutions.

Problem 1 exhibits a multiscale relaxation: The solution converges  initially to ``a'' bump (on a $2$-parameter slow manifold) and subsequently to ``the'' bump (on a $1$-parameter slow manifold) whose lengthscale is fixed by \eqref{meanconstr}. These statements are made more precise below.
Although both \eqref{meanconstr} and \eqref{intconstr} are preserved by the evolution, the missing compactness in Problem 2 allows the solution to lose its bump-like structure in the longtime limit, and the solution converges to the constant state $\bar{u}\equiv -1$. Nonetheless we capture relaxation towards the bump-like state. See the discussion below for more explanation.

The Cahn-Hilliard equation is the gradient flow with respect to the $\Hpkt$ metric of the scalar Ginzburg-Landau energy
\begin{align*}
	E(u)=\int_I \frac{1}{2}u_x^2 +G(u)\,\dx.
\end{align*}
We will make use of the gradient flow structure of \eqref{ch} only in the form
\begin{align}
\dot{E}=-\D\qquad
\text{and}\qquad\norm{u(t,\cdot)-u(s,\cdot)}_{\dot{H}^{-1}(I)}\leq\int_s^t \D^\frac{1}{2}\, \,d\tau,\label{diff}
\end{align}
where $D$ is the dissipation and $\norm{\cdot}_{\dot H^{-1}(I)}$ is appropriately defined.
We emphasize that our results are nonperturbative in the sense that \uline{the initial data is not assumed to be close to the set of stable (or metastable) states}. We assume only that the $L^1$ distance of the initial data $u_0$ to a bump is \uline{finite} (not small) and that $u_0$ is bounded away from a two-bump state in the sense that there exists $\epsilon >0$ such that
\begin{align}
E(u_0) \leq 4\co-\epsilon,\label{e0bd}
\end{align}
where $\co$ represents the energy of the minimizer on $\R$ subject to $\pm 1$ boundary conditions.
Recall that in light of \eqref{diff}, the energy bound is preserved by the flow:
\begin{align}
  E(u)\leq 4\co-\epsilon\quad\text{for all }t>0.\label{diffcons2}
\end{align}
Establishing that the boundedness of the $L^1$ distance is also preserved by the flow lies at the heart of the method; see Subsection~\ref{ss:commentsmethod} below.

We now comment in more detail on the  $L^1$- framework for relaxation introduced in \cite{OSW}, on which our work is based.
The evolution equation is challenging (and interesting) because it is fourth-order  and the linearization on $\R$ has no spectral gap, which means that at most algebraic-in-time convergence to equilibrium can be expected for the problem on the line.
In \cite{OSW} an $L^1$-framework for relaxation was introduced and applied for initial data that was order-one away from a so-called kink $v$. The (normalized or centered) kink
is the minimizer of the energy subject to $\pm 1$ boundary conditions at $\pm\infty$ and $v(0) = 0$. As stated above, the constant $\co$ denotes the energy of the kink.
Clearly any shifted kink $v_a:=v(\cdot-a)$ is again an energy minimizer subject to $\pm 1$ boundary conditions, so that there is a one-parameter family of minimizers.

In \cite{OSW} the authors consider initial data such that
\begin{align}
	E(u_0)&\leq 3\co-\epsilon\qquad\text{(for some $\epsilon>0$)}\notag
	\end{align}
and
\begin{align*}
\int_\R\,(u_0-v)\,\dx=0\quad\text{while at the same time}\quad  \int_\R \abs{u_0-v}\,\dx<\infty.
\end{align*}
The first condition above is just a normalization; the second is the assumption of finite $L^1$ distance, and the goal of \cite{OSW} is to find optimal decay rates based on this assumption.
The idea behind the method is to establish and exploit $L^1$ control for all positive times. Defining
 $v_c:=v(\cdot-c)$ as the $L^2$-closest shifted kink to the solution $u$ at time $t\geq 0$ and the excess mass functional as
\begin{align*}
  \V:=\int_\R\abs{u-v_c}\,\dx,
\end{align*}
it is shown that
\begin{align}
\V\lesssim \V_0+1\qquad\text{for all times $t\geq 0$ },\label{Vbd}
\end{align}
and hence that the energy gap to the kink decays according to
\begin{align*}
	E(u)-e_*\lesssim \min\left\{E(u_0)-e_*,\frac{{\V_0}^{2}+1}{t^{\half}}\right\}.
\end{align*}
  For an explanation of the $\lesssim$ notation used here and throughout the paper, see Notation \ref{n:lesssim} below.

Energetic control alone gives no information about the kink to which the solution converges, since the ``valley floor'' of the energy landscape is flat and all kinks have the same energy. However because of the conservation law
\begin{align}
  \ddt \int_\R u-v\,\dx=0\label{cons}
\end{align}
and the ``nondegeneracy''
\begin{align*}
  \int_{\R} v-v_a\,\dx\neq 0 \qquad\text{for all $a\neq 0$},
\end{align*}
precisely the centered kink is selected as the long-time limit. Moreover,
from the representation formula
\begin{align*}
2c=  \int_{\R} v-v_c\,\dx,
\end{align*}
one can easily parlay \eqref{cons} and $\int_{\R}u_0-v\,\dx=0$
into the estimate
\begin{align*}
 \abs{c(t)} \lesssim \V(t)
\end{align*}
 for the  shift, which in light of \eqref{Vbd} is bounded uniformly in time.
The absence of such a simple selection mechanism and representation formula for bump-like initial data raises new questions.

Let us elucidate this difficulty first in Problem 1. Let $w$ denote the \uline{(normalized or centered) bump function}, by which we mean the function on the torus that minimizes the energy subject to the mean constraint~\eqref{meanconstr} and shifted so that its maximum is at the origin. (Here and throughout we take periodicity on the torus into account.)
Existence and properties of bump-like minimizers of $E$ with fixed mean go back to the work of Carr, Gurtin, and Slemrod \cite{CGS}. As for the kinks, every shifted bump $w_a:=w(\cdot-a)$ is again a minimizer. We will show that the solution of the Cahn-Hilliard equation converges to a shifted minimizer $w_{c_*}$, however the mean constraint does not pick out the shift. Indeed, although
\begin{align}
  \int_{[-\Lam,\Lam]} (u_0-w)\,\dx=0\quad\Rightarrow \quad\int_{[-\Lam,\Lam]} \big(u-w\big)\,\dx=0\;\;\text{for all }t\in(0,\infty),\label{notitw}
\end{align}
this conservation law does not select a longtime limit, since due to
\begin{align}
  \int_{[-\Lam,\Lam]}\big(w-w(\cdot-a)\big)\,\dx=0\;\;\text{for all }a\in [-L,L]\label{ohnow},
\end{align}
it follows from \eqref{notitw} in addition that
\begin{align*}
  \int_{[-\Lam,\Lam]}\big(u-w(\cdot-a)\big)\,\dx=0\;\;\text{\uline{for all} }a\in[-L,L]\;\text{and}\;\,t\in(0,\infty).
\end{align*}
Hence in this problem we face a situation in which there is a \emph{continuum of energy minimizers and identifying the limit configuration is nontrivial}.

Similarly, for Problem 2, we define the sharp interface function $\chi$ such that
\begin{align*} 
\chi(x)=\begin{cases}
1&\mbox{for } \, x\in\left[-\frac L2,\frac L2\right],\\
-1&\text{otherwise},
\end{cases}
\end{align*}
and express our integral constraint~\eqref{intconstr} as
\begin{align*}
\int_{\R} u-\chi\,\dx=0.
\end{align*}
We will see that the solution spends a long time ``not far from'' a shifted interface function $\chi_a=\chi(\cdot-a)$, but again the conservation law
\begin{align*}
  \int_{\R} (u_0-\chi)\,\dx=0\quad\Rightarrow \quad\int_{\R} \big(u-\chi\big)\,\dx=0\;\;\text{for all }t\in(0,\infty)
\end{align*}
is no help in identifying the shift. Here the situation is even more dire: After spending a long time near a shifted interface function, the solution  eventually converges to $\bar{u}\equiv -1$. Hence we seek  to characterize not a stable limit point $w_{c_*}$ but a \emph{metastable ``limit'' point}.

\subsection{The one-parameter slow manifold on the torus}\label{ss:wc} As for the energy on $\R$ subject to $\pm 1$ boundary conditions, the energy on the torus subject to the mean constraint~\eqref{meanconstr} has a \emph{one-parameter family of minimizers}, the so-called bumps mentioned above. It is not hard to check that, for fixed mean $m\in(-1,1)$ and $\Lam>0$ sufficiently large, the normalized bump function $w$ introduced above is smooth, has exactly two zeros separated by a distance order $\Lam$, and satisfies the Euler-Lagrange equation
\begin{align} \label{el}
-w_{xx} + G'(w) = \lambda,
\end{align}
where $\lambda$ is the Lagrange-multiplier connected to the integral constraint. (Here and throughout, we use ``smooth'' to mean $C^\infty$.) The other global minimizers are given by $w_a:=w(x-a)$, parameterized by the translation $a$, and, for reasons that will become clearer after reading Theorem~\ref{t:torus}, we will refer to the collection of all such functions as \emph{the one-dimensional slow manifold on the torus} and denote it by $\N_1(L)$.

Corresponding to a solution $u$ of the Cahn-Hilliard equation at  time $t\geq 0$, we define the shifted bump $w_{c}(x)\coloneqq w(x-c)$ as the \emph{$L^{2}$-projection of $u$ onto the set of bumps}, i.e.,
\begin{align*}
  \norm{u-w_c}_{L^2([-L,L])}\leq \norm{u-w_a}_{L^2([-L,L])}\quad\text{for all }a\in [-L,L],
\end{align*}
and we refer to the associated $c=c(t)$ as ``the shift.'' Notice for future reference that $w_{c}$ satisfies the Euler-Lagrange equation
\begin{align}\label{elL2}
\int_{[-\Lam,\Lam]} (u-w_{c})w_{cx}\,\dx=\int_{[-\Lam,\Lam]} uw_{cx}\,\dx=0.
\end{align}
The shift is not necessarily continuous in time and we neither need nor assume uniqueness of the shift. (However the shift becomes unique and moves continuously after an initial relaxation time.)
\begin{notation}\label{not:cdel}
  It will be convenient to define
  \begin{align*} 
\Delta c (t)\coloneqq \sup_{0 \leq t' \leq t} \, \abs{c(t')-c(0)}.
	\end{align*}
\end{notation}

The one-parameter family of shifted bumps will turn out to be an attracting, lower-dimensional manifold for the evolution: Even for initial data $u_0$ such that $\norm{u_0-w_{c(0)}}_{L^2([-L,L]}\gtrsim 1$, the solution $u$ of the Cahn-Hilliard equation is drawn into a small neighborhood of the one-dimensional manifold and converges to it as $t\to\infty$. Theorem~\ref{t:torus} proves quantitative versions of these remarks. Although, as explained above, the integral constraint does not control the shift, we will see in Theorem~\ref{t:torus} that the limiting shift is not too far (measured in terms of the initial $L^1$-distance) from the initial shift.

\subsection{The two-parameter slow manifold} \label{ss:slowmfld}

For both problems, we will measure the \emph{initial relaxation of the solution} in terms of its distance to a \emph{two-parameter family of glued kink profiles}. We begin with the problem on the line. The slow manifold consists of functions such that for some $q\in\R$ and some centers $\alpha, \beta\in \R$, there holds $\tilde{w}\approx v(\cdot-\alpha)$ for $x<q$ and $\tilde{w}\approx -v(\cdot-\beta)$ for $x>q$, and the function is smoothly interpolated
 near $q$. See Figure~\ref{fig:1} for an illustration.
	\begin{figure} \begin{center}\includegraphics[width=11cm,height=3cm,keepaspectratio]{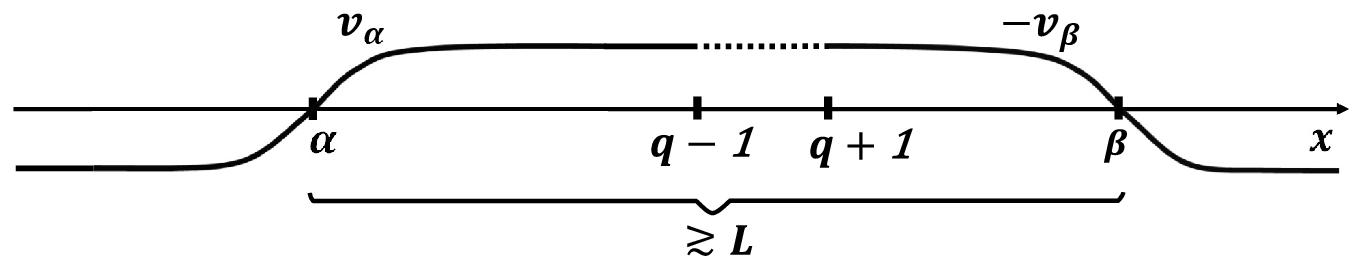}\end{center}
	\caption{A function $\wt\in\N(q,L)$ on the line, constructed from $v_{\alpha}$ and $-v_{\beta}$}\label{fig:1}
\end{figure}

To make things precise: For $L\geq L_0\gg 1$, we define $\N(L)$ to be the collection of functions $\tilde{w}$ such that:
\begin{itemize}
  \item There exists $q, \alpha, \beta \in\R$ with $q-\alpha\geq \frac{L}{2}$ and $\beta-q\geq \frac{L}{2}$.
  \item Away from $q$, the function is equal to a shifted kink:
  \[\tilde{w}=\begin{cases}
    v(\cdot-\alpha)& \text{on }(-\infty,q-1),\\
    -v(\cdot-\beta)&\text{on }(q+1,\infty).
  \end{cases}\]
  \item Near $q$, the function is smoothly interpolated so that $\wt \in C^3(\R)$,   and there holds
\begin{align*}
  \norm{\wt}_{L^2((q-1,q+1))}^2& \lesssim  \max\{v_\alpha(q-1),-v_\beta(q+1)\}, \\ \norm{\wt^{(k)}}_{L^2((q-1,q+1))}^2&\lesssim  \max\{1-v_\alpha(q-1),1{+}v_\beta(q+1)\} \quad \mbox{for } k=1,\,2,\,3,
\end{align*}
and
$ \norm{\wt-1}_{H^1((q-1,q+1))}$
are exponentially small, i.e., bounded above by $\exp(-L/C)$ for some $C<\infty$.
\end{itemize}
Occasionally we will want to refer specifically to glued kink profiles with a fixed gluing point $q$, which we will denote by $\N(q,\Lam)$. Almost always we will reduce to considering $\N(0,\Lam)$.

We define $\N(L)$ and $\N(0,L)$ similarly on the torus. Specifically, $\N(L)$ are functions  given by gluing together an increasing and decreasing kink. Bearing in mind periodicity, the size of the torus, and the mean condition, we choose $C<\infty$ and define $\N(L)$ as functions for which there exist $q\in[-L,L]$ and  $\alpha,\, \beta$ such that $\min\{\abs{\alpha-q},\abs{\alpha-(q+L)}\}\geq L/C$ and the same for $\beta$ (for instance $C=16$ will do).
Away from $q$ and $q+L (=q-L)$, the function is set equal to
\[\tilde{w}=\begin{cases}
    v(\cdot-\alpha)& \text{on }(q-L+1,q-1),\\
    -v(\cdot-\beta)&\text{on }(q+1,q+L-1),
  \end{cases}\]
with smooth interpolation near $q$ and $q+L$ as on $\R$.

Because there are two degrees of freedom ($\alpha$ and $\beta$), the two-parameter slow manifold (in contrast to the one-parameter slow manifold introduced above) includes functions with different distances between the zeros. We will see in Theorem~\ref{t:torus} that solutions of the Cahn-Hilliard equation on the torus relax \emph{quickly} to a member of the two-parameter slow manifold and then over time adjust the length between zeros and approach a member of the one-parameter slow manifold.

For both Problems $1$ and $2$, we will consider initial data $u_0$ with a finite $L^1$ distance to a function $\wt_0\in \N(L)$.
In the following, let $I$ denote the real line or the torus, as the case may be, and $I_-:=(-\infty,0]$ or $I_-:=(-L,0]$ and analogously for $I_+$. For $t>0$ and the solution $u$ of the Cahn-Hilliard equation, we will define an associated function $\wt$ by minimizing
\begin{align*}
	\norm{v_\alpha-u}_{L^2(I_-)} \quad  &\mbox{and} \quad \norm{v_\beta+u}_{L^2(I_+)},
\end{align*}
and we will denote the minimizing points and kinks by $\tilde{a}$ and $v_{\tilde{a}}$, and $\tilde{b}$ and $-v_{\tilde{b}}$, respectively. We will deduce
\begin{align}
	\int_{I_-} (u-v_{\tilde{a}})v_{{\tilde{a}} x}\,\dx =   0 = \int_{I_+} (u+v_{\tilde{b}})v_{{\tilde{b}} x}\, \dx ,\label{elg}
\end{align}
and establish appropriate bounds on $\tilde{a}$ and ${\tilde{b}}$. Notice that $v_{\tilde{a}}$ and $-v_{\tilde{b}}$ satisfy the Euler-Lagrange equation
\begin{align} \label{elv}
	-v_{xx}+G'(v)=0.
\end{align}
We will refer to $\wt$ as ``the $L^2$ closest glued kink profile,'' although the discerning reader will note that we minimize over the  half-lines and not the full space. We do it this way so that we can make use of \eqref{elg}.

We will see that for a long time, $\tilde{w}$ remains well-defined and that the $L^1$-distance of $u$ to $\tilde{w}$ remains controlled.
We will work with the quantities
\begin{alignat}{2}
\ft(t,x)&\coloneqq u(t,x)-\wt(t,x), &\notag\\
\Et(t) & \coloneqq \energy(u) - \energy(\wt) &\quad& \text{(the energy gap),} \nonumber \\
\D(t) & \coloneqq \int\Bigl(\bigl(u_{xx}-G'(u) \bigl)_x\Bigr)^2\,\dx &\quad & \nonumber \text{(the dissipation),}\\
\text{and }\quad \Wt(t)&\coloneqq \int \abs{u(t,x)-\wt(t,x)}\,\dx&\quad&\text{(the excess mass or volume)}. \nonumber
\end{alignat}
We typically omit the argument(s) from the left-hand side, except in cases in which we want to emphasize the dependence. Similarly, we sometimes write $u(t)$ (or similar) instead of $u(t,x)$ when it is the time-dependence that we wish to highlight.

\begin{notation}\label{not:zeros}
  It will also be convenient to define
  \begin{align*} 
\Delta \mathbf{\tilde{x}}(t) \coloneqq \sup_{0 \leq t' \leq t} \, \abs{\mathbf{\tilde{x}}(t')-\mathbf{\tilde{x}}(0)},
	\end{align*}
where $\mathbf{\tilde{x}}=\{\tilde{a},\tilde{b}\}$ are the zeros of $\wt$ and the distance between zeros is defined as in Notation~\ref{not:zeros2}.
\end{notation}

\subsection{Result for Problem 1}\label{ss:result1}


On the torus, we will initially work with the two-parameter slow manifold and the quantities $\ft$, $\Et$, $\Wt$ listed above. Once the energy gap has become sufficiently small, we will ``change horses'' and rely on the quantities
\begin{alignat}{2}
f_{c}(t,x) &\coloneqq u(t,x)-w_{c}(t,x),  &\notag\\
\E(t) & \coloneqq \energy(u) - \energy(w_c)=\energy(u)-\energy(w) &\quad& \text{(the energy gap),} \nonumber \\
\D(t) & \coloneqq \int\Bigl(\bigl(u_{xx}-G'(u) \bigl)_x\Bigr)^2\,\dx &\quad & \nonumber \text{(the dissipation),}\\
\text{and }\quad \W(t)&\coloneqq \int \abs{u(t,x)-w_{c}(t,x)}\,\dx&\quad&\text{(the excess mass or volume).}\nonumber
\end{alignat}
Furthermore we denote the initial energy gap
 and excess mass by
\begin{align*}
\E_0 \coloneqq \E(0)=E(u_0)-E(w).
\quad\text{ and }\quad \Wt_0\coloneqq\int\abs{u_0-\wt(0)}\,\dx.
\end{align*}

Our main result for Problem 1 is:
\begin{theorem}\label{t:torus}
There exists $C>0$ with the following property. For any $W_0<\infty$ and $\epsilon>0$, there exists $\Lam_0<\infty$ such that the following holds true. Let $u$ be a smooth solution of the Cahn-Hilliard equation \eqref{ch} on the torus with sidelength $\Lam\geq \Lam_0$. Suppose that the initial data $u_0$ satisfies
\begin{align} \label{star}
E(u_0)&\leq 4\co-\epsilon,\qquad\dashint_{[-\Lam,\Lam]}u_0\,\dx=m,\qquad \int_{[-\Lam,\Lam]}\abs{u_0-w}\,\dx\leq W_0,
\end{align}
where $m\in[-3/4,3/4]$ and $w$ is the centered bump. Then for all $t>0$, there holds
	\begin{align}\label{slowdowntorus}
	\W\lesssim W_0+1,\qquad \Deltild\lesssim W_0+1, \qquad \Delta c\lesssim W_0+1.
	\end{align}
Moreover, the metastable behavior and convergence to equilibrium can be characterized as follows. There exist times $0\leq T_0\leq T_1\leq T_2$ such that $$T_0 \lesssim W_0^4+1, \qquad  T_1\lesssim L^2,\qquad T_2\lesssim L^2\log W_0$$
and the following holds true.
 For $t\leq T_1$, the energy gap decays
according to
	\begin{align}\label{decayWtorus}
	\E\sim\abs{\Et}\lesssim \min \left\{ \E_0,\frac{W_0^2+1}{t^{1/2}}
\right\}.	\end{align}
In particular, at time $T_0$ the solution is close in energy and $L^\infty$ to the two-parameter slow manifold $\N(0,L)$:
\begin{align}
  \E(T_0)\sim\abs{\Et(T_0)}\ll 1,\qquad ||u(T_0)-\wt||_{L^\infty([-L,L])}\ll 1,\label{eq0}
\end{align}
where here $\ll$ requires that $T_0$ and $L_0$ be chosen sufficiently large. At time $T_1$, the solution is algebraically close in energy and $L^\infty$ to the two-parameter slow manifold:
\begin{align}
  \E(T_1)\sim\abs{\Et(T_1)}\ll \frac{W_0^2+1}{L},\qquad ||u(T_1)-\wt||_{L^\infty([-L,L])}\ll\frac{W_0+1}{L^{1/2}}.\label{egL}
\end{align}
On $[T_1,T_2]$, the energy gap to the two-parameter slow manifold and the dissipation decay exponentially in time:
\begin{align*}
  \Et+D\lesssim \frac{W_0^2+1}{L}\exp(-(t-T_1)/(CL^2)).
\end{align*}
At time $T_2$, the solution is (uniformly in $W_0$) algebraically close in energy and $L^\infty$ to the one-parameter slow manifold:
\begin{align}
  \E(T_2)\sim\abs{\Et(T_2)}\ll \frac{1}{L},\qquad ||u(T_2)-w_c||_{L^\infty([-L,L])}\ll\frac{1}{L^{1/2}}.\label{118b}
\end{align}
For  $t\geq T_2$, the energy gap to the one-parameter slow manifold and the dissipation decay exponentially in time:
\begin{align}
  \E+\D\lesssim \frac{1}{L}\exp(-(t-T_2)/(CL^2)),\label{expy}
\end{align}
and the solution converges exponentially in time to the one-parameter slow manifold:
\begin{align}
  ||u(t)-w_c||_{L^\infty([-L,L])}\lesssim \frac{1}{L^{1/2}}\exp(-(t-T_2)/(CL^2)).\label{expylinf}
\end{align}
Moreover the shift $c(t)$ converges exponentially in time to a fixed location $c_*\in [-L,L]$.
\end{theorem}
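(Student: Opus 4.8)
The plan is to run an energy--energy--dissipation argument in the spirit of \cite{OSW}, split into an algebraic phase driven by the two-parameter reference profile $\wt\in\N(0,L)$ and an exponential phase, and, within the latter, to ``change horses'' from $\wt$ to the one-parameter reference $w_{c}$ once the energy gap has dropped below the scale $1/L$. The master quantities are the energy gap $\Et$ (later $\E$), the dissipation $\D$, and the excess mass $\Wt$ (later $\W$), linked by \eqref{diff}; the PDE enters mainly through $\dot{E}=-\D$ and the $\Hpkt$-displacement bound, together with parabolic (Schauder) regularity. The backbone of the algebraic phase is a Nash-type interpolation inequality that, schematically, reads $\abs{\Et}^{3}\lesssim(\Wt+1)^{4}\,\D$, proved by splitting $u-\wt$ into a far-field part, where one uses strict convexity of $G$ near $\pm1$, and a near-interface part handled via the Euler--Lagrange orthogonality \eqref{elg}. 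Combined with $\dot{E}=-\D$ (and the near-stationarity of $E$ along the slow directions of $\N(0,L)$), this yields $\ddt\,\abs{\Et}^{-2}\gtrsim(\Wt+1)^{-4}$ and hence the rate $\abs{\Et}\lesssim(W_0^{2}+1)\,t^{-1/2}$ of \eqref{decayWtorus}, valid so long as $\Wt\lesssim W_0+1$; the equivalence $\E\sim\abs{\Et}$ follows once both are small, since $E(\wt)$ and $E(w)$ then differ by a comparably small amount.

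Establishing $\Wt\lesssim W_0+1$ for all time --- together with $\Deltild\lesssim W_0+1$ and $\Delta c\lesssim W_0+1$ --- is the heart of the matter and the step I expect to be the principal obstacle. I would set it up as a bootstrap/continuity argument: on a maximal interval where $\Wt\leq C(W_0+1)$, the step above gives the algebraic decay of $\Et$; the two zeros $\tilde{a},\tilde{b}$ of $\wt$ are then controlled through half-line versions of the conservation law --- the mass flux across the plateaus, where $u\approx\pm1$ and $u_{xx}-G'(u)$ is nearly constant because $G'(\pm1)=0$, being small --- combined with approximate representation formulas relating the zeros to the excess mass via the half-line orthogonality \eqref{elg} and a duality estimate for the $\Hpkt$-motion coming from \eqref{diff}; the shift $c$ is then controlled similarly via \eqref{elL2}, where, in contrast to the kink, the exact conservation law \eqref{notitw} is useless because of \eqref{ohnow}, so one must work with the slower, non-exact estimate (roughly, $c$ tracks the midpoint of $\tilde{a},\tilde{b}$ up to corrections that are exponentially small and, later, exponentially decaying). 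Finally the $L^{1}$-triangle inequality between $u(t)$, $\wt(t)$ and $\wt(0)$ --- the $L^{1}$-drift of the reference being controlled by $\Deltild$ --- closes the loop on $\Wt$ with a strictly better constant, so the maximal interval is $[0,\infty)$. Schauder estimates enter throughout: to make the flux terms pointwise small, to upgrade $L^{1}$/$\Hpkt$ control to the $L^{\infty}$ statements \eqref{eq0}, \eqref{egL}, \eqref{118b}, and to guarantee that the projections defining $c,\tilde{a},\tilde{b}$ are well-defined and, after an initial time, unique and continuous. The times are then read off: $T_0$ is when the $t^{-1/2}$ law drives $\abs{\Et}$ below a fixed small constant, so $T_0\lesssim(W_0^{2}+1)^{2}\lesssim W_0^{4}+1$; $T_1\sim L^{2}$ is when it reaches $(W_0^{2}+1)/L$; and $T_2-T_1\sim L^{2}\log W_0$ is the further, exponential, decay needed to reach $1/L$.

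The exponential phase rests on coercivity of $E$ near the slow manifolds. Linearizing $E$ around $\N(0,L)$ subject to the mean constraint \eqref{meanconstr} and the orthogonality \eqref{elg} --- which removes the two translational (neutral) directions --- produces a quadratic form whose smallest eigenvalue is of order $1/L^{2}$, the slow mode being the ``breathing'' mode that adjusts the distance between the two zeros while conserving mass, relaxed diffusively across a domain of length $L$; via the gradient-flow structure this spectral gap yields $\Et+\D\lesssim\frac{W_0^{2}+1}{L}\exp(-(t-T_1)/(CL^{2}))$ on $[T_1,T_2]$, and the $\log W_0$ in $T_2$ is exactly the number of $e$-foldings needed to shrink the length error (of size $O(W_0+1)$ at $T_1$, since $\abs{\Et(T_1)}\lesssim(W_0^{2}+1)/L$ and a length error $\delta$ costs energy $\sim\delta^{2}/L$) down to $O(1)$, i.e.\ onto $\N_1(L)$ at the scale $1/L$. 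At that point I change horses to $w_{c}$ and repeat the coercivity estimate near $\N_1(L)$, where the only soft direction is the genuinely neutral translation mode $\partial_a w_a$, obtaining again the spectral gap $1/(CL^{2})$ and the decay \eqref{expy}. Interpolating the resulting $H^{1}$ control with Schauder bounds upgrades these to the $L^{\infty}$ statements \eqref{118b} and \eqref{expylinf}.

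Finally, for the convergence of the shift: for $T_2\leq s\leq t$, the displacement estimate \eqref{diff} and the decay \eqref{expy} give
\begin{align*}
\norm{u(t)-u(s)}_{\Hpkt}&\leq\int_s^t\D^{1/2}\,d\tau\lesssim\frac{1}{L^{1/2}}\int_s^t\exp\!\big(-(\tau-T_2)/(2CL^{2})\big)\,d\tau\\
&\lesssim L^{3/2}\exp\!\big(-(s-T_2)/(2CL^{2})\big)\longrightarrow0\qquad(s\to\infty).
\end{align*}
Combining this with the $L^{\infty}$ --- hence, on the torus, $\Hpkt$ --- convergence \eqref{expylinf} of $u$ to $w_{c}$, the triangle inequality bounds $\norm{w_{c(t)}-w_{c(s)}}_{\Hpkt}$ by the same quantity; since the translation family is nondegenerate in $\Hpkt$ --- $\norm{w_a-w_b}_{\Hpkt}$ is bounded below by a positive function of $\abs{a-b}$, uniformly for $\abs{a-b}\lesssim W_0+1$ --- it follows that $\abs{c(t)-c(s)}$ obeys the same bound, so $c(\cdot)$ is Cauchy; it converges to some $c_*\in[-L,L]$, and letting $s\to\infty$ yields the stated exponential rate for $\abs{c(t)-c_*}$.
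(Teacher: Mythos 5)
Your overall architecture (Nash-type inequality $\Rightarrow$ $t^{-1/2}$ decay conditional on $L^1$ control; a buckling/continuity argument; the change of reference profile from $\N(0,L)$ to $\N_1(L)$ once the energy gap drops below $1/L$; an exponential phase driven by an inequality of the form $\E\lesssim L^2\D$) matches the paper, and your argument for the convergence of the shift via the $\Hpkt$-displacement bound and nondegeneracy of the translation family is a workable variant of the paper's (which instead integrates the estimate \eqref{ccontrol} of Lemma~\ref{l:fine}). But there is a genuine gap at exactly the step you identify as the heart of the matter: the proof that $\Wt_T\lesssim\Wtz+1$. Your proposed mechanism --- half-line conservation laws and flux bounds to control the zeros $\tilde a,\tilde b$, followed by a triangle inequality between $u(t)$, $\wt(t)$, and $\wt(0)$ --- cannot close, because controlling the zeros of $\wt$ says nothing about the term $\norm{u(t)-\wt(0)}_{L^1}$ that the triangle inequality produces. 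The conservation law controls only the signed integral $\int(u-\wt_0)\,\dx$, not $\int\abs{u-\wt_0}\,\dx$, and the $\Hpkt$-duality coming from \eqref{diff} is useless for the $L^1$ norm, since $\int\abs{f}\,\dx=\sup\{\int\psi f\,\dx : \norm{\psi}_{L^\infty}\le 1\}$ requires test functions that are merely bounded, not of small $\dot H^1$ norm. Nothing in your argument rules out that an order-one amount of excess mass redistributes itself in the far-field plateaus (exactly the configurations of Figure~\ref{fig:2}) while the zeros stay put, which would let $\Wt$ grow.

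The paper's actual argument (Proposition~\ref{p:L1boundtorus}) is a duality argument of Niethammer--Vel\'{a}zquez type: one writes $\int\abs{u\mp 1}\varphi\,\dx$ as a supremum over $L^\infty$-bounded terminal data $\psi$, solves the \emph{backward} constant-coefficient fourth-order parabolic problem $\zeta_t+G''(1)\zeta_{xx}-\zeta_{xxxx}=0$ on a domain whose boundary $\gamma(t)$ tracks the (subcritically, $(T-t)^{1/4}$-moving) zeros, and uses the semigroup/Schauder estimates of Proposition~\ref{p:schauderest} for $\zeta$ together with the already-established energy decay and the integral dissipation bound (Lemma~\ref{l:intdissipationbound}) to bound $\int_0^T\abs{\ddt\int\zeta(u\mp1)\varphi\,\dx}\,\dt$ by $o(1)\,\Wt_T+\Wt_T^{2/3}+\Wtz+1$, which can then be absorbed. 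This is where the Schauder theory really enters --- for the dual variable on a variable domain, not merely to upgrade the regularity of $u$ --- and it is the ingredient your proposal is missing. A secondary point: in the exponential phase the paper does not linearize the second variation of $E$; it derives $\Et\lesssim L^2\D+\exp(-L/C)$ directly from a Hardy-type inequality (Lemma~\ref{l:EDtorus}) and, after the change of horses, $\E\lesssim L^2\D$ from the Poincar\'e inequality \eqref{poincaretorus} combined with the nonlinear EED estimates of Lemma~\ref{l:basiceedtorusw}; a purely spectral statement about the Hessian of $E$ would still have to be upgraded to a nonlinear energy--dissipation inequality before it yields \eqref{expy}.
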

One could summarize by saying that we capture metastability of $\N(L)$ followed by stable relaxation/convergence to $\N_1(L)$. \red{The initial relaxation is algebraic; later stage relaxation is exponential (with  transition timescale $\lesssim L^2$).}
\begin{remark}[No prefactor estimate]
The content of \eqref{expy} should not be overestimated: We do not pinpoint the precise constant in the exponential factor, and in \eqref{expy} we do not want to suggest that we recover the subexponential prefactor. The estimate in \eqref{expy} is only meant to indicate that the bound is initially $1/L$ and, once time is large enough, exponentially small in $L$.
\end{remark}
\begin{remark}[Crossover time]
A comparison with \cite{SW} reveals the following behavior: For initial data $u_0$ such that $1\lesssim W_0:=\norm{u_0-w}_{L^1([-L,L])}\ll \norm{u_0-w}^2_{\dot{H}^{-1}([-L,L])}=:H_0<\infty$ (as expected for a generic coarsening event), the rate~\eqref{decayWtorus} is best initially but is overtaken by $H_0/t$ decay when
\begin{align*}
  T\sim \frac{H_0^2}{W_0^4}.
\end{align*}
\end{remark}
\begin{remark} \label{r:norm}
  Choosing the bump with its maximum at zero in~\eqref{star} should be viewed as a normalization. If instead some shifted bump $w(\cdot-a)$ were considered, we would work instead with $\N(a,L)$. In contrast the second condition in~\eqref{star} should be viewed as setting the lengthscale $L$ of the stable bump to which the solution converges.
\end{remark}
\begin{remark}[Uniqueness of the zeros] \label{r:EDsmall}
Once the energy gap and the dissipation are small, one can show as in \cite[Lemma~2.13]{SW} that the solution $u$ has exactly two zeros. From uniqueness of the zeros one can deduce that they move continuously in time, and according to Lemma~\ref{l:u2zeros}, the zeros are a distance of order $L$ apart.
\end{remark}
\begin{remark}[Dissipation decay] \label{r:algebraic_D}
From Lemma \ref{l:dissy_bound}, one obtains
\begin{align*}
	\D(t) \lesssim  \frac{ W_0^4+1}{t}\qquad \text{on }[1,T_1],
\end{align*}
but the rate does not appear to be optimal.
\end{remark}

\subsection{Result for Problem 2}
On the line, the bump-like states converge in the long-time limit to $u\equiv -1$. However we will show that the two-parameter slow manifold $\N(\Lam)$ is \emph{metastable}: For initial data such that the $L^1$-distance to some $\wt_0\in\N(\Lam)$ is bounded (but not small), the solution is drawn into a small neighborhood of $\N(\Lam)$ and stays there for a long time. By translation invariance, we may assume that $\wt_0\in \N(0,L)$.
Our main result for Problem 2 is the following.
\begin{theorem}\label{t:mainresultuniversal}
For any $W_0 < \infty$ and $\epsilon>0$, there exists $L_0<\infty$ such that for any $L\geq L_0$ the following holds true.
 Let $u$ be a smooth solution of the Cahn-Hilliard equation \eqref{ch} on $\R$ with initial data $u_0$.
 Suppose that
 	\begin{align*}
	E(u_0) \leq 4\co-\epsilon, \quad \int_{\R}(u_0+1)\,\dx=2L,
\end{align*}
and there exists $\wt_0\in\mathcal{N}\red{(0,2L)}$ such that
\begin{align}
 \int_\R\abs{u_0-\wt_0}\,\dx\leq \Wtz.\label{W_uni}
\end{align}
Then there exist times $0\leq T_0\leq T_1$ such that
\begin{align*}
T_0\lesssim \Wtz^4+1,\qquad  T_1\lesssim L^2
\end{align*}
and for all $t\in (0,T_1]$, there exists $\wt\in\red{\N(0,L)}$ and there holds
\begin{align*}
\int_\R \abs{u-\wt}\,\dx\lesssim W_0+1,\qquad \Deltild\lesssim \Wtz+1.
\end{align*}
Moreover for all $t\in (0,T_1]$, the energy gap decays at the optimal algebraic rate
	\begin{align}\label{edecay_uni}
	\abs{E(u)-2e_*}\lesssim \min \left\{ E(u_0)-2e_*,\frac{\Wtz^2+1}{t^{1/2}} \right\}.
	\end{align}
In particular, at time $T_0$
the solution is close in energy and $L^\infty$ to the two-parameter slow manifold:
\begin{align}
  \abs{\Et(T_0)}\ll 1,\qquad ||u(T_0)-\wt||_{L^\infty(\R)}\ll 1,\notag
\end{align}
(where $\ll$ requires that $T_0$ and $L_0$ be sufficiently large) and at time $T_1$
this improves to the algebraic closeness :
\begin{align}
  \Et(T_1)\ll \frac{ W_0^2+1}{L},\qquad ||u(T_1)-\wt||_{L^\infty(\R)}\ll\frac{ W_0+1}{L^{1/2}}.\label{egL22}
\end{align}
Also for all $t\in [T_0,T_1]$, $u$ has exactly two zeros $\{a(t),b(t)\}$   and for all times $s<t$ in this interval, changes are controlled via
\begin{align}
  \abs{a(t)-a(s)}+\abs{b(t)-b(s)}\lesssim 
  \left(\max_{[s,t]}\Et^{1/2}\right)
(t-s)^{1/4}+1.\label{da}
\end{align}

Finally, algebraic closeness to the two-parameter slow manifold, i.e. \eqref{egL22},
persists for a timescale at least of order $L^2/( W_0^4+1)$.
\end{theorem}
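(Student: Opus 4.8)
The plan is to transplant the $L^1$-based energy--energy--dissipation (EED) method of \cite{OSW} and \cite{SW} from the single kink to the glued two-layer profile $\wt$, carrying out the analysis layer-by-layer on the half-lines $I_\pm$ so that the orthogonality relations \eqref{elg} stay available. First I would check that, as long as $u$ remains in a fixed $L^1$-tube around $\N(L)$, the half-line minimizers that define $v_\at$ and $-v_\bt$ exist, are eventually unique, move continuously, and keep their zeros a distance $\gtrsim L$ apart; this uses the preserved energy bound $E(u)\le 4\co-\epsilon$ together with the exponential localization of the kink, and it makes $\wt\in\N(L)$, $\ft$, $\Et$, $\Wt$ well defined with \eqref{elg} in force. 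Next I would prove an EED estimate of the schematic form $\abs{\Et}\lesssim \D^{1/2}(\Wt+\cdots)$ up to errors exponentially small in $L$ (coming from the gluing region and from $E(\wt)$ agreeing with $2\co$ only up to such errors); this is where Nash-type inequalities and a duality argument enter, applied separately on $I_-$ and $I_+$ and patched across the gluing point.

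The core of the argument is the propagation of the finite $L^1$-distance. Differentiating $\Wt$ in time, inserting the Cahn--Hilliard equation, and using \eqref{elg} to cancel the dangerous terms yields a differential inequality of the form $\tfrac{d}{dt}\Wt\lesssim \D^{1/2}+(\text{lower order})+(\text{exp.\ small in }L)$; combining it with a bound on the zero displacement in terms of the excess mass and running a self-improving (Gronwall-type) argument gives $\Wt(t)\lesssim \Wtz+1$ and $\Deltild(t)\lesssim \Wtz+1$ up to a time $T_1\sim L^2$. This is the step I expect to be the main obstacle: the lack of compactness on the line means one must rule out mass escaping to infinity and uncontrolled sliding along the $\N(L)$-direction, while the profile may still be order-one away from $\N(L)$, and the two-layer geometry forces one to track $\at$ and $\bt$ jointly rather than the single shift of the kink problem.

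With the $L^1$ bound and the EED estimate in hand, I would feed them, together with the energy law $\dot E=-\D$ from \eqref{diff}, into an ODE interpolation lemma (the analogue of the one in \cite{OSW}) to obtain $\abs{\Et}\lesssim \min\{E(u_0)-2\co,\ (\Wtz^2+1)/t^{1/2}\}$, i.e.\ \eqref{edecay_uni}; the first term in the minimum is immediate from monotonicity of $E$. Setting $T_0\lesssim \Wtz^4+1$ to be the first time $\abs{\Et}\ll 1$ and $T_1\sim L^2$ the first time $\abs{\Et}\ll (\Wtz^2+1)/L$, a Gagliardo--Nirenberg/elliptic-regularity estimate that couples the smallness of $\Et$ with the $L^1$ bound upgrades these to the $L^\infty$ statements at $T_0$ and at $T_1$, i.e.\ \eqref{egL22}. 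Once $\Et$ and $\D$ are simultaneously small, one shows as in \cite[Lemma~2.13]{SW} that $u$ has exactly two zeros $a(t),b(t)$ that move continuously, and proves \eqref{da} by combining the $\Hpkt$-contraction in \eqref{diff}, a lower bound for the $\Hpkt$-cost of displacing the two zeros of a glued profile, and a Hölder-in-time bound for $\int \D^{1/2}$.

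Finally, for the persistence claim, I would restart the estimates at time $T_1$. By then the dissipation has become algebraically small, $\D\lesssim (\Wtz^4+1)/t$ and hence $\D(T_1)\lesssim (\Wtz^4+1)/L^2$, so by \eqref{diff} the solution drifts in $\Hpkt$ at rate at most $\D(T_1)^{1/2}$; a continuity/trapping argument --- the solution cannot leave the $L^1$-tube on which all estimates were closed until it has drifted a full ``radius'' of that tube --- then shows that the algebraic-closeness window \eqref{egL22} survives for an additional time of order $L^2/(\Wtz^4+1)$, and re-running the preceding estimates on this longer interval re-closes every bound there.
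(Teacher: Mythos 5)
Your outline matches the paper's architecture at the level of the stopping-time/buckling scaffold, the EED estimates, the Nash-type inequality, the ODE lemma, and the zero-motion lemmas. But the step you yourself flag as the main obstacle --- propagating $\int_\R\abs{u-\wt}\,\dx\lesssim \Wtz+1$ up to $T_1\sim L^2$ --- is handled by an argument that does not close. You propose to differentiate $\Wt$ in time, use \eqref{elg} to cancel the dangerous terms, arrive at $\tfrac{d}{dt}\Wt\lesssim \D^{1/2}+\dots$, and Gronwall. Integrating this gives $\Wt(T)-\Wt(0)\lesssim\int_0^T\D^{1/2}\,\dt$, and this time integral is \emph{not} uniformly bounded on the relevant horizon: even with the optimal decay $\Et\lesssim \Wt_T^2 t^{-1/2}$ and the consequent $\D\lesssim \Wt_T^4 t^{-3/2}$ from Lemma~\ref{l:dissy_bound}, one only gets $\int_0^T\D^{1/2}\,\dt\lesssim \Wt_T^2\,T^{1/4}\sim \Wt_T^2 L^{1/2}$. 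This is exactly why Lemma~\ref{l:intdissipationbound} is stated only for exponents $\gamma>2/3$; the exponent $1/2$ is on the wrong side of the threshold, so no self-improvement can rescue the estimate. (Also, \eqref{elg} is an $L^2$-orthogonality against $v_{\tilde a x}$ and does not produce cancellations in the time derivative of an $L^1$ quantity.)

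The paper's actual mechanism (Proposition~\ref{p:L1bound}, inherited from \cite{OSW} and ultimately from the Niethammer--Vel\'azquez duality) is different in kind: one writes $\Wt$ (via the stand-in $\Wb$) by $L^\infty$--$L^1$ duality, propagates the test function \emph{backward} by the constant-coefficient dual problem $\zeta_t+G''(\pm1)\zeta_{xx}-\zeta_{xxxx}=0$ on domains whose boundaries track the zeros at rate $(T-t+\Lambda)^{1/4}$, and uses the decay $\norm{\zeta_{xx}}_{L^\infty}\lesssim (T-t)^{-1}\wedge(T-t)^{-1/2}$ etc.\ from Proposition~\ref{p:schauderest}. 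It is the time decay of the dual solution, paired with the algebraic decay of $\Et$ and the integral dissipation bound at exponent $3/4$, that renders the time integrals summable (up to logarithms) and produces $\Wb(T)\lesssim o(1)\Wt_T+\Wt_T^{2/3}+\Wtz+1$, which can then be absorbed. You have mentioned ``a duality argument'' but located it inside the EED estimate, where it plays no role; without the backward dual problem and the moving-boundary Schauder estimates, the $L^1$ bound --- and with it \eqref{edecay_uni}, \eqref{egL22}, and the persistence claim --- cannot be obtained. The final persistence statement also comes from Lemma~\ref{l:motionofzeros} (displacement $\lesssim \max\Et^{1/2}(t-s)^{1/4}+1$ with $\Et\lesssim(\Wtz^2+1)/L$), not from an $\Hpkt$-drift bound via $\D(T_1)^{1/2}$, though that heuristic points in the right direction.
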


\begin{remark}[Improved bounds]
One can improve the bound on the energy gap by iterating and arguing as in Lemma~\ref{l:motionofzeros}, but the resulting bound on the motion of the zeros is not summable. To obtain good control on the motion of the zeros, one may need to impose additional constraints on the initial data that give information about how far the disturbance is from the bump. For instance if one assumes in addition that $\norm{u_0-\wt_0}_{\dot{H}^{-1}(\R)}^2\lesssim L$, one could combine the $L^1$ relaxation result from this paper with the method of \cite{OW}.
\end{remark}
\begin{remark}[Dissipation decay]
As in Remark~\ref{r:algebraic_D},
one obtains from Lemma~\ref{l:dissy_bound} that
\begin{align*}
	\D(t) \lesssim  \frac{ W_0^4+1}{t}\qquad \text{on }[1,T_1],
	\end{align*}
but the rate does not appear to be optimal.
\end{remark}

Theorem~\ref{t:mainresultuniversal} establishes metastability of $\N(L)$ on $\R$.
Since there is no state with zero dissipation and $-1$ boundary conditions at $\pm\infty$ other than $\bar{u}\equiv -1$, the solution converges in the longtime limit to $\bar{u}$ (and the excess mass from the integral constraint ``leaks'' to infinity). Once the energy has become strictly less than $2e_*$, it is not hard to obtain the following quantitative result. We present the result for initial data $u_0$, but it is natural to imagine this data being an intermediate-time state in a longer coarsening process.
We will use the notation
\begin{align*}
 \Wm (t)&\coloneqq \int \abs{u(t,x)+1}\,\dx&\quad&\text{(the excess mass or volume relative to $-1$).}
 \end{align*}

\begin{theorem}\label{t:mainresultubar}
	For any  $\epsilon >0$, the following holds true.
 Let $u$ be a smooth solution of the Cahn-Hilliard equation \eqref{ch} on the line with initial data $u_0$.
 Suppose that $u_0$ satisfies
 	\begin{align*}
	E(u_0) \leq 2\co-\epsilon
	\qquad
and \qquad
 \Wm (0)<\infty . 
\end{align*}
Then the solution converges to $\bar{u}\equiv -1$ in $H^1(\R)$ and for all $t\geq 0$, there holds
\begin{align*}
  \Wm \lesssim \Wm (0)+1 \quad \mbox{and} \quad  E(u)\lesssim \min\left\{E(u_0),\frac{\Wm (0)^2+1}{t^{1/2}}\right\}.
\end{align*}
\end{theorem}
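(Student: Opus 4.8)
\noindent\emph{Proof idea.}
This is the ``trivial-limit'' member of the trio: the only stationary state with $-1$ boundary conditions at $\pm\infty$ and finite excess mass is $\bar u\equiv -1$, and $E(\bar u)=0$, so we are merely establishing algebraic relaxation to a \emph{constant}. The plan is to run the energy--energy--dissipation scheme underlying Theorems~\ref{t:torus} and~\ref{t:mainresultuniversal}, but with the moving glued-kink profile $\wt$ replaced throughout by the fixed state $-1$; this removes the shift and the gluing region, and most estimates simplify accordingly. First I would record the consequences of the energy bound $E(u(t))\le E(u_0)\le 2\co-\epsilon$, which by \eqref{diff} persists for all $t\ge 0$. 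From the pointwise inequality $\tfrac12 u_x^2+G(u)\ge \sqrt{2G(u)}\,\abs{u_x}=\abs{(\Phi(u))_x}$, with $\Phi'\coloneqq\sqrt{2G}$, together with the double-well structure of $G$ (Assumption~\ref{ass:G}), one obtains: $\|u_x\|_{L^2}^2\le 2E(u)$; $\mathrm{TV}(\Phi(u))\le E(u)$, so that $u$ makes no full excursion to $+1$ (a complete transition costs $\ge\co$, while $E<2\co-\epsilon$); a bound on $\|u\|_{L^\infty}$ in terms of $E(u_0)$ and $\Wm(0)$; and $\|u+1\|_{L^2}^2\lesssim E(u)$ with a constant depending on $\epsilon$. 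The sub-$2\co$ hypothesis is precisely what makes these bounds uniform.

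The engine of the decay rate is the Nash-type inequality
\begin{align*}
  E(u)^3\ \lesssim\ (\Wm^2+1)^2\,\D ,
\end{align*}
valid whenever $E(u)\le 2\co-\epsilon$. I would prove it by the $L^1$-based duality argument of the companion results: bound $E(u)=E(u)-E(-1)$ from above by pairing the variational derivative $u_{xx}-G'(u)$ against a near-optimal transport of $u$ towards $-1$, estimate the resulting pairing by $\D^{1/2}$ times a weak ($\dot H^{-1}$-type) norm of that transport, control this norm by $\Wm$ (this is where the $L^1$-framework replaces the $\dot H^{-1}$-framework of \cite{OW}), and absorb the lower-order contributions using the a priori bounds of the previous paragraph. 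The energy level below $2\co$ is essential here: it rules out sequences with $E\to 2\co$ and $\D\to 0$ (bump-like profiles with well-separated layers), for which such an inequality would fail.

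The technical core --- and, I expect, the main obstacle --- is showing that the $L^1$-quantity stays controlled, $\Wm(t)\lesssim\Wm(0)+1$. The scheme is the usual continuity/bootstrap argument: on a maximal interval on which $\Wm\le 2(\Wm(0)+C)$, the Nash inequality together with $\dot E=-\D$ gives $E(u(t))\lesssim(\Wm(0)^2+1)/t^{1/2}$ and, as in Lemma~\ref{l:dissy_bound}, a dissipation bound; one then controls the growth of $\Wm$ --- via $\ddt\Wm=\int\sgn(u+1)\,u_t=\int\partial_x[\sgn(u+1)]\,\mu_x$ with $\mu\coloneqq u_{xx}-G'(u)$, so that $\abs{\ddt\Wm}$ is bounded by $\mu_x$ evaluated at the crossings of the level $-1$, which one estimates by combining the transport bound \eqref{diff} with Schauder estimates for $\mu$ --- and concludes $\Wm(t)\le\Wm(0)+C$ \emph{strictly}, closing the bootstrap and extending the interval to $[0,\infty)$. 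Getting this growth estimate to actually close (rather than merely accumulate logarithmically in time) is the delicate point; it is the ``boundedness of the $L^1$-distance is preserved by the flow'' assertion highlighted in the introduction.

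With $\Wm(t)\lesssim\Wm(0)+1$ in hand, the Nash inequality yields $-\dot E\gtrsim E^3/(\Wm(0)^2+1)^2$; integrating and combining with the monotonicity $E(u(t))\le E(u_0)$ gives $E(u(t))\lesssim\min\{E(u_0),(\Wm(0)^2+1)/t^{1/2}\}$. For the convergence statement, $E(u(t))\to 0$ forces $\mathrm{TV}(\Phi(u(t)))\to 0$, hence $\|\Phi(u(t))-\Phi(-1)\|_{L^\infty}\to 0$ (a function tending to a limit at $\pm\infty$ lies within its total variation of that limit), hence $\|u(t)+1\|_{L^\infty}\to 0$ since $\Phi$ is a homeomorphism of $\R$; then $\|u(t)+1\|_{L^2}^2\le\|u(t)+1\|_{L^\infty}\,\Wm(t)\to 0$ and $\|u_x(t)\|_{L^2}^2\le 2E(u(t))\to 0$, so $u(t)\to -1$ in $H^1(\R)$, which completes the proof.
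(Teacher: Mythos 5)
Your overall architecture is the right one (EED estimates relative to $-1$, a Nash-type inequality, an ODE integration, and $L^1$ control of $\Wm$ as the crux), and indeed the paper's proof is exactly the chain Lemma~\ref{l:basiceed-} $+$ Lemma~\ref{l:nash} $+$ Lemma~\ref{l:odenash} $+$ Proposition~\ref{p:L1boundsmallE}. Two of your intermediate mechanisms, however, diverge from the paper's, and the second one is a genuine gap. First, a minor point: the Nash inequality is not obtained by ``pairing the variational derivative against a near-optimal transport''; it is a pure interpolation, $\sup\abs{f}\lesssim(\int\abs{f}\,\dx\int f_x^2\,\dx)^{1/3}$ combined with $\int f^2\,\dx\lesssim\sup\abs{f}\int\abs{f}\,\dx$ and the EED bounds $E\lesssim\int f^2+f_x^2\,\dx$, $\int f_x^2\,\dx\lesssim\D$, where $f=u+1$. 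The sub-$2e_*$ hypothesis enters only through Lemma~\ref{l:basiceed-}, not through any compactness/contradiction argument at this stage.

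The real gap is in your $L^1$ control. Writing $\ddt\Wm=\int\partial_x[\sgn(u+1)]\,\mu_x\,\dx$ with $\mu=u_{xx}-G'(u)$ reduces the problem to a sum of point evaluations $\sum_i\pm 2\mu_x(x_i)$ over the crossings of the level $-1$: the number of crossings is not controlled, pointwise values of $\mu_x$ are not controlled by $\D=\int\mu_x^2\,\dx$ (one would need $\mu_{xx}$, which the dissipation does not bound), and even granting $\abs{\mu_x(x_i)}\lesssim\D^{1/2}$ the time integral $\int_0^T\D^{1/2}\,\dt\lesssim T^{1/2}E(u_0)^{1/2}$ grows and the bootstrap does not close --- as you half-concede. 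The paper closes this step by \emph{linear duality}: write $\Wm(T)=\sup_\psi\int\psi(u+1)\,\dx$ over $\norm{\psi}_{L^\infty}\le 1$, solve the backwards constant-coefficient problem $\zeta_t+G''(-1)\zeta_{xx}-\zeta_{xxxx}=0$ with terminal data $\psi$ (Proposition~\ref{p:semigroup}), and compute
\begin{align*}
\ddt\int_\R\zeta(u+1)\,\dx=\int_\R\zeta_{xx}\bigl(G'(u)-G'(-1)-G''(-1)(u+1)\bigr)\,\dx,
\end{align*}
so that the linear-in-$(u+1)$ part is absorbed into the dual equation and only a \emph{quadratic} remainder survives, bounded by $\norm{\zeta_{xx}}_{L^\infty}\int(u+1)^2\,\dx\lesssim\norm{\zeta_{xx}}_{L^\infty}E(u)$. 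The semigroup decay $\norm{\zeta_{xx}}_{L^\infty}\lesssim(T-t)^{-1}\wedge(T-t)^{-1/2}$ together with $E\lesssim(\sup_{t\le T}\Wm+1)^2/t^{1/2}$ makes the time integral come out as $\frac{\ln T}{T^{1/4}}(\sup_{t\le T}\Wm+1)$, whose decaying prefactor permits absorption of the supremum for large $T$ (and the short-time bound is trivially $O(T^{1/2})$); no continuity/bootstrap on a maximal interval is needed. Without this duality step --- which is the content of Proposition~\ref{p:L1boundsmallE} --- your argument does not yield $\Wm(t)\lesssim\Wm(0)+1$. The final $H^1$ convergence is fine, though it follows more directly from $\norm{u+1}_{H^1}^2\lesssim E\to 0$ via \eqref{eq:energyestimate-} than via your total-variation detour.
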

The proof of Theorem~\ref{t:mainresultubar} is also included in Section~\ref{S:1} below.
\begin{remark}[Boundary conditions]\label{rem:boundedness}
  Notice that $E < 4e_*-\epsilon$ and either $\Wt < \infty$ or $\Wm<\infty$ imply that $u$ is bounded in $L^\infty(\R)$ (using the trick of Modica and Mortola) and satisfies $-1$ boundary conditions at $\pm \infty$. Indeed, the energetic and $L^1$ bounds imply that
  $G(u)\gtrsim (u+1)^2$ outside of a large compact set, so that $(u+1)\in H^1(\R)$ and
  \begin{align*}
    \sup_{(M,\infty)}\abs{u+1}^2\leq\int_M^\infty (u+1)^2+((u+1)_x)^2\,\dx=o(1)_{M\uparrow\infty},
  \end{align*}
  and similarly on $(-\infty,-M)$.

\end{remark}

\subsection{Additional comments on the literature}\label{ss:lit}
Metastability of the Cahn-Hilliard equation was already observed numerically by Elliott and French in \cite{EF}; see also \cite{M}. Analytical results on the metastability include \cite{ABF,BH,BX1,BX2}. In particular, slowness of the metastable phase was observed under varying smallness assumptions on the initial data.	
Bronsard and Hilhorst \cite{BH} use an elementary energy method to show that solutions for
initial data algebraically close in $L^1$ to the slow manifold remain trapped algebraically close to the slow manifold for an algebraically long time.
Bates and Xun \cites{BX1,BX2} examine the behavior of a solution of the Cahn-Hilliard equation on a bounded spatial domain with initial data that is algebraically close in $H^2$ to the slow manifold. By exploiting the spectrum of the linearized Cahn-Hilliard operator, they prove that the solution relaxes towards the slow manifold and stays close for an exponentially long time.

As explained above, capturing metastability of the Cahn-Hilliard equation for poorly prepared initial data requires understanding the initial, algebraic relaxation towards the slow manifold. Algebraic rates of convergence for the Cahn-Hilliard equation on the line and initial data close (in some form) to a kink profile has been studied in \cites{BKT,CCO,H}. In \cite{OW}, the authors demonstrate that closeness of the initial data to the slow manifold is not necessary: They consider initial data that is merely a finite distance from the slow manifold in $\Hpkt$ and establish optimal algebraic relaxation under this assumption.

The fact that exponential closeness to the slow manifold is not only \emph{propagated} but also \emph{generated}, i.e., that poorly prepared initial data is drawn into an exponentially small neighborhood of the slow manifold, is demonstrated for the Allen-Cahn equation via a sharp energy method by Otto and Westdickenberg (née Reznikoff) in \cite{OR}. Scholtes and Westdickenberg \cite{SW} combine this technique with the algebraic relaxation from \cite{OW} to prove metastability of the Cahn-Hilliard equation on the torus: They show that
initial data that is an order-one distance in energy and $\Hpkt$ to the slow manifold is drawn into an exponentially small neighborhood of the slow manifold and remains trapped there for an exponentially long time.

As described above, \cite{OSW} drops the $\Hpkt$ condition in favor of an $L^1$ distance, since this yields a more generic condition for coarsening. In this work, we again consider initial data that is merely order-one away in $L^1$ from a limit point or metastable configuration (see Figure~\ref{fig:2}). The challenges of our setting compared to that of \cite{OSW} have been explained earlier in the introduction.

\subsection{Comments and method}\label{ss:commentsmethod}
Problems 1 and 2---and the tools we use to analyze them---are closely related, so we collect some comments that apply to both.
For an extended discussion of optimal relaxation rates for the Cahn-Hilliard equation on the line, we refer to \cite{OW}. The energetic relaxation in~\eqref{decayWtorus} and~\eqref{edecay_uni} is optimal and is the same rate as for the heat equation on the line with $L^1$ initial data.
The main idea for both problems is to combine
\begin{itemize}
  \item energy--energy--dissipation (EED) relationships,
  \item a Nash-type inequality of the form
  $\E\lesssim \D^{1/3}(\W+1)^{4/3},$
  \item and a duality argument that controls the growth of $\W$.
\end{itemize}
In particular, since on the torus $E(w_c)$ is independent of the shift $c$, the differential equality \eqref{diff} improves to $\dot \E=-\D$ and
the Nash-type inequality  and an integration in time yield
   $$ \E\red{(T)} \lesssim \frac{\sup_{t\leq T} \W(t)^2+1}{\red{T}^{1/2}}, $$
so that controlling $\W$ is critical. The strategy for the two-parameter slow manifold of glued kink profiles is similar, but since the energy is not constant, we also need to control exponentially small error terms.

\subsection{Conventions and organization}
Our assumptions on the potential $G$ are as follows.
\begin{assumption}\label{ass:G}
	The double-well potential $G$
	is assumed to satisfy
	\begin{align*}
	\begin{aligned}
	&\bullet &&G\in C^2 \mbox{ is even,} && \\
	&\bullet &&G(\pm1) = 0\;\text{ and }\; G(u) > 0 \; \mbox{ for } u \neq \pm 1, \\
	&\bullet &&G'(u) \leq 0 \; \mbox{ for } u\in[0,1], && \\
	&\bullet &&G''(\pm1)  >0. &&
	\end{aligned}
	\end{align*}
\end{assumption}

\begin{notation} \label{n:lesssim}
	Throughout the paper we use the notation
	\begin{align*} A \lesssim B \end{align*}
for positive quantities $A$ and $B$ if there exists a universal constant $C\in(0,\infty)$ depending at most on the potential $G$ and $\epsilon$ from \eqref{e0bd}, such that
$L \geq L_0$ implies $A \leq CB$. If $A \lesssim B$ and $B \lesssim A$ we write $A \sim B$.
	
Moreover, we  write
\begin{align*} A \ll B \end{align*}
if for every $\eps >0$ there is a parameter value sufficiently large so that $A \leq \eps B$ as long as the parameter is at least that large. Typically the parameter is $L\geq L_0$. Exceptions are indicated.		
The notation
$A \approx B $
is used when $\abs{A-B} \ll 1$.

When there is a constant (typically in an exponential) that is universal (depending at most on $G$) but which we are not interested in tracking precisely, we denote this constant by $C$. Hence the value of $C$ may vary from line to line.
\end{notation}
\begin{notation}
  In specifying constants, we will always emphasize the ``worst case scenario'' in the sense that we write that there exists
  $
    \epsilon>0
  $
  to mean that there exists $\epsilon\in (0,\infty)$, when the important point to keep in mind is that $\epsilon$ is positive and analogously
  $
    L_0<\infty
  $
  to mean that there exists $L_0\in(0,\infty)$, when the important point to keep in mind is that $L_0$ is finite.
\end{notation}
\begin{notation}\label{not:zeros2}
  	For two sets of zeros $\mathbf{x}(t)=\{a(t),b(t)\}$, $\mathbf{x}(s)=\{a(s),b(s)\}$, we use $\abs{\mathbf{x}(t)-\mathbf{x}(s)}$ to denote the distance between the zeros:	
	\begin{align*}
	\abs{\mathbf{x}(t)-\mathbf{x}(s)}\coloneqq \max\left\{|a(t)-a(s)|,|b(t)-b(s)| \right\},
	\end{align*}
bearing in mind periodicity on the torus.
\end{notation}

\begin{notation}
  	We will occasionally consider integral functionals like $\E$ restricted to a subinterval $I$ of the original domain of definition. In this case we will denote this restriction via a subscript, e.g., by $\E_I$.
\end{notation}

\subsubsection*{Organization.}

The energy--energy--dissipation relationships that are central to our method are stated, together with elementary consequences, in Subsection~\ref{ss:eed}.
General statements that are important on both the line and  the torus are collected in Subsection~\ref{ss:prelim}. Problem 1 and  the proof of Theorem~\ref{t:torus} are contained in
Section~\ref{S:2}.
Section~\ref{S:1} tackles Problem 2 and the proofs of Theorems~\ref{t:mainresultuniversal} and \ref{t:mainresultubar}.
Remaining proofs are given in Section~\ref{S:aux} and the appendix, with the following rationale.

\red{The proofs of Lemmas~\ref{l:eedwt},~\ref{l:basiceedtorusw}, and~\ref{l:basiceed-}---as well as those of Lemmas \ref{l:nash}~--~\ref{l:dissy}---are relegated to the appendix, since they closely resemble proofs from \cites{OR,OW,OSW}. The proofs of Lemma~\ref{l:EDtorus}  and the lemmas from Subsection~\ref{ss:prelim} are not similar to previously published results and are included in Subsection~\ref{ss:prelimpf}. Subsection~\ref{ss:propproof} contains the proofs for the control of the excess mass.}

\section{Basic estimates}\label{s:mainresults}
We begin with some preliminary estimates that are central to our work. Unless otherwise specified, we will work on \emph{either} the real line or the torus with sidelength $L$. We will use $I_-$ to denote $[-L,0]$ or $(-\infty,0]$, and $I_+:=-I_-$.
Integrals appearing below without an explicit domain of integration are understood to be over the full domain.

\subsection{EED and elementary consequences} \label{ss:eed}

On both the real line and the torus, we will track the solution initially in time using EED with respect to a  glued kink profile $\wt$ (cf. Subsection~\ref{ss:slowmfld}).

\begin{lemma}[Energy gap and dissipation estimates compared to $\wt$] \label{l:eedwt}
There exists $C<\infty$ with the following property.	For every $W_0 < \infty$ and $\epsilon >0$, there exists  $L_0<\infty$ such that for any $L \geq L_0$, one has the following. Consider any smooth function $u$  such that
\begin{align*}
E(u)&\leq 4\co-\epsilon
\end{align*}
and such that there is an $L^2$-closest glued kink profile $\wt \in \N(0,L)$ and
\begin{align}
  \int\abs{u-\wt}\,\dx\leq W_0. \label{EEDassumptwt}
\end{align}
Then there holds
	\begin{align}
		\int \ft^2+\ft_x^2\,\dx &\lesssim \abs{\Et} + \exp \left( -L/C\right) \label{eq:Eerrorlow}\\
		\abs{\Et} &\lesssim \int \ft^2+\ft_x^2\,\dx +  \exp \left( -L/C\right), \label{eq:Eerrorup} \\
		\mbox{and} \qquad \int \ft_x^2+\ft_{xx}^2+\ft_{xxx}^2\,\dx & \lesssim  \D + \exp \left( -L/C\right). \label{eq:Derror}
	\end{align}
\end{lemma}
On the torus, the Hardy inequality makes it possible to estimate $\Et$ by $L^2D$ up to an exponential error.
\begin{lemma}[Energy-energy-dissipation estimate on the torus]\label{l:EDtorus}
There exists $C<\infty$ with the following property.	For every $W_0 < \infty$ and $\epsilon >0$, there exists  $L_0<\infty$ such that the following holds true. Let $u$ be a smooth function on the torus with sidelength $L\geq L_0$ satisfying
\begin{align*}
E(u)&\leq 4\co-\epsilon
\end{align*}
and such that there exists an $L^2$-closest glued kink profile $\wt \in \N(L)$ with
\begin{align*}
  \int\abs{u-\wt}\,\dx\leq W_0. 
\end{align*}
Then there holds
\begin{align*}
\Et\lesssim L^2 D+\exp(-L/C).
\end{align*}
\end{lemma}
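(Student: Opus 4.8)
The plan is to upgrade the basic energy-gap estimate \eqref{eq:Eerrorlow} of Lemma~\ref{l:eedwt}, which controls $\Et$ by $\int \ft^2 + \ft_x^2\,\dx$ (up to an exponential error), to an estimate in terms of $L^2 D$ by inserting a Hardy-type inequality that trades the zeroth-order term $\int \ft^2\,\dx$ against $L^2 \int \ft_x^2\,\dx$, and then controlling $\int \ft_x^2\,\dx$ by $D$ via the dissipation estimate already available in Lemma~\ref{l:eedwt}. Concretely, from \eqref{eq:Eerrorlow} it suffices to show
\begin{align}
  \int \ft^2\,\dx \lesssim L^2 \int \ft_x^2\,\dx + \exp(-L/C), \label{hardyplan}
\end{align}
since then $\int \ft^2 + \ft_x^2\,\dx \lesssim (L^2{+}1)\int \ft_x^2\,\dx + \exp(-L/C) \lesssim L^2 D + \exp(-L/C)$ using \eqref{eq:halflineD} (or \eqref{eq:Derror}) and $L \geq L_0 \gg 1$.

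The main work is establishing \eqref{hardyplan}. First I would recall why a naive Poincaré inequality is not available: $\ft$ has no sign constraint and no zero-mean constraint on the torus, so $\int \ft^2$ cannot in general be bounded by $L^2 \int \ft_x^2$. The point is that $\wt$ is the $L^2$-closest glued kink profile, so the Euler--Lagrange relations \eqref{elg} hold, namely $\int_{I_-}\ft\, v_{\at x}\,\dx = 0$ and $\int_{I_+}\ft\, v_{\bt x}\,\dx = 0$ (up to the exponentially small discrepancy between $\wt$ and the kinks near the gluing points). Since $v_{\at x}$ and $v_{\bt x}$ are, up to exponentially small corrections, the normalized ground states of the linearized operators localized near the two transition layers, orthogonality to them removes the obstruction to a spectral-gap estimate. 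I would therefore split the torus into the two half-period intervals, on each of which $\ft$ is $L^2$-orthogonal to the (approximate) kernel direction, and invoke the weighted Hardy inequality: away from the transition layers $v$ is exponentially close to $\pm 1$, so $v_x$ (and hence the ground-state weight) decays like $\exp(-x/C)$, and the standard Hardy inequality on a half-line with such a weight gives control of $\int \ft^2$ over the "bulk" region (where $\ft$ is effectively a deviation from the constant $\mp 1$) by $\int \ft_x^2$ with a constant that is $O(1)$, while near the transition layers one picks up the factor $L^2$ because the two layers are a distance of order $L$ apart and that is the length scale over which the zero mode varies. This is the same mechanism as in \cite[Lemma~2.13 / the spectral estimates]{SW} and in \cite{OSW}, adapted from the kink (one layer) to the bump (two layers).

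The step I expect to be the main obstacle is making the localization-and-gluing bookkeeping precise: one must (i) pass from the exact orthogonality \eqref{elg} on the half-lines $I_\pm$ to orthogonality against the genuine eigenfunctions of the relevant one-layer linearized operators, absorbing the mismatch into the $\exp(-L/C)$ error using the defining exponential-smallness properties of $\N(L)$; (ii) handle the cut-off near the two gluing points $q$ and $q+L$, where $\ft$ and its derivatives are only controlled in $L^2((q-1,q+1))$ by the exponentially small bounds in the definition of $\N(L)$, so that the contribution of these windows to $\int \ft^2$ is itself $\exp(-L/C)$ and can be discarded; and (iii) track the dependence of the Hardy constant on $L$ carefully enough to see that the worst region contributes exactly the advertised $L^2$ and not, say, $e^{L/C}$ or $L$. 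Once \eqref{hardyplan} is in hand, combining it with \eqref{eq:Eerrorlow} and \eqref{eq:halflineD} and enlarging $L_0$ so that $L^2 \geq 1$ and the constants line up finishes the proof.
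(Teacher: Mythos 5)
Your proposal is correct and is essentially the paper's proof: the paper establishes exactly your key inequality $\int\ft^2\,\dx\lesssim L^2\int\ft_x^2\,\dx+\exp(-L/C)$ by applying the Hardy-type inequality of Lemma~\ref{l:hardy} (whose hypothesis is precisely the orthogonality \eqref{elg}) to $u-v_{\tilde{a}}$ on $[-L,0]$ and to $u+v_{\tilde{b}}$ on $[0,L]$, using that the weight $1/(1+(x-\tilde{a})^2)$ is bounded below by $1/(1+L^2)$ there, and then concludes via the EED estimates of Lemma~\ref{l:eedwt}. Two minor corrections: the energy-gap bound needed for the final step is the upper bound \eqref{eq:Eerrorup} rather than \eqref{eq:Eerrorlow}, and the factor $L^2$ is incurred in the bulk, at distance of order $L$ from the transition layers where the Hardy weight degenerates, not near the layers themselves.
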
	

On the torus, once the energy gap $\Et$ has become small enough, we change perspective and compare $u$ to the  $L^2$-closest shifted bump $w_c$. This is made possible by the following lemma.
\begin{lemma}[Energy gap and dissipation estimates compared to $w_c$] \label{l:basiceedtorusw}
For every $W_0 < \infty$ and $\epsilon >0$, there exists  $L_0<\infty$ and $\eps>0$ such that the following holds true.  Let  $u$ be a smooth function on the torus with sidelength $L\geq L_0$ satisfying
\begin{align} \label{EEDwcA}
E(u)&\leq 4\co-\epsilon,\qquad
  \int_{[-L,L]}\abs{u-w_c}\,\dx\leq W_0, \qquad\text{and}\qquad \E \leq \frac{\eps}{L}.
\end{align}
Then there holds
	\begin{eqnarray}
	\int_{[-\Lam,\Lam]} f_c^2 + f_{cx}^2 \, \dx & \lesssim & \E \;\; \lesssim \int_{[-\Lam,\Lam]} f_c^2 + f_{cx}^2 \; \dx \label{eq:energyestimatewctorus} \\
	\mbox{and} \quad \quad \quad \int_{[-\Lam,\Lam]} f_{cx}^2 + f_{cxx}^2+f_{cxxx}^2 \, \dx & \lesssim & \D. \label{eq:dissipationestimatewctorus}
	\end{eqnarray}
\end{lemma}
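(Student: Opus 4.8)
The plan is to follow the ``energy--energy--dissipation near a slow manifold'' scheme of \cite{OW,OSW,SW}, exploiting that $w_c$ is a critical point of the energy under the mean constraint. Write $u=w_c+f_c$. Since both $u$ and $w_c$ satisfy the mean constraint, $\int f_c=0$, so expanding $E$ around $w_c$ and invoking the Euler--Lagrange equation~\eqref{el} (which makes $E'(w_c)=-w_{cxx}+G'(w_c)=\lambda$ constant) the linear term $\lambda\int f_c$ drops out, leaving
\[
  \E=\mathcal{Q}(f_c)+R,\qquad \mathcal{Q}(f):=\int f_x^2+G''(w_c)\,f^2\,\dx,
\]
with $R=\int\bigl[G(w_c+f_c)-G(w_c)-G'(w_c)f_c-\tfrac12 G''(w_c)f_c^2\bigr]\,\dx$ superquadratic in $f_c$. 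From $E(u)\le 4\co-\epsilon$ together with $\int\abs{u-w_c}\,\dx\le W_0$ one gets $\norm{u}_{L^\infty}\lesssim 1$ (Modica--Mortola trick, cf.\ Remark~\ref{rem:boundedness}, or a direct torus argument), hence $\norm{f_c}_{L^\infty}\lesssim 1$; this already yields the easy bound $\E\lesssim\int f_c^2+f_{cx}^2$ via $\abs R\lesssim\int f_c^2$ and $\mathcal{Q}(f_c)\lesssim\int f_c^2+f_{cx}^2$.

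\textbf{Coercivity of the second variation --- the main obstacle.} The lower bound $\int f_c^2+f_{cx}^2\lesssim\E$ amounts to coercivity of $\mathcal{Q}$, i.e.\ of the Schr\"odinger operator $\mathcal{L}=-\partial_{xx}+G''(w_c)$, on the space cut out by the constraints. Apart from the exact zero mode $w_{cx}$ (translation), $\mathcal{L}$ carries one further near-zero ``breathing'' eigenmode $\psi_b$ with eigenvalue of magnitude $\le\exp(-L/C)$ --- the two transition layers of $w_c$ interact only exponentially weakly across both gaps of the torus --- while the rest of the spectrum sits above a fixed positive threshold. The $L^2$-projection identity~\eqref{elL2} (so $\int f_c w_{cx}=0$) removes the translation direction. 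The breathing mode has mean of order one (displacing the two layers symmetrically changes the enclosed area, hence the mean), so the mean constraint $\int f_c=0$ --- used together with the $L^1$ bound, which controls $\abs{\langle f_c,\psi_b\rangle}\lesssim W_0$, and the smallness $\E\le\eps/L$ --- forces the $\psi_b$-component of $f_c$ to be negligible; on what is left, $\mathcal{Q}$ enjoys an order-one spectral gap. Absorbing the exponentially small errors, the superquadratic remainder $R$ (small because $\norm{f_c}_{L^\infty}$ is small in this regime), and the $\eps$-small contributions into the left-hand side then gives $\E\sim\int f_c^2+f_{cx}^2$. I expect this step to be the crux: one must track carefully how the translation and mean constraints interact with the two near-zero modes so that the coercivity constant stays uniform in $L$, and it is precisely here that the hypothesis $\E\le\eps/L$ (with $\eps$ chosen after $W_0$ and $\epsilon$) is needed.

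\textbf{Dissipation estimate.} Since $w_c$ solves~\eqref{el}, $(w_{cxx}-G'(w_c))_x\equiv 0$; hence, with the chemical potential $\mu:=u_{xx}-G'(u)$,
\[
  \mu_x=f_{cxxx}-\bigl(G'(u)-G'(w_c)\bigr)_x=f_{cxxx}-G''(u)f_{cx}-\bigl(G''(u)-G''(w_c)\bigr)w_{cx}.
\]
Using $\abs{G''(u)}\lesssim 1$, $\abs{G''(u)-G''(w_c)}\lesssim\abs{f_c}$, and the exponential localization of $w_{cx}$ at the two layers, this reduces matters to the key bound $\int f_{cx}^2\lesssim\D$; the remaining second- and third-derivative bounds in~\eqref{eq:dissipationestimatewctorus} then follow by interpolation ($\int f_{cxx}^2\le\norm{f_{cx}}_{L^2}\norm{f_{cxxx}}_{L^2}$). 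The bound $\int f_{cx}^2\lesssim\D$ itself is proved --- as in the proof of Lemma~\ref{l:EDtorus} --- by testing the identity $f_{cxx}-(G'(u)-G'(w_c))=\mu+\lambda$ against suitable multipliers and using a Hardy-type inequality on the regions away from the layers, where $G''(w_c)\sim G''(\pm 1)>0$.
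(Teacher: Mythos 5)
Your overall architecture matches the paper's: the energy-gap identity with the linear term killed by the mean constraint and the Euler--Lagrange equation \eqref{el}, a coercivity estimate for $-\partial_{xx}+G''(w_c)$ on the constrained subspace, and for the dissipation the identity $(w_{cxx}-G'(w_c))_x\equiv 0$ combined with Taylor expansion and a Hardy-type inequality. You also correctly locate the difficulty in the interaction of the two constraints with the translation and breathing modes. However, there are two genuine gaps, precisely at the points you flag as the crux.

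First, the $L^\infty$-smallness of $f_c$. You prove only $\norm{f_c}_{L^\infty}\lesssim 1$ from the energy bound, and later invoke ``$\norm{f_c}_{L^\infty}$ is small in this regime'' to absorb the superquadratic remainder $R$; but this smallness is not free and cannot be obtained from $\norm{f_c}_{L^\infty}^2\lesssim\norm{f_c}_{L^2}\norm{f_{cx}}_{L^2}$ without already knowing the coercivity you are trying to prove. The paper devotes a full step to exactly this: starting from $\E\le\eps/L$ it passes to the \emph{two}-parameter manifold (whose EED estimates, Lemma~\ref{l:eedwt}, hold without any smallness of the energy gap), obtains $\norm{u-\wt}_{L^2}^2\lesssim \eps/L$, and then exploits the mean constraint via Cauchy--Schwarz over the length-$2L$ torus, $\abs{\ell-\tilde\ell}\sim\abs{\int(\wt-w_d)\,\dx}=\abs{\int(\wt-u)\,\dx}\le (2L)^{1/2}\norm{\ft}_{L^2}\ll1$, to conclude that the inter-zero distance of $\wt$ agrees with that of the bump up to $o(1)$; the orthogonality \eqref{elL2} then pins down the translation and yields $\norm{u-w_c}_{L^\infty}\ll1$. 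This detour through $\N(L)$ is the genuinely new content of the lemma and is absent from your sketch.

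Second, your proposed control of the breathing mode does not close. The bound $\abs{\langle f_c,\psi_b\rangle}\lesssim W_0$ from the $L^1$ hypothesis yields at best $\int f_c^2\lesssim\E+W_0^2$, not $\int f_c^2\lesssim\E$. More importantly, the mean constraint by itself does \emph{not} render the $\psi_b$-component negligible: for $\phi$ a (cut-off) breathing profile with $\int\phi\,\dx\approx 4$, the function $f=1-\tfrac{2L}{\int\phi\,\dx}\,\phi$ satisfies $\int f\,\dx=0$ and $\int f\,w_{cx}\,\dx=0$, yet $\int f^2\,\dx\sim L^2$ while $\int f_x^2+G''(w_c)f^2\,\dx\sim L$ (the constant restoring the mean costs only order $1/L$ per unit squared amplitude of the localized piece). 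So a purely spectral argument on the subspace cut out by the two orthogonality conditions cannot produce an $L$-uniform coercivity constant; the hypotheses $\int\abs{f_c}\,\dx\le W_0$ and $\E\le\eps/L$ must be used \emph{structurally, before} the linearized estimate---exactly as in the paper's Step 1 above---to reduce to the regime where the breathing displacement is pointwise $o(1)$. The dissipation part of your proposal is essentially the paper's, modulo the same reliance on the unestablished $L^\infty$-smallness.
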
	

Notice for future reference that because of \eqref{diffcons2}, there holds
	\begin{align} \notag
	\E \leq \E_0 \lesssim 1 \quad \mbox{and} \quad \abs{\Et} \lesssim 1 \qquad\;\;\mbox{for all } t \geq 0,
	\end{align}
and hence, using the elementary estimate
\begin{align} \label{elementary}
\sup \abs{f} \lesssim \left(\int f^2\,\dx \right)^{1/4} \left( \int f_x^2\,\dx \right)^{1/4},
\end{align}
the conditions of Lemma~\ref{l:eedwt} lead to
	\begin{eqnarray}\label{eq:basiceedconsequence}
	\norm{\ft}_{L^{\infty}} &\lesssim & \min \left\{ \abs{\Et}^{1/2}, \abs{\Et}^{1/4} \D^{1/4} \right\} +  \exp \left( -L/C\right) \overset{\eqref{diffcons2}}{\lesssim} 1.  \notag\\
	\norm{ \ft_{xx}}_{L^{\infty}} &\lesssim& \D^{1/2} +  \exp \left( -L/C\right).	
	\end{eqnarray}
Under the conditions of Lemma~\ref{l:basiceedtorusw}, we have also
\begin{align}\label{eq:f_c_infty}
\norm{f_c}_{L^{\infty}([-L,L])} \lesssim \min \left\{\E^{1/2},\E^{1/4}\D^{1/4} \right\} \overset{\eqref{diffcons2}}{\lesssim} 1.
\end{align}

For the comparison to $\bar{u}\equiv -1$ on the line in Theorem~\ref{t:mainresultubar}, the estimates simplify significantly.
\begin{lemma}[Energy and dissipation estimates when comparing to $-1$] \label{l:basiceed-}
	Let $\epsilon >0$. Let $u$ be a smooth function satisfying
\begin{align*}
E(u)&\leq 2\co-\epsilon\qquad\text{and}\qquad \int_\R \abs{u+1}\,\dx <\infty.
\end{align*}
Then there holds
	\begin{eqnarray}
	\int_{\R} (u+1)^2 + u_x^2 \, \dx & \lesssim & E \;\; \lesssim \int_{\R} (u+1)^2 + u_x^2 \, \dx, \label{eq:energyestimate-} \\
	\int_{\R} u_x^2 + u_{xx}^2 +u_{xxx}^2 \, \dx & \lesssim & \D. \nonumber 
	\end{eqnarray}
\end{lemma}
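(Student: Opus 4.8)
The plan is to reduce both assertions to a uniform confinement of $u$: the hypotheses force the range of $u$ to lie in a fixed compact set $K\subset(-\infty,1)$ having $-1$ in its interior, with $\sup_{\R}u\leq 1-\delta_0$ for a universal $\delta_0>0$, and $u+1\in H^1(\R)$ with $u\to-1$ at $\pm\infty$. The last two facts, together with an $L^\infty$ bound, are exactly what Remark~\ref{rem:boundedness} records via the Modica--Mortola trick ($\tfrac12 u_x^2+G(u)\geq |\partial_x\Phi(u)|$ for $\Phi(r):=\int_{-1}^r\sqrt{2G(s)}\,ds$, so $\Phi\circ u$ is of bounded variation and is bounded because $u+1\in L^1$ forces $u$ near $-1$ on a set of infinite measure). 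The refinement $\sup u\leq 1-\delta_0$ is precisely where the threshold $E(u)\leq 2\co-\epsilon$, rather than merely $E(u)<\infty$, is used: an excursion of $u$ from near $-1$ up to $1-\delta$ and, since $u\to-1$ at $+\infty$, back down again, plus the energy needed to sit near $-1$ at both ends, costs at least $2\Phi(1-\delta)=2\co-o_\delta(1)$; choosing $\delta_0$ small, such an excursion would violate $E\leq 2\co-\epsilon$, so it cannot occur.

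\textbf{Energy estimates \eqref{eq:energyestimate-}.} On the compact range $K$ one has $G(-1)=G'(-1)=0$ and $G''(-1)>0$ (Assumption~\ref{ass:G}), so a second-order Taylor expansion about $-1$ together with compactness of $K\setminus\{|u+1|<\kappa\}$, on which $G(u)$ and $(u+1)^2$ are both pinched between positive constants, yields the two-sided pointwise comparison $c\,(u+1)^2\leq G(u)\leq C\,(u+1)^2$ with $c,C$ depending only on $G$ and $\epsilon$. Integrating, and using $\tfrac12\int u_x^2\leq E$ (since $G\geq 0$) for one inequality and $\tfrac12\int u_x^2\leq\tfrac12\int u_x^2$ for the other, gives \eqref{eq:energyestimate-}. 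In particular $E\leq 2\co\lesssim 1$, hence $\int(u+1)^2+u_x^2\lesssim 1$, i.e.\ $\|u+1\|_{H^1(\R)}\lesssim 1$.

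\textbf{Dissipation estimate.} Write $\mu:=u_{xx}-G'(u)$, so that $\D=\int\mu_x^2$ with $\mu_x=u_{xxx}-G''(u)u_x$. The argument is that of Lemma~\ref{l:eedwt} with the glued profile $\wt$ replaced by the constant $-1$; it is in fact simpler, since there are no gluing regions (hence no exponential errors) and the linearization $-\partial_x^2+G''(-1)$ at the stable well is coercive, with no translational zero mode to be removed by a projection. Concretely, setting $w:=u+1$ and $a:=G''(-1)>0$, one writes $\mu_x=(w_{xxx}-aw_x)-(G''(u)-a)w_x$; the constant-coefficient part satisfies, by Plancherel, $\int(w_{xxx}-aw_x)^2=\int\xi^2(\xi^2+a)^2|\hat w|^2\gtrsim\int w_x^2+w_{xx}^2+w_{xxx}^2$, while the lower-order remainder is absorbed by interpolation using the a priori bound $\|w\|_{H^1}\lesssim 1$ from the previous step. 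This yields $\int u_x^2+u_{xx}^2+u_{xxx}^2\lesssim\D$.

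\textbf{Main obstacle.} The only genuinely new point is the $L^\infty$-confinement with a \emph{universal} gap to $+1$, which must be made quantitative in terms of $G$ and $\epsilon$ alone, independent of $u$ and of the lengthscale; this is the step where the sharp energy threshold $2\co$ enters. Everything downstream is standard elliptic and Nash-type bookkeeping and is strictly easier than its counterpart Lemma~\ref{l:eedwt}, which already handles the harder version with gluing errors and a non-coercive linearization, so one may either repeat that argument in this simplified setting or appeal to it directly.
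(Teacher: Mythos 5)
Your treatment of the energy estimate \eqref{eq:energyestimate-} is correct and is essentially the paper's: the Modica--Mortola/threshold argument confines $u$ to a compact set with $\sup u\leq 1-\delta_0$, whence $(u+1)^2\sim G(u)$ pointwise and the two-sided bound follows. Your linear step for the dissipation (Plancherel coercivity of $\partial_x(\partial_x^2-G''(-1))$ acting on $w=u+1$) is also fine and is the Fourier-side version of the paper's integration-by-parts identity \eqref{eq:LinDiss}.

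There is, however, a genuine gap in how you close the dissipation estimate. The remainder you must control is $\int_\R\bigl(G''(u)-G''(-1)\bigr)^2u_x^2\,\dx\lesssim \norm{u+1}_{L^\infty}^2\int_\R u_x^2\,\dx$, and this is of \emph{exactly the same order} as the term $\int_\R u_x^2\,\dx$ you are trying to bound on the left. No interpolation starting from $\norm{u+1}_{H^1}\lesssim 1$ can make the prefactor small: your confinement argument only yields $\sup u\leq 1-\delta_0$, i.e.\ $\norm{u+1}_{L^\infty}\lesssim 1$, and under $E\leq 2\co-\epsilon$ the solution may well make an order-one excursion (say up to $u=0$), so $G''(u)-G''(-1)$ is genuinely order one and absorption fails. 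Any attempt to trade the remainder for quantities controlled by the energy instead lands you at $\D+O(1)$ on the right, which is useless when $\D\ll 1$. The missing ingredient is a case distinction on the size of $\D$: if $\D\gtrsim 1$ one simply uses $\int u_x^2\lesssim E\lesssim 1\lesssim\D$ and then upgrades to the higher derivatives; if $\D\ll 1$ one must first prove $\norm{u+1}_{L^\infty(\R)}\ll 1$, which does \emph{not} come from the energy threshold alone but from smallness of the discrepancy $\xi=\tfrac12u_x^2-G(u)$ in $L^\infty$ (controlled by $\D^{1/6}$ as in \eqref{smallnessxiR}), evaluated at the interior extrema of $u$ where $u_x=0$, combined with the energy bound $E\leq 2\co-\epsilon$ to exclude the value $+1$ there. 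Only with this dissipation-dependent $L^\infty$ smallness can the nonlinear remainder be absorbed.
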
	

We prove Lemma \ref{l:EDtorus} in Subsection~\ref{ss:prelimpf}. We relegate the proofs of the other lemmas (Lemma~\ref{l:eedwt}, Lemma~\ref{l:basiceed-}, and Lemma~\ref{l:basiceedtorusw}) to the appendix since they follow by adapting the proofs from \cite{OR,OW}.
	
\subsection{Additional general results}\label{ss:prelim}	

We list here some additional results that are independent of the domain and hence apply to both Problems 1 and 2. The proofs are deferred to Subsection~\ref{ss:prelimpf} and the appendix.

We begin with the following remark about the slow manifolds.
\begin{remark} \label{r:expt}
The energy of functions in the slow manifolds is exponentially close to that of two kinks:
For $\wt\in\N(L)$ and $w\in\N_1(L)$, there holds:
\begin{align*}
  \abs{E(\tilde{w})-2e_*}+\abs{E(w)-2e_*}\lesssim \exp(-L/C).
\end{align*}
We will often use this fact to estimate energy differences by exponentially small terms. These estimates are well-known and versions of them can be found for instance in \cites{CGS,CP,OR}.
\end{remark}

Within the proofs of the main theorems, it will be necessary to show that $u$ has (at least) two well-separated zeros and an $L^2$ closest glued kink profile $\wt\in \N(L)$.
\begin{lemma}[$L^1$ closeness to the slow manifolds] \label{l:u2zeros}
	For every $W_0 < \infty$, there exists  $L_0<\infty$ such that for  $L \geq L_0$, the following holds true. Let $\wt_0 \in \mathcal{N}(0, 2 L)$ denote a glued kink profile with zeros $\tilde{\mathbf{x}}_0=\{\tilde{a}_0,\tilde{b}_0\}$. Let $u$ be a smooth function such that $E(u)\leq 4e_*-\epsilon$ and
	\begin{align}
	\int \abs{u-\wt_0}\,\dx \leq W_0.\label{onew}
	\end{align}
	Then $u$ has (at least) two zeros $\mathbf{x} =  \{ a, b \}$ such that $a < b$ are well-separated  and  not far from the zeros of $\wt_0$:
	\begin{align*}
	\abs{a}, \abs{b} \gtrsim L, \quad  \quad  \abs{b - a} \gtrsim L,\qquad \abs{\mathbf{x}-\tilde{\mathbf{x}}_0} \lesssim W_0 + W_0^{1/2}.
	\end{align*}
	In addition, there exists an $L^2$-closest glued kink profile $\wt\in \mathcal{N}(0,L)$ with zeros $\tilde{\mathbf{x}}=\{\tilde{a}, \tilde{b}\}$ and such that
	\begin{align*}
		\int \abs{u-\wt}^2\,\dx +\int\abs{u-\wt}\,dx\lesssim W_0+1, \qquad
	\abs{\mathbf{x}-\tilde{\mathbf{x}}} \lesssim W_0 + 1.
	\end{align*}
On the torus, we obtain analogous bounds if $\wt_0$ is replaced by an element of $\N_1(L)$ or if $\wt$ is replaced by $w_c$. For instance, if \eqref{onew} holds with respect to some $w_d$ with zeros $\mathbf{x}_d$, then
\begin{align*}
  \int\abs{u-w_d}^2\,dx+\int\abs{u-w_d}\,dx\lesssim W_0+1,\qquad \abs{\mathbf{x}-\mathbf{x}_d}\lesssim W_0+1.
\end{align*}
\end{lemma}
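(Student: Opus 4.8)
The plan is to first produce two well‑separated zeros of $u$ from the $L^1$ bound~\eqref{onew} alone, and then to obtain the $L^2$-closest glued kink profile $\wt\in\N(0,L)$ by a direct minimization, the point being to check that the minimizing translations stay in the interior of the admissible parameter range so that~\eqref{elg} holds.

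\emph{Zeros of $u$.} Recall that $\wt_0\in\N(0,2L)$ agrees, up to an error $\exp(-L/C)$, with an increasing kink $v_{\tilde a_0}$ on the left and a decreasing kink $-v_{\tilde b_0}$ on the right, where $\tilde a_0\le-L$, $\tilde b_0\ge L$, $\tilde b_0-\tilde a_0\ge2L$. Fixing $s_0$ with $v(s_0)=\frac12$, the profile $\wt_0$ is $\le-\frac12$ just to the left of $\tilde a_0-s_0$ and $\ge\frac12$ just to the right of $\tilde a_0+s_0$; were $u$ to keep a constant sign along a long such interval, the $L^1$ discrepancy would be at least half its length, so~\eqref{onew} supplies points $x_-<\tilde a_0<x_+$ within $O(W_0+1)$ of $\tilde a_0$ with $u(x_-)<0<u(x_+)$. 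Continuity of $u$ (from $E(u)<\infty$) then gives a zero $a\in(x_-,x_+)$. To sharpen its position, use that $\wt_0$ vanishes \emph{linearly} at $\tilde a_0$ (slope $v'(0)>0$): on a segment of length $\sim\abs{a-\tilde a_0}$ joining $\tilde a_0$ to a nearest zero $a$ of $u$, the functions $u$ and $\wt_0$ are of opposite sign, which forces $\int\abs{u-\wt_0}\gtrsim\abs{a-\tilde a_0}^2$ for $\abs{a-\tilde a_0}\lesssim1$ and $\gtrsim\abs{a-\tilde a_0}$ otherwise, whence $\abs{a-\tilde a_0}\lesssim W_0+W_0^{1/2}$. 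The symmetric argument at $\tilde b_0$ yields a second zero $b$ with $\abs{b-\tilde b_0}\lesssim W_0+W_0^{1/2}$; since this is $\ll L$ once $L\ge L_0(W_0)$, the stated bounds $\abs a,\abs b\gtrsim L$, $\abs{b-a}\gtrsim L$, $\abs{\mathbf x-\tilde{\mathbf x}_0}\lesssim W_0+W_0^{1/2}$ follow from $\abs{\tilde a_0},\abs{\tilde b_0}\ge L$ and $\tilde b_0-\tilde a_0\ge2L$.

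\emph{The $L^2$-closest profile.} Minimize $\tilde a\mapsto\norm{u-v_{\tilde a}}_{L^2(I_-)}$ and $\tilde b\mapsto\norm{u+v_{\tilde b}}_{L^2(I_+)}$. Testing the first at $\tilde a=\tilde a_0$ and interpolating~\eqref{onew} against the bound $\norm{u-\wt_0}_{L^\infty}\lesssim1$ (which comes from $E(u)\le 4e_*$ and~\eqref{elementary}; cf.\ Remark~\ref{rem:boundedness}) bounds the infimum by $C(W_0+1)^{1/2}$. For $\tilde a$ with $\abs{\tilde a-\tilde a_0}$ large, $v_{\tilde a}$ and $v_{\tilde a_0}$ differ by $\approx2$ on a long interval interior to $I_-$, so $\norm{v_{\tilde a}-v_{\tilde a_0}}_{L^2(I_-)}^2\gtrsim\min\{\abs{\tilde a-\tilde a_0},\abs{\tilde a-\tilde a_0}^2\}$; and for $\tilde a$ outside the window $\tilde a\le-L/2$ the distance is $\gtrsim\sqrt L$ because $u\approx1$ on an interval of length $\gtrsim L$ in $I_-$ on which $v_{\tilde a}\approx-1$. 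These three facts confine the minimizer to $\{\abs{\tilde a-\tilde a_0}\lesssim W_0+1\}$, a compact set on which the continuous functional attains its minimum and which sits strictly inside $\{\tilde a\le-L/2\}$ for $L\ge L_0$; hence the minimizer is interior and~\eqref{elg} holds (similarly for $\tilde b$). Minimality gives $\int\abs{u-\wt}^2\lesssim W_0+1$, the triangle inequality through $\wt_0$ (with $\int_\R\abs{v_{\tilde a}-v_{\tilde a_0}}\le2\abs{\tilde a-\tilde a_0}$ and the gluing regions of $\wt,\wt_0$ being $\exp(-L/C)$-close in $L^1$) gives $\int\abs{u-\wt}\lesssim W_0+1$, and $\abs{\mathbf x-\tilde{\mathbf x}}\le\abs{\mathbf x-\tilde{\mathbf x}_0}+\abs{\tilde{\mathbf x}_0-\tilde{\mathbf x}}\lesssim W_0+1$. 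On the torus the argument is identical with $I_\pm$ the finite half-intervals, and the variants with $\wt_0\in\N_1(L)$ or with $\wt$ replaced by $w_c$ (or $w_d$) are handled the same way, now minimizing over the single translation parameter.

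The main obstacle is the last step --- showing the half-line minimization is attained in the interior of the admissible window, so that~\eqref{elg} is available. This hinges on the quantitative coercivity above, which in turn requires separating the small-error regime (kinks still overlapping, $L^2$ distance growing quadratically in the translation error) from the large-error regime (kinks essentially separated, distance linear in the error and then saturating at scale $\sqrt L$ once the translation exits the window). A minor auxiliary point is that, on the line, the $L^1$ bound on $u-\wt$ cannot be read off the $L^2$ bound on an unbounded domain and must instead be routed through $\wt_0$ and the $L^1$-Lipschitz dependence of $v_{\tilde a}$ on $\tilde a$.
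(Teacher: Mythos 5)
Your argument for the line (and for the conclusion relative to $\wt$) is essentially the paper's: the zeros of $u$ are located by the same sign/linear-vanishing estimate that forces $\int\abs{u-\wt_0}\gtrsim\min\{R,R^2\}$ if $u$ had no zero within $R$ of $\tilde a_0$ (the paper runs this by contradiction with a case split at the point where $\wt_0=\tfrac12$, you run it directly — cosmetic difference), and the $L^2$ bound for $\wt$ comes from minimality tested at $v_{\tilde a_0}$ together with the interpolation $\int f^2\lesssim\norm{f}_{L^\infty}\int\abs{f}$. You are in fact more careful than the paper on one point: the paper simply invokes ``the $L^2$-closest glued kink profile,'' whereas you verify via the coercivity $\norm{v_{\tilde a}-v_{\tilde a_0}}_{L^2(I_-)}^2\gtrsim\min\{\abs{\tilde a-\tilde a_0},\abs{\tilde a-\tilde a_0}^2\}$ that the minimizer exists, lies within $O(W_0+1)$ of $\tilde a_0$, and is interior, so that \eqref{elg} is legitimate. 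The resulting bounds on $\abs{\mathbf{x}-\tilde{\mathbf{x}}}$ and on $\int\abs{u-\wt}$ then follow by the same triangle inequalities in both proofs (the paper derives the zero bound first and the shift bound from it; you go the other way).

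The one genuine gap is the torus variant in which $\wt_0\in\N(0,2L)$ is compared to the one-parameter family $\N_1(L)$, i.e.\ the conclusion for $w_c$. This is \emph{not} ``handled the same way, now minimizing over the single translation parameter'': every bump $w_a$ has the \emph{same} distance $\ell$ between its zeros (fixed by the mean constraint), whereas the inter-zero distance $\tilde\ell_0$ of $\wt_0$ is a priori a free parameter of $\N(0,2L)$. To bound the infimum over shifts you must test against the bump $w_d$ sharing its left zero with $\wt_0$, and $\norm{\wt_0-w_d}_{L^2}$ is controlled only once you know $\abs{\ell-\tilde\ell_0}\lesssim W_0$; that step requires the conservation law $\int_{[-L,L]}(u-w_d)\,\dx=0$ combined with $\abs{\int_{[-L,L]}(\wt_0-u)\,\dx}\leq W_0$, since $\abs{\ell-\tilde\ell_0}\sim\abs{\int_{[-L,L]}(\wt_0-w_d)\,\dx}$. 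Without this mean-constraint argument your coercivity scheme does not close for that variant (the natural test bump could be far from $\wt_0$ in $L^2$), and this variant is actually used in Step 2 of the proof of Theorem~\ref{t:torus}. The remaining torus variants (conclusion relative to the same $w_d$ appearing in the hypothesis, or relative to $\wt$) do go through as you describe, modulo the bookkeeping of staying away from both midpoints of the periodic domain.
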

\begin{remark}
	Although the solution $u$ of \eqref{ch} may (initially) have \uline{additional} zeros, we will regularly refer to the zeros identified in Lemma~\ref{l:u2zeros} as \uline{the zeros} $\mathbf{x}=\{a,b\}$ of $u$ and we define
\begin{align*}
  \Delt(t) \coloneqq \sup_{0 \leq t' \leq t} \, \abs{\mathbf{x}(t')-\mathbf{x}(0)}.
\end{align*}
\end{remark}

Next, we introduce two tools to control the motion of the zeros: a rough, a priori bound (which will allow us to carry out our duality argument) and a finer bound, which will be useful once we have established smallness of the energy gap.
The proofs are identical for the line and the torus.
We remind the reader of Notation~\ref{not:zeros}.
\begin{lemma}[Rough control of the zeros] \label{l:motionofzeros} 	
	For every $W_0 < \infty$ and $\epsilon >0$, there exists  $L_0<\infty$
	such that for $L \geq L_0$, the following holds true. Suppose that $u$ is a smooth solution of the Cahn-Hilliard equation satisfying \eqref{diffcons2} and that 
	there exists $\wt_0\in\N(L)$ such that for each $t\in[0,T]$ there holds
$\int\abs{u-\tilde{w}_0}\,\dx\leq W_0$.
 Then for  all $0\leq t\leq T$,
	the motion of the zeros of $\wt$  is controlled via
\begin{align} \label{eq:shiftcontrolini}
	\abs{\tilde{\mathbf{x}}(T)-\tilde{\mathbf{x}}(t)} \lesssim
\left(\max_{[t,T]}\Et^{1/2}\right)
(T-t)^{1/4}+1.
	\end{align}
\end{lemma}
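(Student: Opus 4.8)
The plan is to use the second inequality in the gradient-flow structure \eqref{diff}, namely that the $\dot H^{-1}$ displacement of $u$ is controlled by the time integral of $\D^{1/2}$, together with the fact that a shift of a glued kink profile produces an $\dot H^{-1}$ change that is \emph{coercive} in the shift (at least as long as the shift stays order $L$). First I would fix $t\le T$ with $T-t\lesssim L^2$ and write, for the two relevant times, $u(T)-u(t)=\bigl(\wt(T)-\wt(t)\bigr)+\bigl(\ft(T)-\ft(t)\bigr)$. By the triangle inequality in $\dot H^{-1}$,
\begin{align*}
  \norm{\wt(T)-\wt(t)}_{\dot H^{-1}}\leq \norm{u(T)-u(t)}_{\dot H^{-1}}+\norm{\ft(T)}_{\dot H^{-1}}+\norm{\ft(t)}_{\dot H^{-1}}.
\end{align*}
The first term is bounded by $\int_t^T\D^{1/2}\,d\tau\leq (T-t)^{1/2}\bigl(\int_t^T\D\,d\tau\bigr)^{1/2}$ via \eqref{diff} and Cauchy--Schwarz; since $\dot E=-\D$ (or $\dot E=-\D$ up to exponentially small terms when $E(\wt)$ drifts, controlled by Remark~\ref{r:expt}) and $E$ is bounded by \eqref{diffcons2}, $\int_t^T\D\,d\tau\lesssim \max_{[t,T]}\Et+\exp(-L/C)\lesssim \max_{[t,T]}\Et + 1$, which with $T-t\lesssim L^2$ gives a contribution $\lesssim L\,\max_{[t,T]}\Et^{1/2}+L$. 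The error terms $\norm{\ft(\cdot)}_{\dot H^{-1}}$ I would control by interpolating: $\norm{\ft}_{\dot H^{-1}}^2\lesssim \norm{\ft}_{L^1}\norm{\ft}_{\dot H^{-1}}$... more robustly, use $\norm{\ft}_{\dot H^{-1}}\lesssim \norm{\ft}_{L^1}^{1/2}\norm{\ft}_{L^2}^{1/2}$-type bounds or simply $\norm{\ft}_{\dot H^{-1}}\lesssim \norm{\ft}_{L^1}$ on each kink's support after localizing; since $\norm{\ft}_{L^1}\leq W_0$ by hypothesis and $\norm{\ft}_{L^2}^2\lesssim \abs{\Et}+\exp(-L/C)\lesssim 1$ by Lemma~\ref{l:eedwt}, these contribute at most $O(W_0+1)$.

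For the lower bound, the key geometric computation is that shifting a glued kink profile changes it in $\dot H^{-1}$ proportionally to the shift. Concretely, for $\wt\in\N(L)$ with zeros at roughly $\at,\bt$, moving the left zero by $\delta_a$ and the right zero by $\delta_b$ changes $\wt$ by (to leading order) $\delta_a v_{\at x}+\delta_b(-v_{\bt})_x=\delta_a v_{\at x}-\delta_b v_{\bt x}$ near the two transition layers. Since $v_x$ is a fixed, exponentially localized bump with a definite $\dot H^{-1}$ norm, and the two layers are a distance $\gtrsim L$ apart so their contributions are essentially orthogonal, one gets
\begin{align*}
  \norm{\wt(T)-\wt(t)}_{\dot H^{-1}}\gtrsim \abs{\at(T)-\at(t)}+\abs{\bt(T)-\bt(t)}
\end{align*}
provided the cumulative shifts remain, say, $\le L/C$; I would set this up as a continuity/bootstrap argument in $T$ so that the a priori smallness of the shift relative to $L$ is available exactly when needed (and if the shift ever tries to exceed $L/C$, the estimate we are proving already forbids it for $T-t\lesssim L^2$ once $L_0$ is large). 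Combining the upper and lower bounds yields
\begin{align*}
  \abs{\tilde{\mathbf{x}}(T)-\tilde{\mathbf{x}}(t)}\lesssim L\,\max_{[t,T]}\Et^{1/2}+W_0+1\lesssim \max_{[t,T]}\Et^{1/2}(T-t)^{1/4}\cdot\frac{L}{(T-t)^{1/4}}+\cdots
\end{align*}
which is \emph{not} quite \eqref{eq:shiftcontrolini}; so I would need to be more careful and not throw away the $(T-t)^{1/4}$ structure. The sharper route is: $\int_t^T\D^{1/2}\,d\tau\leq (T-t)^{1/4}\bigl(\int_t^T\D^{2/3}\,d\tau\bigr)^{...}$ — rather, use Hölder with exponents matching the Nash-type inequality $\Et\lesssim \D^{1/3}(\Wt+1)^{4/3}$ so that $\D\lesssim \Et^3/(\Wt+1)^4$ and hence $\D^{1/2}\lesssim \Et^{3/2}/(\Wt+1)^2$; but we also have $\D\leq -\dot\Et$(+exp), so $\int_t^T\D^{1/2}\,d\tau\leq(T-t)^{1/2}(\int_t^T\D\,d\tau)^{1/2}\lesssim (T-t)^{1/2}\max_{[t,T]}\Et^{1/2}$, and it is the constraint $T-t\lesssim L^2$ that converts $(T-t)^{1/2}$ into $(T-t)^{1/4}(T-t)^{1/4}\lesssim (T-t)^{1/4}L^{1/2}$... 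I will instead simply keep $(T-t)^{1/2}\max\Et^{1/2}$ and observe $(T-t)^{1/2}\lesssim L\lesssim L_0^{-1}\cdot L\cdot$, i.e. absorb: since $(T-t)\lesssim L^2$ we have $(T-t)^{1/2}\lesssim (T-t)^{1/4}(T-t)^{1/4}$ and $(T-t)^{1/4}\lesssim L^{1/2}$, giving $\lesssim L^{1/2}(T-t)^{1/4}\max\Et^{1/2}$, and finally absorb the $L^{1/2}$ by noting that in the regime of interest $\Et\lesssim (W_0^2+1)/L$ so $L^{1/2}\Et^{1/2}\lesssim W_0+1$. The cleanest statement thus uses both the a priori bound $\Et\lesssim 1$ and, where available, the refined $\Et\lesssim (W_0^2+1)/L$, and I would present it so that the ``+1'' on the right of \eqref{eq:shiftcontrolini} absorbs all the $W_0$-dependent and exponentially small leftovers while the genuine dynamic part is $\max_{[t,T]}\Et^{1/2}(T-t)^{1/4}$.

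The main obstacle I anticipate is the \emph{lower bound / coercivity step}: making rigorous that an $\dot H^{-1}$-small change of $u$ forces a proportionally small change of the \emph{projection parameters} $\at,\bt$, uniformly over the (non-unique, possibly discontinuous) choice of closest glued kink profile. This requires (i) a quantitative non-degeneracy estimate $\norm{\wt_{\mathbf{x}}-\wt_{\mathbf{y}}}_{\dot H^{-1}}\gtrsim\abs{\mathbf{x}-\mathbf{y}}$ valid on the relevant range of shifts, exploiting that $v_x$ has a fixed profile and the two layers are $\gtrsim L$ apart (so cross terms are $\exp(-L/C)$), and (ii) a bootstrap in time ensuring the total shift never leaves that range on $[t,T]$ with $T-t\lesssim L^2$, which is where the largeness of $L_0$ and the boundedness \eqref{diffcons2} are used. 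The handling of the gluing region near $q$ (where $\wt$ is only defined by interpolation) and of possible extra zeros of $u$ is routine given Lemma~\ref{l:u2zeros} and the exponential smallness estimates in the definition of $\N(L)$, so I would relegate those technicalities to a subsequent section as the paper's organization suggests.
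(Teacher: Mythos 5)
There is a genuine gap, and you have in fact located it yourself: the global-in-space duality argument only yields $\norm{u(T)-u(t)}_{\Hpkt}\leq\int_t^T\D^{1/2}\,d\tau\lesssim(T-t)^{1/2}\max_{[t,T]}\Et^{1/2}$, and your attempts to convert $(T-t)^{1/2}$ into $(T-t)^{1/4}$ all leave a stray factor $L^{1/2}$ that you propose to absorb using $\Et\lesssim (W_0^2+1)/L$. That bound is not a hypothesis of the lemma, and the lemma is invoked in the proofs of Theorems~\ref{t:torus} and~\ref{t:mainresultuniversal} precisely in the initial regime where $\Et\sim 1$, so this patch is not available. A second, independent problem is your treatment of the error terms $\norm{\ft}_{\Hpkt}$: neither $\norm{\ft}_{\Hpkt}\lesssim\norm{\ft}_{L^1}$ nor any interpolation with $\norm{\ft}_{L^2}$ controls the $\Hpkt$ norm by $W_0+1$, because an order-one $L^1$ disturbance located far from the bump has arbitrarily large $\Hpkt$ norm --- this is exactly the scenario of Figure~\ref{fig:2} that motivates the $L^1$ framework. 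Finally, the coercivity $\norm{\wt(T)-\wt(t)}_{\Hpkt}\gtrsim\abs{\tilde{\mathbf{x}}(T)-\tilde{\mathbf{x}}(t)}$ with an $L$-independent constant is not correct: if the two zeros move by different amounts the difference has nonzero mean and is not even in $\Hpkt(\R)$, and for a pure translation by $\delta$ the norm scales like $\delta L^{1/2}$, not $\delta$.

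The missing idea, which repairs all three issues at once, is to \emph{localize the duality argument in space}: test $\wt(T)-\wt(t)$ against a cut-off $\varphi$ equal to $1$ on an interval of width $\Lt$ around $\tilde a(T)$ and supported on twice that interval, with $1\ll\Lt\ll L$. Then $\abs{\tilde a(T)-\tilde a(t)}\lesssim\abs{\int(\wt(T)-\wt(t))\varphi\,\dx}+1$ because $\wt\approx\pm1$ away from its zeros (no $\Hpkt$ coercivity needed); the terms $\int\ft\varphi\,\dx$ are bounded by $\Lt^{1/2}\abs{\Et}^{1/2}$ via Cauchy--Schwarz on the support of $\varphi$ and the $L^2$ bound \eqref{eq:Eerrorlow} (no $\Hpkt$ norm of $\ft$ appears); and the dynamic term pairs against $\varphi_x$, costing only $\norm{\varphi_x}_{L^2}\lesssim\Lt^{-1/2}$, so that \eqref{diff} gives a contribution $\lesssim\max_{[t,T]}\Et^{1/2}\bigl((T-t)/\Lt\bigr)^{1/2}$. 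Optimizing $\Lt\sim(T-t)^{1/2}$, which is admissible precisely because $T-t\lesssim L^2$, produces the claimed rate $(T-t)^{1/4}\max_{[t,T]}\Et^{1/2}+1$. Your bootstrap to keep the shift $\ll L$ is indeed needed and is present in the paper's argument, but the localization is the essential step your proposal lacks.
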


\begin{lemma}[Fine control of the zeros]\label{l:fine}
There exist $\eps>0$, $C<\infty$ with the following property. For every $W_0 < \infty$, and $\epsilon >0$, there exists  $L_0<\infty$
such that for $L \geq L_0$, the following holds true. Suppose that $u$ is a smooth solution of the Cahn-Hilliard equation satisfying \eqref{diffcons2} and that on all of $[0,T]$,
there exists $\wt\in\N(L)$ such that
$\int\abs{u-\tilde{w}}\,\dx\leq W_0$. We assume in addition for Problems 1 and 2 that the zeros $a,\,b$ of the solution are absolutely continuous in time and $\Delta \mathbf{\tilde{x}}(T) \lesssim L^{1/2}$.  Then there holds
\begin{align} \label{eq:expwt}
  \Delt(T)+\Deltild(T)\lesssim \Wt_T+\frac{1}{L^{3/2}}\int_0^T \Et(t)^{1/2}\,dt,
\end{align}
for any $T\le \exp(L/C)$, and similarly for $\Wt_T$ and $\Et$ replaced by $\W_T$ and $\E$.
On the torus, if  $\E_0\leq \eps/L$, then the shift is controlled in terms of the energy gap for all $T>0$ via
\begin{align} \label{ccontrol}
	\Delta c(T) \lesssim  L^{1/2}\E(0)^{1/2} + \frac{1}{L^{3/2}}  \int_0^T\E(t)^{1/2}\,\dt.
\end{align}
\end{lemma}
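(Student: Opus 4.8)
The strategy is to exploit the orthogonality (Euler--Lagrange) conditions~\eqref{elg} and~\eqref{elL2}, which say that $u - \wt$ (resp. $u - w_c$) is $L^2$-orthogonal to the relevant translation modes, and combine these with the $\dot H^{-1}$-contraction estimate in~\eqref{diff}. The point is that the translation parameters $\at,\bt$ (resp. $c$) can be read off from a projection of $u$ against a fixed profile, so a change in these parameters over a time interval is controlled by the change in $u$ over that interval, measured in a weak norm. First I would fix the gluing point, reducing by translation to $\wt\in\N(0,L)$, and work on $I_-$ and $I_+$ separately, using the half-line orthogonality~\eqref{elg}. Differentiating the defining relation $\int_{I_-}(u-v_{\at})v_{\at x}\,\dx=0$ (formally) in time gives an ODE of the schematic form $\dot{\at}\bigl(\int v_{\at x}^2 - \int (u-v_{\at})v_{\at xx}\bigr) = \int u_t\,v_{\at x}$, where the bracketed prefactor is bounded below by a universal constant because $\int v_{\at x}^2 \gtrsim 1$ while $\int(u-v_{\at})v_{\at xx}$ is small once $\|u-v_{\at}\|$ is small (Lemma~\ref{l:u2zeros} and the $L^\infty$ bound~\eqref{eq:basiceedconsequence}). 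Then $\abs{\at(T)-\at(0)}$ is estimated by integrating $\abs{\int u_t v_{\at x}\,\dx}$ in time; since $v_{\at x}$ is a fixed exponentially-localized $H^1$ profile, one integrates by parts in $x$ to move two derivatives onto $u$ (writing $u_t = -(u_{xx}-G'(u))_{xx}$) and bounds the spatial integral by $\D^{1/2}$ times the $\dot H^1$ norm of a suitable test function built from $v_{\at x}$. This is exactly where the factor $L^{-3/2}$ enters: the localization lengthscale of $v_{\at x}$ near the zero, together with the fact that the zero sits at distance $\sim L$ from the gluing region and from the other zero, forces the relevant weighted norms to be small in $L$; the careful bookkeeping of these weights is the main obstacle and the place where one must be most careful.

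More precisely, for the $L$-dependence I expect to introduce a cutoff $\eta$ adapted to the lengthscale $L$ (equal to $1$ near $\at$, vanishing near the gluing point and near $\bt$), and to test the equation against $\eta^2 v_{\at x}$ or against a primitive thereof, so that the right-hand side becomes $\lesssim \D^{1/2}\,\norm{(\eta^2 v_{\at x})}_{\dot H^1} \lesssim \D^{1/2}/L^{3/2}$ — the $L^{-3/2}$ coming from one factor of $L^{-1/2}$ from each of two cutoff derivatives hitting the exponentially small tail, plus the exponential smallness of $v_{\at x}$ itself away from $\at$. Integrating in time and using the EED estimate $\D^{1/2}\lesssim \Et^{1/2}$ (more precisely, $\int\ft_x^2 + \ft_{xx}^2 + \ft_{xxx}^2\lesssim \D$ from~\eqref{eq:Derror} combined with $\int\ft^2+\ft_x^2\lesssim\abs{\Et}+\exp(-L/C)$ from~\eqref{eq:Eerrorlow}) yields the $\frac{1}{L^{3/2}}\int_0^T \Et^{1/2}\,dt$ term; the assumption $T\leq\exp(L/C)$ ensures the accumulated exponential errors $T\exp(-L/C)$ remain negligible, and the hypothesis $\Deltild(T)\lesssim L^{1/2}$ guarantees that the zeros and the cutoff regions do not collide over $[0,T]$, so the prefactor lower bound and the localization estimates remain valid throughout. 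The term $\Wt_T$ on the right-hand side of~\eqref{eq:expwt} absorbs the contribution of $\Delt$, i.e.\ the fact that the actual zeros of $u$ (as opposed to those of $\wt$) must be located via $u(\mathbf{x})=0$ and differ from $\tilde{\mathbf{x}}$ by at most $\norm{\ft}_{L^\infty}$ near the zeros, hence by $\lesssim\Wt_T$ after using the $L^1$ bound and interpolation; a Lipschitz/implicit-function argument at the (simple, by Remark~\ref{r:EDsmall}) zeros of $u$ turns the $L^\infty$ closeness of $u$ to $\wt$ into closeness of the zero sets, and absolute continuity of $a,b$ is used precisely to justify differentiating these relations in time.

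For the torus statement~\eqref{ccontrol}, the argument is the same but cleaner: one works with the single profile $w_c$ and the orthogonality~\eqref{elL2}, i.e.\ $\int_{[-L,L]} u\,w_{cx}\,\dx=0$; differentiating in time gives $\dot c\int w_{cx}^2 = \int u_t\,w_{cx}$ up to the term $\int(u-w_c)w_{cxx}$, which is controlled by $\norm{f_c}_{L^\infty}\lesssim\E^{1/4}\D^{1/4}$ via~\eqref{eq:f_c_infty} and is therefore subcritical once $\E\leq\eps/L$; the prefactor $\int w_{cx}^2$ is again bounded below uniformly. Testing $u_t$ against $w_{cx}$ and its primitive, using periodicity and the exponential localization of $w_{cx}$ near the two zeros of the bump (which are a distance $\sim L$ apart), yields $\abs{\dot c}\lesssim \D^{1/2}/L^{3/2}$, and integrating gives the claimed $\frac{1}{L^{3/2}}\int_0^T\E^{1/2}\,\dt$; the boundary term at $t=0$ produces the $L^{1/2}\E(0)^{1/2}$ contribution once one allows for a short initial interval in which the estimate on $\int(u-w_c)w_{cxx}$ is weaker, controlled by $\E(0)^{1/2}$ directly. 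I would present the half-line computation in full for Problem~2 and then remark that the torus case follows \emph{mutatis mutandis} with the single bump profile replacing the two half-line kinks.
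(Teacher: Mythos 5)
Your overall setup (weak formulation of the evolution for $u-v_{\at}$, a cutoff adapted to the lengthscale $L$, EED estimates, and the triangle inequality with Lemma~\ref{l:u2zeros} to pass from $\Deltild$ to $\Delt$) matches the paper, but the quantitative mechanism you propose for producing the factor $\frac{1}{L^{3/2}}\Et^{1/2}$ does not work. You bound $\abs{\dot{\at}}$ pointwise in time by $\abs{\int u_t\psi\,\dx}\lesssim \D^{1/2}\norm{\psi_x}_{L^2}$ for a test function $\psi$ normalized so that $\int\psi v_{\at x}\,\dx\gtrsim 1$, and claim $\norm{\psi_x}_{L^2}\lesssim L^{-3/2}$ for $\psi=\eta^2 v_{\at x}$ or a primitive. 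This is impossible: writing $1\lesssim\int\psi v_{\at x}\,\dx=-\int\psi_x(v_{\at}-c)\,\dx$ and noting $\norm{v_{\at}-c}_{L^2(I_-)}\gtrsim L^{1/2}$ for any constant $c$ (since $v_{\at}\approx 1$ on an interval of length $\sim L$), one gets $\norm{\psi_x}_{L^2}\gtrsim L^{-1/2}$ for every admissible test function. Even with the optimal choice you would only obtain $\abs{\dot{\at}}\lesssim \D^{1/2}/L^{1/2}$, and $\int_0^T\D^{1/2}\,\dt$ is neither controlled by the available integral dissipation bounds (Lemma~\ref{l:intdissipationbound} requires exponent $>2/3$) nor comparable to $\frac{1}{L^{3/2}}\int_0^T\Et^{1/2}\,\dt$ (by Lemma~\ref{l:EDtorus} one only has $\Et^{1/2}/L^{3/2}\lesssim\D^{1/2}/L^{1/2}$, i.e.\ your bound is the weaker one).

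The paper's proof avoids the dissipation entirely. It tests the identity $(u-v_{\at})_t-\at_t v_{\at x}-\bigl(-\ft_{xx}+G'(u)-G'(v_{\at})\bigr)_{xx}=0$ against a fixed cutoff $\varphi$ equal to $1$ on most of $I_-$ with $\norm{\varphi_{xx}}_{L^2}\lesssim L^{-3/2}$, and \emph{integrates in time first}: the $(u-v_{\at})_t$ contribution becomes two time-boundary terms bounded by $\Wt_T$ (this is where the $L^1$ quantity enters, not via an implicit-function argument at the zeros), while the elliptic term is integrated by parts twice in $x$ so that $\varphi_{xx}$ pairs in $L^2$ with $\ft$ (and, after two more integrations by parts, with $\ft_{xx}$), each bounded via \eqref{eq:Eerrorlow} by $\abs{\Et}^{1/2}$ up to exponential errors absorbed using $T\leq\exp(L/C)$. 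Thus $L^{-3/2}$ is the size of $\norm{\varphi_{xx}}_{L^2}$, and the partner factor is $\Et^{1/2}$, not $\D^{1/2}$. For \eqref{ccontrol}, the term $L^{1/2}\E(0)^{1/2}$ is simply the Cauchy--Schwarz bound on the same time-boundary terms $\bigl\vert\int(u-w_c)\varphi\,\dx\bigr\vert$ over a support of length $\sim L$, combined with Lemma~\ref{l:basiceedtorusw} and monotonicity of $\E$; it is not produced by a short initial time interval as you suggest. You would need to restructure your argument along these lines for the stated rates to come out.
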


The following Lemmas~\ref{l:nash}--\ref{l:intdissipationbound} appeared already as \cite[Lemmas 3.3--3.5]{OSW}, but for completeness of presentation, we include the (short) proofs in the appendix.
In these lemmas, we speak generally of quantities $\E$, $\D$, $\W$, and $f$; i.e., the reader should forget the earlier definitions and consider only the properties specified in the given lemma.

The first lemma is a Nash-type inequality \cite[Lemma 3.3]{OSW}, which bounds the energy gap in terms of the excess mass and the dissipation. It is similar to the nonlinear estimates used as a first step by Nash in his classical derivation of heat kernel bounds and plays a central role in the $L^1$-framework.
\begin{lemma}[Nash-type inequality] \label{l:nash}
Suppose that the quantities $\E,\D,\W$, and $f\in H^1$ satisfy
\begin{align}  
	\E&\lesssim 1, \label{eq:nashE1} \\
	\int f_x^2\, \dx \leqsim\; \;
\E  & \leqsim \int f^2+f_x^2\,\dx, \label{eq:nashE} \\
	\int f_x^2\, \dx&\leqsim \;\D , \label{eq:nashD} \\
	\W&=\int \abs{f}\,\dx. \label{eq:nashW}
\end{align}
Then there holds
\begin{align*}  
	\E \lesssim \D^{1/3} (\W + 1)^{4/3}.
\end{align*}
\end{lemma}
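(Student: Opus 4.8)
The plan is to establish the Nash-type inequality $\E \lesssim \D^{1/3}(\W+1)^{4/3}$ by interpolating the $L^2$-norm of $f$ against the $L^1$-norm $\W$ and the $\dot{H}^1$-seminorm, which is controlled by $\D$. By \eqref{eq:nashE} it suffices to bound $\int f^2\,\dx$; write $\E \lesssim \int f^2 + f_x^2\,\dx$. Since $\int f_x^2\,\dx \lesssim \D$ by \eqref{eq:nashD}, the main task is to show $\int f^2\,\dx \lesssim \D^{1/3}(\W+1)^{4/3}$, after which one observes that $\D \leq \D^{1/3}(\W+1)^{4/3}$ as soon as $\D \lesssim (\W+1)^2$, which follows because the left-hand side of \eqref{eq:nashD} bounds $\D$ from below while $\E \lesssim 1$ forces $\int f_x^2\,\dx \lesssim 1$ (hence $\D$ is bounded, at least in the regime where the inequality is nontrivial); more carefully one simply adds the two contributions and notes that $\D^{1/3}(\W+1)^{4/3}\gtrsim \D$ whenever $\D\lesssim(\W+1)^2$, while in the complementary regime $\E\lesssim 1\lesssim \D^{1/3}(\W+1)^{4/3}$ directly.

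The heart of the argument is the interpolation inequality. First I would split $\int f^2\,\dx$ according to where $\abs{f}$ is large or small relative to a threshold $\lambda>0$ to be optimized: on $\{\abs{f}\leq\lambda\}$ one has $\int_{\{\abs f\le\lambda\}} f^2\,\dx \leq \lambda \int\abs{f}\,\dx = \lambda\W$, while on $\{\abs f>\lambda\}$ one wants to use the $H^1$-control. A cleaner route is the classical Nash/Gagliardo--Nirenberg estimate in one dimension: for $f\in H^1(I)$,
\begin{align*}
  \int f^2\,\dx \lesssim \left(\int \abs{f}\,\dx\right)^{4/3}\left(\int f_x^2\,\dx\right)^{1/3} + \left(\int\abs f\,\dx\right)^2,
\end{align*}
where the second term accounts for the zero-frequency part (and is genuinely needed on the torus, where Poincaré-type inequalities require mean-zero; on the line the pure scaling form $\int f^2 \lesssim (\int\abs f)^{4/3}(\int f_x^2)^{1/3}$ holds, but keeping the extra term does no harm). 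Feeding $\int f_x^2\,\dx \lesssim \D$ into this gives $\int f^2\,\dx \lesssim \W^{4/3}\D^{1/3} + \W^2 \lesssim (\W+1)^{4/3}\D^{1/3} + (\W+1)^2$, and combining with the $\D$ term from $f_x^2$ yields the claim after absorbing the $(\W+1)^2$ piece as above using $\E\lesssim 1$.

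The one-dimensional Gagliardo--Nirenberg/Nash inequality itself I would prove from scratch in a line or two for completeness: using $\abs{f(x)}^2 = \abs{f(y)}^2 + \int_y^x (f^2)_x \leq \abs{f(y)}^2 + 2\int\abs f\,\abs{f_x}$, averaging over $y$ in an interval of length $\ell$ gives $\sup\abs{f}^2 \lesssim \ell^{-1}\W + \W^{1/2}(\int f_x^2)^{1/2}$ (Cauchy--Schwarz on the cross term), whence $\int f^2 \leq \sup\abs f \cdot \W \lesssim \ell^{-1/2}\W^{3/2} + \W^{5/4}(\int f_x^2)^{1/4}$; this is not yet the optimal exponent, so instead I would use the sharper bootstrap $\int f^2 \le \W\,\|f\|_{L^\infty}$ together with $\|f\|_{L^\infty}^2\lesssim \|f\|_{L^1}\|f_x\|_{L^1}\le \W(\int f_x^2)^{1/2}\cdot(\mathrm{meas})^{1/2}$ only on a bounded domain, or—cleanest—optimize the two-region split directly: on $\{\abs f>\lambda\}$, $\mathrm{meas}\{\abs f>\lambda\}\le\W/\lambda$, and by Cauchy--Schwarz plus the fundamental theorem of calculus $\int_{\{\abs f>\lambda\}} f^2\,\dx \lesssim (\W/\lambda)\cdot\big(\lambda^2 + \W^{1/2}(\int f_x^2)^{1/2}\lambda^{?}\big)$, then choose $\lambda \sim \W^{1/3}(\int f_x^2)^{1/6}$. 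The expected main obstacle is purely bookkeeping: getting the exponents in the elementary interpolation to come out as exactly $4/3$ and $1/3$ (rather than a suboptimal pair) and handling the zero-frequency correction term uniformly on both the torus and the line; since the statement only claims $\lesssim$ with the stated exponents and \cite[Lemma~3.3]{OSW} already records this, I would either cite that proof verbatim or reproduce the short split-and-optimize argument, with no conceptual difficulty remaining.
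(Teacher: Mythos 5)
Your approach is the same as the paper's: interpolate $\int f^2\,\dx$ between $\W$ and $\int f_x^2\,\dx$ via the one-dimensional Nash inequality, then dispose of the gradient term separately. The chain you mention in passing --- $\int f^2\,\dx\le\sup\abs{f}\cdot\W$ together with $\sup\abs{f}^2\lesssim\bigl(\int f^2\,\dx\,\int f_x^2\,\dx\bigr)^{1/2}$, bootstrapped to $\sup\abs{f}\lesssim\bigl(\W\int f_x^2\,\dx\bigr)^{1/3}$ --- is exactly the paper's two-line proof, and your dichotomy for the gradient term ($\D\lesssim(\W+1)^2$ versus $\D\gtrsim(\W+1)^2$, using $\E\lesssim1$ in the latter case) is a valid alternative to the paper's $\int f_x^2\,\dx\lesssim\D^{1/3}\E^{2/3}\lesssim\D^{1/3}\le\D^{1/3}(\W+1)^{4/3}$.

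The one genuine flaw is your treatment of the ``zero-frequency'' correction. If the interpolation inequality really carried an additive $\W^2$ term, the lemma would fail: in the relevant regime $\D\to0$ with $\W$ fixed, $(\W+1)^2$ cannot be absorbed into $\D^{1/3}(\W+1)^{4/3}$, and invoking $\E\lesssim1$ does not help since $(\W+1)^2\gtrsim1$ points the wrong way. The resolution is that no correction term appears: the lemma is used with the correction-free estimate $\sup\abs{f}\lesssim\bigl(\int f^2\,\dx\bigr)^{1/4}\bigl(\int f_x^2\,\dx\bigr)^{1/4}$ (the paper's \eqref{elementary}), valid on the line and, in the paper's applications on the torus, for the functions $\ft$ and $f_c$ to which it is applied. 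So you should simply drop the $\W^2$ term rather than claim to absorb it. The remaining loose ends in your sketch (the suboptimal averaging attempt, the unresolved exponent in the layer-cake split) are superseded by the bootstrap above, so once the correction term is removed the argument closes.
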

We fix a time horizon $T<\infty$ and  define
\begin{equation} \label{eq:definitionWT}
\W_T \coloneqq \sup_{t\leq T} \W(t)+1 .
\end{equation}
(We will occasionally use that $\W_T \geq 1$.) A simple ODE lemma controls the energy gap in terms of this quantity.
\begin{lemma} \label{l:odenash}
	Let $T<\infty$ and $\E,\,\D,\,\W:[0,T]\to [0,\infty)$. Suppose that on $[0,T]$, $\E$ and $\D$ satisfy the differential equality
	\begin{align} \label{eq:odenashpre1}
	\ddt \E= -\D
	\end{align}	
	and  the algebraic inequality
	\begin{align} \label{eq:odenashpre2}
	\E \lesssim \D^{1/3}\left( \W + 1 \right)^{4/3}.
	\end{align}	
	Then $\E$ is bounded above by
	\begin{align} \label{eq:odenashresult}
	\E(t) \lesssim \min \left\{\E_0,\frac{\W_T^2 }{t^{1/2}}\right\} \;\; \mbox{for all } t\in [0,T].
	\end{align}
\end{lemma}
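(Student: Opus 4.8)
The plan is to collapse the two hypotheses into a single autonomous differential inequality for $\E$ and then integrate it, in the spirit of Nash's estimate for heat kernels (and exactly as in \cite[Lemma~3.4]{OSW}). First I would cube the algebraic inequality \eqref{eq:odenashpre2} and use that, by definition of $\W_T$ in \eqref{eq:definitionWT}, there holds $\W(s)+1\leq\W_T$ for every $s\leq T$; this gives $\E^3\lesssim\D\,(\W+1)^4\leq\D\,\W_T^4$, hence $\D\gtrsim\E^3/\W_T^4$. Feeding this into \eqref{eq:odenashpre1} yields
\begin{align*}
  \ddt\E=-\D\lesssim -\frac{\E^3}{\W_T^4}\qquad\text{on }[0,T].
\end{align*}
In particular $\dot\E\leq 0$, so $\E$ is nonincreasing and $\E(t)\leq\E_0$ for all $t\in[0,T]$, which is the first half of the claimed bound \eqref{eq:odenashresult}.

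For the second half I would integrate the differential inequality. Fix $t\in(0,T]$. If $\E(t)=0$ there is nothing to prove, since the right-hand side $\W_T^2/t^{1/2}$ is positive. Otherwise, monotonicity forces $\E(s)\geq\E(t)>0$ for all $s\in[0,t]$, so on $[0,t]$ we may divide by $\E^3$; using $\frac{d}{ds}\bigl(\E^{-2}\bigr)=-2\E^{-3}\dot\E$ the inequality above becomes $\frac{d}{ds}\bigl(\E^{-2}\bigr)\gtrsim\W_T^{-4}$. Integrating from $0$ to $t$ and discarding the nonnegative term $\E_0^{-2}$ gives $\E(t)^{-2}\gtrsim t/\W_T^4$, i.e.\ $\E(t)\lesssim\W_T^2/t^{1/2}$. (Equivalently, one may compare $\E$ with the explicit solution of $\dot y=-c\,y^3/\W_T^4$ with matching initial datum via a standard ODE comparison principle; this variant also covers the degenerate case $\E_0=+\infty$ cleanly.) Combining the two bounds yields $\E(t)\lesssim\min\{\E_0,\,\W_T^2/t^{1/2}\}$, which is \eqref{eq:odenashresult}.

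I do not expect a genuine obstacle here: the argument is the routine ODE step underlying all the later relaxation estimates. The only points that need a little care are the possible vanishing of $\E$ (handled above by restricting to the set $\{\E>0\}$, using monotonicity) and the fact that the manipulation $\frac{d}{ds}(\E^{-2})=-2\E^{-3}\dot\E$ presupposes enough regularity of $\E$; this is supplied by the assumed differential equality \eqref{eq:odenashpre1}, which makes $\E$ absolutely continuous. The implicit constant in \eqref{eq:odenashresult} depends only on the constant in \eqref{eq:odenashpre2}, consistent with Notation~\ref{n:lesssim}.
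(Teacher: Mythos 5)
Your argument is correct and is precisely the "integration in time" that the paper's one-line proof refers to: combine $\dot\E=-\D$ with $\D\gtrsim\E^3/\W_T^4$ and integrate $\frac{d}{ds}(\E^{-2})\gtrsim\W_T^{-4}$. The care you take with monotonicity and the vanishing of $\E$ is appropriate but routine; there is no divergence from the paper's approach.
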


Next we introduce an integral bound on the dissipation, which we will need for our results on $L^1$ control.
\begin{lemma}[Integral dissipation bound] \label{l:intdissipationbound}
	Let $T<\infty$ and a constant $\W_T<\infty$ be given, and suppose that  $\E,D:[0,T]\to [0,\infty)$ are related on $[0,T]$ by
	\begin{align} \label{eq:intdisspre}
	\ddt \E= -\D \quad \mbox{and} \quad \E \lesssim \frac{\W_T^2 }{t^{1/2}}.
	\end{align}	
	Then for any $\frac{2}{3} <\gamma \leq 1$ there holds
	\begin{align} \label{eq:intdissipationboundres}
	\int_0^T \D^{\gamma}(t) \, \text{d}t \lesssim \W_T^{4(1-\gamma)}.
	\end{align}
\end{lemma}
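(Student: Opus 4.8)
The plan is to use that $\dot\E=-\D\le 0$, so that $\E$ is non-increasing on $[0,T]$ and, integrating, $\int_s^{s'}\D\,dt=\E(s)-\E(s')\le\E(s)$ for $s\le s'$, together with H\"older's inequality with exponents $1/\gamma$ and $1/(1-\gamma)$ and the decay hypothesis $\E\lesssim\W_T^2 t^{-1/2}$. The crucial choice is to split $[0,T]$ at the scale $t_*:=\min\{\W_T^4,T\}$.

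On the initial piece $[0,t_*]$ I would apply H\"older and then the ambient energy bound $\E(0)\lesssim 1$, which holds throughout this paper by \eqref{diffcons2}:
\[
\int_0^{t_*}\D^\gamma\,dt\le\Bigl(\int_0^{t_*}\D\,dt\Bigr)^{\gamma}t_*^{\,1-\gamma}\le\E(0)^{\gamma}\,t_*^{\,1-\gamma}\lesssim t_*^{\,1-\gamma}\le\W_T^{4(1-\gamma)}.
\]
On the tail $[t_*,T]$ (present only when $\W_T^4<T$) I would decompose dyadically into the intervals $[2^jt_*,\min\{2^{j+1}t_*,T\}]$, $j\ge 0$, and on each apply H\"older as above, this time estimating $\int_{2^jt_*}^{2^{j+1}t_*}\D\,dt\le\E(2^jt_*)\lesssim\W_T^2(2^jt_*)^{-1/2}$, which yields $\int_{2^jt_*}^{2^{j+1}t_*}\D^\gamma\,dt\lesssim\W_T^{2\gamma}(2^jt_*)^{1-\frac32\gamma}$. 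This is exactly where the hypothesis $\gamma>\frac23$ is used: the exponent $1-\frac32\gamma$ is negative, so the geometric series over $j\ge 0$ converges and is dominated by its $j=0$ term, giving $\sum_j\lesssim\W_T^{2\gamma}t_*^{1-\frac32\gamma}\le\W_T^{2\gamma}(\W_T^{4})^{1-\frac32\gamma}=\W_T^{4(1-\gamma)}$. Adding the two contributions gives the claim.

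The only delicate point is the bookkeeping that makes the two pieces balance: the threshold $t_*\sim\W_T^4$ is forced by equating $t_*^{1-\gamma}$ (from the near-zero H\"older estimate) with $\W_T^{2\gamma}t_*^{1-\frac32\gamma}$ (from the dyadic tail). One should also record that the restriction $\gamma>\frac23$ is sharp: for $\gamma\le\frac23$ the dyadic tail sum fails to produce a $T$-independent bound (only a constant-summand, hence logarithmically divergent, series at $\gamma=\frac23$, and a genuinely divergent one below). The endpoint $\gamma=1$ is immediate, since then the tail is vacuous and $\int_0^T\D\,dt=\E(0)-\E(T)\lesssim 1=\W_T^{0}$. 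The argument is insensitive to whether the underlying domain is the line or the torus, as it uses only the differential equality $\dot\E=-\D$ and the decay of $\E$.
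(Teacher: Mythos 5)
Your proof is correct, and its skeleton---H\"older's inequality with exponents $1/\gamma$ and $1/(1-\gamma)$, the monotonicity $\int_s^{s'}\D\,dt\le\E(s)$ coming from $\ddt\E=-\D$, and a split of $[0,T]$ at the scale $\W_T^4$---is the same as the paper's; the two arguments diverge only in how the tail $[\W_T^4,T]$ is treated. The paper avoids a dyadic decomposition by introducing a weight $t^\theta$ with $\theta\in(\tfrac1\gamma-1,\tfrac12)$, rewriting $t^\theta\D=\ddt(-t^\theta\E)+\theta t^{\theta-1}\E$, applying a single weighted H\"older inequality over all of $(\tau,T)$, and then optimizing in the splitting time $\tau$ (which again lands at $\tau\sim\W_T^4$); the hypothesis $\gamma>\tfrac23$ is hidden there in the requirement that the interval $(\tfrac1\gamma-1,\tfrac12)$ be nonempty. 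Your dyadic blocks with plain H\"older on each block and geometric summability of $2^{j(1-\frac32\gamma)}$ achieve the same bound more transparently and make the role of $\gamma>\tfrac23$ explicit. Two small remarks. First, both proofs use $\E(0)\lesssim1$ on the initial piece, which is not literally among the stated hypotheses of the lemma but holds in every application by \eqref{diffcons2}; you flag this, the paper uses it silently in the step $T^{1-\gamma}\E_0^\gamma\lesssim T^{1-\gamma}$. Second, your final step $t_*^{1-\frac32\gamma}\le(\W_T^4)^{1-\frac32\gamma}$ involves a negative exponent and would go the wrong way if $t_*<\W_T^4$; it is legitimate only because the tail is nonempty precisely when $t_*=\W_T^4$, so this is really an equality---worth saying so explicitly.
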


Finally, we introduce a differential inequality for the dissipation, and, as a corollary, an upper bound on the dissipation in terms of the energy gap, which we will later use to deduce uniqueness of the zeros. The following lemma is an adaptation of \cite[estimate (1.24) of Lemma 1.4]{OW}.
\begin{lemma}[Differential inequality for the dissipation]\label{l:dissy}
There exist $C<\infty$ and $\eps>0$ with the following property. For every $W_0 < \infty$, and $\epsilon >0$, there exists $L_0<\infty$ such that for $L \geq L_0$, the following holds true. For Problem 1 or 2, suppose that $u$ is a smooth solution of the Cahn-Hilliard equation satisfying \eqref{diffcons2} and such that for all $t\in [0,T]$, there exists $\wt\in\N(L)$ with $\int\abs{u-\wt}\,\dx\leq W_0$ and $\norm{u-\wt}_{L^\infty}\le \eps$. Then there holds
\begin{align} \label{eq:dissyexp}
  \ddt\D\lesssim \D^{3/2}+\exp(-L/C).
\end{align}
For Problem 1, if in addition $\E\leq \eps/L$, then
\begin{align} \label{eq:dissywc}
  \ddt\D\lesssim \D^{3/2}.
\end{align}
\end{lemma}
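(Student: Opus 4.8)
\emph{Setup.} The plan is to differentiate $\D$ in time and exploit the negative ``higher dissipation'' that appears, against the genuinely indefinite term forced by the double well. Write $\eta:=u_{xx}-G'(u)$, so that $\D=\int\eta_x^2\,\dx$ and the equation reads $u_t=-\eta_{xx}$. Substituting $\eta_t=-\eta_{xxxx}+G''(u)\eta_{xx}$ into $\ddt\D=2\int\eta_x\eta_{xt}\,\dx$ and integrating by parts (the $G'''$-terms cancel, and boundary terms vanish by periodicity or by decay of $\eta$ and its derivatives on $\R$, so the identity holds under Assumption~\ref{ass:G}) yields
\[ \ddt\D=-2\int\eta_{xxx}^2\,\dx-2\int G''(u)\,\eta_{xx}^2\,\dx. \]
The second term is dangerous only on the transition layers, where $G''(u)<0$; since these have total length of order one, a crude bound there yields merely $\ddt\D\lesssim\D$, so the gain of a full extra power of $\D$ has to come from finer structure.

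\emph{Main cancellation.} I would peel off the profile. In the regime of \eqref{eq:dissywc} compare to $w_c$ (for \eqref{eq:dissyexp} compare to $\wt$), and write $G''(u)=G''(w_c)+\bigl(G''(u)-G''(w_c)\bigr)$ with $\abs{G''(u)-G''(w_c)}\lesssim\abs{f_c}$ (using $\norm{f_c}_{L^\infty}\le\eps$). Then $-2\int\eta_{xxx}^2-2\int G''(w_c)\eta_{xx}^2$ equals $-2$ times the second variation $\int\bigl(g_x^2+G''(w_c)\,g^2\bigr)\,\dx$ of $E$ at $w_c$, evaluated at $g=\eta_{xx}$. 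Here I invoke the spectral properties that already underlie the EED estimates (cf.\ \cites{CGS,CP,OR} and Lemma~\ref{l:basiceedtorusw}): since $w_c$ minimizes $E$ under the mean constraint, this quadratic form is nonnegative on mean-zero functions, with kernel spanned by $w_{cx}$ and a positive spectral gap off the kernel. Now $\eta_{xx}=-u_t$ has zero mean (mass is conserved), and, differentiating \eqref{elL2} in time, $\langle\eta_{xx},w_{cx}\rangle$ is of the order of the speed $\dot c$ of the projection, which is small and controlled as in Lemma~\ref{l:fine}. Consequently the quadratic form is $\gtrsim\int\eta_{xxx}^2\,\dx$ up to a small correction, so the main part of $\ddt\D$ is $\le-c\int\eta_{xxx}^2\,\dx$ plus that correction. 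For the glued kink $\wt$, which is only exponentially close to a critical point, the same holds with an additional additive error $\exp(-L/C)\int\eta_{xx}^2\,\dx$, because the second variation at $\wt$ is bounded below by $-\exp(-L/C)$ and is coercive off a finite-dimensional near-kernel.

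\emph{Perturbation and conclusion.} It remains to control $-2\int\bigl(G''(u)-G''(w_c)\bigr)\eta_{xx}^2\,\dx$, which is at most $C\norm{f_c}_{L^\infty}\int\eta_{xx}^2\,\dx$. Interpolating, $\int\eta_{xx}^2\,\dx=-\int\eta_x\eta_{xxx}\,\dx\le\D^{1/2}\bigl(\int\eta_{xxx}^2\,\dx\bigr)^{1/2}$, so by Young's inequality this term is $\le c\int\eta_{xxx}^2\,\dx+C\norm{f_c}_{L^\infty}^2\,\D$; the same interpolation disposes of the small correction and of the $\exp(-L/C)$-error from the previous step, and the $\int\eta_{xxx}^2$-contributions are absorbed into the negative higher dissipation. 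This leaves $\ddt\D\lesssim\norm{f_c}_{L^\infty}^2\,\D$ (plus $\exp(-L/C)$ in the glued-kink case), and by the $L^\infty$ EED bound \eqref{eq:f_c_infty} one has $\norm{f_c}_{L^\infty}^2\lesssim\E^{1/2}\D^{1/2}$ (respectively $\norm{\ft}_{L^\infty}^2$ via \eqref{eq:basiceedconsequence}), while $\E\lesssim1$ by \eqref{diffcons2}. Hence $\ddt\D\lesssim\E^{1/2}\D^{3/2}\lesssim\D^{3/2}$, which is \eqref{eq:dissywc}; collecting the various $\exp(-L/C)$-terms (and using that $\D$ is bounded in the regime of interest) gives \eqref{eq:dissyexp}.

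\emph{Main obstacle.} The delicate point is the balance in the middle step: since $\int G''(u)\eta_{xx}^2$ is a priori of the same order as $\D$, the improvement to $\D^{3/2}$ relies entirely on (i) cancelling its leading part against the coercive $\int\eta_{xxx}^2$ by means of the spectral gap of the linearized operator, which forces control of the orthogonality defect of $\eta_{xx}=-u_t$ against the translation mode $w_{cx}$ (equivalently, of the speed of the $L^2$-projection $c(t)$), and (ii) converting the remaining factor into $\D^{1/2}$ via $\norm{f}_{L^\infty}^2\lesssim\E^{1/2}\D^{1/2}$; carrying the exponentially small errors for the glued-kink profile $\wt$ is the remaining bookkeeping.
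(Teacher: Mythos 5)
Your overall strategy coincides with the paper's: the identity $\tfrac12\ddt\D=-\int h_x^2+G''(u)h^2\,\dx$ with $h=\eta_{xx}=-u_t$, the splitting of $G''(u)$ into the profile part plus a perturbation of size $\norm{f}_{L^\infty}$, and the use of the linearized coercivity with the translation mode as (near-)kernel are all exactly the paper's steps. The difference is in how the kernel component is handled, and that is where your argument has a genuine gap.

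Decompose $h=h_0+\alpha w_{cx}$ with $\int h_0 w_{cx}\,\dx=0$. The best available bound on $\alpha$ is $\alpha=\langle h,w_{cx}\rangle/\norm{w_{cx}}_{L^2}^2=-\langle \eta_x,w_{cxx}\rangle/\norm{w_{cx}}_{L^2}^2$, hence $\alpha^2\lesssim\D$ and nothing better (Lemma~\ref{l:fine} controls the \emph{integrated} motion of $c$, not the instantaneous speed, so it cannot improve this). The coercive part of the quadratic form controls only $\int h_0^2+h_{0x}^2$; the kernel direction contributes zero (exactly, on the torus, by $-w_{cxxx}+G''(w_c)w_{cx}=0$). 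Consequently your claim that ``the quadratic form is $\gtrsim\int\eta_{xxx}^2$ up to a small correction'' is false as stated: since $\int\eta_{xxx}^2=\int h_x^2\lesssim\int h_{0x}^2+\alpha^2$, the ``correction'' is of size $\alpha^2\sim\D$. The same leftover appears in your Young step: after writing $\norm{f_c}_{L^\infty}\int\eta_{xx}^2\le\delta\int\eta_{xxx}^2+C_\delta\norm{f_c}_{L^\infty}^2\D$, the piece $\delta\int\eta_{xxx}^2$ can only be absorbed up to $\delta\alpha^2\int w_{cxx}^2\sim\delta\D$. Either way you land at $\ddt\D\lesssim\D$, which is strictly weaker than $\D^{3/2}$ for small $\D$ and useless for Lemma~\ref{l:dissy_bound}. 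The paper's fix is to treat the kernel component of the \emph{perturbation term} separately and exploit the spatial localization of the translation mode: expanding $\int\abs{G''(u)-G''(w_c)}h^2\lesssim\int\abs{f_c}h_0^2+\alpha^2\int\abs{f_c}w_{cx}^2$, the first piece is $\le\eps\int h_0^2$ and is absorbed by the coercive term, while for the second piece Cauchy--Schwarz against the exponentially localized weight $w_{cx}^2$, the Hardy-type inequality (Lemma~\ref{l:hardy}), and the dissipation estimate $\int f_{cx}^2\lesssim\D$ give $\int\abs{f_c}w_{cx}^2\lesssim(\int f_{cx}^2)^{1/2}\lesssim\D^{1/2}$, whence $\alpha^2\int\abs{f_c}w_{cx}^2\lesssim\D^{3/2}$. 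This localization step is the missing ingredient; without it the exponent $3/2$ is not reached. (Your treatment of the glued-kink case is the same modulo half-line decompositions, cut-offs near the gluing point, and the $\exp(-L/C)$ bookkeeping, which the paper carries out explicitly.)
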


As in \cite[(1.15), or, more precisely (1.32)]{OW}, the differential inequality for $\D$ is converted (together with $\dot{E}=-D$) into the following upper bound.
\begin{lemma}[Dissipation bound by energy]\label{l:dissy_bound}
Let $T<\infty$ and suppose that  $\E,\,\D:[0,T]\to [0,\infty)$ are related on $[0,T]$ by
\begin{align}
  \ddt \E= -\D \quad \mbox{and} \quad \ddt\D\lesssim \D^{3/2}.\label{assumeddiffq}
\end{align}
Then, for all $t\in (0,T)$, $D$ satisfies the bound
\begin{align*}
	D(t)\lesssim \max\left\{\E^2\left(\frac t2\right),\frac{\E\left(\frac t2\right)}{t}\right\}.
	\end{align*}
\end{lemma}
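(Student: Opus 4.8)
The plan is to run the standard ``Bihari/Gronwall + energy balance'' argument that converts the super-linear differential inequality $\ddt\D\lesssim \D^{3/2}$ into a decay bound for $\D$, using the energy balance $\ddt\E=-\D$ to pin down the constant. First I would fix $t\in(0,T)$ and work on the interval $[t/2,t]$. On this interval integrate the differential inequality $\ddt\D\lesssim \D^{3/2}$: separating variables gives, for $t/2\le s\le \sigma\le t$,
\begin{align*}
\D(\sigma)\lesssim \frac{\D(s)}{\bigl(1-C\D(s)^{1/2}(\sigma-s)\bigr)^2},
\end{align*}
valid as long as the denominator stays positive. The key dichotomy is then: either $\D(s)^{1/2}(\sigma-s)$ is bounded by a small universal constant throughout $[t/2,t]$ — in which case $\D$ cannot have grown or shrunk by more than a constant factor on a subinterval of length comparable to $t$, so that $\D(t)\lesssim \D(s)$ for an appropriate $s\in[t/2,t]$ — or else there is some $s_0\in[t/2,t]$ with $\D(s_0)^{1/2}\gtrsim 1/t$, i.e.\ $\D(s_0)\gtrsim 1/t^2$, which we will see also leads (after running the inequality forward a controlled amount) to $\D(t)\lesssim \D(s_0)$.

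Next I would bring in the energy balance. Since $\ddt\E=-\D\le 0$, $\E$ is nonincreasing, and integrating over $[t/2,t]$ gives
\begin{align*}
\int_{t/2}^{t}\D(s)\,\dd s=\E(t/2)-\E(t)\le \E(t/2).
\end{align*}
Hence there exists a time $s_*\in[t/2,t]$ with $\D(s_*)\le \frac{2}{t}\E(t/2)$, i.e.\ $\D(s_*)\lesssim \E(t/2)/t$. Combining this with the first step: starting from $s_*$ and propagating the differential inequality forward to time $t$ (a time increment of at most $t/2$), either the denominator $1-C\D(s_*)^{1/2}(t-s_*)$ stays bounded below by a universal positive constant — which happens precisely when $\D(s_*)^{1/2}(t-s_*)\lesssim 1$, giving $\D(t)\lesssim \D(s_*)\lesssim \E(t/2)/t$ — or the denominator is not controlled, which forces $\D(s_*)^{1/2}(t-s_*)\gtrsim 1$, hence $\D(s_*)\gtrsim 1/t^2$; combined with $\D(s_*)\lesssim \E(t/2)/t$ this yields $1/t\lesssim \E(t/2)$, i.e.\ $1/t^2\lesssim \E(t/2)^2$, and then a crude bound on $\D(t)$ — obtained by starting the differential inequality from the \emph{infimum} of $\D$ on a short subinterval, or by noting $\D(t)\le \D(s_*)+C\int_{s_*}^t\D^{3/2}$ and absorbing — gives $\D(t)\lesssim \E(t/2)^2$. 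In both branches one lands inside $\max\{\E(t/2)^2,\E(t/2)/t\}$, which is the claimed bound.

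The step I expect to be the main obstacle is making the dichotomy fully rigorous, that is, controlling $\D(t)$ in terms of $\D$ at a nearby \emph{well-chosen} time without circularity. The subtlety is that the mean-value time $s_*$ from the energy balance need not be one where $\D$ is small relative to its neighbors, so one cannot directly propagate the pointwise inequality forward from $s_*$ if $\D$ happens to be near a spike there. The clean fix, following \cite{OW}, is to first use the differential inequality \emph{backward} to show that $\D$ cannot have been much smaller than $\D(t)$ just before time $t$ — more precisely, if $\D(s)^{1/2}(t-s)\le c_0$ for a suitable small universal $c_0$ then $\D(t)\le 4\D(s)$ — and then argue by contradiction: if $\D(t)$ were larger than $C\max\{\E(t/2)^2,\E(t/2)/t\}$, then $\D(s)\ge \D(t)/4$ on an entire subinterval of $[t/2,t]$ of length $\gtrsim \min\{t,\D(t)^{-1/2}\}$, and integrating $\D$ over that subinterval would exceed $\E(t/2)$, contradicting the energy balance. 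Choosing constants so that this subinterval genuinely lies in $[t/2,t]$ (which is where the two cases $\D(t)^{-1/2}\gtrless t$ enter, and where the two terms in the max come from) completes the argument. I would also remark at the end that the same proof, with $\ddt\D\lesssim \D^{3/2}+\exp(-L/C)$ in place of \eqref{assumeddiffq}, yields the analogous bound with an additional $\exp(-L/C)$ error, which is the version actually invoked in Remarks~\ref{r:algebraic_D} and the Problem~2 dissipation remark.
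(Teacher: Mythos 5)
Your final argument---integrating the Riccati-type inequality backward from $t$ to obtain the pointwise lower bound $\D(s)\gtrsim \D(t)/\bigl(1+\D(t)^{1/2}(t-s)\bigr)^2$ and then comparing $\int_{t/2}^{t}\D$ with $\E(t/2)$ via the energy balance---is exactly the paper's proof; the paper merely integrates that lower bound directly to get $\E(\tau)\gtrsim\min\{\D(t)^{1/2},\,\D(t)(t-\tau)\}$ and takes $\tau=t/2$, rather than phrasing it as a contradiction. The first half of your proposal (forward propagation from the mean-value time $s_*$) is, as you yourself observe, superfluous once the backward bound is in hand.
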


The proofs of Lemmas \ref{l:nash}--\ref{l:dissy_bound}  are included in the appendix, because they are similar to (albeit in the case of Lemma \ref{l:dissy}, somewhat more complicated than) the proofs of the corresponding results in \cite{OSW} and \cite{OW}.
The other proofs are given in Subsection \ref{ss:prelimpf}.

\section{Problem 1: The large torus}\label{S:2}
On the torus, we will use two slow manifolds: Initially, we will use glued kink profiles $\tilde{w}\in\N(L)$; then, once the energy gap has become sufficiently small, we will ``change horses'' and compare $u$ to shifted bumps $w_c$ (see Subsection~\ref{ss:wc}), which attract the solution in the limit $t\to\infty$.

Initially in time, we will use the Nash-type inequality. The  central challenge for this argument is to control $\Wt_T$ up until a time $T\sim L^2$. This control is derived in the following proposition.
\begin{proposition}[$L^1$ control on the torus] \label{p:L1boundtorus} There exists a $\delta\in (0,1)$ with the following property.
	For every $W_0 < \infty$, $W_1<\infty$, and $\epsilon >0$, there exists  $L_0<\infty$
	such that for $L \geq L_0$ and $T=\delta L^2$, the following holds true. Let $u$ be a smooth solution of \eqref{ch} on the torus with sidelength $L$ and initial data satisfying \eqref{star}. Suppose that on $[0,T]$, $u$ has an $L^2$-closest
glued kink profile
$\wt\in\red{\mathcal{N}(0,L)}$
and that for all $t\in[0,T]$, there holds
\begin{align} \label{eq:torusL1boundassumpt}
\red{\Wt_T\leq W_1,}\quad	\Et(t) \geq 0, \quad \Et \sim \int_{[-L,L]} \ft^2+\ft_x^2\,\dx \quad \mbox{and} \quad \Et \lesssim \frac{\Wt_T^2}{t^{\frac12}}.
\end{align}
Then the excess mass is controlled via
	\begin{align} 
	\Wt_T \lesssim \Wtz + 1.\label{Wtz}
	\end{align}
\end{proposition}
\red{The point is that the derived bound~\eqref{Wtz} is independent of $W_1$. The reader should think of $W_1$ as an artificial bound (that will be used in the buckling argument for the proof of Theorem~\ref{t:torus}).}

 Proposition~\ref{p:L1boundtorus} is an extension of \cite[Proposition~3.6]{OSW}, which was inspired by the duality argument of  Niethammer and Vel\'{a}zquez in their work on existence of self-similar solutions of Smoluchowski’s equation, cf. \cite[Subsection~1.3]{NV}. We include the proof of Proposition~\ref{p:L1boundtorus} in Subsection~\ref{ss:propproof}.

As soon as the solution is close enough to the one-parameter slow manifold $\N_1(L)$ for Lemma~\ref{l:basiceedtorusw} to apply, we will use exponential decay in time of the energy gap. Indeed, if $u_0$ satisfies \eqref{EEDwcA}, then the
Poincar\'{e} inequality
	\begin{align} \label{poincaretorus}
	\int_{[-L,L]} f_c^2\,\dx \lesssim L^2 \int_{[-L,L]} f_{cx}^2\,\dx
	\end{align}
and Lemma~\ref{l:basiceedtorusw} imply
	\begin{align*}
	\E \overset{\eqref{eq:energyestimatewctorus}}\lesssim \int_{[-L,L]} f_c^2 + f_{cx}^2\,\dx \overset{\eqref{poincaretorus}}\lesssim (1+L^2) \int_{[-L,L]} f_{cx}^2\,\dx \overset{\eqref{eq:dissipationestimatewctorus}}\lesssim L^2\D,
	\end{align*}
and an integration in time yields
\begin{align}
	\E\lesssim \E_0\exp\left(-\frac{t}{CL^2}\right)\qquad\text{for $t\geq 0$}. \label{eq:torusexpdecayE}
	\end{align}
Lemma \ref{l:dissy_bound} then yields
\begin{align}
 D\lesssim \E_0\exp\left(-\frac{t}{CL^2}\right)\qquad\text{for $t\geq 1$}. \label{dissyexp2p0}
\end{align}

\begin{proof}[Proof of Theorem~\ref{t:torus}]	

	\underline{Step $1$}:  We begin by restricting to times for which the energy gap is not too small, which we will use in order to be able to absorb the exponential error terms in \eqref{eq:Eerrorlow}--\eqref{eq:Derror}.
We define
\begin{align*}
\hat{T}_1 &\coloneqq
\inf \left\{ t\geq 0 : \E \leq \frac{\eps}{L} \right\},
\end{align*}
where $\eps>0$ is chosen at least as small as the implicit constants in \eqref{EEDwcA} and
\eqref{118b} (where for the second estimate in \eqref{118b}, the energy bound is combined with \eqref{eq:f_c_infty}).
According to \eqref{star} and Lemma \ref{l:u2zeros}, $\wt_0:=\wt(0)$ exists, belongs to $\N(0,L)$, and satisfies $\Wt_0\lesssim W_0+1$.
To be sure of the existence of $\wt$ for future times, we define
\begin{align*}
    \hat{T}_2&\coloneqq\inf\left\{t\geq 0\colon \norm{u(t)-\wt_0}_{L^1({[-L,L]})}\geq C_1(\Wtz+1) \right\},
\end{align*}
where $C_1<\infty$ is a universal constant to be specified below. According to Lemma~\ref{l:u2zeros}, for all $t\leq\hat{T}_2$, the $L^2$ closest glued kink profile $\wt\in\N(0,L)$ exists and satisfies
\begin{align}
  \sup_{t\leq \hat{T}_2} \left(\abs{{\textbf{x}}(t)-{\textbf{x}}(0)}+
  \abs{\tilde{\textbf{x}}(t)-\tilde{\textbf{x}}(0)}\right)\lesssim C_1(\Wtz+1) .\label{C1dex}
\end{align}
A triangle inequality, the definition of $\hat{T}_2$, and properties of $\wt$ give in addition
\begin{align}
\Wt\lesssim C_1(W_0+1).  \label{Vaddition}
\end{align}
One goal in the buckling argument will be to improve from \eqref{C1dex} and \eqref{Vaddition} to  estimates that are independent of $C_1$.
Finally, for an arbitrary $\Tt<\infty$, we define the (finite) time
	 \begin{align*}
	 	T \coloneqq \min \{\Tt, \hat{T}_1, \hat{T}_2\}.
	 \end{align*}

Using the properties of the energy on the slow manifold (cf. Remark \ref{r:expt}), we observe that for all $t, s\in [0,T]$, there holds
\begin{eqnarray}
  \abs{E(\tilde{w}(t))-E(\tilde{w}(s))}+\abs{E(\wt(t))-E(w_c(t))}&\leq&\exp(-(L-\Deltild(T))/C)\notag\\
  &\leq& \exp(-L/C),\label{absorb0}
\end{eqnarray}
where in \eqref{absorb0} we have applied \eqref{C1dex} and increased $C$ to absorb the $W_0$- and $C_1$-dependent exponential factor.
From this estimate
we deduce
\begin{align}
\Et>0,\quad\Et\gtrsim\frac{\eps}{L},\quad\text{and }\; \E(t)\sim\Et(t)\sim \Et(s)\qquad\text{for all $t, s\in[0,T]$.}\label{etoo}
\end{align}
In particular, if $\hat{T}_1=0$, then we can set $T_0=T_1=T_2=0$ and the first part of the theorem is satisfied trivially. Hence we may assume without loss of generality that $\hat{T}_1>0$.

Recalling the properties of $\wt$ and the lower bound from~\eqref{etoo}, we deduce from Lemmas~\ref{l:eedwt} and~\ref{l:nash} that
\begin{align}
 \tilde\E&\lesssim (\D+\exp(-L/C))^{1/3}(\Wt+1)^{4/3}\notag\\
 &\lesssim  \D^{1/3}(\Wt+1)^{4/3}+ \exp(-L/C)(\Wt+1)^{4/3}\qquad\text{on all of }\,[0,T]. \label{EDWpuretorus1}
\end{align}

Recalling \eqref{Vaddition} and \eqref{etoo}, we estimate the second term by
\begin{align*}
  \exp(-L/C)(\Wt+1)^{4/3}\overset{\eqref{Vaddition}}\lesssim\exp(-L/C)\left(C_1(W_0+1)\right)^{4/3}\overset{\eqref{etoo}}\ll \Et
\end{align*}
whence \eqref{EDWpuretorus1} improves to
\begin{align}
 \tilde\E\lesssim \D^{1/3}(\Wt+1)^{4/3}\qquad\text{on all of }\,[0,T]. \label{EDWpuretorus}
\end{align}

Next we introduce the fixed-in-time function $\tilde{w}(T)$, so that
\begin{align}
  \bar\E:=E(u)-E(\tilde{w}(T))\quad\text{satisfies}\quad \ddt\bar\E=-D\quad\text{and}\quad \bar\E\geq 0,\;\;\bar\E(t)\sim \tilde{\E}(t),\label{Ebartorus}
\end{align}
where for the inequality we have used monotonicity of the energy and the positivity of $\tilde\E(T)$ from \eqref{etoo}.
Since
\begin{align}
  \bar\E\overset{\eqref{Ebartorus}}\lesssim\tilde\E\overset{\eqref{EDWpuretorus}}\lesssim \D^{1/3}(\Wt+1)^{4/3},\label{nashlocal}
\end{align}
an application of Lemma~\ref{l:odenash} yields
\begin{align}
  \bar{\E}\lesssim\frac{\Wt_{T}^2}{t^{1/2}}\qquad\text{and hence}\qquad  \Et\overset{\eqref{Ebartorus}}\lesssim\frac{\Wt_{T}^2}{t^{1/2}}\qquad\text{for all }\quad t\leq T.\label{firstbd}
\end{align}
We now have a two-part argument. Initially and up to times $t\leq \min\{T,\delta L^2\}$ (where $\delta$ is the constant from Proposition~\ref{p:L1boundtorus}), we combine~\red{\eqref{Vaddition} and} \eqref{firstbd} with Proposition~\ref{p:L1boundtorus}
to obtain
\begin{align}
\Wt_{T}\lesssim \Wtz+1,\qquad\text{so that }\qquad\Et\lesssim\frac{\Wtz^2+1}{t^{1/2}}\qquad \text{for }t\leq \min\{T,\delta L^2\}.\label{univ3torus}
\end{align}

We would now like to deduce
\begin{align}\label{eq:zerocontrol}
\Delt+\Deltild\lesssim \Wtz+1.
\end{align}
Initially and until $\Et\ll 1$ (according to the smallness desired in \eqref{eq0}), we use Lemma \ref{l:motionofzeros}. We observe that \eqref{univ3torus} implies this smallness is achieved at a time at most order $\Wtz^4+1$.
This defines the time $T_0$ announced in the theorem.

Notice that smallness of the energy gap on $[T_0,T]$ and \eqref{eq:basiceedconsequence} give $\norm{u-\wt}_{L^\infty}\ll 1$ on $[T_0,T]$. We want also smallness of the dissipation, from which we will infer that the zeros are unique and, hence, move continuously. For this, we start with the coarse lower bound on the dissipation
\begin{align*}
  D\overset{\eqref{nashlocal}}\gtrsim \frac{\Et^3}{(\Wt+1)^4}\overset{\eqref{etoo},\eqref{Vaddition}}\gtrsim \frac{\eps^3}{L^3C_1^4(W_0+1)^4}\gg\exp(-L/C),
\end{align*}
where we have used that $\eps$ and $C_1$ are universal constants and that $L_0$ is chosen sufficiently large with respect to $W_0$.
Substituting this lower bound into \eqref{eq:dissyexp} for $t\leq \hat{T}_2$ yields the differential inequality
\begin{align*}
  \ddt D\lesssim D^{3/2},
\end{align*}
and Lemma \ref{l:dissy_bound} gives smallness in terms of $\Et$.
Thus we obtain continuity of the zeros. Using the rough bound
\eqref{C1dex} and continuity to justify an application of Lemma \ref{l:fine}, we obtain from the lemma and the energy gap estimate \eqref{univ3torus}  the uniform control \eqref{eq:zerocontrol} up until $\min\{T,\delta L^2\}$.

Finally, we argue that the constraint from $\hat{T}_2$ is slack:
Using
\eqref{eq:zerocontrol} and properties of the $\wt$, we deduce
\begin{align}
  \int_{[-L,L]}\abs{u-\wt_0}\,dx\lesssim \int_{[-L,L]}\abs{u-\wt}\,dx+\int_{[-L,L]}\abs{\wt-\wt_0}\,dx
  \overset{\eqref{univ3torus},\eqref{eq:zerocontrol}}\lesssim \Wtz+1.\label{littleC2}
\end{align}
Therefore choosing the constant $C_1$ in the definition of $\hat{T}_2$ large enough with respect to the implicit constant in this bound gives $\hat{T}_2>\min\{T,\delta L^2\}$.

We now turn to the second part of the two-part argument. If $T\leq \delta L^2$, then we set $T_1=T_2=T$ and move to Step 2, but if $T>\delta L^2$, then we set $T_1=\delta L^2$ and we need a separate estimate on $[T_1,T]$.
\red{We begin by deducing from \eqref{univ3torus} that
\begin{align}
  \Et(T_1)\lesssim \frac{\Wtz^2+1}{L}.\label{WoL}
\end{align}}
We will now use the exponential decay of $\Et$. Indeed, invoking Lemma \ref{l:EDtorus} and recalling \eqref{etoo} and \eqref{Ebartorus}, we observe that
\begin{align*}
  \ddt\bar\E=-D\quad\text{and}\quad \bar\E\lesssim L^2 D.
\end{align*}
An integration in time (together with \eqref{Ebartorus}) yields
\begin{eqnarray}
\Et(t)&\lesssim& \Et(T_1)\exp\left({-(t-T_1)/C L^2}\right)\notag\\
&\overset{\eqref{WoL}}\lesssim &\frac{\Wtz^2+1}{L}\exp\left({-(t-T_1)/C L^2}\right) \qquad\text{ for $t\in[T_1,T]$},\label{biggy}
\end{eqnarray}
and Lemma \ref{l:dissy_bound} provides the bound on the dissipation. Inserting \eqref{biggy} into Lemma \ref{l:fine} implies \eqref{eq:zerocontrol} on $[T_1,T]$.
For $\Wt$ we use the Cauchy-Schwarz inequality to estimate
\begin{align}
  \Wt\leq L^{1/2}\left(\int_{[-L,L]}(u-\wt)^2\,dx\right)^{1/2}
  \lesssim  L^{1/2}\Et^{1/2}\overset{\eqref{biggy}}\lesssim W_0+1,\notag
\end{align}
and arguing as in \eqref{littleC2} implies $\hat{T}_2>T$ for $C_1$ large enough.
From the energetic decay and by choosing $\tilde{T}$ large enough, we conclude
\begin{align}
\E(T) =\frac{\eps}{L} \qquad \text{and}\qquad T\lesssim L^2\log\Wtz,\label{ETB}
\end{align}
and we set $T_2:=T$.
We now proceed to the next step, in which the attraction to the one-parameter slow manifold takes over.



\uline{Step 2}:	Now that we want to consider the one-parameter slow manifold, it makes sense to start by considering what has transpired in relation to $\N_1(L)$ up until time $T_2$.
From Lemma~\ref{l:u2zeros}, $\Wt_{T_2}\lesssim W_0+1$, and \eqref{eq:zerocontrol} on $[0,T_2]$, we obtain
\begin{align*}
\W\lesssim W_0+1,\quad \text{and }\quad \Delta c\lesssim W_0+1\qquad\text{on }[0,T_2].
\end{align*}
Next it remains to continue beyond $T_2$.
Because of the first part of \eqref{ETB}, we can apply \eqref{eq:torusexpdecayE} with initial time $T_2$
and \eqref{expy} follows. In addition, EED (from Lemma~\ref{l:basiceedtorusw}) and the Cauchy-Schwarz inequality give control of $\W$ for all $t\geq T_2$, concluding the proof of the first part of \eqref{slowdowntorus}. Using \eqref{expy} in Lemma \ref{l:fine} concludes the proof of the third part of \eqref{slowdowntorus}, and this together with a triangle inequality and Lemma~\ref{l:u2zeros} completes the proof of the second item in \eqref{slowdowntorus}. The estimate \eqref{expylinf} follows in the usual way from~\eqref{expy} and~\eqref{eq:f_c_infty}.

It remains only to establish exponential convergence of the shift, for which we again turn to Lemma~\ref{l:fine}. Using \eqref{eq:torusexpdecayE} with initial time $s$ for $t>s\geq T_2$, we estimate
\begin{eqnarray*}
	\abs{c(t)-c(s)} & \overset{\eqref{ccontrol}}\lesssim &
	L^{1/2}\E(s)^{1/2} + \frac{1}{L^{3/2}}  \int_s^t\E(\tau)^{1/2}\,d\tau\\
&\overset{\eqref{eq:torusexpdecayE}}\lesssim&
L^{1/2}\E(s)^{1/2} +\frac{1}{L^{3/2}} \int_s^t\E(s)^{1/2}\exp\left(-\frac{\tau-s}{CL^2}\right)^{1/2}\,d\tau\\
&\lesssim & L^{1/2}\E(s)^{1/2}\\
& \overset{\eqref{expy}}\lesssim &\exp\left(-\frac{s-T_2}{CL^2}\right),
\end{eqnarray*}	
which suffices.

\end{proof}

\section{Problem 2: The bump on the real line} \label{S:1}
We begin by collecting the supporting results that we will use for the proof of Theorem~\ref{t:mainresultuniversal}.
Because Lemma~\ref{l:odenash} yields the decay of the energy gap only in terms of the supremum of $\Wt$, the important question for our main result is whether $\Wt$ in fact remains bounded.
\begin{proposition}[$L^1$ control on the line] \label{p:L1bound}There exists $\delta\in(0,1)$ with the following property.
	For every $W_0 < \infty$, $W_1<\infty$, and $\epsilon >0$, there exists  $L_0<\infty$
	such that for $L \geq L_0$, the following holds true.  Let $u$ be a smooth solution of \eqref{ch} on the line with initial data satisfying \eqref{e0bd} and \eqref{W_uni}, and suppose there is a $\wt\in\red{\mathcal{N}(0,L)}$ that is the $L^2$-closest glued kink profile. Suppose moreover that for all $t\in[0,T]$, there holds
	\begin{align*}
		\red{\Wt_T\leq W_1,}\quad	\Et\geq 0,\quad \Et\sim \int_\R \ft^2+\ft_x^2\,\dx, \quad \mbox{and} \quad 
		\Et \lesssim \frac{\Wt_T^2}{t^{1/2}}. 
	\end{align*}
	\begin{enumerate}
\item If $T\leq \delta L^2$ or
	
\item if $T\lesssim L^2$ and $\Et\lesssim (\Wtz^2+1)/L$, then

\end{enumerate}
\begin{align*} 
	\Wt_T \lesssim \Wtz + 1.
	\end{align*}

\end{proposition}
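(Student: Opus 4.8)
The plan is to mimic the duality argument used in Proposition~\ref{p:L1boundtorus} (itself following \cite[Proposition~3.6]{OSW} and the Niethammer--Vel\'azquez strategy from \cite{NV}), adapting it to the line. The central object is a dual test function: I would fix the time horizon $T$ and solve a backward (adjoint) linear parabolic problem on $[0,T]$ with a bounded, say $\{-1,0,1\}$-valued, terminal datum chosen to realize $\Wt_T$ as a pairing $\int \ft\,\varphi\,\dx$ at some time. Testing the Cahn--Hilliard equation (written as $u_t = -(u_{xx}-G'(u))_{xx}$) against $\varphi$ and using the evolution $\ddt\wt$, one obtains an identity of the schematic form $\frac{d}{dt}\int \ft\,\varphi\,\dx = (\text{error from }\ddt\wt) + (\text{terms controlled by }\D^{1/2}\text{ and by }\|\ft\|_{L^2}\text{ weighted by }\varphi)$. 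Integrating over $[0,T]$, the key is to bound the right-hand side by $\Wtz + 1 + (\text{small})\cdot\Wt_T$, so that $\Wt_T$ can be absorbed.

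The key ingredients, in order: (i) a parabolic regularity/decay estimate for the adjoint solution $\varphi$ — on the line the relevant bound is $\|\varphi_x(\cdot,t)\|_{L^\infty} \lesssim (T-t)^{-1/4}$ or $\|\varphi_{xx}\|_{L^1}\lesssim$ something integrable, exactly as in \cite{OSW}; (ii) the integral dissipation bound, Lemma~\ref{l:intdissipationbound}, applied with some $\gamma\in(2/3,1]$ using the hypothesis $\Et \lesssim \Wt_T^2/t^{1/2}$ and $\dot\E = -\D$ (up to exponentially small energy-difference errors from $\ddt E(\wt)$, which are harmless), to get $\int_0^T \D^\gamma\,dt \lesssim \Wt_T^{4(1-\gamma)}$; (iii) the energy--dissipation comparison Lemma~\ref{l:eedwt}, in particular $\int \ft^2 + \ft_x^2 \lesssim \Et + \exp(-L/C)$ and $\int \tilde{\tilde f}_x^2 \lesssim \D$, to convert between the functionals; (iv) control of the motion of the zeros of $\wt$, via Lemma~\ref{l:motionofzeros}, to bound the error coming from $\ddt\wt$: since $\wt$ is, away from the gluing region, a pair of shifted kinks moving with velocities $\dot{\at},\dot{\bt}$, one has $\ddt\wt \approx -\dot{\at}v_{\at x} - \dot{\bt}v_{\bt x}$ plus exponentially small terms, and $|\dot{\at}|,|\dot{\bt}|$ integrated against $\varphi$ are controlled using $\max_{[t,T]}\Et^{1/2}(T-t)^{1/4}$ from \eqref{eq:shiftcontrolini} together with the hypothesis on $\Et$. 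The two cases (a) $T\leq\delta L^2$ and (b) $T\lesssim L^2$ with $\Et \lesssim (\Wtz^2+1)/L$ differ only in how one closes the time integration: in case (a) the crude $\Wt_T$-dependence is absorbed by choosing $\delta$ small (a buckling/smallness argument in the horizon length), while in case (b) the extra smallness $(\Wtz^2+1)/L$ of the energy gap supplies the needed smallness even though $T$ may be a full order-$L^2$; in both cases one ends with $\Wt_T \leq C(\Wtz+1) + \tfrac12 \Wt_T$.

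The main obstacle, I expect, is the error term from $\ddt\wt$ — i.e., the fact that, unlike on the torus where $E(w_c)$ is exactly shift-independent, here the glued kink profile $\wt$ genuinely moves and the ``closest profile'' $\wt$ is only defined via the half-line projections \eqref{elg}. One must show that the velocity of the zeros does not inject more $L^1$-mass than $\Wtz + 1 + o(1)\Wt_T$ over the whole interval $[0,T]$; this is where the Euler--Lagrange orthogonality \eqref{elg}, the exponential localization of $v_x$, the zero-motion bound \eqref{eq:shiftcontrolini}, and the integrated dissipation bound must all be combined carefully, and where the choice of $\delta$ (case~(a)) or the exploitation of the extra factor $1/L$ (case~(b)) is decisive. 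A secondary nuisance is bookkeeping the exponentially small errors $\exp(-L/C)$ appearing in Lemma~\ref{l:eedwt} and in $|E(\wt(t))-E(\wt(s))|$; these are dominated by $1$ once $L\geq L_0(W_0,\epsilon)$ and are absorbed into the additive constant throughout.
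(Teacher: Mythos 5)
Your overall strategy (duality against a backward fourth--order parabolic problem, integral dissipation bounds, absorption of $\Wt_T$) is the right family of ideas, but the specific pairing you propose contains a genuine gap. You test the equation against $\ft=u-\wt$ and plan to treat $\ddt\wt\approx-\dot{\at}v_{\at x}-\dot{\bt}v_{\bt x}$ as an error controlled by the zero--motion bound \eqref{eq:shiftcontrolini}. This fails for two reasons. First, $\wt$ is defined at each time by $L^2$--projection on the half-lines, and at this stage of the argument its zeros are not known to be differentiable or even continuous in $t$, so $\dot{\at}$ is not available. Second, and more fundamentally, integrating $\abs{\dot{\at}}$ against a bounded test function over $[0,T]$ produces the \emph{total variation} of $\at$, whereas \eqref{eq:shiftcontrolini} (and likewise Lemma~\ref{l:fine}) control only the \emph{displacement} $\sup_{t}\abs{\at(t)-\at(0)}$; the zeros could in principle oscillate with unbounded total variation, and no lemma in the paper rules this out. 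The paper's proof is structured precisely to avoid this term: it never differentiates $\wt$ in time. Instead it measures the $L^1$ distance to the constants $\pm 1$ outside moving neighborhoods of the zeros (the stand-in $\Wb$ of \eqref{Wtilde}, linked back to $\Wt$ by Lemma~\ref{l:linkWW}), and the motion of the zeros is absorbed into the \emph{moving boundary} $\gamma(t)=\tilde a(T)\mp C_1(T-t+\Lambda)^{1/4}$ of the dual problem, for which the variable-domain Schauder estimates of Proposition~\ref{p:schauderest} hold. Only $u_t$ then enters the duality computation, and the resulting terms $J_1,\dots,J_7$ are estimated as in Proposition~\ref{p:L1boundtorus}, the outer regions being simpler because no cut-off function is needed there.

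You also miss the mechanism that distinguishes case (b). In the paper, for $T\lesssim L^2$ with $\Et\lesssim(\Wtz^2+1)/L$, the inner region $[\gamma_{1+},\gamma_{2-}]$ is \emph{not} handled by duality at all but by the one-line Cauchy--Schwarz estimate
\begin{align*}
\int_{\gamma_{1+}(T)}^{\gamma_{2-}(T)}\abs{u(T)-\wt(T)}\,\dx\lesssim L^{1/2}\,\Et(T)^{1/2}\lesssim \Wtz+1,
\end{align*}
while the duality argument on the two outer half-lines goes through for all $T\lesssim L^2$ because the terms requiring $T\leq\delta L^2$ ($J_5$, $J_6$, $J_7$) arise only from the cut-off function needed on the inner region. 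Your description of case (b) as ``the extra smallness supplies the needed smallness'' does not identify this split, which is the actual content of the second part of the proposition.
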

We also obtain $L^1$ control on the distance to $\bar{u}\equiv -1$ when the energy is below $2\co$.
\begin{proposition}[$L^1$ control on the line compared to $\bar{u}\equiv -1$] \label{p:L1boundsmallE}
Let $\epsilon >0$ and suppose that $u$ is a smooth solution of \eqref{ch} on the line with initial data satisfying $E(u(0))\le 2\co-\epsilon$ and $\Wm (0)<\infty$.
Then for all $t>0$, there holds
\begin{align}\label{eq:W-}
\Wm (t)\lesssim \Wm (0)+1.
\end{align}
\end{proposition}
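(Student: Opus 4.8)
The plan is to run the same duality argument as in Proposition~\ref{p:L1boundtorus} (and Proposition~\ref{p:L1bound}), but in the simpler setting where the comparison function is the constant $\bar u\equiv-1$ rather than a glued kink profile, so that there are no moving zeros and no exponentially small error terms to absorb. First I would fix a time horizon $T<\infty$ and set $W_{-,T}:=\sup_{t\le T}\Wm(t)+1$; the goal is to show $W_{-,T}\lesssim\Wm(0)+1$ with a constant independent of $T$, after which $T\uparrow\infty$ gives \eqref{eq:W-}. As a preliminary, note that since $E(u(0))\le 2e_*-\epsilon$ and $\Wm(0)<\infty$, Remark~\ref{rem:boundedness} gives that $u$ is bounded in $L^\infty$ and $(u+1)\in H^1$; combined with Lemma~\ref{l:basiceed-} this yields the EED chain $\int (u+1)^2+u_x^2\,\dx\sim E$ and $\int u_x^2\,\dx\lesssim\D$. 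Since $E(u)\le 2e_*-\epsilon$ for all $t$ by monotonicity, the Nash-type inequality (Lemma~\ref{l:nash}, applied with $f=u+1$, $\E=E$, $\W=\Wm$) gives $E\lesssim\D^{1/3}(\Wm+1)^{4/3}$, and then $\dot E=-\D$ together with Lemma~\ref{l:odenash} yields $E(t)\lesssim\min\{E(0),W_{-,T}^2/t^{1/2}\}$ on $[0,T]$.

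The core of the argument is the duality bound on $\Wm$. Writing $g:=u+1\ge$ (something controlled), one tests the equation $g_t=-(g_{xx}-G'(u))_{xx}$ against a suitable dual test function. Following \cite{OSW} and the proof of Proposition~\ref{p:L1boundtorus}, I would estimate, for a fixed time $s\le T$,
\begin{align*}
  \Wm(s)\le \Wm(0)+\int_0^s\Big|\ddt\!\int |u+1|\,\dx\Big|\,dt,
\end{align*}
and bound the time-derivative term by pairing $g_t$ with $\sgn(u+1)$ (regularized), integrating by parts, and using the structure $g_{xx}-G'(u)=g_{xx}-(G'(u)-G'(-1))$ together with $G'(-1)=0$ and the nondegeneracy $G''(-1)>0$, which gives $|G'(u)-G'(-1)|\lesssim |u+1|+|u+1|\cdot\|u+1\|_{L^\infty}^{\,}$ where $u+1$ is small outside a compact set. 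The key is that after integration by parts the ``bad'' terms are controlled by $\int |g_x|^2$, $\int |g_{xx}|^2$, and $\int |g_x|$ on the region where $g$ is not small, i.e.\ by fractional powers of $\D$ and $E$, and one arrives at a bound of the schematic form
\begin{align*}
  \Big|\ddt\!\int |u+1|\,\dx\Big|\lesssim \D^{1/2}+\D^{\gamma}(W_{-,T}+1)^{\sigma}
\end{align*}
for suitable exponents with $2/3<\gamma\le 1$. Integrating in time and using Lemma~\ref{l:intdissipationbound} (which converts the decay $E\lesssim W_{-,T}^2/t^{1/2}$ into $\int_0^T\D^\gamma\,dt\lesssim W_{-,T}^{4(1-\gamma)}$) then yields $\Wm(s)\lesssim\Wm(0)+W_{-,T}^{\,\theta}+1$ with $\theta<1$, so that taking the supremum over $s\le T$ and absorbing gives $W_{-,T}\lesssim\Wm(0)+1$ uniformly in $T$. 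Letting $T\to\infty$ completes the proof.

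The main obstacle is the duality computation itself: one must choose the dual test function correctly (a truncated, smoothed sign function of $u+1$, or equivalently solving an appropriate auxiliary problem as in \cite[Subsection~1.3]{NV}), keep track of the region where $u+1$ fails to be exponentially small, and verify that every term produced by integration by parts is controlled by a power of $\D$ strictly less than $1$ in a way that is summable in time against the Nash decay rate. Everything else—the EED relations, the Nash inequality, the ODE lemma, the integral dissipation bound—is available off the shelf from Section~\ref{s:mainresults}, and the absence of a slow manifold with moving zeros makes this the cleanest of the three $L^1$-control statements; in fact one can essentially read off the argument from the proof of Proposition~\ref{p:L1boundtorus} by replacing $\wt$ with $-1$, dropping the exponential errors, and dropping the terms involving $\Deltild$.
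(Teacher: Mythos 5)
Your overall skeleton (finite time horizon, EED from Lemma~\ref{l:basiceed-}, Nash inequality, Lemma~\ref{l:odenash}, then a duality bound on $\Wm$ that is absorbed for large $T$) is exactly the paper's, but the one step you defer as ``the main obstacle''---the duality computation---is the entire content of the proof, and the primary mechanism you propose for it would fail. Pairing $u_t$ with a (regularized) $\sgn(u+1)$ and integrating by parts puts derivatives on $\sgn(u+1)$, whose distributional derivative is a measure concentrated on the zero set $\{u=-1\}$; the resulting terms involve pointwise values of $u_x\bigl(u_{xx}-G'(u)\bigr)_x$ at an uncontrolled collection of zeros and cannot be estimated by powers of $\D$ or $E$. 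This is precisely why the paper does \emph{not} test against a function of $u$: it writes $\Wm(T)=\sup_\psi\int\psi\,(u(T)+1)\,\dx$ over arbitrary $\|\psi\|_{L^\infty}\le 1$ and propagates $\psi$ \emph{backwards} by the constant-coefficient adjoint equation $\zeta_t+G''(-1)\zeta_{xx}-\zeta_{xxxx}=0$ on all of $\R$ (Proposition~\ref{p:semigroup}). With this choice the linear terms cancel exactly and
\begin{align*}
\ddt\int_\R \zeta\,(u+1)\,\dx=\int_\R\zeta_{xx}\bigl(G'(u)-G'(-1)-G''(-1)(u+1)\bigr)\,\dx,
\end{align*}
which by Taylor and \eqref{eq:energyestimate-} is bounded by $\|\zeta_{xx}\|_{L^\infty}E(u)$. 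You mention the NV-style auxiliary problem only parenthetically as an ``equivalent'' alternative; it is in fact the only route that works here.

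A second, smaller discrepancy: your proposed integration in time goes through $\int_0^T\D^\gamma\,\dt$ and Lemma~\ref{l:intdissipationbound}, with a schematic bound $\D^{1/2}+\D^\gamma(W_{-,T}+1)^\sigma$ on the time derivative. Those dissipation-type terms arise in Proposition~\ref{p:L1boundtorus} only from boundary terms at the moving curves and from the cut-off function; in the present setting there is no boundary and no cut-off, so none of them appear. The paper needs only the semigroup decay $\|\zeta_{xx}\|_{L^\infty}\lesssim (T-t)^{-1}\wedge(T-t)^{-1/2}$ together with $E\lesssim E^{1/2}\lesssim\bigl(\sup_{t\le T}\Wm+1\bigr)t^{-1/4}$, yielding a contribution $\tfrac{\ln T}{T^{1/4}}\sup_{t\le T}\Wm(t)+1$ that is absorbed for $T$ large (and a $T^{1/2}$ bound for $T$ of order one); Lemma~\ref{l:intdissipationbound} is not used at all. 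So the proposal correctly identifies the architecture but neither executes nor correctly sketches the key estimate.
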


The proof of Theorem~\ref{t:mainresultuniversal} relies on EED, the Nash-type decay, $L^1$ control, and a buckling argument.
\begin{proof}[Proof of Theorem~\ref{t:mainresultuniversal}]
Our goal is to control the evolution until the energy gap has become algebraically small, and the proof is similar to the first part of the proof of Theorem~\ref{t:torus}. We define
\begin{align*}
  \hat{T}_1:=\inf\left\{t\geq 0\colon E(u(t))-2e_*\leq \frac{\eps(\Wtz^2+1)}{L}\right\} 
\end{align*}
for $\eps>0$ arbitrarily small. If $\hat{T}_1=0$, then we can set $T_1=0$ and the theorem is satisfied trivially. Hence we may assume without loss that $\hat{T}_1>0$.
Next we set
\begin{align*}
  \hat{T}_2:=\inf\left\{t\geq 0\colon \norm{u(t)-\wt_0}_{L^1({\R})}\geq C_1(\Wtz+1) \right\},
\end{align*}
for a universal constant $C_1$ to be specified below.
Finally, we set
\begin{align*}
  T:=\min\{\hat{T}_1,\hat{T}_2,\delta L^2\},
\end{align*}
where $\delta$ is the constant from Proposition~\ref{p:L1bound}.

Arguing exactly as in the proof of Theorem~\ref{t:torus} yields
\begin{align}
  \Et\lesssim\frac{\Wt_{T}^2}{t^{1/2}}\qquad\text{for all }\quad t\leq T.\notag
\end{align}
In addition, from Proposition~\ref{p:L1bound} (the first part) and Lemmas~\ref{l:motionofzeros} and~\ref{l:fine},  we obtain
\begin{align}
\Wt_{T}\lesssim \Wtz+1,\qquad\Et\lesssim\frac{\Wtz^2+1}{t^{1/2}},\qquad\text{and}\qquad \Deltild\lesssim \Wtz+1.\label{univ3}
\end{align}
Using the first and third bound above and choosing the constant $C_1$ in the definition of $\hat{T}_2$ large enough, we obtain $T<\hat{T}_2$.
From the energetic decay, we can already conclude the existence of the time $T_0$ from the statement of the theorem and also that
\[\abs{E(u(T))-2e_*}\lesssim\frac{\Wtz^2+1}{\sqrt{\delta}L}
\lesssim\frac{\Wtz^2+1}{L},\]
since $\delta$ is a universal constant.

Without loss of generality, we may assume that $T<\hat{T}_1$.
To continue beyond time $\delta L^2$,
we set
\begin{align*}
  S:=\min\{\hat{T}_1,\hat{T}_2,CL^2\},
\end{align*}
for any fixed constant $C<\infty$.
Using $\hat{T}_2$ to control $\Deltild$ roughly as in \eqref{C1dex}, properties of $\wt$, and monotonicity of the energy, we observe that
\begin{align*}
  \Et\lesssim \frac{\Wtz^2+1}{L}\qquad \text{for all }t\in [\delta L^2,S].
\end{align*}
Hence we may argue as above, this time using the second condition in Proposition~\ref{p:L1bound}, from which we deduce \eqref{univ3} for all $t\in[\delta L^2,S]$. As usual, this yields
\begin{align*}
  \Et(S)\leq \frac{\eps ( W_0^2+1)}{L}, \qquad S\lesssim L^2
\end{align*}
for $C$ large enough with respect to $\eps$, and we set $T_1:=S$.

Finally, Lemma~\ref{l:motionofzeros} gives~\eqref{da} and also implies that the algebraic closeness to the slow manifold persists at least until $t-T_1\gtrsim L^2/( W_0^4+1)$.

\end{proof}

For the convergence to $\bar{u}\equiv -1$, the argument is much simpler.
\begin{proof}[Proof of Theorem~\ref{t:mainresultubar}]
Here no buckling is necessary and a direct application of Proposition~\ref{p:L1boundsmallE}, Lemma~\ref{l:basiceed-}, Lemma~\ref{l:nash}, and Lemma~\ref{l:odenash} yields the result.
\end{proof}

\section{Auxiliary proofs} \label{S:aux}
The proof of Lemma \ref{l:EDtorus} and of the smaller auxiliary results from Section~\ref{s:mainresults}  are given in Subsection~\ref{ss:prelimpf}.
Then in Subsection~\ref{ss:propproof} we establish the more involved Propositions~\ref{p:L1boundtorus} and~\ref{p:L1bound}.

\subsection{Proofs from Subsections~\ref{ss:eed}-\ref{ss:prelim}}\label{ss:prelimpf}

\begin{proof}[Proof of Lemma \ref{l:EDtorus}]
We use
\begin{align*}
\int_{[-L,0]} (u-v_{\tilde a})v_{\tilde ax}\,\dx =0
\end{align*}
and Lemma \ref{l:hardy} to estimate
\begin{align*}
\frac{1}{L^2+1}\int_{[-L,0]} (u-v_{\tilde a})^2\,\dx \lesssim \int_{[-L,0]} \frac{1}{1+(x-\tilde a)^2}(u-v_{\tilde a})^2\,\dx\lesssim \int_{[-L,0]} (u-v_{\tilde a})_x^2\,\dx.
\end{align*}
Combining this with the corresponding estimate on $[0,L]$ and the form of $\wt$, we obtain
\begin{align*}
\int_{[-L,L]} \ft^2\,\dx \lesssim L^2\int_{[-L,L]} \ft_x^2\,\dx+\exp(-L/C).
\end{align*}
The EED estimates from Lemma \ref{l:eedwt} then yield
\begin{align*}
\Et\lesssim L^2 D+\exp(-L/C).
\end{align*}
\end{proof}

\begin{proof}[Proof of Lemma~\ref{l:u2zeros}]

	We begin with the result on $\R$.

	\underline{Step 1}: Without loss we consider the first zero $\tilde{a}_0$ of $\wt_0$. It suffices to show that $u$ has a zero in a neighborhood of $\tilde{a}_0$ of size $R = C \max \, {\{ W_0, W_0^{\frac12} \}}$, where $C$ is a universal constant. By choosing $L_0$ sufficiently large with respect to $W_0$, we may assume that
	\begin{align*}
		\tilde{a}_0+3R\leq \frac{\tilde{a}_0 + \tilde{b}_0}{2}.
	\end{align*}
	
	We assume for a contradiction that $u$ does not have a zero in $(\tilde{a}_0-3R,\tilde{a}_0+3R)$. Then $u$ has a fixed sign on that interval; we assume without loss that the sign is negative. We observe that $\wt_0 > 0$ on $(\tilde{a}_0, \tilde{b}_0)$. 
	Since $u$ is negative on $(\tilde{a}_0-3R,\tilde{a}_0 + 3R)$, there holds
	\begin{align*} 
		\int_{\tilde{a}_0+R}^{\tilde{a}_0+2R} \wt_0 \, \dx \leq \int_{\tilde{a}_0}^{\tilde{a}_0+3R} (\wt_0 - u) \, \dx {\leq W_0}.
	\end{align*}
	Let $x_0$ denote the point in  $(\tilde{a}_0, \frac{1}{2}(\tilde{a}_0+\tilde{b}_0))$ such that $\wt_0(x_0) = \half$. We now consider the two cases: (i) $x_0> \tilde{a}_0+\frac{3}{2}R$ and (ii) $x_0\leq \tilde{a}_0+\frac{3}{2}R$. In case (i), we use $\wt_0(x)\gtrsim x-\tilde{a}_0$ on $(\tilde{a}_0+R,x_0)$ to estimate
	\begin{align*}
		\int_{\tilde{a}_0+R}^{\tilde{a}_0+2R} \wt_0 \, \dx \geq \int_{\tilde{a}_0+R}^{\min\{\tilde{a}_0+2R, x_0\}} \wt_0 \, \dx \gtrsim R^2.
	\end{align*}
	In case (ii), we use the monotonicity of $w$ on $( \tilde{a}_0+R,\frac{1}{2}(\tilde{a}_0+\tilde{b}_0))$ to deduce
	\begin{align*}
		\int_{\tilde{a}_0+R}^{\tilde{a}_0+2R} \wt_0 \, \dx \geq \int_{\max\left\{x_0,\tilde{a}_0 +R\right\}}^{\tilde{a}_0+\frac{3}{2}R} \wt_0 \, \dx \gtrsim R.
	\end{align*}
	Therefore $R \lesssim \max \, \{W_0,{W_0^{\frac{1}{2}}}\}$, which contradicts the definition of $R$ for $C$ large enough.
	
	\underline{Step 2}: For the estimates with respect to $\wt$, we use the properties of the glued kink profiles and $L^2$  closeness to $v_{\tilde{a}}$ 
	to estimate
	\begin{eqnarray}
		\lefteqn{\int_{\R} (u-\wt)^2\,\dx} \nonumber \\
		& \lesssim & \int_{\R^-} (u-v_{\tilde{a}})^2\,\dx + \int_{{\R^+}} (u+v_{\tilde{b}})^2\,\dx + 1 \leq \int_{{\R}_-} (u-v_{a_0})^2\,\dx + \int_{{\R^+}} (u+v_{b_0})^2\,\dx +1 \nonumber \\
		& \lesssim &  \int_{\R} (u-\wt_0)^2\,\dx + 1 \lesssim  W_0+1, \nonumber
	\end{eqnarray}
where we have used the $L^\infty$ bound on $u$ and $\wt_0$ .
An argument similar to Step 1 then yields
	\begin{align*}
		\abs{\mathbf{x}-\tilde{\mathbf{x}}} \lesssim W_0+ 1,
	\end{align*}
from which we also deduce $L^1$ control (via a triangle inequality and properties of the glued kink profiles).
This completes the proof on $\R$.

	The proof of the analogous result on the torus (with respect to $\wt$) is almost the same, except that one has to choose $L_0$ large enough with respect to $W_0$ so that
	\begin{align*}
		\tilde{a}+3 R\leq \frac{\tilde{a}+\tilde{b}}{2} \qquad\text{and also}\qquad  \tilde{a}-3R\geq \frac{-2L+\tilde{b}+\tilde{a}}{2},
	\end{align*}
	in order to keep away from both midpoints (bearing periodicity in mind).
	
Now suppose that \eqref{onew} holds on the torus and consider the distance to the $L^2$-closest bump $w_c$ with zeros $\textbf{x}_c=\{a_c,b_c\}$.
Let $\ell$ and $\tilde{\ell}_0$ denote the distances between the zeros of $w_c$ and $\wt_0$, respectively. We shift $w_c$ so that it shares its left zero with that of $\wt_0$ and denote the shifted bump by $w_d$. There holds
	\begin{align*}
			\abs{\ell-\tilde{\ell}_0} \sim \left\vert \int_{[-L,L]} \wt_0-w_d\,\dx \right\vert =  \left\vert \int_{[-L,L]} \wt_0-u\,\dx \right\vert
		\lesssim  W_0,
	\end{align*}
where we have used the conservation law $0=\int_{[-L,L]} u-w_c\,dx=\int_{[-L,L]}u-w_d\,dx$.
From here we deduce
\begin{align*}
w_c\in\N_1(L)\qquad\text{and}\qquad	\int_{[-L,L]} (\wt_0-w_d)^2\,\dx \lesssim W_0+1,
\end{align*}
and hence, with a triangle inequality, we obtain
	\begin{align*}
		\int_{[-L,L]} (u-w_c)^2\,\dx \leq \int_{[-L,L]} (u-w_d)^2\,\dx
\lesssim W_0+1.
	\end{align*}
The $L^1$ bound and the bound on $\abs{\mathbf{x}-\mathbf{x}_c}$ follow as usual.
A similar (but simpler) argument yields the result when $\wt_0$  in \eqref{onew} is replaced by some bump $w_d\in\N_1(L)$.
\end{proof}

\begin{proof}[Proof of Lemma~\ref{l:motionofzeros}]
	We will give the proof on the line. The modification for the torus presents no new difficulties.

According to the assumption about $\wt_0$ and Lemma~\ref{l:u2zeros}, there exists for all $t\in[0,T]$ an $L^2$-closest profile $\wt=\wt(t)\in \mathcal{N}(0,L)$. We will localize around one of the zeros of $\wt$. To fix ideas, assume without loss of generality that the larger shift is in the zero $\tilde{a}$ of $\wt$.  According to Lemma~\ref{l:u2zeros}, there holds
\begin{align}
  \abs{\at(T)-\at(t)}\lesssim W_0+1,\label{assumL}
\end{align}
and our goal is to establish \ref{eq:shiftcontrolini} (in which there is no $W_0$-dependence).

Using \eqref{assumL}, we will estimate
	\begin{align} \label{zerosviawt}
		\abs{\tilde{a}(T)-\tilde{a}(t)} \lesssim \left|\int_{\R}(\wt(T)-\wt(t))\varphi\,\dx\right| +1,
	\end{align}
for a cut-off function $\varphi:\R\to [0,1]$ such that
	\begin{align*}
	\varphi&=\begin{cases}  1 & \mbox{ on } \left[ \tilde{a}(T)-\tilde{L},\tilde{a}(T)+\tilde{L}\right], \\  0 & \mbox{ on } [\tilde{a}(T)-2\tilde{L}, \tilde{a}(T)+2\tilde{L}]^c, \end{cases} \quad
	\mbox{and} \quad \abs{\varphi_{x}}\lesssim \frac{1}{\tilde{L}},\,\abs{\varphi_{xx}}\lesssim \frac{1}{\tilde{L}^2}.
	\end{align*}
The estimate \eqref{zerosviawt} is valid as long as
\begin{align}
\max\{1,|a(t)-a(T)|\}\ll \tilde{L}\ll L.
\label{needs}
\end{align}
We use the triangle inequality to estimate
\begin{align}
 \lefteqn{ \left|\int_{\R}(\wt(T)-\wt(t))\varphi\,\dx\right|}\notag\\
 &\leq \left|\int_{\R}(\wt(T)-u(T))\varphi\,\dx\right|+
  \left|\int_{\R}(u(T)-u(t))\varphi\,\dx\right|+
  \left|\int_{\R}(u(t)-\wt(t))\varphi\,\dx\right|.\label{bigtri}
\end{align}
For the first and last terms, we use the bounded support of $\varphi$ to estimate
\begin{eqnarray}
  \left|\int_{\R}(\wt(T)-u(T))\varphi\,\dx\right|+
  \left|\int_{\R}(u(t)-\wt(t))\varphi\,\dx\right|&\overset{\eqref{eq:Eerrorlow}}\lesssim & \tilde{L}^{1/2}(\abs{\Et(t)}^{1/2}+\abs{\Et(T)}^{1/2}).\label{sumtf}
\end{eqnarray}
For the middle term on the right-hand side of \eqref{bigtri}, we use duality,~\eqref{diff}, and the definition of $\varphi$ to estimate
	\begin{eqnarray}
	\left|\int_{\R} (u(T) - u(t))\varphi \,\dx\right| &\leq&\left(\int \varphi_{x}^2\,\dx\right)^{1/2} \norm{u(T)- u(t)}_{\Hpkt(\R)}\nonumber \\
	&\overset{\eqref{diff}}\lesssim&\left(\frac{1}{\tilde{L}}\right)^{1/2} \int_t^T \D^{1/2}\,\rm{d}s\notag\\
&\lesssim&
	 \abs{E(u(T))-E(u(t))}^{1/2} \left(\frac{T-t}{\tilde{L}}\right)^{1/2}\notag\\
&\lesssim & \left(\abs{\Et(t)}^{1/2}+\abs{\Et(T)}^{1/2}+\exp(-L/C)\right)\left(\frac{T-t}
{\tilde{L}}\right)^{1/2},\label{58simpl}
	\end{eqnarray}
where we have used \eqref{assumL} and the properties of $\wt$.
Substituting \eqref{sumtf} and \eqref{58simpl} into \eqref{bigtri} and then into \eqref{zerosviawt} gives
\begin{align*}
  \abs{\at(T)-\at(t)}\lesssim \max_{s\in[t,T]}\abs{\Et(s)}^{1/2}\left(\tilde{L}^{1/2}
  +\left(\frac{T-t}{\tilde{L}}\right)^{1/2}\right)+1.
\end{align*}
Optimizing in $\tilde{L}$ under the constraint \eqref{needs} (and recalling $\Et\lesssim 1$) gives \eqref{eq:shiftcontrolini}.
\end{proof}

\begin{proof}[Proof of Lemma~\ref{l:fine}]
\underline{Step 1:} Proof of \eqref{eq:expwt}. We estimate $\abs{\tilde a(T)-\tilde a(0)}$. The estimates for $\tilde b$ are identical and the estimate for $\Delt$  follows by the triangle inequality and Lemma~\ref{l:u2zeros}. The starting point are the equations
\begin{align*}
	u_t + [u_{xx}-G'(u)]_{xx} & = 0,\\
	-v_{\tilde{a}xx} + G'(v_{\tilde{a}}) &= 0, 
	\\
	\mbox{and} \quad v_{\tilde{a} t} &= -\tilde{a}_tv_{\tilde{a}x}, 
\end{align*}
which we use to write
\begin{align} \label{eq:uvtildea}
	(u-v_{\tilde{a}})_t - \tilde{a}_tv_{\tilde{a}x} - \Big( -(u-v_{\tilde{a}})_{xx} + G'(v_{\tilde{a}} + (u-v_{\tilde{a}}))-G'(v_{\tilde{a}}) \Big)_{xx}=0.
\end{align}	
We assume without loss $\tilde{a}(0)\leq \tilde{a}(T)$. We define a test function $\varphi:I_-\to [0,1]$ that satisfies
	\begin{align*}
		\varphi \equiv 1 \mbox{ on }
		I_- \setminus \left(\tilde{a}(0)+\frac{\tilde L}{2},0\right) \quad \mbox{and} \quad \varphi \equiv 0 \mbox{ on } \left(\tilde{a}(0)+\tilde L,0\right),
	\end{align*}
where $\tilde L\sim L$ is small enough so that $\tilde a(0)+\tilde L<0$, and
	\begin{equation} \label{etaL2}
		 \norm{\varphi_{xx}}_{L^2(I_-)} \lesssim \frac{1}{L^{3/2}},  \quad \mbox{and} \quad \norm{\varphi_{xxx{x}}}_{L^2(I_-)} \lesssim \frac{1}{L^{7/2}}.
	\end{equation}
Multiplying \eqref{eq:uvtildea} with $\varphi$, integrating, and rearranging terms leads to
\begin{eqnarray}
	\lefteqn{\int_0^T\int_{I_-}   \tilde{a}_tv_{\tilde{a}x}\varphi \,\dx\,\dt
	 =  \int_0^T\int_{I_-}	(u-v_{\tilde{a}})_{t}\varphi\,\dx\,\dt } \nonumber \\
&&\quad-  \int_0^T\int_{I_-}\Big( -(u-v_{\tilde{a}})_{xx} + G'(v_{\tilde{a}}
+ (u-v_{\tilde{a}}))-G'(v_{\tilde{a}}) \Big)_{xx}\varphi\,\dx \,\dt.  \label{eq:uvtildea2}
\end{eqnarray}
Because of the assumption $\Delta \mathbf{\tilde{x}} (T) \lesssim L^{1/2}$,  there holds $\int_{I_-}  v_{\tilde{a}x}\varphi \,\dx \approx 1$, and the left-hand side of \eqref{eq:uvtildea2} can be approximated as
 \begin{align*}
 	\int_0^T\int_{I_-} \tilde{a}_tv_{\tilde{a}x}\varphi \,\dx\,\dt = 	\int_0^T\tilde{a}_t \int_{I_-} v_{\tilde{a}x}\varphi \,\dx\,\dt \approx \int_0^T \tilde{a}_t\,\dt = 	\tilde{a}(T)-\tilde{a}(0).
 \end{align*}
We substitute this into the left-hand side of \eqref{eq:uvtildea2} and estimate
\begin{eqnarray}
\lefteqn{	\tilde{a}(T) - \tilde{a}(0) } \nonumber \\
		&\approx & \int_0^T \int_{I_-} (u-v_{\tilde{a}})_{t}\varphi \,\dx \,\dt + \int_0^T\int_{I_-}  \Big( - (u-v_{\tilde{a}})_{xx}+G''(v_{\tilde{a}})(u-v_{\tilde a}) \Big)_{xx}\varphi  \,\dx \,\dt\nonumber \\
		&\lesssim & \left\vert\int_{I_-} (u(T)-v_{\tilde{a}}(T))\varphi \,\dx\right\vert+\left\vert\int_{I_-} (u(0)-v_{\tilde{a}}(0))\varphi \,\dx\right\vert  \nonumber \\
&&+ \left\vert
 \int_0^T\int_{I_-}  (u-v_{\tilde{a}})_{xx}\varphi_{xx} \,\dx\,\dt \right\vert
		+ \sup_{\abs{\tau} \leq 1} G''(\tau)\int_0^T\int_{I_-}\abs{u-v_{\tilde{a}}} \abs{\varphi_{xx}}   \,\dx \,\dt \label{cutshift} \\
		&\overset{\eqref{etaL2}}\lesssim &  \Wt_T + \frac{1}{L^{3/2}}  \int_0^T\left(\int_{I_-}  (u-v_{\tilde{a}})^2\,\dx\right)^{1/2}\,\dt \lesssim \Wt_T + \frac{1}{L^{3/2}}  \int_0^T\left(\int_{I_-} \ft^2\,\dx\right)^{1/2}\,\dt, \nonumber
\end{eqnarray}
where in the last step we used $T \exp(-L/C)\lesssim 1\leq\Wt_T$ to absorb the integrated exponential error. Next we employ \eqref{eq:Eerrorlow} to estimate the inner integral and  absorb the error in the same way. Combining the estimates for $\tilde a$ and $\tilde b$ yields \eqref{eq:expwt}.

\underline{Step 2:} Proof of \eqref{ccontrol}. Proceeding analogously and applying the EED estimates from Lemma~\ref{l:basiceedtorusw} yields
\begin{eqnarray}
	\lefteqn{	\abs{c(T) - c(0)} } \nonumber \\
	&\overset{\eqref{cutshift}}\lesssim & \left\vert\int_{I_-} (u(T)-w_c(T))\varphi \,\dx\right\vert + \left\vert\int_{I_-} (u(0)-w_c(0))\varphi \,\dx\right\vert  + \left\vert \int_0^T\int_{I_-}  (u-w_c)_{xx}\varphi_{xx} \,\dx\,\dt \right\vert \nonumber \\
	& &+ \sup_{\abs{\tau} \leq 1} G''(\tau)\int_0^T\int_{I_-}\abs{u-w_c} \abs{\varphi_{xx}}   \,\dx \,\dt \nonumber \\
	&\overset{\eqref{etaL2}}\lesssim &  L^{1/2}\E(0)^{1/2} + \frac{1}{L^{3/2}}  \int_0^T\E(t)^{1/2}\,\dt. \nonumber
\end{eqnarray}
\end{proof}

\subsection{Proofs of control of the excess mass}\label{ss:propproof}

We use a duality argument in order to prove Propositions~\ref{p:L1boundtorus},~\ref{p:L1bound}, and \ref{p:L1boundsmallE}.

In order to carry out the duality argument, we will need semigroup estimates for the dual of the linearized operator $\partial_t -G''(1) \partial_x^2 + \partial_x^4$ on a variable domain. In \cite{OSW} the authors use Schauder theory to treat the motion perturbatively. Motion of the boundary at rate $t^{1/2}$ (on large temporal scales) would be critical; in our application on the torus, as in \cite{OSW}, it is sufficient to take $t^{1/4}$.

For our decay estimates, it will be convenient to introduce the following notation.
\begin{notation}
	We use the following notation for two quantities $A$ and $B$:
	\[ A \vee B = \max \{ A, B \} \quad \mbox{and} \quad A \wedge B = \min \{ A, B\}.  \]
\end{notation}
We recall the variable domain estimates from \cite{OSW}.
\begin{proposition}[{\cite[Proposition~3.1]{OSW}}] \label{p:schauderest}
	Let $T<\infty$. For every $C_1 \in(0,\infty)$ there exists a constant $\Lambda\in [1,\infty)$ large with respect to $C_1^4$ such that the following holds true. Consider the curve
	\begin{align*}
	\gamma(t) \coloneqq \tilde{a}(T) - C_1(T-t+\Lambda)^{1/4}\;\;\mbox{for all } t\in[0,T].
	\end{align*}
	Let $\zeta$ satisfy the backwards problem
	\begin{equation} \label{eq:zetapde}
	\begin{cases}
	\zeta_t + G''(1) \zeta_{xx} - \zeta_{xxxx} = 0 & \mbox{ on } \; t \in [0,T), \; x\in(-\infty,\gamma(t)),  \\
	\zeta = \zeta_{xx} = 0 & \mbox{ for } \; t \in [0,T), \; x=\gamma(t),  \\
	\zeta = \psi & \mbox{ for } \; t=T, \; x\in (-\infty,0],     \end{cases}
	\end{equation}
	where $\psi$ satisfies $\norm{\psi}_{L^{\infty}} \leq 1$. There holds
	\begin{eqnarray}
	\norm{\zeta}_{L^{\infty}} & \lesssim &   1   \label{eq:zeta} \\
	\norm{\zeta_{x}}_{L^{\infty}} & \lesssim & \frac{1}{(T-t)^{1/2}} \wedge \frac{1}{(T-t)^{1/4}},      \label{eq:zetax}  \\
	\norm{\zeta_{xx} }_{L^{\infty}} & \lesssim &  \frac{1}{T-t} \wedge \frac{1}{(T-t)^{1/2}},      \label{eq:zetaxx}  \\
	\norm{ \zeta_{xxx} }_{L^{\infty}} & \lesssim &  \frac{1}{(T-t)^{3/2}} \wedge \frac{1}{(T-t)^{3/4}},    \label{eq:zetaxxx}  \\
	\norm{ \zeta_{xxxx} }_{L^{\infty}} & \lesssim & \frac{1}{(T-t)^2} \wedge \frac{1}{T-t}.      \label{eq:zetaxxxx}
	\end{eqnarray}
\end{proposition}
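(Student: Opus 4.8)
The plan is to reduce the moving-domain backward problem \eqref{eq:zetapde} to a small perturbation of a constant-coefficient fourth-order parabolic equation on a \emph{fixed} half-line, to solve that baseline problem explicitly by odd reflection and Fourier kernel estimates, and finally to absorb the perturbation by a Duhamel (or parabolic Schauder) argument; this is essentially the route of \cite{OSW}, which I would follow.

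\emph{Reduction to a fixed domain.} First I would reverse time, $s := T-t$, so that \eqref{eq:zetapde} reads $\zeta_s = G''(1)\zeta_{xx} - \zeta_{xxxx}$ on $\{x < \gamma(T-s)\}$ with $\zeta = \zeta_{xx} = 0$ at the moving endpoint and $\zeta|_{s=0} = \psi$. The endpoint velocity is $\dot\gamma(t) = \tfrac{C_1}{4}(T-t+\Lambda)^{-3/4}$, which is bounded by $\tfrac{C_1}{4}\Lambda^{-3/4}$ uniformly in $t \in [0,T]$, and is even smaller at short times $T-t$, which is the regime where a fourth-order flow is most sensitive to boundary motion. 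Applying the time-dependent translation $y = x - \gamma(T-s)$ straightens the domain to $\{y<0\}$, preserves the hinged boundary conditions, and turns the equation into
\[
\zeta_s = G''(1)\zeta_{yy} - \zeta_{yyyy} - \dot\gamma(T-s)\,\zeta_y \quad\text{on } \{y<0\},\qquad \zeta = \zeta_{yy} = 0 \text{ at } y=0 .
\]
Choosing $\Lambda$ large with respect to $C_1^4$ (as in the statement) makes the coefficient of the transport perturbation $\zeta_y$ as small as desired, uniformly in $s$.

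\emph{The constant-coefficient model.} For the unperturbed equation on $(-\infty,0)$, the boundary conditions $\zeta = \zeta_{yy} = 0$ at $y=0$ are exactly those preserved by odd reflection, so $\zeta(s,\cdot) = \Phi_s * \psi_{\mathrm{odd}}$ on $(-\infty,0)$, where $\psi_{\mathrm{odd}}$ is the (bounded) odd extension of the terminal data and $\widehat{\Phi_s}(\xi) = \exp(-s(\xi^4 + G''(1)\xi^2))$. A scaling analysis of this symbol — the $\xi^4$ term controlling short times $s\le 1$ (spatial scale $s^{1/4}$), the $\xi^2$ term taking over for $s\ge 1$ (spatial scale $s^{1/2}$, the extra damping only helping) — gives $\|\partial_y^k \Phi_s\|_{L^1(\R)} \lesssim s^{-k/4} \wedge s^{-k/2}$ for $k = 0,\dots,4$. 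Young's inequality then yields $\|\partial_y^k \zeta(s)\|_{L^\infty} \lesssim (s^{-k/4}\wedge s^{-k/2})\,\|\psi\|_{L^\infty}$, which, transported back through the translation, is exactly \eqref{eq:zeta}--\eqref{eq:zetaxxxx} with $s = T-t$.

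\emph{The perturbation, and the main obstacle.} Writing the full equation in Duhamel form, $\zeta(s) = \Phi_s * \psi_{\mathrm{odd}} - \int_0^s \dot\gamma(T-\sigma)\,\partial_y\Phi_{s-\sigma} * \zeta(\sigma)\,d\sigma$, and distributing the $y$-derivatives between kernel and solution, with the split $\int_0^s = \int_0^{s/2}+\int_{s/2}^s$ arranged so that every arising singularity $(s-\sigma)^{-j/4}$ is integrable and the smallness of $\zeta$ near $\sigma=0$ is used where needed, the baseline bounds show that the Duhamel term is bounded in the weighted norm $\sup_{s>0}\sum_{0\le k\le 4}(s^{k/4}\vee s^{k/2})\|\partial_y^k\zeta(s)\|_{L^\infty}$ by $C\Lambda^{-3/4}$ times that same norm. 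For $\Lambda$ large this is a contraction, giving the estimates; equivalently one may run interior and boundary parabolic Schauder estimates for the straightened equation, as in \cite{OSW}. I expect this last step to be the main obstacle: the delicate point is to control the transport perturbation \emph{uniformly over arbitrarily large time horizons $T$} while losing no power of $T-t$ in any of the five decay rates — which is precisely what forces the particular form of $\gamma$, the shift $+\Lambda$ decoupling the boundary speed from the amplitude $C_1$, and the subcritical $(\,\cdot\,)^{1/4}$ growth keeping the boundary motion below the $t^{1/4}$ (short-time) and $t^{1/2}$ (long-time) parabolic scales. The non-compactness of the half-line is harmless here, since only unweighted $L^\infty$ bounds are sought and all kernels lie uniformly in $L^1(\R)$.
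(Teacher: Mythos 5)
The paper does not actually prove this proposition: it is imported verbatim from \cite{OSW} (Proposition~3.1 there), so there is no in-paper argument to compare against. Your strategy --- straighten the moving boundary, solve the constant-coefficient hinged half-line problem by odd reflection and $L^1$ kernel bounds for the symbol $e^{-s(\xi^4+G''(1)\xi^2)}$, then absorb the resulting transport term perturbatively --- matches the paper's own description of the cited proof (``Schauder theory to treat the motion perturbatively''), and you correctly identify the two scaling facts that make the result true: $G''(1)>0$ gives the crossover from $s^{-k/4}$ to $s^{-k/2}$ decay, and the $(\,\cdot\,)^{1/4}$ boundary law is subcritical with respect to both parabolic scales.

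Two points in your perturbation step deserve more care. First, the bookkeeping of the smallness constant: bounding the transport coefficient by its supremum $\tfrac{C_1}{4}\Lambda^{-3/4}$ is not what closes the argument uniformly in $T$. In the Duhamel integral the relevant quantity is the \emph{time-integrated} coefficient weighted by the decay of $\|\tilde\zeta_y\|_{L^\infty}$, e.g.
\begin{align*}
\int_1^\infty \frac{C_1}{(\sigma+\Lambda)^{3/4}}\,\sigma^{-1/2}\,d\sigma \sim C_1\Lambda^{-1/4},
\end{align*}
and it is the requirement $C_1\Lambda^{-1/4}\ll 1$ that produces the stated condition ``$\Lambda$ large with respect to $C_1^4$''; your $C\Lambda^{-3/4}$ drops the $C_1$-dependence and the tail of the time integral. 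Second, when you redistribute derivatives in the Duhamel term on $[s/2,s]$ to avoid the nonintegrable $(s-\sigma)^{-1}$ singularity for $\zeta_{xxxx}$, the integration by parts against the reflected kernel generates boundary contributions at $y=0$: the odd extension of the forcing $-b\,\tilde\zeta_y$ is discontinuous there because $\tilde\zeta_y(0)$ need not vanish. These jump terms must be estimated separately (their contribution is still controlled by the pointwise kernel bounds, but it does not come for free from Young's inequality); this is precisely the kind of boundary technicality that the Schauder route in \cite{OSW} is designed to sidestep. Neither issue is fatal, but both need to be addressed for the argument to be complete.
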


We begin with the proof of Proposition~\ref{p:L1boundtorus}.
\begin{proof}[Proof of Proposition~\ref{p:L1boundtorus}]
	\underline{Step 0}: Set-up.

Analogously as in \cite[Proof of Proposition~3.6]{OSW}, we define the curves
\begin{align*}
\begin{aligned} \gamma_{1-}(t) &\coloneqq \tilde{a}(T) - C_1(T-t+\Lambda)^{1/4},&\gamma_{1+}(t)& \coloneqq \tilde{a}(T) + C_1(T-t+\Lambda)^{1/4}, \\
\gamma_{2-}(t) &\coloneqq \tilde{b}(T) - C_1(T-t+\Lambda)^{1/4},& \gamma_{2+}(t) & \coloneqq \tilde{b}(T) + C_1(T-t+\Lambda)^{1/4},
\end{aligned} 
\end{align*}
for order one constants $C_1$ and $\Lambda$ to be specified below.
It will also be convenient to choose $L_0$ sufficiently large so that
\red{\begin{align}
 \Lambda\leq  L^2. \label{lambdacondition}
\end{align}}

Choosing $C_1$ large enough with respect to the implicit constant in \eqref{eq:shiftcontrolini} and fixing a corresponding  $\Lambda$ from Proposition~\ref{p:schauderest}, 
we observe that  the zeros stay bounded within the moving curves, since
\begin{align} \label{bumpinequalitygammapmeps}
\tilde{a}(t) - \gamma_{1-}(t) \geq (T-t+\Lambda)^{1/4} \quad \mbox{and} \quad \gamma_{1+}(t) - \tilde{a}(t) \geq (T-t + \Lambda)^{1/4}
\end{align}
and similarly for $\tilde{b}$ and $\gamma_{2\pm}$.
Notice also that the definition of the $\gamma$-curves (including \eqref{lambdacondition}), the fact \red{$\tilde{w}\in\N(0,L)$}, and the cut-off time $\delta L^2$ imply  that the $\gamma$-curves stay well-separated in the sense that
\red{\begin{align}
	\min\Big\{\gamma_{2-}(t),-\gamma_{1+}(t)\Big\}\gtrsim L\qquad\text{for all }t\in[0,\delta L^2].\label{gammafarapart}
	\end{align}}

As in \cite{OSW}, we define a simpler stand-in for $\Wt$ via
\begin{align} \label{Wtilde}
\Wb \coloneqq \int_{-L}^{\gamma_{1-}(t)} \abs{u(t)+1} \,\dx + \int_{\gamma_{2+}(t)}^{L} \abs{u(t)+1} \,\dx + \int_{\gamma_{1+}(t)}^{\gamma_{2-}(t)} \abs{u(t)-1}\,\dx,
\end{align}
where the third integral is set to zero for times such that $\gamma_{1+}(t)\geq\gamma_{2-}(t)$.
The following lemma relates $\Wt$ and $\Wb$.
\begin{lemma}[Link between $\Wt$ and $\Wb$] \label{l:linkWW}
	Let $W_0 < \infty$, and $\epsilon >0$. 
	There exists  $L_0<\infty$ such that for $L \geq L_0$, the following holds true. Let $u$ be a smooth solution of \eqref{ch} with initial data satisfying \eqref{e0bd} and \eqref{W_uni} and let $\wt\in\mathcal{N}(L)$ be the corresponding glued kink profile. 

	Then for $t\in [0,T]$ there holds
	\begin{align} \label{eq:linkWWres}
	\Wb(t) \lesssim \Wt(t)+1 \; \mbox{ for all } t\in [0,T] \quad \mbox{and} \quad \Wt(T) \lesssim \Wb(T)+1.
	\end{align}
\end{lemma}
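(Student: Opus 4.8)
The plan is to estimate $\Wb$ and $\Wt$ against each other by exploiting that the moving curves $\gamma_{1\pm}$, $\gamma_{2\pm}$ sit at a controlled distance from the zeros $\tilde a$, $\tilde b$ (via \eqref{bumpinequalitygammapmeps}), together with the exponential closeness of $\wt$ to $\pm1$ away from its zeros. First I would record the pointwise consequence of \eqref{bumpinequalitygammapmeps} and the properties of $\wt\in\N(L)$: on $(-L,\gamma_{1-}(t))$ and on $(\gamma_{2+}(t),L)$ there holds $\abs{\wt+1}\lesssim \exp(-L/C)$ (since these regions are at distance $\gtrsim (T-t+\Lambda)^{1/4}\gtrsim 1$, and in fact $\gtrsim L$ away from the gluing zone by the definition of $\N(L)$ once $L_0$ is large using \eqref{lambdacondition}), while on $(\gamma_{1+}(t),\gamma_{2-}(t))$ one has $\abs{\wt-1}\lesssim \exp(-L/C)$; if that last interval is empty the corresponding term in $\Wb$ is zero by definition and there is nothing to estimate there. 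The key quantitative input is that the total length of the integration domains in \eqref{Wtilde} is at most $2L$, so each occurrence of $\exp(-L/C)$ integrated over its domain contributes at most $2L\exp(-L/C)\lesssim 1$.

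For the first inequality in \eqref{eq:linkWWres}, I would write $\abs{u+1}\le \abs{u-\wt}+\abs{\wt+1}$ on the two outer intervals and $\abs{u-1}\le\abs{u-\wt}+\abs{\wt-1}$ on the middle one, integrate, bound $\int\abs{u-\wt}\,\dx$ by $\Wt(t)$, and absorb the exponential errors into the additive $+1$ as just described; this gives $\Wb(t)\lesssim\Wt(t)+1$ for all $t\in[0,T]$. For the second inequality, at the final time $t=T$ the curves satisfy $\gamma_{1-}(T)=\tilde a(T)-C_1\Lambda^{1/4}$, $\gamma_{1+}(T)=\tilde a(T)+C_1\Lambda^{1/4}$, and likewise for $\gamma_{2\pm}(T)$ around $\tilde b(T)$, so the three intervals in \eqref{Wtilde} cover all of $[-L,L]$ except the two bounded ``windows'' $[\gamma_{1-}(T),\gamma_{1+}(T)]$ and $[\gamma_{2-}(T),\gamma_{2+}(T)]$, each of length $2C_1\Lambda^{1/4}=O(1)$. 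On these windows I would bound $\int\abs{u-\wt}\,\dx$ using Cauchy--Schwarz and the $L^\infty$ bound on $u-\wt$ coming from \eqref{eq:basiceedconsequence} (or just $\norm{u}_{L^\infty}\lesssim 1$, $\norm{\wt}_{L^\infty}\lesssim 1$), which is $O(1)$ since the windows have length $O(1)$; outside the windows, splitting $\abs{u-\wt}\le\abs{u+1}+\abs{\wt+1}$ (resp.\ with $1$) reproduces $\Wb(T)$ plus an exponential error $\lesssim 1$. Assembling, $\Wt(T)\lesssim\Wb(T)+1$.

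The only mild subtlety—and the step I would be most careful about—is making sure the ``outer'' and ``middle'' integration regions in $\Wb$ really do lie in the part of the domain where $\wt$ is exponentially close to a constant, i.e.\ that the curves $\gamma_{1\pm},\gamma_{2\pm}$ never wander into the gluing zones near the zeros of $\wt$ nor past the midpoints; this is exactly what \eqref{bumpinequalitygammapmeps} guarantees (for all $t\in[0,T]$, not just at $T$), and it in turn rests on the rough zero-motion bound \eqref{eq:shiftcontrolini} combined with the choice of $C_1$ large relative to its implicit constant and $\Lambda$ from Proposition~\ref{p:schauderest}. Once that geometry is pinned down, everything else is a triangle inequality plus the observation that $2L\exp(-L/C)\lesssim 1$ for $L\ge L_0$.
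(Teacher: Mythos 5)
Your overall route is the same as the paper's: triangle inequalities in both directions, an $O(1)$ bound for the integrals of $\abs{\wt\pm 1}$ over the regions cut out by the $\gamma$-curves, and an $O(1)$ bound for $\int\abs{u-\wt}\,\dx$ over the two length-$2C_1\Lambda^{1/4}$ windows at time $T$ (the paper gets the latter from Cauchy--Schwarz together with $\norm{\ft(T)}_{L^2}^2\lesssim\Et\lesssim 1$; your version via the $L^\infty$ bound on $u-\wt$ works equally well).

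There is, however, one incorrect step in your justification. The pointwise bounds $\abs{\wt+1}\lesssim\exp(-L/C)$ on $(-L,\gamma_{1-}(t))\cup(\gamma_{2+}(t),L)$ and $\abs{\wt-1}\lesssim\exp(-L/C)$ on $(\gamma_{1+}(t),\gamma_{2-}(t))$ are false: what controls $\abs{\wt\pm 1}$ is the distance to the \emph{zeros} $\tilde a,\tilde b$ of $\wt$, not the distance to the gluing zone, and by \eqref{bumpinequalitygammapmeps} the curves come within $(T-t+\Lambda)^{1/4}$ of the zeros, which for $t$ close to $T$ is only $\Lambda^{1/4}=O(1)$. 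So near the endpoints $\gamma_{1\pm},\gamma_{2\pm}$ one only has $\abs{\wt\pm1}\lesssim\exp(-(T-t+\Lambda)^{1/4}/C)$, an order-one quantity, and your ``key quantitative input'' $2L\exp(-L/C)\lesssim 1$ does not apply. The conclusion you need is nevertheless true and is exactly what the paper uses: since $\abs{\wt(x)+1}\lesssim\exp(-\abs{x-\tilde a(t)}/C)$ to the left of $\tilde a(t)$ (and analogously on the other regions), integrating the exponential tail gives $\int_{-L}^{\gamma_{1-}(t)}\abs{\wt+1}\,\dx\lesssim\int_{(T-t+\Lambda)^{1/4}}^{\infty}\exp(-r/C)\,dr\lesssim 1$, i.e.\ an order-one bound coming from integrability of the tail rather than from a uniform exponential-in-$L$ smallness. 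With that repair the rest of your argument goes through and coincides with the paper's proof.
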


\begin{proof}[Proof of Lemma \ref{l:linkWW}]
	We begin with the first estimate in \eqref{eq:linkWWres}. We will give the argument for the first term in \eqref{Wtilde}; the bounds for the other two terms are obtained similarly. Using the triangle inequality, \eqref{bumpinequalitygammapmeps}, and the fact that $\wt$ can be approximated by $\pm 1$  outside an order-one neighborhood of its zeros, we obtain
	\begin{eqnarray}
	\int_{-L}^{\gamma_{1-}(t)} \abs{u(t)+1} \,\dx
	&\leq& \int_{-L}^{\gamma_{1-}(t)}\abs{u(t)-\wt(t)}\,\dx
	+ \int_{-L}^{\gamma_{1-}(t)} \abs{\wt(t)+1} \,\dx  \nonumber  \\
	&\overset{\eqref{bumpinequalitygammapmeps}}\lesssim&\Wt(t)+ \int_{-L}^{\tilde{a}(t)-(T-t+\Lambda)^{1/4}} \abs{\wt(t)+1} \,\dx \lesssim \Wt(t)+1. \nonumber 
	\end{eqnarray}
	For the second estimate in \eqref{eq:linkWWres}, we calculate
	\begin{eqnarray}
	\lefteqn{ \int_{[-L,L]} \abs{u(T) - \wt(T)}\,\dx} \nonumber \\
	& \overset{\eqref{Wtilde}}\leq & \Wb(T)  + \int_{-L}^{\gamma_{1-}(T)} \abs{\wt(T) + 1}\,\dx + \int_{\gamma_{2+}(T)}^{L} \abs{\wt(T) + 1}\,\dx + \int_{\gamma_{1+}(T)}^{\gamma_{2-}(T)} \abs{\wt(T) - 1}\,\dx\nonumber \\
	& & +\int_{\gamma_{1-}(T)}^{\gamma_{1+}(T)}\abs{u(T)-\wt(T)}\,\dx + \int_{\gamma_{2-}(T)}^{\gamma_{2+}(T)}\abs{u(T) - \wt(T)}\,\dx \nonumber \\
	& \lesssim & \Wb(T) +  1 +  \int_{\gamma_{1-}(T)}^{\gamma_{1+}(T)}\abs{u(T) - \wt(T)}\,\dx + \int_{\gamma_{2-}(T)}^{\gamma_{2+}(T)} \abs{u(T) - \wt(T)}\,\dx. \nonumber
	\end{eqnarray}
	For the last two terms on the right-hand side, we use the Cauchy-Schwarz inequality and our choice of $C_1$ and $\Lambda$ to estimate
	\begin{eqnarray}
	\lefteqn{\int_{\gamma_{1-}(T)}^{\gamma_{1+}(T)} \abs{u(T) - \wt(T)}\,\dx + \int_{\gamma_{2-}(T)}^{\gamma_{2+}(T)} \abs{u(T) - \wt(T)}\,\dx}\nonumber \\
	&\lesssim& \left(C_1\Lambda^{1/4}\right)^{1/2}\norm{\ft(T)}_{L^2([-L,L])}
	\overset{\eqref{eq:torusL1boundassumpt}}\lesssim
\Et^{1/2}\overset{\eqref{diffcons2}}\lesssim 1.\notag
	\end{eqnarray}
\end{proof}

\uline{Step 1}: Main idea.
We now explain the main idea, which is to use duality to express the third term in $\Wb$ as
	\begin{align*}
	\int_{\gamma_{1+}(t)}^{\gamma_{2-}(t)} \abs{u(t) - 1}\, \dx = \sup \left\{  \int_{\gamma_{1+}(t)}^{\gamma_{2-}(t)} \psi (u(t)-1)  \, \dx \mid \psi: \red{(\gamma_{1+}(t), \gamma_{2-}(t))} \to \R \, \mbox{ such that } \norm{\psi}_{L^{\infty}} \leq 1  \right\}.
	\end{align*}
	Bearing in mind periodicity, the sum of the first two terms in $\Wb$ can be treated in the same way. Choosing $\psi$ as the terminal data for~\eqref{eq:zetapde}, we will develop useful bounds for
\begin{align*}
  \ddt\int_{\gamma_{1+}(t)}^{\gamma_{2-}(t)}   (u(t)-1)\zeta\,\dx.
\end{align*}
 For this, the semigroup estimates from Proposition~\ref{p:schauderest} play an essential role. An integration in time and taking the supremum over the terminal data---and the analogous arguments for the remaining two terms in $\Wb$---will lead to
	\begin{eqnarray}
	\Wb(T)
  	&\lesssim& \begin{cases} T +\Wtz +1& \mbox{for all }T, \\
  \Big(\frac{\ln T}{T^{1/4}}+ \frac{T^{1/4}}{L^{1/2}} \Big)\Wt_T +\Wt_T^{2/3} + \Wtz+1 & \mbox{for } 2\leq T \leq \delta L^2, \\
   \end{cases} \nonumber
	\end{eqnarray}
	 for any $\delta\in(0,1)$.
	 We will use the first estimate on $[0,T_1]$ and the second estimate on $[T_1,\delta L^2]$, choosing $T_1$ large enough and $\delta$ small enough so that we can absorb the  linear in $\Wt_T$ terms from the right-hand side. Combining this with Lemma~\ref{l:linkWW} and Young's inequality, we obtain
\begin{align*}
 \bar{\W}(T)+ \Wt(T)\lesssim\Wtz+1.
\end{align*}

	\underline{Step 2}: Estimates. We now turn in more detail to
\begin{align*} 
		\int_{\gamma_{1+}(t)}^{\gamma_{2-}(t)} \abs{u(t)-1}\, \dx ,
		\end{align*}
\red{for which we will introduce a cut-off function in order to work with only one moving boundary at a time.}
We will consider ``short'' times (for which the estimate is allowed to grow with $T$) and large times $2\leq T\leq \delta L^2$.
	
We identify the torus with the interval $[-L,L]$ on $\R$.
\red{Because of the separation from~\eqref{gammafarapart},} we can define $\varphi: \R \to [0,1]$ such that the derivatives of $\varphi$ are supported on an order $L$ interval around the origin and
\begin{align*}
		\varphi \equiv 0 \mbox{ on }
		\left(-\infty,\gamma_{1+}(t)\right]\quad \mbox{and} \quad \varphi \equiv 1 \mbox{ on } \left[\red{-2},\infty \right)\qquad\text{for all }t\leq T\leq \delta L^2, 	
		\end{align*}
		while
		\begin{equation} \label{etaLiL2}
		\norm{\varphi_{x}}_{L^2(\R)} \lesssim \frac{1}{L^{1/2}}, \quad \norm{\varphi_{xx}}_{L^2(\R)} \lesssim \frac{1}{L^{3/2}}, \quad \mbox{and} \quad \norm{\varphi_{xxx}}_{L^2(\R)} \lesssim \frac{1}{L^{5/2}}.
		\end{equation}
We use $\varphi$ to write
		\begin{align*}
		\int_{\gamma_{1+}(t)}^{\gamma_{2-}(t)}\abs{u(t)-1}\,\dx= \int_{-\infty}^{\gamma_{2-}(t)}\abs{u(t)-1}\varphi\,\dx + \int_{\gamma_{1+}(t)}^{\infty}\abs{u(t)-1}(1-\varphi)\,\dx.
		\end{align*}
We will provide estimates for the first term on the right-hand side; the estimates for the second term are analogous.
Using duality in the form
\begin{eqnarray}
		\lefteqn{\int_{-\infty}^{\gamma_{2-}(t)} \abs{u(t) - 1}\varphi \, \dx }\nonumber \\
		&=& \sup \left\{  \int_{-\infty}^{\gamma_{2-}(t)} \psi (u(t)-1) \varphi \, \dx \mid \psi: (-\infty, \gamma_{2-}(t)) \to \R \, \mbox{ s.t. } \norm{\psi}_{L^{\infty}((-\infty,\gamma_{2-}(t)))} \leq 1  \right\} \nonumber
		\end{eqnarray}
and defining $\zeta$ as in Proposition~\ref{p:schauderest} as usual, it suffices to establish estimates for
\begin{align*}
\int_0^T  \left|\ddt \int_{-\infty}^{\gamma_{2-}} \zeta (u-1) \varphi \, \dx\right|\,\dt.
\end{align*}

		For simplicity of notation, we will in the remainder denote $\gamma \coloneqq \gamma_{2-}$ and not explicitly indicate the domain over which norms are taken when it is clear from the context.
		The properties of $\zeta$, $\wt$, and $\varphi$ and integration by parts yield
		\begin{eqnarray}
		\lefteqn{\ddt \int_{-\infty}^{\gamma} \zeta (u-1) \varphi \, \dx} \nonumber \\
		& \overset{\zeta(\gamma)=0}= & \int_{-\infty}^{\gamma} \zeta_t (\wt-1) \varphi \, \dx + \int_{-\infty}^{\gamma} \zeta_{xx} \left(G'(u) - G'(\wt)-G''(1)\ft \right) \varphi \, \dx \nonumber \\
			& & - \left[ \zeta_x  \left(G'(u) - G'(\wt)-\ft_{xx}\right) \varphi \right]_{x=\gamma} + \left[ \zeta_{xxx}\ft \varphi\right]_{x=\gamma} \nonumber \\
			& & + \int_{-\infty}^{\gamma} \zeta_{xx} \left( 2 \ft_{x} \varphi_x + \ft \varphi_{xx} \right) \, \dx  + \int_{-\infty}^{\gamma} \left( 2 \zeta_x \varphi_x + \zeta \varphi_{xx} \right) \left( G'(u) - G'(\wt) - \ft_{xx} \right) \, \dx \nonumber \\
			& & + \int_{(-\infty,\gamma]\cap (-1,1)} \zeta_{xx}\varphi  (G'(\wt)-\wt_{xx})\,\dx
\nonumber \\
			& \eqqcolon & J_1 + J_2 + J_3 + J_4 + J_5 + J_6 + J_7. \nonumber		
		\end{eqnarray}
		The following arguments are similar to those from \cite[Proposition~3.6]{OSW}. While the terms $J_1,...,J_4$ can essentially be treated as $I_1,...,I_4$ in \cite[Proof of Proposition~3.6]{OSW}, the new terms $J_5$ and $J_6$ appear because of the presence of the cut-off function $\varphi$, and $J_7$ appears since on $(-1,1)$, $\wt$ does not satisfy $-\wt_{xx}+G'(\wt)=0$.
For completeness and the convenience of the reader, we include all of the estimates. Often the argument for the short time estimate and the argument for the time integral over the so-called terminal layer $[T-1,T]$ for $T\geq 2$ will be the same; in that case we point this out briefly and leave it to the reader to check.
We remark for reference below that
		\begin{align} \label{eq:bumpfact}
		\int_{-\infty}^{\gamma} \abs{\wt_{x}}\varphi + \abs{\wt-1}\varphi_x + \abs{\wt-1}\varphi\, \dx \lesssim 1.
		\end{align}

		\underline{Term $J_1$}: For order one times, we use integration by parts to deduce
		\begin{eqnarray}
			\lefteqn{\left\vert \int_{-\infty}^{\gamma} \zeta_t(\wt-1) \varphi \, \dx  \right\vert  =  \left\vert \int_{-\infty}^{\gamma} \Big(\zeta_{xxxx}-\zeta_{xx}G''(1) \Big)(\wt-1) \varphi \, \dx  \right\vert} \nonumber \\
			& = & \Big\vert \zeta_{xxx}(\gamma)(\wt(\gamma)-1)\varphi(\gamma) - \int_{-\infty}^{\gamma} \zeta_{xxx} \Big( \wt_{x}\varphi + (\wt-1)\varphi_x \Big) \, \dx \nonumber \\
			& & - \int_{-\infty}^{\gamma} \zeta_{xx}G''(1)(\wt-1) \varphi \, \dx \Big\vert \nonumber \\
			& \lesssim & \norm{\zeta_{xxx}}_{L^{\infty}} \left(1 +  \int_{-\infty}^{\gamma} \abs{\wt_{x}\varphi + (\wt-1)\varphi_x} \, \dx \right) + \norm{ \zeta_{xx}}_{L^{\infty}} \int_{-\infty}^{\gamma} \abs{\wt-1}\varphi\, \dx \nonumber \\
			&  \overset{\eqref{eq:zetaxx},\, \eqref{eq:zetaxxx}, \eqref{eq:bumpfact}}{\lesssim} & \frac{1}{(T-t)^{1/2}} + \frac{1}{(T-t)^{3/4}}, \label{J11}
		\end{eqnarray}
		so that
			\begin{align*}
			\int_0^T \abs{J_1} \, \dt  \overset{\eqref{eq:zetaxx},\eqref{eq:zetaxxx},\eqref{eq:bumpfact}}\lesssim T^{1/2} + T^{1/4}.
		\end{align*}
		
		For the terminal layer $[T-1,T]$, we estimate as in \eqref{J11} and use the Schauder estimates from Proposition~\ref{p:schauderest} to derive
			\begin{align*}
			\left\vert \int_{-\infty}^{\gamma} \zeta_t(\wt-1) \varphi \, \dx  \right\vert
			 \overset{\eqref{eq:zetaxx},\, \eqref{eq:zetaxxx}, \eqref{eq:bumpfact}}{\lesssim} \frac{1}{(T-t)^{1/2}} + \frac{1}{(T-t)^{3/4}}, \nonumber
		\end{align*}
		so that
		\begin{align*}
			\int_{T-1}^T \left\vert J_1  \right\vert \, \dt \lesssim 1.
		\end{align*}
		
		For $T \geq 2$, we use
		Proposition~\ref{p:schauderest} to estimate
		\begin{align*}
			\left\vert \int_{-\infty}^{\gamma} \zeta_t (\wt-1) \varphi \, \dx\right\vert \lesssim \norm{\zeta_t}_{L^{\infty}} \int_{-\infty}^{\gamma} \abs{\wt-1}\varphi \, \dx  \overset{\eqref{eq:zetapde}, \,\eqref{eq:zetaxx},\,\eqref{eq:zetaxxxx}}{\lesssim} \frac{\exp\left( -\frac{\tilde{a}(T)-\gamma}{C}\right)}{T-t}  \lesssim  \frac{\exp\left( -\frac{(T-t)^{1/4}}{C}\right)}{T-t},
		\end{align*}
		where we have used exponential tails of $\abs{\wt-1}$ order one away from $\tilde{a}(T)$ and \eqref{bumpinequalitygammapmeps}.
		Integrating in time over $[0,T-1]$ gives
		\begin{align}
			\int_0^{T-1} \left\vert J_1  \right\vert \, \dt  \lesssim \int_0^{T-1} \frac{1}{(T-t)^{3/4}} \exp\left( -\frac{(T-t)^{1/4}}{C}\right)\, \dt \lesssim 1. \nonumber
		\end{align}

		\underline{Term $J_2$}: We reformulate $J_2$ as
		\begin{align*}
		\abs{J_2} = \left\vert \int_{-\infty}^{\gamma} \zeta_{xx} \Big(G'(u) - G'(\wt)-G''(\wt)\ft  + \left(G''(\wt)-G''(1)\right)\ft \Big) \varphi \, \dx \right\vert \nonumber		
		\end{align*}
		and use the Cauchy-Schwarz inequality,  square integrability of $\abs{\wt-1}$ on $(-L, \gamma)\cap \supp(\varphi)$, and the properties of $G$ to obtain
		\begin{eqnarray}
		\lefteqn{\left\vert\int_{-\infty}^{\gamma}\zeta_{xx} ( G''(\wt)-G''(1) )\ft \varphi \, \dx \right\vert} \nonumber \\
		& \lesssim &  \norm{\zeta_{xx}}_{L^{\infty}} \left( \int_{-\infty}^{\gamma} (G''(\wt) - G''(1))^2\varphi^2\, \dx \right)^{1/2} \left( \int_{-\infty}^{\gamma}  \ft^2 \, \dx \right)^{1/2} \notag\\
		&\lesssim& \norm{\zeta_{xx}}_{L^{\infty}} \left( \int_{-\infty}^{\gamma}  \ft^2 \, \dx \right)^{1/2}.\label{1bumpI2eq1}
		\end{eqnarray}
		For the remaining term, Taylor's formula, the $L^{\infty}$ bound on $u$ and $\wt$, and boundedness of $G'''$ on compact intervals yield
		\begin{eqnarray} \label{1bumpI2eq2}
		\lefteqn{\left\vert\int_{-\infty}^{\gamma} \zeta_{xx} \left( G'(u) - G'(\wt)-G''(\wt)\ft\right) \varphi \, \dx\right\vert} \nonumber \\
		& \lesssim & \norm{\zeta_{xx}}_{L^{\infty}} \int_{-\infty}^{\gamma}  \abs{ G'(u) - G'(\wt)-G''(\wt)\ft} \,\dx \lesssim \norm{\zeta_{xx}}_{L^{\infty}} \int_{-\infty}^{\gamma} \ft^2 \, \dx.
		\end{eqnarray}
		We combine \eqref{1bumpI2eq1}, \eqref{1bumpI2eq2}, and Assumption \eqref{eq:torusL1boundassumpt} to bound
		\begin{align*}
		\abs{J_2} \overset{\eqref{eq:torusL1boundassumpt}}\lesssim\norm{\zeta_{xx}}_{L^{\infty}} \left( \Et + \Et^{1/2}\right).
		\end{align*}
		
		For order one times, the simple energy estimate $\Et \lesssim 1$ and Proposition~\ref{p:schauderest} lead to
		\begin{align*}
		\int_0^T \abs{J_2} \,\dt \lesssim \int_0^T \norm{\zeta_{xx}}_{L^{\infty}} \Big( \Et + \Et^{1/2} \Big) \,\dt  \overset{\eqref{eq:zetaxx}}{\lesssim}  \int_0^T  \frac{1}{(T-t)^{1/2}} \,\dt \lesssim T^{1/2}. \nonumber
		\end{align*}
		
		For $T \geq 2$,  the above argument bounds the integral over the terminal layer by $1$.
	
		For the integral over $[0,T-1]$, we apply $\Et\lesssim1$ and \eqref{eq:torusL1boundassumpt} to obtain
		\begin{align}
		\abs{J_2} \overset{\eqref{eq:odenashresult}}{\lesssim} \norm{\zeta_{xx}}_{L^{\infty}}
		\frac{\Wt_T}{t^{1/4}}. \nonumber
		\end{align}
		An integration in time yields
		\begin{align*}
		\int_0^{T-1} \vert J_2 \vert \, \dt \overset{\eqref{eq:zetaxx}}{\lesssim} & \int_0^{T-1}\frac{1}{T-t}
		\frac{\Wt_T}{t^{1/4}}\,\dt \lesssim  \frac{\ln T}{T^{1/4}}\Wt_T + 1.
		\end{align*}
			
		\underline{Term $J_3$}: For the first term in $J_3$, we again use boundedness of $G''$ on compact intervals to obtain
		\begin{align*}
		\Big\vert \zeta_x(\gamma)  \Big(G'(u(\gamma)) - G'(\wt(\gamma))\Big) \Big\vert \lesssim \norm{\zeta_{x}}_{L^{\infty}} \left\vert \int_{\wt(\gamma)}^{u(\gamma)} G''(s) \, \rm{d}s \right\vert \lesssim \norm{\zeta_{x}}_{L^{\infty}} \abs{\ft(\gamma)}.
		\end{align*}
		Substituting into $J_3$, we deduce
		\begin{align} \label{eq:I3est1}
		\vert J_3 \vert \lesssim \norm{\zeta_x}_{L^{\infty}} \left( \abs{ \ft(\gamma)} + \abs{\ft_{xx}(\gamma)} \right).
		\end{align}
		
		For small times, we  apply the simplistic energy and dissipation estimates from \eqref{eq:basiceedconsequence} to derive
		\begin{align} \nonumber
		\int_0^T \norm{\zeta_x}_{L^{\infty}}  \abs{\ft(\gamma)} \, \dt \lesssim \int_0^T \norm{\zeta_x}_{L^{\infty}}  \norm{\ft}_{L^{\infty}} \, \dt  \overset{\eqref{eq:basiceedconsequence},\eqref{eq:zetax}}{\lesssim} \int_0^T \frac{1}{(T-t)^{1/4}} \, \dt \lesssim T^{3/4}
		\end{align}
		and Cauchy-Schwarz and \eqref{diff} in
		\begin{align} \nonumber
		\int_0^T \norm{\zeta_x}_{L^{\infty}} \abs{\ft_{xx}(\gamma)} \, \dt\lesssim \int_0^T \norm{\zeta_x}_{L^{\infty}} \norm{\ft_{xx} }_{L^{\infty}} \, \dt  \overset{\eqref{eq:basiceedconsequence},\eqref{eq:zetax}}{\lesssim} \int_0^T \frac{\D^{1/2} + \exp(-L/C)}{(T-t)^{1/4}} \, \dt \lesssim T^{3/4}.
		\end{align}
For $T \geq 2$, we use the same estimates on the terminal layer $[T-1,T]$.

To estimate \eqref{eq:I3est1} for the time interval $[0,T-1]$, we 
use the support of $\varphi$ and argue as in the proof of Lemma~\ref{l:hardy}
 for $(u-v_{\tilde{a}})(\tilde{a})$ on $(-L,0)$ to deduce
		\begin{align*}
			\abs{\ft(\tilde{a})}^2\lesssim \int_{-L}^{0} \ft_x^2\,\dx.
		\end{align*}

		This together with the triangle inequality, the Cauchy-Schwarz inequality, and \eqref{bumpinequalitygammapmeps} gives
		\begin{eqnarray}
		\lefteqn{\abs{\ft(\gamma)}} \nonumber \\
		& \leq & \abs{\ft(\gamma) - \ft(\tilde{a})}+ \abs{\ft(\tilde{a})} =\left\vert\int_{\gamma}^{\tilde{a}} \ft_{x} \,\dx   \right\vert+ \abs{\ft(\tilde{a})} \nonumber \\
		&\overset{\eqref{eq:Derror}}\lesssim & \abs{\tilde{a}-\gamma}^{1/2} \left(\D^{1/2}+ \exp\left( -L/C \right)\right) + \D^{1/2} + \exp\left( -L/C\right) \nonumber \\
		& \overset{\eqref{bumpinequalitygammapmeps}}{\lesssim} & (T-t+\Lambda)^{1/8} \left(\D^{1/2}+\exp\left( -L/C \right)\right). \nonumber
		\end{eqnarray}
		Combining this with another application of \eqref{eq:basiceedconsequence} to the second term in \eqref{eq:I3est1}, we conclude
		\begin{align} \label{eq:I3est3}
		\abs{J_3} \overset{\eqref{eq:basiceedconsequence}}\lesssim \norm{\zeta_x}_{L^{\infty}}(T-t+\Lambda)^{1/8} \left(\D^{1/2}+ \exp\left( -L/C \right)\right).
		\end{align}
		
		For large times, H\"{o}lder's inequality with exponent $\frac{3}{2}$ and Lemma \ref{l:intdissipationbound} yield
		\begin{eqnarray}
		\lefteqn{\int_0^{T-1} \abs{J_3} \, \dt} \nonumber \\
		& \overset{\eqref{eq:I3est3}}{\lesssim} & \int_0^{T-1} \norm{\zeta_x}_{L^{\infty}} (T-t+\Lambda)^{1/8} \left(\D^{1/2}+\exp(-L/C)\right) \, \dt \nonumber \\
		& \overset{\eqref{eq:zetax}}{\lesssim} & \int_0^{T-1}  \frac{\D^{1/2}}{(T-t)^{3/8}} \,\dt  + \int_0^{T-1}  \frac{1}{(T-t)^{3/8}} \exp(-L/C) \,\dt \nonumber \\
		& \lesssim &  \left( \int_0^{T-1} \left( \frac{1}{(T-t)^{3/8}} \right)^3 \, \dt \right)^{1/3}  \left( \int_0^{T-1} \D^{3/4} \, {\rm{d}} t \right)^{2/3} +  T^{5/8}\exp(-L/C) \nonumber \\
		& \overset{\eqref{eq:intdissipationboundres}}{\lesssim} & \frac{\Wt_T^{2/3}}{T^{1/24}} + 1 \lesssim \Wt_T^{2/3} +1, \nonumber
		\end{eqnarray}
		where we have used $T \leq \delta L^2$.
		
		\underline{Term $J_4$}: For $J_4$, we use the elementary estimate \eqref{elementary} and \eqref{eq:torusL1boundassumpt} to deduce
		\begin{align*}
		\abs{J_4} \lesssim \norm{\zeta_{xxx}}_{L^{\infty}} \norm{\ft}_{L^{\infty}} {\lesssim}  \norm{\zeta_{xxx}}_{L^{\infty}} \Et^{1/2}.
		\end{align*}
		
		For small times, we use this together with $\Et\lesssim 1$ to conclude
		\begin{align*}
		\int_0^T \abs{J_4}\,\dt\lesssim \int_0^T \norm{\zeta_{xxx}}_{L^{\infty}}  \, \dt  \overset{\eqref{eq:zetaxxx}}{\lesssim} T^{1/4}, \nonumber
		\end{align*}
and similarly for the terminal layer.

For $2\leq T \leq \delta L^2$ and the integral over $[0,T-1]$, we use
		\begin{eqnarray}\int_0^{T-1} \abs{J_4}\,\dt
		&\lesssim& \int_0^{T-1} \norm{\zeta_{xxx}}_{L^{\infty}} \Et^{1/2} \, \dt \nonumber \\
		& \overset{\eqref{eq:torusL1boundassumpt},\,\eqref{eq:zetaxxx}}{\lesssim} &  \Wt_T \int_0^{T-1} \frac{1}{(T-t)^{3/2}} \frac{1}{t^{1/4}} \, \dt  \lesssim \frac{1}{T^{{1/4}}}\Wt_T. \nonumber
		\end{eqnarray}
		
		\underline{Term $J_5$}: Using the Cauchy-Schwarz inequality, we estimate
		\begin{eqnarray}
		\lefteqn{\left\vert \int_{-\infty}^{\gamma} \zeta_{xx} \ft_{x} \varphi_x \,\dx \right\vert + \left\vert \int_{-\infty}^{\gamma} \zeta_{xx} \ft \varphi_{xx}  \, \dx \right\vert} \nonumber \\
		 & \lesssim & \norm{\zeta_{xx}}_{L^{\infty}} \norm{\ft_{x}}_{L^2} \norm{\varphi_x}_{L^2} + \norm{\zeta_{xx}}_{L^{\infty}} \norm{\ft}_{L^2} \norm{\varphi_{xx}}_{L^2}\overset{\eqref{eq:torusL1boundassumpt},\, \eqref{etaLiL2}}{\lesssim} \frac{1}{L^{1/2}} \norm{\zeta_{xx}}_{L^{\infty}} \Et^{1/2}\nonumber
		\end{eqnarray}
		and proceed as in $J_2$ to obtain
		\begin{align*}
		\int_0^T \abs{J_5} \,\dt \lesssim \begin{cases}
		\frac{1}{L^{1/2}}T^{1/2} & \mbox{for all } T, \\  \frac{1}{L^{1/2}}  \frac{\ln T}{T^{1/4}}\Wt_T  & \mbox{for } 2 \leq T \leq \delta L^2.
		\end{cases}
		\end{align*}
	
		\underline{Term $J_6$}: We re-express $J_6$ as
		\begin{eqnarray}
			\lefteqn{\int_{-\infty}^{\gamma} \left( 2 \zeta_x \varphi_x + \zeta \varphi_{xx} \right) \left( G'(u) - G'(\wt) - \ft_{xx} \right) \, \dx} \nonumber\\
			 & = &  \int_{-\infty}^{\gamma} 2\zeta_x \varphi_x  \left( G'(u) - G'(\wt) \right) \, \dx  + \int_{-\infty}^{\gamma} \zeta \varphi_{xx} \left( G'(u) - G'(\wt) \right) \, \dx
			\nonumber \\
			& & - \int_{-\infty}^{\gamma} \zeta_x \varphi_x \ft_{xx} \, \dx- \int_{-\infty}^{\gamma} \zeta \varphi_{xx} \ft_{xx} \, \dx \eqqcolon J_{6a} + J_{6b} + J_{6c} + J_{6d}. \nonumber
		\end{eqnarray}

		To estimate $J_{6a}$, we apply the Cauchy-Schwarz inequality, the $L^\infty$ bound on $u$ and $\wt$, and the properties of $G$ to deduce
		\begin{align*}
		\abs{J_{6a}} \lesssim \norm{\zeta_x}_{L^{\infty}} \norm{\varphi_x}_{L^2} \norm{\ft}_{L^2} \overset{\eqref{eq:torusL1boundassumpt},\,\eqref{etaLiL2}}{\lesssim} \frac{1}{L^{1/2}} \norm{\zeta_x}_{L^{\infty}} \Et^{1/2}.
		\end{align*}
		
		For small times, an integration in time, Proposition \ref{p:schauderest}, and $\Et\lesssim 1$ yield
		\begin{align*}
		\int_0^T \abs{J_{6a}} \, \dt \overset{\eqref{eq:zetax}}{\lesssim} \frac{1}{L^{1/2}} \int_0^T \frac{1}{(T-t)^{1/4}} \, \dt \lesssim \frac{T^{3/4}}{L^{1/2}}.
		\end{align*}
		
		For  $2 \leq T \leq \delta L^2$, we use the time-dependent energy estimate \eqref{eq:torusL1boundassumpt} to derive
		\begin{align*}
			\int_0^T \abs{J_{6a}}\,\dx
			 \overset{\eqref{eq:torusL1boundassumpt}, \eqref{eq:torusL1boundassumpt},\eqref{eq:zetax}}{\lesssim}  \frac{\Wt_T}{L^{1/2}} \int_0^T \frac{1}{(T-t)^{1/2}} \frac{1}{t^{1/4}}\,\dt \lesssim \frac{\Wt_T}{L^{1/2}}  T^{1/4}.
		\end{align*}
		We treat $J_{6b}$ in a similar manner and obtain
		\begin{align*}
		\abs{J_{6b}} \lesssim \norm{\zeta}_{L^{\infty}} \norm{\ft}_{L^2} \norm{\varphi_{xx}}_{L^2} \overset{\eqref{eq:torusL1boundassumpt},\eqref{etaLiL2}}{\lesssim} \frac{1}{L^{\frac{3}{2}}} \norm{\zeta}_{L^{\infty}} \Et^{1/2}
		\end{align*}
		and
		\begin{align*}
		\int_0^T \abs{J_{6b}} \, \dt \overset{\eqref{eq:zeta}}{\lesssim} \begin{cases}
		\frac{T}{L^{3/2}} & \mbox{for all } T, \\ \frac{T^{3/4}}{L^{3/2}}\Wt_T & \mbox{for } 2\leq T \leq \delta L^2.
		\end{cases}
		\end{align*}
		
		For $J_{6c}$, we get
		\begin{align*}
		\abs{J_{6c}} \lesssim \norm{\zeta_x}_{L^{\infty}} \norm{\ft_{xx}}_{L^2} \norm{\varphi_x}_{L^2} \overset{\eqref{eq:basiceedconsequence}}\lesssim \frac{1}{L^{1/2}} \norm{\zeta_x}_{L^{\infty}} \left( \D^{1/2}+ \exp(-L/C) \right).
		\end{align*}
For small times, we use Proposition \ref{p:schauderest} and Cauchy-Schwarz to obtain
		\begin{eqnarray}
			\int_0^T \abs{J_{6c}} \, \dt & \overset{\eqref{eq:zetax}}{\lesssim} & \frac{1}{L^{1/2}} \int_0^T \frac{1}{(T-t)^{1/4}}  \left( \D^{1/2} + \exp(-L/C) \right) \, \dt \lesssim \frac{1}{L^{1/2}} T^{3/4}. \nonumber
		\end{eqnarray}
For $2\leq T \leq \delta L^2$, we argue similarly for the terminal layer and for $[0,T-1]$, we use Proposition \ref{p:schauderest} and the integral dissipation bound. Applying Lemma \ref{l:intdissipationbound} requires a larger exponent on $\D$, which is obtained by using H\"{o}lder's inequality as in $J_3$ and absorbing the exponential error term for times $T\lesssim L^2$:
		\begin{eqnarray}
			\int_0^{T-1} \abs{J_{6c}} \, \dt &  \overset{\eqref{eq:zetax}}{\lesssim} & \frac{1}{L^{1/2}} \int_0^{T-1} \frac{1 }{(T-t)^{1/2}} \left( \D^{1/2} + \exp(-L/C) \right)\, \dt \nonumber \\ &\lesssim& \frac{1}{L^{1/2}} \left( \int_0^{T-1} \frac{1}{(T-t)^{3/2}} \, \dt  \right)^{1/3} \left[ \left( \int_0^{T-1} \D^{3/4} \, \dt\right)^{2/3} + \exp(-L/C)T \right] \nonumber \\
			& \overset{\eqref{eq:intdissipationboundres}}{\lesssim} & \frac{1}{L^{1/2}} \frac{\Wt_T^{2/3}}{T^{1/6}}. \nonumber
		\end{eqnarray}
		
		Finally, for $J_{6d}$, we use integration by parts and $\zeta(\gamma)=0$ to estimate
		\begin{eqnarray}
		\left\vert \int_{-\infty}^{\gamma} \zeta  \ft_{xx} \varphi_{xx}  \, \dx \right\vert
			& = & \left|\zeta(\gamma) \ft_{x}(\gamma) \varphi_{xx}(\gamma) - \int_{-\infty}^{\gamma} \left( \zeta_x \varphi_{xx} + \zeta \varphi_{xxx} \right) \ft_{x} \, \dx \right|\nonumber \\
			& \lesssim & \left( \norm{\zeta_x}_{L^{\infty}} \norm{ \varphi_{xx}}_{L^2} + \norm{\zeta}_{L^{\infty}} \norm{\varphi_{xxx}}_{L^2}\right) \norm{\ft_{x}}_{L^2} \nonumber \\
			& \overset{\eqref{eq:torusL1boundassumpt},\,\eqref{etaLiL2}}{\lesssim} & \left(\frac{1}{L^{3/2}}  \norm{\zeta_x}_{L^{\infty}} + \frac{1}{L^{5/2}}\norm{\zeta}_{L^{\infty}} \right) \Et^{1/2}. \nonumber
		\end{eqnarray}
For small times we use the simple energy estimate
		\begin{align*}
		\int_0^T \abs{J_{6d}} \, \dt \lesssim \frac{1}{L^{3/2}}\int_0^T   \frac{1}{(T-t)^{1/4}} + \frac1L \, \dt \lesssim \frac{T^{3/4}}{L^{3/2}} + \frac{T}{L^{5/2}}.
		\end{align*}
For  $2 \leq T \leq \delta L^2$, Proposition \ref{p:schauderest}, and \eqref{eq:torusL1boundassumpt} lead to
		\begin{eqnarray}
			\int_0^T \abs{J_{6d}} \,\dt
			& \overset{\eqref{eq:torusL1boundassumpt},\,\eqref{eq:zeta},\, \eqref{eq:zetax} }{\lesssim} &  \frac{1}{L^{3/2}}\int_0^{T} \frac{1}{(T-t)^{1/2}}  \frac{\Wt_T}{t^{1/4}}+ \frac{\Wt_T}{Lt^{1/4}} \, \dt  \lesssim \frac{\Wt_T}{L^{3/2}} T^{1/4} + \frac{\Wt_T}{L^{5/2}} T^{3/4}. \nonumber
		\end{eqnarray}

\underline{Term $J_7$}: For $J_7$, we note that on $(-1,1)$ there holds $\abs{\wt-1} \lesssim \exp(-L/C)$ and therefore also $\abs{G'(\wt)} \lesssim \exp(-L/C)$ on $(-1,1)$. Thus
\begin{align*}
	 \left\vert \int_{[-L,\gamma)\cap (-1,1)} \zeta_{xx} \left( G'(\wt)-\wt_{xx}\right) \varphi \, \dx \right\vert  & \lesssim \norm{\zeta_{xx}}_{L^{\infty}} \Big( \sup_{\abs{\tau-1}\leq \exp(-L/C)} G'(\tau) + \norm{\wt_{xx}}_{L^2((-1,1))} \Big) \\
	 & \lesssim \norm{\zeta_{xx}}_{L^{\infty}}  \exp(-L/C).
\end{align*}
For small times, we deduce
\begin{align*}
	\int_0^T \abs{J_{7}} \, \dt \lesssim \exp(-L/C)\int_0^T   \frac{1}{(T-t)^{1/2}}  \, \dt \lesssim \exp(-L/C)T^{1/2},
\end{align*}
and similarly for the terminal layer $[T-1,T]$.
For larger times, we obtain
\begin{align*}
	\int_0^T \abs{J_{7}} \, \dt \lesssim \exp(-L/C)\int_0^T   \frac{1}{(T-t)}  \, \dt \lesssim \exp(-L/C)\ln(T).
\end{align*}

\end{proof}

\begin{proof}[Proof of Proposition~\ref{p:L1bound}]
We define the curves $\gamma_{i\pm}$, $i=1,2$ as before and use Lemma~\ref{l:u2zeros} to verify that for $T\lesssim L^2$, the zeros stay bounded between the moving curves.
We will refer to $(-\infty,\gamma_{1-}]$ and $[\gamma_{2+},\infty)$ as the ``outer regions'' and $[\gamma_{1+},\gamma_{2-}]$ as the ``inner region.'' Up to time $\delta L^2$, the proof is analogous to that of Proposition~\ref{p:L1boundtorus}.	 The inner region is controlled as in the proof on the torus, and the outer regions are even simpler (since no cut-off function is necessary), and the proof proceeds as in the proof of \cite[Proposition 3.6]{OSW}.	

To continue past $T=\delta L^2$ (and hence get the smallness claimed in \eqref{egL22}), we need a separate argument. We make use of the condition
$\Et\lesssim (\Wtz^2+1)/L$ to estimate the $L^1$ norm on the inner region
\begin{align*}
  \int_{\gamma_{1+}(T)}^{\gamma_{2-}(T)}\abs{u(T)-\wt(T)}\,dx\lesssim L^{1/2}\left(\int_{\gamma_{1+}(T)}^{\gamma_{2-}(T)}\abs{u(T)-\wt(T)}^2\,dx\right)^{1/2}\lesssim \Wtz+1.
\end{align*}
For the outer regions, on the other hand, we can again proceed as in \cite[Proposition 3.6]{OSW}, since the restriction $T\leq \delta L^2$ is only necessary for the terms $J_5$ and $J_6$, which do not occur on the outer regions since there is no need for a cut-off function. (The term $J_7$ also does not occur.)

\end{proof}

For the proof of Proposition \ref{p:L1boundsmallE}, we will make use of the following well-known parabolic estimate (see also \cite[p. 22]{OSW}). The proof is not hard and relies on Fourier transformation and the scaling of the heat kernel and the biharmonic heat kernel.
\begin{proposition}\label{p:semigroup}
Let $\zeta$ satisfy the backwards problem
	\begin{equation} \label{eq:zetapdeminus}
	\begin{cases}
	\zeta_t + G''(-1) \zeta_{xx} - \zeta_{xxxx} = 0 & \mbox{ on } \; t \in [0,T), \; x\in\R,  \\
	\zeta = \psi & \mbox{ for } \; t=T, \; x\in \R,     \end{cases}
	\end{equation}
	where $\psi$ satisfies $\norm{\psi}_{L^{\infty}} \leq 1$. There holds
	\begin{eqnarray}
	\norm{\zeta_{xx} }_{L^{\infty}} & \lesssim &  \frac{1}{T-t} \wedge \frac{1}{(T-t)^{1/2}},      \label{eq:zetaxxminus}
	\end{eqnarray}
\end{proposition}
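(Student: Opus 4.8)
The plan is to solve the backwards problem explicitly by Fourier transformation and then to read off the required decay from the convolution kernel that appears. Substituting $s = T-t$, the function $\zeta$ solves the \emph{forward} equation $\zeta_s = G''(-1)\zeta_{xx} - \zeta_{xxxx}$ with initial datum $\psi$ at $s=0$; since $G''(-1)>0$ by Assumption~\ref{ass:G}, on the Fourier side (with $\widehat{f}(k)=\int f(x)e^{-ikx}\,\dx$) the solution is $\widehat{\zeta}(s,k) = e^{-s(G''(-1)k^2 + k^4)}\widehat{\psi}(k)$. Differentiating twice in $x$,
\[
\widehat{\zeta_{xx}}(s,k) = -k^2\, e^{-sG''(-1)k^2}\, e^{-sk^4}\, \widehat\psi(k),
\]
so $\zeta_{xx}(s,\cdot) = K_s * \psi$, where $K_s$ is the inverse transform of $-k^2 e^{-sG''(-1)k^2}e^{-sk^4}$. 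By Young's inequality and $\norm{\psi}_{L^\infty}\le 1$ it then suffices to prove $\norm{K_s}_{L^1}\lesssim \frac{1}{s}\wedge\frac{1}{s^{1/2}}$.

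The key observation is that the two exponential factors are the Fourier transforms of a Gaussian heat kernel $g_s$ (the heat kernel of $\partial_t - G''(-1)\partial_x^2$) and the biharmonic heat kernel $\Phi_s$ (the kernel of $\partial_t + \partial_x^4$), respectively, while $-k^2$ is the symbol of $\partial_x^2$. Hence $K_s$ can be written in two ways, $K_s = (\partial_x^2 g_s) * \Phi_s = g_s * (\partial_x^2 \Phi_s)$, and Young's inequality gives
\[
\norm{K_s}_{L^1} \le \min\bigl\{\,\norm{\partial_x^2 g_s}_{L^1}\norm{\Phi_s}_{L^1}\,,\ \norm{g_s}_{L^1}\norm{\partial_x^2\Phi_s}_{L^1}\,\bigr\}.
\]
Standard parabolic scaling then finishes the argument: $\norm{g_s}_{L^1}=1$ and $\norm{\partial_x^2 g_s}_{L^1} \lesssim s^{-1}$ by the explicit Gaussian formula, while $\Phi_s(x) = s^{-1/4}\Phi_1(s^{-1/4}x)$ yields $\norm{\Phi_s}_{L^1} = \norm{\Phi_1}_{L^1}$ and $\norm{\partial_x^2\Phi_s}_{L^1} = s^{-1/2}\norm{\partial_x^2\Phi_1}_{L^1}$. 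The first alternative gives $\norm{K_s}_{L^1}\lesssim s^{-1}$ and the second $\norm{K_s}_{L^1}\lesssim s^{-1/2}$, for all $s>0$; taking the minimum and recalling $s=T-t$ yields the claim.

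The only point requiring a little care is the finiteness of $\norm{\Phi_1}_{L^1}$ and $\norm{\partial_x^2\Phi_1}_{L^1}$, since $\Phi_1$ has no elementary closed form. This follows because the transforms $e^{-k^4}$ and $-k^2 e^{-k^4}$ are Schwartz functions, so $\Phi_1$ and $\partial_x^2\Phi_1$ are Schwartz and in particular integrable; alternatively one bounds $\norm{\Phi_1}_{L^1}\lesssim \norm{(1+x^2)\Phi_1}_{L^2}\lesssim \norm{(1-\partial_k^2)e^{-k^4}}_{L^2}$ by Cauchy--Schwarz and Plancherel, and likewise for $\partial_x^2\Phi_1$. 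This is ``hard'' only in a trivial sense; the rest is scaling. Note that the Gaussian factor — that is, $G''(-1)>0$ — is genuinely needed: it is precisely what produces the stronger rate $s^{-1}$ at large times, which dominates the purely biharmonic rate $s^{-1/2}$ there.
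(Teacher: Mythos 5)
Your proof is correct and follows exactly the route the paper indicates for this proposition (which it does not prove in detail, noting only that the argument "relies on Fourier transformation and the scaling of the heat kernel and the biharmonic heat kernel"): factor the symbol as $-k^2e^{-sG''(-1)k^2}e^{-sk^4}$, place the two derivatives on either the Gaussian or the biharmonic kernel, and conclude by Young's inequality and scaling. Nothing further is needed.
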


\begin{proof}[Proof of Proposition \ref{p:L1boundsmallE}]
 	The proof is analogous to but much simpler than the one for Proposition \ref{p:L1bound}. We will work on a finite time horizon $[0,T]$ and derive estimates that hold for all $T$.
 First we deduce from Lemmas~\ref{l:basiceed-} and~\ref{l:nash} that
 	\begin{align}\label{eq:EnergyDecay-}
 	E(t)\lesssim \frac{\left(\sup_{t\leq T} \Wm (t)+1\right)^2}{t^{1/2}} \quad \mbox{for } t\in[0,T].
 	\end{align}	
	Using duality, we express
		\begin{align*}
			\int_{\R} \abs{u(t)+1}\, \dx = \sup \left\{  \int_{\R} \psi (u(t)+1)  \, \dx \mid \psi: \R \to \R \, \mbox{ such that } \norm{\psi}_{L^{\infty}} \leq 1  \right\}.
		\end{align*}
	Choosing $\psi$ as usual as the terminal data for $\zeta$ satisfying \eqref{eq:zetapdeminus}, our task is to bound
		\begin{align}
		  \ddt\int_{\R}  \zeta(u(t)+1)\,\dx.\label{turnto}
		\end{align}
An integration in time and taking the supremum over the terminal data  will then lead to
		\begin{align}
			\Wm (T)
		  	\lesssim \begin{cases} T^{1/2} + \Wm (0) & \mbox{for all }T, 
		  		\\
		  	\Wm (0) + 1 + \frac{\ln T}{T^{1/4}}\sup_{t\leq T}\Wm (t)+1 & \mbox{for } 2\leq T,
		 	\end{cases}
		\end{align}
from which we deduce \eqref{eq:W-}.

Hence we turn to \eqref{turnto}. The definition of $\zeta$ and integration by parts yield
		\begin{eqnarray}\label{eq:duality_simple}
		\lefteqn{\ddt \int_\R \zeta (u+1) \, \dx  =  \int_\R \zeta_t (u+1) \, \dx + \int_\R \zeta u_t \, \dx} \nonumber \\
		& = & \int_\R (\zeta_{xxxx}-G''(-1)\zeta_{xx}) (u+1) \, \dx + \int_\R -\zeta ((u+1)_{xx}-G'(u))_{xx} \, \dx \nonumber\\
		& = & \int_\R \zeta_{xx} \left( G'(u)-G'(-1)-G''(-1)(u+1)\right)\,\dx,
		\end{eqnarray}
		where we also used $G'(-1)=0$. Taylor's formula and the $L^\infty$ bound on $u$ lead to
		\begin{align}\label{eq:taylorest}
		\left\vert \int_\R \zeta_{xx} \left( G'(u)-G'(-1)-G''(-1)(u+1)\right)\,\dx \right\vert \lesssim \norm{\zeta_{xx}}_{L^{\infty}} \int_\R (u+1)^2\,\dx.
		\end{align}
		Since we assume $E(u)\leq 2\co-\epsilon$, we can use \eqref{eq:energyestimate-} to bound
		\begin{align}\label{eq:u+1}
		\int_\R (u+1)^2\,\dx \lesssim  E(u).
		\end{align}
		We now substitute \eqref{eq:taylorest} and \eqref{eq:u+1} into \eqref{eq:duality_simple} and argue as follows. For order one times $T$, the simple energy estimate $E(u) \lesssim 1$ and Proposition~\ref{p:semigroup} lead to
		\begin{align*}
		\int_0^T \norm{\zeta_{xx}}_{L^{\infty}} E(t) \,\dt  \overset{\eqref{eq:zetaxxminus}}{\lesssim}  \int_0^T  \frac{1}{(T-t)^{1/2}} \,\dt \lesssim T^{1/2}.
		\end{align*}
		For large $T$,  the above argument bounds the integral over the terminal layer $[T-1,T]$ by a constant.

		For the integral over $[0,T-1]$, we apply  $E(t)\lesssim 1$ in the form $E(t)\lesssim E(t)^{1/2}$, and use \eqref{eq:EnergyDecay-} to estimate
		\begin{eqnarray}
		\lefteqn{\int_0^{T-1} \norm{\zeta_{xx}}_{L^{\infty}} E(t) \,\dt} \nonumber \\
		& \lesssim & \int_0^{T-1} \norm{\zeta_{xx}}_{L^{\infty}} E(t)^{1/2} \,\dt   \overset{\eqref{eq:zetaxxminus},\eqref{eq:EnergyDecay-}}{\lesssim}  \int_0^{T-1}  \frac{1}{T-t} \frac{\sup_{t\leq T}\Wm (t)+1}{t^{1/4}}\,\dt \nonumber \\
		& \lesssim & \frac{\ln T}{T^{1/4}}\sup_{t\leq T}\Wm (t)+1. \nonumber
		\end{eqnarray}	
\end{proof}

\appendix

\section{Proofs of the EED estimates and the differential inequality for the dissipation} \label{ap:eedproofs}
Here we address the proofs of the energy gap and dissipation estimates from Subsection~\ref{ss:eed}. These estimates and their proofs are closely related to the EED inequalities from \cites{OR,OW}. For Lemma~\ref{l:eedwt}, the energy gap and dissipation estimates with respect to $\wt$ on $\R$, we give the proofs in detail. For Lemma~\ref{l:basiceed-}, the energy estimate is easy and for the  dissipation estimate, one can argue as for Lemma~\ref{l:eedwt}, but in a (much) simpler fashion. The proof of the estimates with respect to $\wt$ on the torus from Lemma~\ref{l:basiceedtorusw} is analogous to that for the corresponding estimates on $\R$, and we only comment on the most important changes that are necessary to adapt the argument. We proceed similarly for Lemma~\ref{l:basiceedtorusw}, the estimates with respect to the optimal bump $w_c$ on the torus. For more detail, we refer to \cite{B}.
\subsection{Auxiliary tools}
One tool, analogous to \cite[Lemma 2.1]{OW}, is the following Hardy-type inequality. We will use it on both $\R$ and the torus, so we state a general version.
\begin{lemma} \label{l:hardy}
There exists $\ell_1<\infty$ with the following property. Let $v$ be the centered kink and $I=[-\infty,\ell]$ or $[-\red{\tilde \ell},\ell]$ for $\red{\tilde \ell,} \ell\geq \ell_1$. Suppose $f\in H^1(I)$ satisfies
	\begin{align} \label{eq:hardyassumpt}
	\int_I f v_x\,\dx =0.
	\end{align}
	Then there holds
	\begin{align} \label{eq:hardytype}
	\int_I\frac{1}{x^2+1} f^2\, \dx \lesssim \int_I f_x^2\,\dx.
	\end{align}
\end{lemma}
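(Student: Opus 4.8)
The plan is to reduce the whole estimate to controlling the single number $f(0)$, which is exactly where the orthogonality hypothesis~\eqref{eq:hardyassumpt} enters, and then to handle the weight $\tfrac1{1+x^2}$ by a trivial bound near the origin together with a one-dimensional Hardy inequality in the tails. Throughout write $D:=\int_I f_x^2\,\dx$, and note that since $f\in H^1(I)$ in one dimension, $f$ is continuous, so point values like $f(0)$ are meaningful. First I would record the properties of the centered kink $v$ that I need: $v$ is odd (as $G$ is even), strictly increasing with $v_x>0$, and converges to $\pm1$ exponentially fast, so that $v_x(x)\lesssim \exp(-\abs{x}/C)$ and in particular $C_0:=\int_{\R}\abs{x}^{1/2}v_x\,\dx<\infty$. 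Moreover, by telescoping, $\int_I v_x\,\dx$ equals $v(\ell)+1$ (when $I=(-\infty,\ell]$) or $2v(\ell)$ (when $I=[-\ell,\ell]$), both of which tend to $2$ as $\ell\to\infty$; choosing $\ell_1$ large I may therefore assume $\int_I v_x\,\dx\geq1$.

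Next I would bound $f(0)$. By Cauchy--Schwarz, $\abs{f(x)-f(0)}=\bigl\lvert\int_0^x f_y\,\dy\bigr\rvert\leq \abs{x}^{1/2}D^{1/2}$ for every $x\in I$. Using~\eqref{eq:hardyassumpt},
\begin{align*}
\abs{f(0)}\int_I v_x\,\dx=\Bigl\lvert\int_I (f(0)-f)v_x\,\dx\Bigr\rvert\leq \int_I \abs{f(0)-f}\,v_x\,\dx\leq D^{1/2}\int_I \abs{x}^{1/2}v_x\,\dx\leq C_0 D^{1/2},
\end{align*}
so that $f(0)^2\lesssim D$ by the lower bound $\int_I v_x\,\dx\geq1$.

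It then remains to prove $\int_I \tfrac{f^2}{1+x^2}\,\dx\lesssim D+f(0)^2$. I would split $I$ at $\pm1$ (legitimate since $\ell\geq\ell_1\geq1$) into $[-1,1]$ and the one or two remaining ``tail'' intervals. On $[-1,1]$ the weight is at most $1$ and $f(x)^2\leq 2f(0)^2+2D$, which is acceptable. On a tail interval, say $[1,\ell]$ (the interval $[-\ell,-1]$, resp.\ $(-\infty,-1]$, is identical after the reflection $x\mapsto-x$), I would use $\tfrac1{1+x^2}\leq\tfrac1{x^2}$ and the elementary Hardy estimate obtained by integration by parts,
\begin{align*}
\int_1^\ell \frac{f^2}{x^2}\,\dx=\Bigl[-\tfrac{f^2}{x}\Bigr]_1^\ell+2\int_1^\ell\frac{f f_x}{x}\,\dx\leq f(1)^2+2\Bigl(\int_1^\ell \tfrac{f^2}{x^2}\,\dx\Bigr)^{1/2}D^{1/2},
\end{align*}
where the boundary contribution at the far endpoint is discarded because it has the favourable sign (on the finite interval $[1,\ell]$) or vanishes (on the half-line, where $f\to0$ at $-\infty$ since $f\in H^1$); absorbing gives $\int_1^\ell \tfrac{f^2}{x^2}\,\dx\lesssim D+f(1)^2$, and $f(1)^2\leq 2f(0)^2+2D$. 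Summing the pieces yields $\int_I \tfrac{f^2}{1+x^2}\,\dx\lesssim D+f(0)^2\lesssim D$, which is~\eqref{eq:hardytype}.

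The only mildly delicate points are checking that the constants are uniform in $\ell\geq\ell_1$ — they are, by the telescoping identity for $\int_I v_x$ and the exponential convergence $v\to\pm1$ — and quoting the exponential decay of $v_x$ needed for $C_0<\infty$; neither is a real obstacle, and both are standard facts about the kink (cf.\ the references cited for Remark~\ref{r:expt}). The conceptual heart is simply that orthogonality to $v_x$ pins $f$ down \emph{at the origin}, after which the weighted norm is controlled by a routine Hardy/Poincaré argument.
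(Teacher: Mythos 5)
Your argument is correct and follows essentially the same route as the paper's: the orthogonality condition pins down $f(0)$ with $f(0)^2\lesssim\int_I f_x^2\,dx$, and the elementary Hardy inequality for $f-f(0)$ does the rest. The only differences are cosmetic — you control $f(0)$ via the pointwise bound $\abs{f(x)-f(0)}\leq\abs{x}^{1/2}\bigl(\int_I f_x^2\,dx\bigr)^{1/2}$ together with $\int\abs{x}^{1/2}v_x\,dx<\infty$, where the paper uses Jensen's inequality with measure $\abs{v_x}\,dx$ followed by the Hardy inequality itself, and you prove the elementary Hardy inequality by integration by parts where the paper simply quotes it.
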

\begin{proof}[Proof of Lemma~\ref{l:hardy}]
	We will prove the result on $(-\infty,\ell]$. The result on $[-\red{\tilde \ell},\ell]$ is proved similarly.
	We begin with the elementary Hardy inequality
	\begin{align} \label{eq:hardy}
	\int_I \frac{1}{x^2+1} (f-f(0))^2\, \dx \lesssim \int_I f_x^2\,\dx.
	\end{align}
	Because of the exponential tails of $v_x$ and $\ell\geq \ell_1$, we can define the order-one normalization factor
	\begin{align*}
	C_N:=\int_I v_x\,\dx \approx 1-\exp(-\ell/C)  \approx 1.
	\end{align*}
	We use this constant to write
	\begin{align*}
	f(0) = \frac{1}{C_N} f(0) \int_I v_x\,\dx 
\overset{\eqref{eq:hardyassumpt}}= \frac{1}{C_N} \int_I (f(0)-f)v_x \,\dx.
	\end{align*}
	From here, we employ Jensen's inequality with measure $\abs{v_x}\dx$, exponential decay of $v_x$, and the Hardy inequality \eqref{eq:hardy} to deduce
	\begin{align*}
	f(0)^2 \lesssim  \left( \int_I \abs{f-f(0)}|v_x|\,\dx \right)^2 \lesssim \int_I (f-f(0))^2|v_x|\,\dx \lesssim \int_I f_x^2\,\dx,
	\end{align*}
	and hence also
	\begin{align}
	\int_I \frac{1}{x^2+1}f(0)^2\,\dx \lesssim \int_I f_x^2\,\dx.\label{zeroha}
	\end{align}
The combination of \eqref{eq:hardy} and \eqref{zeroha} yields the result.
\end{proof}

	We also recall the following fact about the kernel of the linearized dissipation operator.
\begin{lemma}[{\cite[Lemma~3.4]{OW}}] \label{l:linaux2}
	If $f\in C^3(\R)$ with $f_x,\,f_{xx}\in L^2(\R)$ satisfies
	\begin{align*}
	\left( -f_{xx} + G''(v) f \right)_x =0,
	\end{align*}
	then $f = \alpha v_{x}$ for some $\alpha \in \R$.
\end{lemma}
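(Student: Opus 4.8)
The plan is to conjugate the linearized operator $L\phi := -\phi_{xx}+G''(v)\phi$ by the solution $v_x$ of $L\phi=0$, and then use the hypothesis $f_x\in L^2(\R)$ to discard the exponentially growing homogeneous solution. First, since $\R$ is connected and $(-f_{xx}+G''(v)f)_x=0$, there is a constant $c$ with $Lf=c$. Differentiating the Euler--Lagrange equation $-v_{xx}+G'(v)=0$ shows $Lv_x=0$, and from the first integral $\frac12 v_x^2=G(v)$ one has $v_x>0$ everywhere, while $v_x$ and $v_{xx}=G'(v)$ decay exponentially. Writing $f=v_x\,g$ with $g:=f/v_x$, a short computation using $Lv_x=0$ gives the identity
\[
Lf=-\frac{1}{v_x}\bigl(v_x^{\,2}g_x\bigr)_x ,
\]
so that $\bigl(v_x^{\,2}g_x\bigr)_x=-c\,v_x$. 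Integrating from $0$ to $x$ and using $v(0)=0$ yields
\[
v_x(x)^2 g_x(x)=a-c\,v(x),\qquad a:=v_x(0)^2 g_x(0).
\]

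Next I would exploit the asymptotics of the kink. Since $G''(\pm1)>0$ and $G''(-1)=G''(1)$ by evenness, linearizing $-v_{xx}+G'(v)=0$ at $v=\pm1$ gives $v_x(x)^2\sim C_{\pm}\exp(\mp 2\sqrt{G''(1)}\,x)$ as $x\to\pm\infty$ with $C_{\pm}>0$, while $v(x)\to\pm1$. Hence $g_x(x)=(a-c\,v(x))/v_x(x)^2$ grows like $\frac{a-c}{C_+}\exp(2\sqrt{G''(1)}\,x)$ as $x\to+\infty$ unless $a=c$; if $a\neq c$, then $g$, and therefore $f=v_x\,g$, would grow like $\exp(\sqrt{G''(1)}\,x)$, so $f_x$ would grow exponentially there. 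But $f_x,f_{xx}\in L^2(\R)$ forces $f_x\in H^1(\R)\hookrightarrow C_0(\R)$, so $f_x(x)\to0$ as $|x|\to\infty$; this rules out $a\neq c$. The analogous argument at $-\infty$ (where $v\to-1$ and $v_x^{\,2}\sim C_-\exp(2\sqrt{G''(1)}\,x)$) forces $a=-c$. Therefore $c=0$ and $a=0$, so $v_x^{\,2}g_x\equiv0$, i.e.\ $g_x\equiv0$; hence $g\equiv\alpha$ for some $\alpha\in\R$ and $f=\alpha\,v_x$.

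The only point requiring care is the contradiction step: since we assume only $f_x,f_{xx}\in L^2$ (not $f\in L^2$), $f$ itself is a priori merely $O(|x|^{1/2})$, so the contradiction should be drawn from $f_x$—known to vanish at $\pm\infty$—rather than from $f$. An equivalent, perhaps cleaner organization is to integrate $Lf=c$ against $v_x$ over $[-R,R]$: integration by parts together with $Lv_x=0$ gives $[-f_x v_x+f\,v_{xx}]_{-R}^{R}=c\,(v(R)-v(-R))$, and letting $R\to\infty$ the left-hand side tends to $0$ (using $f_x(\pm\infty)=0$, the exponential decay of $v_x$ and of $v_{xx}=G'(v)$, and the $O(|x|^{1/2})$ growth of $f$), while the right-hand side tends to $2c$; hence $c=0$, and the substitution argument above with $c=0$ already in hand produces $f=\alpha v_x$ directly.
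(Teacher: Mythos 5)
Your argument is correct. Note that the paper does not actually prove this lemma: it is imported verbatim as \cite[Lemma~3.4]{OW}, so there is no in-paper proof to compare against; the argument in \cite{OW} is the same standard reduction-of-order/Wronskian argument you give (conjugate $L$ by the positive kernel element $v_x$, integrate $(v_x^2 g_x)_x=-cv_x$, and kill the growing mode using the integrability hypothesis). Two small points of care, both of which your own closing remarks already address. First, with $G\in C^2$ only, the sharp asymptotic $v_x^2\sim C_\pm\exp(\mp2\sqrt{G''(1)}\,x)$ with a genuine constant $C_\pm$ is slightly more than is guaranteed; but all you need is $v_x^2=\exp(\mp2\sqrt{G''(1)}\,x(1+o(1)))$, which follows from $v_x=\sqrt{2G(v)}$ and $G''(\pm1)>0$, and this already forces $f=v_xg$ to grow faster than any polynomial when $a\neq c$. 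Second, drawing the contradiction from ``$f_x$ grows exponentially'' requires checking that the two leading terms in $f_x=v_{xx}g+v_xg_x$ do not cancel (they do not, but it is a computation); your alternative of contradicting the bound $\abs{f(x)-f(0)}\leq\abs{x}^{1/2}\norm{f_x}_{L^2}$ is cleaner and sidesteps this entirely, as does the final reorganization via $\int(Lf)v_x\,\dx$. As written, the proof is complete and sound.
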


\subsection{Proofs of the EED estimates for the two-parameter slow manifold}

We now turn to the energy gap and dissipation estimates with respect to glued kink profiles $\wt\in \N(L)$.
As in \cite[Lemma~3.1 and Lemma~3.2]{OW}, the proof of the nonlinear estimates relies on linearized estimates, which we formulate in terms of the kinks $v_{\tilde{a}}$, $v_{\tilde{b}}$ from which $\wt$ is constructed. Notice that the construction of $\tilde{w}$ assures that the contribution of the ``patching region'' to the energy gap and the dissipation is bounded by the exponentially small error term (on the right-hand side of \eqref{eq:Eerrorlow}-\eqref{eq:Derror}), so  it suffices to establish the estimates for the kinks.

We begin by collecting some linearized estimates. As usual we give two forms, one for Problem 2 and another for Problem 1. By translation invariance, we may assume that the $L^2$ closest glued kink profile belongs to $\N(0,L)$. We state our results for the left half-line or ``left half-torus''; the analogous results hold to the right of the origin.
\begin{lemma}\label{l:linER}
There exists $\ell_1<\infty$ such that for all $\red{\tilde\ell},\ell\ge \ell_1$ the following holds true. Let $I=(-\infty,\ell]$ or $[-\red{\tilde \ell},\ell]$. Suppose that $f\in H^1(I)$ satisfies
\begin{align*}
	\int_{I} fv_{x}\,\dx=0.
\end{align*}
Then there holds
\begin{align} \label{eq:linER}
	\int_{I} f^2\,\dx \lesssim\int_{I} f_x^2+G''(v)f^2\,\dx.
\end{align}
\end{lemma}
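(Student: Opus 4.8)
The plan is to prove the coercivity estimate \eqref{eq:linER} by reducing it, via the Hardy-type inequality of Lemma~\ref{l:hardy}, to a statement about the bottom of the spectrum of the linearized operator $\mathcal{L} := -\partial_x^2 + G''(v)$ acting on functions orthogonal to $v_x$. The key structural fact is that $v_x$ is the ground state of $\mathcal{L}$ on the whole line with eigenvalue $0$ (differentiate the kink equation $-v_{xx}+G'(v)=0$), it is positive, and $G''(v)\to G''(\pm 1) > 0$ at $\pm\infty$, so the essential spectrum of $\mathcal{L}$ starts at $\min\{G''(1),G''(-1)\}>0$. Hence $0$ is an isolated simple eigenvalue and there is a spectral gap: on the orthogonal complement of $v_x$ the quadratic form $\int f_x^2 + G''(v)f^2\,dx$ is bounded below by $\lambda_1 \int f^2\,dx$ for some $\lambda_1>0$. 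On the half-line (or half-torus) with the natural (Neumann-type) setup this only helps the coercivity, since restricting to $I$ and taking the orthogonality constraint $\int_I f v_x\,dx = 0$ is, up to the exponentially small tail of $v_x$ past $\ell$, the same constraint.

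The steps I would carry out, in order: (i) First treat the whole-line case as motivation/black box, recalling that $\ker\mathcal{L} = \mathrm{span}\{v_x\}$ (this is essentially Lemma~\ref{l:linaux2} specialized to $f = \alpha v_x$, combined with standard Sturm--Liouville / ODE uniqueness), and that $\mathcal{L} \geq 0$ with a spectral gap on $\{v_x\}^\perp$. (ii) Argue by contradiction on $I$: suppose no constant $C$ works, take a sequence $f_n \in H^1(I)$ with $\int_I f_n v_x = 0$, $\int_I f_n^2 = 1$, and $\int_I (f_n)_x^2 + G''(v) f_n^2\,dx \to 0$. Since $G''(v)$ is bounded, the bound on the quadratic form plus $\int_I f_n^2 = 1$ forces $\int_I (f_n)_x^2$ bounded, so $f_n$ is bounded in $H^1(I)$; extract a weakly convergent subsequence $f_n \rightharpoonup f$. (iii) Use the Hardy-type inequality \eqref{eq:hardytype} (whose hypothesis $\int_I f_n v_x = 0$ is exactly our constraint) to get the weighted bound $\int_I \frac{1}{x^2+1} f_n^2\,dx \lesssim \int_I (f_n)_x^2\,dx$, which is uniformly bounded; this gives enough tightness to pass to the limit and conclude $\int_I f^2 = 1$ (no mass escapes to the far end of $I$), and by weak lower semicontinuity $\int_I f_x^2 + G''(v) f^2\,dx = 0$ and $\int_I f v_x = 0$. (iv) Since the form vanishes and $G''(v) f^2$ is only negative on the bounded region where $G''(v)<0$, analyze $f$: it satisfies the Euler--Lagrange equation $\mathcal{L} f = 0$ on the interior of $I$ with the natural boundary condition, so $f$ is a multiple of $v_x$ (using Lemma~\ref{l:linaux2} on the line after noting $f$ extends, or a direct Sturm--Liouville argument on $I$); but $\int_I f v_x = 0$ then forces $f \equiv 0$, contradicting $\int_I f^2 = 1$. (v) Finally check the constant can be taken uniform over $\ell \geq \ell_1$: since all the constraints and constants degrade only through exponentially small $v_x$-tails, a standard argument (or passing $\ell\to\infty$ and treating large $\ell$ by perturbation of the line case, small $\ell \in [\ell_1, L_0]$ by compactness) handles this.

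The main obstacle is step (iii)--(iv): ensuring that along the minimizing sequence no $L^2$ mass leaks out to the non-compact end of $I$ (on $(-\infty,\ell]$) or, on $[-\ell,\ell]$, concentrates near the endpoints in a way that escapes the weak limit — this is exactly where the Hardy-type inequality \eqref{eq:hardytype} is essential, converting the (degenerate at the origin, but that's fine) weight $1/(x^2+1)$ control into genuine compactness of the embedding restricted to the orthogonality-constrained subspace. The other delicate point is the rigidity in step (iv): that the only $H^1$ solution of $\mathcal{L}f = 0$ on $I$ with the correct boundary behavior is $\alpha v_x$; on the full line this is Lemma~\ref{l:linaux2}, and on a half-line/interval one either reflects/extends or invokes ODE uniqueness together with the decay that $v_x$-orthogonality and finite energy impose. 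Everything else — boundedness of $G''(v)$, the identity $\mathcal{L} v_x = 0$, weak lower semicontinuity, and absorbing exponential tails into the $\lesssim$ — is routine.
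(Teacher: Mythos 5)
Your overall architecture (normalize, contradict, extract a weak limit, identify the limit as a multiple of $v_x$ via Lemma~\ref{l:linaux2}, kill it with the orthogonality constraint) matches the paper's proof. But step (iii) contains a genuine gap: the Hardy-type inequality \eqref{eq:hardytype} cannot deliver the tightness you claim. Its conclusion is a \emph{weighted} $L^2$ bound with weight $1/(x^2+1)$, which vanishes at the non-compact end of $I$; a sequence of unit-mass bumps drifting to $-\infty$ satisfies $\int_I \frac{1}{x^2+1}f_n^2\,\dx\to 0$ while all of the $L^2$ mass escapes. So you cannot conclude $\int_I f^2=1$ for the weak limit, and since your contradiction is precisely ``$f\equiv 0$ yet $\int_I f^2=1$,'' the argument collapses at this point.

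The correct mechanism — which you actually state in step (i) but then fail to deploy — is the uniform positivity of $G''(v)$ away from the transition layer: choose $X$ so that $G''(v)\geq\frac12 G''(1)>0$ on $I\setminus(-X,X)$. Then $C^0_{\loc}$ convergence $f_n\to f\equiv 0$ kills $\int_{-X}^{X}f_n^2\,\dx$ and $\int_{-X}^{X}G''(v)f_n^2\,\dx$, and combining this with $\limsup_n\int_I (f_n)_x^2+G''(v)f_n^2\,\dx\leq 0$ yields $\limsup_n\int_{I\setminus(-X,X)}f_n^2\,\dx\lesssim\limsup_n\int_{I\setminus(-X,X)}G''(v)f_n^2\,\dx\leq 0$, hence $\int_I f_n^2\to 0$, contradicting the normalization. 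This is exactly how the paper closes the argument; no Hardy inequality is needed in this lemma. One further structural remark: the paper builds the uniformity in $\ell$ into the contradiction by taking $\ell_n\to\infty$, so the limit function lives on all of $\R$ and Lemma~\ref{l:linaux2} applies verbatim, avoiding the half-line boundary-condition discussion you defer to steps (iv)--(v).
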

For the dissipation estimate, we need to increase our region of integration slightly on the right-hand side. This ``extra error'' will be absorbed later in the nonlinear estimates; see the proof of Lemma \ref{l:eedwt}.
\begin{lemma}\label{l:linDR}
There exists $\ell_1<\infty$ such that for all $\red{\tilde\ell},\ell,\red{\tilde{r}},r\ge \ell_1$ the following holds true. Let $I=(-\infty,\ell]$ or ${[-\red{\tilde\ell},\ell]}$ and let $I_+:={[-\red{\tilde\ell-\tilde r},-\red{\tilde\ell}] \cup I\cup [\ell,\ell+r]}$. Suppose that $f\in C^3(I_+)\cap \dot{H}^1(I_+)$ satisfies
\begin{align*}
	\int_{I} fv_{x}\,\dx=0.
\end{align*}
Then there holds
\begin{align} \label{eq:linDR}
	&\int_{I} \frac{1}{1+x^2}f^2+f_x^2+f_{xx}^2+f_{xxx}^2\,\dx \nonumber \\
	\lesssim & \;\int_{I_+} \left[(-f_{xx}+G''(v)f)_x\right]^2\,\dx+\frac{1}{{\ell\wedge r}}\int_{I_+}\red{\frac{1}{1+x^2} f^2+}f_x^2\,\dx.
\end{align}
\end{lemma}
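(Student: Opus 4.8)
The plan is to reduce the dissipation estimate \eqref{eq:linDR} to the second-order estimate \eqref{eq:linER} of Lemma~\ref{l:linER} together with the Hardy inequality of Lemma~\ref{l:hardy}, using a bootstrap in the number of derivatives. Throughout, write $\operatorL f := -f_{xx}+G''(v)f$ for the linearized dissipation operator, so that the left-hand side of \eqref{eq:linDR} involves $(\operatorL f)_x$ on the slightly enlarged interval $I_+$. The first step is to control the zeroth-, first-, and second-order terms. To that end I would test $(\operatorL f)_x$ against a suitable primitive: since $f$ has the orthogonality $\int_I f v_x\,\dx=0$, Lemma~\ref{l:hardy} gives $\int_I \tfrac{1}{1+x^2}f^2\,\dx \lesssim \int_I f_x^2\,\dx$, so it remains to bound $\int_I f_x^2+f_{xx}^2\,\dx$. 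Here one integrates by parts: for a smooth cutoff $\eta$ that is $1$ on $I$ and supported in $I_+$ with $|\eta'|\lesssim 1/(\ell\wedge r)$, one has
\begin{align*}
  \int_{I_+}(\operatorL f)_x\,(\operatorL f)\,\eta^2\,\dx = \tfrac12\bigl[(\operatorL f)^2\eta^2\bigr]_{\partial I_+} - \int_{I_+}(\operatorL f)^2\,\eta\eta'\,\dx,
\end{align*}
which, combined with Young's inequality and the definition of $\operatorL$, produces $\int_I (\operatorL f)^2\,\dx \lesssim \int_{I_+}[(\operatorL f)_x]^2\,\dx + \tfrac{1}{\ell\wedge r}\int_{I_+} f_x^2\,\dx$ after absorbing boundary contributions (the boundary of $I_+$ lies where $v_x$ is exponentially small, so those terms are harmless, or one simply arranges $\eta$ to vanish there). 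Then $\int_I f_{xx}^2\,\dx \lesssim \int_I (\operatorL f)^2\,\dx + \int_I (G''(v))^2 f^2\,\dx$, and the second term is controlled by $\int_I f_x^2\,\dx$ via \eqref{eq:linER} (or directly by Hardy, since $G''(v)$ is bounded). Finally $\int_I f_x^2\,\dx$ itself is bounded by $\int_I (\operatorL f)f\,\dx + \int_I G''(v)f^2\,\dx \lesssim \|\operatorL f\|_{L^2}\|f\|_{L^2} + \|f_x\|_{L^2}^2$, and one closes this by first applying \eqref{eq:linER} to bound $\|f\|_{L^2}$ by $\|f_x\|_{L^2}$ plus lower order, then absorbing.

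The second step is the third-order term $\int_I f_{xxx}^2\,\dx$. Differentiating the identity $f_{xx} = G''(v)f - \operatorL f$ gives $f_{xxx} = G'''(v)v_x f + G''(v)f_x - (\operatorL f)_x$, so $\int_I f_{xxx}^2\,\dx \lesssim \int_I f^2 + f_x^2\,\dx + \int_I [(\operatorL f)_x]^2\,\dx$, using that $v_x$ and $G''(v)$, $G'''(v)$ are bounded (by Assumption~\ref{ass:G} and smoothness of $v$). The already-established bounds on $\int_I f^2+f_x^2\,\dx$ then finish this term. Assembling, one obtains \eqref{eq:linDR}.

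The main obstacle is the boundary-term bookkeeping in the integration by parts on the finite interval together with the mismatch between the domain $I$ on the left and the larger domain $I_+$ on the right of \eqref{eq:linDR}. One must choose the cutoff $\eta$ and handle the endpoints of $I_+$ so that (i) all boundary terms either vanish or are absorbed into $\tfrac{1}{\ell\wedge r}\int_{I_+}f_x^2\,\dx$, and (ii) the coercivity estimate \eqref{eq:linER}, which is stated on $I$ with the orthogonality $\int_I f v_x\,\dx=0$, can still be invoked — note \eqref{eq:linER} only uses $f|_I$, so this is fine, but one must be careful that the quantities being absorbed live on $I_+$ while the coercive control lives on $I$. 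A secondary subtlety, as in \cite{OW}, is that the second-order operator $\operatorL$ has $v_x$ in its kernel (Lemma~\ref{l:linaux2}), so \eqref{eq:linER} is only coercive modulo that direction; this is precisely why the orthogonality hypothesis is imposed, and why Lemma~\ref{l:hardy} rather than a naive Poincaré inequality is needed to control the weighted $L^2$ norm. Once these points are handled, the chain of integrations by parts is routine, and the $\tfrac{1}{\ell\wedge r}$ prefactor arises exactly from $\|\eta'\|_{L^\infty}$.
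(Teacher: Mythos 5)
The decisive step in your argument is the claim that
\begin{align*}
\int_I(\operatorL f)^2\,\dx\;\lesssim\;\int_{I_+}\bigl[(\operatorL f)_x\bigr]^2\,\dx+\frac{1}{\ell\wedge r}\int_{I_+}f_x^2\,\dx,\qquad \operatorL f:=-f_{xx}+G''(v)f,
\end{align*}
which you derive from the identity $\int_{I_+}(\operatorL f)_x(\operatorL f)\eta^2\,\dx=\tfrac12\bigl[(\operatorL f)^2\eta^2\bigr]_{\partial I_+}-\int_{I_+}(\operatorL f)^2\eta\eta'\,\dx$ ``combined with Young's inequality.'' This is a non sequitur: that identity is just the fundamental theorem of calculus applied to $(\operatorL f)^2\eta^2$, and after rearranging it only relates $\int(\operatorL f)_x(\operatorL f)\eta^2$ to an integral of $(\operatorL f)^2$ against $\eta\eta'$, which is supported in $I_+\setminus I$ and carries no sign; no choice of $\eta$ and no application of Young's inequality extracts $\int_I(\operatorL f)^2$ from it. To pass from $\norm{(\operatorL f)_x}_{L^2}$ to $\norm{\operatorL f}_{L^2}$ one needs a Poincar\'{e}-type normalization---a point or region where $\operatorL f$ is known to be small---and nothing in the hypotheses provides one. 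This is not a technicality: the dissipation controls the chemical potential only up to an additive constant, and pinning that constant down (equivalently, controlling $\int_I f_x^2$, to which everything else in your chain correctly reduces via Lemma~\ref{l:hardy}, \eqref{eq:linER}, and the identity $f_{xxx}=G'''(v)v_xf+G''(v)f_x-(\operatorL f)_x$) is the entire content of the lemma. Since your bounds on $\int_I f_x^2$ and $\int_I f_{xx}^2$ both funnel through $\norm{\operatorL f}_{L^2(I)}$, the argument collapses at this point; the remaining issue of the uncontrolled boundary terms $[f_xf]_{\partial I}$ in your step for $\int_I f_x^2$ is secondary by comparison but also unresolved.

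The paper closes exactly this gap by a compactness argument rather than a direct one: assuming the inequality fails along sequences $\ell_n,r_n\to\infty$ normalized by $\int_I f_{nx}^2=1$, it extracts a weak limit, identifies it with a multiple of $v_x$ via Lemma~\ref{l:linaux2}, kills it with the orthogonality, and then derives a quantitative contradiction by setting $g_n=f_{nx}$ and expanding $\int(-g_{nxx}+G''(1)g_n)^2\varphi\ge-2G''(1)\int g_{nxx}g_n\varphi+G''(1)^2\int_I g_n^2$, with the cutoff $\varphi$ placed on a unit interval in $[\ell_n,\ell_n+r_n]$ on which $\int g_n^2\le\eps$---this is precisely where the extension to $I_+$ and the term $\frac{1}{\ell\wedge r}\int_{I_+}f_x^2$ enter. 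The local vanishing of the weak limit and the normalization $\int_I g_n^2=1$ are both essential for the constant $G''(1)^2>0$ to beat the error terms. Your direct approach has no substitute for this mechanism; if you want to avoid compactness, you would need a quantitative far-field coercivity estimate for $g=f_x$ of this type, not an estimate on $\operatorL f$ obtained by integration by parts.
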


\begin{proof}[Proof of Lemma \ref{l:linER}]The proofs on $\R$ and the torus are essentially identical; for notational simplicity, we will give the argument for the half-line.
Suppose for a contradiction that the statement is false. Then there exists sequences $\ell_n\to \infty$ and $f_n\in H^1((-\infty,\ell_n))$ such that
\begin{align}
\int_{-\infty}^{\ell_n} f_n^2\,\dx &=1,\label{l2nrm}\\
\int_{-\infty}^{\ell_n} f_nv_{x}\,\dx&=0,\notag\\
\limsup_{n\to\infty}\int_{-\infty}^{\ell_n} f_{nx}^2+G''(v)f_n^2\,\dx&\leq 0.\label{liminfco}
\end{align}
Since $f_n$ is uniformly bounded in $H^1_{\loc}$, there exists a subsequence and $f:\R\to\R$ such that
\begin{align}
f_n&\rightharpoonup f \text{ in }H^1_\loc(\R), \notag\\
f_n&\to f \text{ in }C^0_\loc(\R), \notag\\
\int_\R f^2\,\dx&\le 1,\label{ine1}\\
\int_\R fv_x\,\dx&=0,\notag\\
\int_\R f_x^2+G''(v)f^2\,\dx&\le 0,\label{ine2}
\end{align}	
where for the last inequality we use a combination of $C_{\loc}^0$ convergence and lower semicontinuity. From \eqref{ine1} and \eqref{ine2} we deduce in addition that $f\in H^1(\R)$.
Since $v$ is a minimizer of the energy for $\pm 1$ boundary conditions, the second variation of the energy around $v$ in direction $f\in H^1(\R)$ is positive, so that \eqref{ine2} improves to the equality
\begin{align*}
\int_\R f_x^2+G''(v)f^2\,\dx= 0.
\end{align*}
Consequently $f$ minimizes the linearized energy gap functional, and thus satisfies the Euler-Lagrange equation
\begin{align*}
 f_{xx}+G''(v)f= 0.
\end{align*}
This implies $f\in C^2(\R)$ and $f_{xx}\in L^2(\R)$. According to Lemma~\ref{l:linaux2}, we deduce $f=\alpha v_x$ for some $\alpha\in \R$. Together with $\int_\R fv_x=0$, this implies $f=0$. We will now show that
\begin{align*}
  \int_{-\infty}^{L_n}f_n^2\,\dx\to 0,
\end{align*}
contradicting \eqref{l2nrm}.

On the one hand, $C_{\loc}^0$ convergence to $f=0$ means that for any $X<\infty$, there holds
\begin{align}
  \lim_{n\to\infty}\int_{-X}^X f_n^2\,\dx=\lim_{n\to\infty}\int_{-X}^X G''(v)  f_n^2\,\dx=0.\label{tozero}
\end{align}
On the other hand, choosing $X$ large enough so that
\begin{align*}
  \inf_{(-\infty,L_n)\setminus (-X,X)}G''(v)\geq\frac{1}{2}G''(1)>0,
\end{align*}
we obtain from the combination of \eqref{liminfco} and \eqref{tozero} that
\begin{align*}
 \limsup_{n\to\infty} \int_{-\infty}^{-X} f_n^2\,\dx+\int_{X}^{\ell_n}f_n^2\,\dx\lesssim
 \limsup_{n\to\infty}
  \int_{-\infty}^{-X} G''(v)f_n^2\,\dx+\int_{X}^{\ell_n}G''(v)f_n^2\,\dx\leq 0.
\end{align*}
\end{proof}

\begin{proof}[Proof of Lemma \ref{l:linDR}]
\red{As before we give the proof on the half-line.} As usual, we use the Hardy-type inequality from Lemma~\ref{l:hardy} and the form of the dissipation to argue that it is enough to show the inequality for the left-hand side $\int_{-\infty}^\ell f_x^2\,\dx$.  (See for instance the proof of \cite[Lemma 3.2]{OW}.) Hence suppose that such an estimate does not hold. Then there exist sequences $\ell_n, r_n\to \infty, f_n\in \dot{H}^1((-\infty,\ell_n+r_n))$ such that
\begin{align}
\int_{-\infty}^{\ell_n} f_nv_{x}\,\dx&=0,\notag\\
\int_{-\infty}^{\ell_n} f_{nx}^2\,\dx &=1,\label{562}\\
\int_{-\infty}^{\ell_n+r_n} \left[(-f_{nxx}+G''(v)f_n)_x\right]^2\,\dx&\to 0,\label{eq:Diss_conv}\\
\frac{1}{\ell_n\wedge r_n}\int_{-\infty}^{\ell_n+r_n}\red{\frac{1}{1+x^2}f_n^2+} f_{nx}^2\,\dx&\to 0.\label{eq:extra_term}
\end{align}
By the usual argument (see for instance the proof of \cite[Lemma 3.2]{OW}), \eqref{562} - \eqref{eq:extra_term} yield
\begin{align}
\int_{-\infty}^{\ell_n} \frac{1}{1+x^2}f_n^2+f_{nx}^2+f_{nxx}^2+f_{nxxx}^2\,\dx &\lesssim1,\label{eq:improved_f}
\end{align}
Since $f_n$ is uniformly bounded in $H^3_{\loc}(-X,X)$, there exists a subsequence and a limit function $f$ such that
\begin{align}
f_n\rightharpoonup f \text{ in }H^3_\loc(\R), \;\;
f_n & \to f \text{ in }C^2_\loc(\R), \notag\\
\int_\R fv_x\,\dx&=0,
\notag \\
\int_\R f_{x}^2+f_{xx}^2+f_{xxx}^2\,\dx&\lesssim 1,\notag\\
\int_\R \left[(-f_{xx}+G''(v)f)_x\right]^2\,\dx&\le 0.\label{fxs}
\end{align}	
From \eqref{fxs}, we obtain
\begin{align*}
 (-f_{xx}+G''(v)f)_x= 0,
\end{align*}
from which we deduce $f\in C^3(\R)$. Since also $f_x,\,f_{xx}\in L^2(\R)$, Lemma~\ref{l:linaux2} gives $f=\alpha v_x$ for some $\alpha\in \R$. Together with $\int_\R fv_x\,\dx=0$ this implies $f=0$.

We claim now that
\begin{align} \label{linDissclaim}
\int_{-\infty}^{\ell_n+r_n} \left[(-f_{nxx}+G''(1)f_n)_x\right]^2\,\dx&\to 0.
\end{align}
By \eqref{eq:Diss_conv} it is enough to show that
\begin{align*}
\int_{-\infty}^{\ell_n+r_n} \left[([G''(v)-G''(1)]f_n)_x\right]^2\,\dx&\to 0.
\end{align*}
For any $X<\infty$, the convergence  on $(-X,X)$ follows from the uniform convergence $f_n\to 0$. For the integral over $[\ell_n,\ell_n+r_n]$ we estimate
\begin{align*}
\int_{\ell_n}^{\ell_n+r_n}\left(G''(v)-G''(1)\right)^2f_{nx}^2+G'''(v)^2v_x^2f_n^2\,\dx&\to 0,
\end{align*}
where we used \eqref{eq:extra_term} and
\[\abs{G''(v)-G''(1)}\lesssim \frac{1}{\ell_n}\qquad\text{and}\qquad \abs{G'''(v)v_x}\lesssim \frac{1}{{(1+x^2)}\ell_n}\]
for $x\ge \ell_n$. Finally we observe that we can use the properties of $v$ and \eqref{eq:improved_f}
to make the integrals over $(-\infty,-X)$ and $(X,\ell_n)$ arbitrarily small for $X$ large enough.

Now let $g_n\coloneqq f_{nx}$ and for an arbitrary $\eps>0$, choose $n$ large enough so that we can express \eqref{562}, \eqref{eq:extra_term}, and \eqref{linDissclaim} in terms of $g_n$ and $\eps$ as
\begin{align}
\int_{-\infty}^{\ell_n} g_n^2\,\dx&=1,\label{eq:g_ident}\\
\int_{-\infty}^{\ell_n+r_n} \left(-g_{nxx}+G''(1)g_n\right)^2\,\dx&\le \eps,\label{eq:sec}\\
\int_{-\infty}^{\ell_n+r_n}g_n^2\,\dx&\le \eps(\ell_n\wedge r_n).
\end{align}
 According to the last inequality, there is a point $x_n\in[\ell_n,\ell_n+r_n-1]$ such that $\int_{x_n}^{x_n+1}g_n^2\le \eps$.  Let $\varphi\in C^\infty(\R;[0,1])$ such that $\varphi\equiv 1$ on $(-\infty,x_n)$, $\varphi\equiv 0$ on $(x_n+1,\infty)$ and  $\abs{\varphi_{xx}}\lesssim 1$.
Then, according to \eqref{eq:sec} there holds
\begin{eqnarray}
\eps&\ge& \int_{-\infty}^{\ell_n+r_n} \left[-g_{nxx}+G''(1)g_n\right]^2\,\dx \notag\\
&\ge& \int_{-\infty}^{\ell_n+r_n} \left[-g_{nxx}+G''(1)g_n\right]^2\varphi\,\dx \notag\\
&\ge& -2G''(1)\int_{-\infty}^{\ell_n+r_n}  g_{nxx}g_n\varphi\,\dx+(G''(1))^2\int_{-\infty}^{\ell_n}g_n^2\,\dx\notag\\
&\overset{\eqref{eq:g_ident}}=&-2G''(1)\int_{-\infty}^{\ell_n+r_n} g_{nxx}g_n\varphi\,\dx+(G''(1))^2.\label{subone1}
\end{eqnarray}
For the first term on the right-hand side, we integrate by parts to estimate
\begin{align}
-\int_{-\infty}^{\ell_n+r_n} g_{nxx}g_n\varphi\,\dx=\int_{-\infty}^{\ell_n+r_n}  g_{nx}g_n \varphi_x+ g_{nx}^2\varphi\,\dx\ge -\frac{1}{2}\int_{x_n}^{x_n+1} g_n^2\varphi_{xx}\,\dx.\label{sub1}
\end{align}
Recalling $\int_{x_n}^{x_n+1} g_n^2\abs{\varphi_{xx}}\,\dx\lesssim \eps$, we obtain from \eqref{subone1} and \eqref{sub1} a contradiction for $\eps$ small enough in relation to $G''(1)$.
\end{proof}

We are now ready to establish the nonlinear estimates.
\begin{proof}[Proof of Lemma \ref{l:eedwt}]
As explained at the beginning of this subsection, the kinks $v_{\tilde{a}}$, $v_{\tilde{b}}$ out of which $\tilde{w}$ is constructed will play an important role, and we may assume without loss of generality that $\wt\in\N(0,L)$, so that $\tilde{a}<0$, $\tilde{b}>0$, and $-\tilde{a}\sim\tilde{b}\sim L$. Note also that according to our assumptions and Lemma~\ref{l:u2zeros}, $u$ has zeros $a,\,b$ such that
\begin{align*}
  \abs{a-\tilde{a}}+\abs{b-\tilde{b}}\lesssim \Wt+\Wt^{1/2},
\end{align*}
and hence
\begin{align*}
  -\tilde{a}\sim -a\sim\tilde{b}\sim b\sim L.
\end{align*}
For notational simplicity, we will focus on the infinite line case; the estimates for the torus are proved in the same way.

We begin with the energy gap, i.e., by establishing the energy gap estimates \eqref{eq:Eerrorlow}-\eqref{eq:Eerrorup}.
We will split the proof into the following steps.

\textit{Step} 1: If $|\Et| \ll 1$, then
$\norm{u-\wt}_{L^{\infty}} \ll 1$.

\textit{Step} 2: If $\norm{u-\wt}_{L^{\infty}} \ll 1$, then $\int_\R \ft^2\,\dx\lesssim |\Et|+\exp(-L/C)$.

\textit{Step} 3: If $E(u)\lesssim 1$ and $\int_\R \ft^2\,\dx\lesssim |\Et|+\exp(-L/C)$, then $\int_{\R}\ft^2+\ft_x^2\,\dx\lesssim|\Et|+\exp(-L/C)$.

\textit{Step} 4: If $\Et>0$, $\Et \gtrsim 1$,  and $E(u_0) \leq 4\co-\epsilon$, then $\int_{\R}\ft^2+\ft_x^2\,\dx\lesssim\Et$.

\textit{Step} 5: $E(u)\lesssim 1$ implies $\Et\lesssim\int_{\R}\ft^2+\ft_x^2\,\dx+\exp(-L/C)$.

Notice that $\Et$ is not necessarily positive, but that $\Et<0$ implies $\abs{\Et}\ll 1$, so we lose no generality in Step 3 by assuming positivity of the energy gap. By the same reasoning applied to each infinite half-line, smallness of the energy gap in Step 1 implies smallness of $E_{(-\infty,0)}(u)-E_{(-\infty,0)}(v_{\tilde{a}})$, and the same on the positive half-line.
In the following we will sometimes make use of the energy identity
\begin{align}
E_{(-\infty,0)}(u)-E_{(-\infty,0)}(v_{\tilde{a}})  &= \int_{\mid (-\infty,0]} \frac{1}{2} (u-v_{\tilde{a}})_{x}^2 + G(u) - G(v_{\tilde{a}}) - G'(v_{\tilde{a}})(u-v_{\tilde{a}}) \,\dx\notag\\
&\qquad+ v_{\tilde{a}x}(0)(u(0)-v_{\tilde{a}}(0)), \label{EidentityR}
\end{align}
which follows from  $-v_{xx}+G'(v)=0$ and integration by parts.

\underline{Step 1}:  We begin by comparing to the kinks $v_a$ and $-v_b$ on $(-\infty,0]$ and $[0,\infty)$, respectively. To be concrete, we focus on $v_a$ and the negative half-line. According to \eqref{EEDassumptwt}, there exists $X<\infty$ and a point $x_*\in(-X,0)$ such that
\begin{align*} 
u(x_*) \approx 1\qquad\text{while also}\qquad v_a(x_*)\approx 1.
\end{align*}
Therefore, using the Modica-Mortola trick, we obtain
\begin{align} \label{approx1R}
\int_{(-\infty,x_*]} u_x \sqrt{2G(u)} \,\dx \approx \int_{(-\infty,x_*]} v_{ax} \sqrt{2G(v_{a})} \,\dx = E_{(-\infty,x_*]}(v_{a})
\end{align}

We now use smallness of the energy gap on $(-\infty,x_*)$ (see above) to deduce
\begin{eqnarray}
1 & \gg &  E_{(-\infty,x_*]}(u) - E_{(-\infty,x_*]}(v_a) \nonumber \\
& = & \half \int_{(-\infty,x_*]} \left(u_x - \sqrt{2G(u)}\right)^2 + 2\left(u_x\sqrt{2G(u)}\right)\,\dx -  E_{(-\infty,x_*]}(v_a)\nonumber \\
&  \overset{\eqref{approx1R}}\approx &  \frac{1}{2} \int_{(-\infty,x_*]} \left( u_x - \sqrt{2G(u)} \right)^2 \,\dx.  \nonumber
\end{eqnarray}
In other words, $u$ satisfies
\begin{align*}
u_x = \sqrt{2G(u)} + r \quad\mbox{on } (-\infty,x_*)\qquad\text{and}\qquad
u(a) = 0,
\end{align*}
where  $r$ is small in $L^2$. Since
 $v_a$ satisfies
\begin{align*}
v_{a x} = \sqrt{ 2G(v_{a})} \quad\mbox{on } (-\infty,x_*)\qquad\text{and}\qquad v_a(a)=0,
\end{align*}
ODE theory  yields $L^{\infty}$ closeness of $u$ to $v_a$ on a large interval $(a-X,a+X)$ around $a$. In the same way, one obtains $L^{\infty}$ closeness of $u$ to $-v_b$ on a large interval around $b$.

It remains to show closeness to $v_a$ on $(-\infty,a-X)$ and $(a+X,0]$ (and similarly for $-v_b$). For this, we use
\begin{align*}
  v_a\approx -1\quad\text{on}\;(-\infty,a-X),\qquad v_a\approx 1\quad\text{on}\;(a+X,0],
\end{align*}
together with
\begin{align*}
  \lim_{x\to-\infty}u(x)=-1,\qquad u(a-X)\approx -1,
\end{align*}
smallness of the energy gap, and the trick of Modica and Mortola.

Finally, to deduce the desired closeness to $\wt$ on $\R$, it suffices to improve from closeness to $v_a$ and $v_b$ to closeness to $v_{\tilde{a}}$ and $v_{\tilde{b}}$. For closeness of $u$ to $v_{\tilde{a}}$ on $(-\infty,0]$, we argue as in \cite[Proof of (1.18), Lemma~1.3]{OW}. Indeed, orthogonality gives
\begin{align*}
0 = \int_{(-\infty,0]} \Big( (u- v_{a}) + (v_{a}-v_{\tilde{a}}) \Big)v_{\tilde{a}x} \,\dx,
\end{align*}
and thus
\begin{align*}
\left\vert \int_{(-\infty,0]}\left(v_{\tilde{a}}- v_a \right) v_{\tilde{a}x} \,\dx \right\vert = \left\vert \int_{(-\infty,0]}\left( u-v_a\right) v_{\tilde{a}x} \,\dx \right\vert \leq \norm{u-v_a}_{L^{\infty}((-\infty,0])}\int \abs{v_{\tilde{a}x}}\,\dx \ll 1,
\end{align*}
from which we deduce $\abs{a-\tilde{a}} \ll 1$. The same argument on the positive half-line yields $\abs{b-\tilde{b}}\ll 1$, and combining these ingredients implies
\begin{align*}
\norm{u-v_{\tilde{a}}}_{L^{\infty}((-\infty,0])} + \norm{u+v_{\tilde{b}}}_{L^{\infty}([0,\infty))} \ll 1.
\end{align*}

\underline{Step 2}:
The energy gap identity \eqref{EidentityR},
Taylor expansion of $G(u)$ around $v_{\tilde{a}}$, and $\norm{u-v_{\tilde{a}}}_{L^{\infty}((-\infty,0])} \ll 1$ yield a lower bound on the energy gap:
\begin{align*}
E_{(-\infty,0)}(u)-E_{(-\infty,0)}(v_{\tilde{a}}) & \geq \int_{(-\infty,0]} \frac{1}{2} \left( (u-v_{\tilde{a}})_{x}^2 + G''(v_{\tilde{a}})(u-v_{\tilde{a}})^2 \right) \,\dx \\
&\qquad - C \norm{ u-v_{\tilde{a}}}_{L^{\infty}((-\infty,0])} \int_{(-\infty,0]} (u-v_{\tilde{a}})^2 \,\dx-\exp(-L/C).
\end{align*}
The linear bound from \eqref{eq:linER} (after translation), nondegeneracy of $G$ (cf.\ Assumption~\eqref{ass:G}), and smallness of $\norm{u-v_{\tilde{a}}}_{L^{\infty}((-\infty,0])} $ yield
\begin{align}
\int_{(-\infty,0]} (u-v_{\tilde{a}})^2 \,\dx\lesssim \abs{E_{(-\infty,0)}(u)-E_{(-\infty,0)}(v_{\tilde{a}})} +\exp(-L/C).\label{sub585}
\end{align}
The analogous argument on $[0,\infty)$ and the construction of $\wt$ yield the result.

\underline{Step 3}: 
From the identity \eqref{EidentityR}, the uniform bound on $u$, and Taylor's formula, we obtain
\begin{eqnarray}
\int_{(-\infty,0]} [(u-v_{\tilde{a}})_{x}]^2 \, \dx &\overset{\eqref{EidentityR}}{\lesssim}& \abs{E_{(-\infty,0)}(u)-E_{(-\infty,0)}(v_{\tilde{a}})}\notag\\
&&\qquad - \int_{(-\infty,0]} G(u) - G(v_{\tilde{a}}) - G'(v_{\tilde{a}})(u-v_{\tilde{a}}) \, \dx+\exp(-L/C)\notag\\
& \lesssim&\abs{ E_{(-\infty,0)}(u)-E_{(-\infty,0)}(v_{\tilde{a}})} + \int_{(-\infty,0]} (u-v_{\tilde{a}})^2 \, \dx +\exp(-L/C). \nonumber
\end{eqnarray}
Substituting \eqref{sub585} completes the estimate on $(-\infty,0]$ and the analogous argument on the positive half-line together with the construction of $\wt$ yields the result.

\underline{Step 4}: As in \cite[Step 3, Proof of Lemma~1.3, (1.18)]{OW}, we say that $u$ has a $\delta$-transition layer on $(x_-,x_+)$ if
$u(x_-)=-1+\delta$ and $u(x_+) =1-\delta$, or vice versa, and
$(x_-,x_+)$ is minimal in the sense that the same is not true for any proper subset of
$(x_-,x_+)$. By using $E(u)\leq 4\co-\epsilon$, choosing $\delta = \delta(\epsilon)$ sufficiently small, applying the Modica-Mortola
trick, and recalling the boundary conditions of $u$, we deduce that $u$ has exactly one $\delta$-transition on $(-\infty,0]$ that is positioned around the zero $a$. We argue as before to derive existence of a constant $C(\delta) < \infty$ such that
\begin{align}
\begin{cases} (u(x)-(-1))^2\leq C(\delta) G(u(x)) & \mbox{for } x \in (-\infty,a], \\ (u(x)-1)^2 \leq C(\delta) G(u(x)) & \mbox{for } x \in (a,0]. \end{cases} \label{CdeltaR}
\end{align}
Let $\chi: (-\infty,0] \to \{-1,1\}$ denote the characteristic function
\begin{align*}
\chi (x) = \begin{cases} -1, & x \in (-\infty,a], \\ 1, & x \in (a,0]. \end{cases}
\end{align*}
From \eqref{CdeltaR} it follows that
\begin{align} \label{eq:uchideltaR}
\int_{(-\infty,0]} \abs{u - \chi}^2 \,\dx \leq C(\delta) \int_{(-\infty,0]} G(u) \,\dx \lesssim C(\delta).
\end{align}
The properties of $v_a$ yield
\begin{align} \label{eq:wctildechi1R}
\int_{(-\infty,0]} \abs{v_{a} - \chi}^2 \,\dx \lesssim 1.
\end{align}
Since $v_{\tilde{a}}$ is the $L^2$ closest shifted kink to $u$ on $(-\infty,0]$, a triangle inequality yields
\begin{eqnarray}
\lefteqn{\int_{(-\infty,0]} \abs{u - v_{\tilde{a}}}^2 \,\dx } \nonumber \\
& \lesssim & \int_{(-\infty,0]} \abs{u - v_a}^2 \,\dx \lesssim \int_{(-\infty,0]} \abs{u - \chi}^2 \,\dx + \int_{(-\infty,0]} \abs{\chi - v_a}^2 \,\dx \nonumber \\
& \overset{\eqref{eq:uchideltaR},\eqref{eq:wctildechi1R}}{\lesssim} & C(\delta) + 1 \lesssim \left( C(\delta) + 1 \right) \Et,  \nonumber
\end{eqnarray}
where the last inequality follows from $\Et \gtrsim 1$. We improve from this $L^2$ to an $H^1$ estimate (up to exponential errors) using \eqref{EidentityR}. The analogous estimate on $[0,\infty)$ and the definition of $\wt$ completes the proof.

\underline{Step 5}: Once again we will deduce the estimate on $\R$ (up to exponential errors) from two estimates on the half-lines and the definition of $\wt$. From \eqref{EidentityR} we deduce
\begin{eqnarray}
\lefteqn{\abs{E_{(-\infty,0)}(u)-E_{(-\infty,0)}(v_{\tilde{a}})}}\notag\\
& \overset{\eqref{EidentityR}}{=} & \left|\int_{(-\infty,0]} \frac{1}{2} [(u-v_{\tilde{a}})_{x}]^2 + G(u) - G(v_{\tilde{a}}) - G'(v_{\tilde{a}})(u-v_{\tilde{a}}) \, \dx \right|+\exp(-L/C)\nonumber \\
& \lesssim & \int_{(-\infty,0]} [(u-v_{\tilde{a}})_{x}]^2 \,\dx + \sup_{\vert s \vert \leq 1 + \norm{u}_{L^{\infty}}} \abs{G''(s)} \int_{(-\infty,0]} (u-v_{\tilde{a}})^2 \,\dx +\exp(-L/C) \nonumber \\
&\lesssim& \int_{(-\infty,0]}[(u-v_{\tilde{a}})_{x}]^2  +(u-v_{\tilde{a}})^2 \,\dx+\exp(-L/C). \nonumber
\end{eqnarray}

We now turn to the proof of the dissipation estimate~\eqref{eq:Derror}, once again focusing for notational simplicity on the infinite line case.
We carry out the proof via these steps:

\textit{Step} 1:  If $\D \ll 1$, then $\norm{ \ft}_{L^{\infty}(\R)} \ll 1$. 

\textit{Step} 2: If  $\norm{\ft}_{L^{\infty}(\R)}\ll 1$, then $\int_{\R}  \ft_{x}^2+\ft_{xx}^2+\ft_{xxx}^2\,\dx \lesssim \D+\exp(-L/C)$.

\textit{Step} 3: If $\D \gtrsim 1$ and $E(u_0) \leq 4\co-\epsilon$, then $\int_{\R}  \ft_{x}^2\,\dx \lesssim \D$.

\textit{Step} 4: If $\int_{\R}  \ft_{x}^2\,\dx \lesssim \D$ and $E(u) \lesssim 1$, then $\int_{\R}  \ft_{x}^2+\ft_{xx}^2+\ft_{xxx}^2\,\dx \lesssim \D+\exp(-L/C)$.

We will make use of the half-line identity
\begin{eqnarray}
\lefteqn{\int_{(-\infty,X]} \left[ \left(-u_{xx} + G'(u)\right)_x\right]^2\,\dx}\notag\\
& \overset{\eqref{elv}}=& \int_{(-\infty,X]} \left[ \left(- (u-v_{\tilde{a}})_{xx} + G'(u) -G'(v_{\tilde{a}}) \right)_x\right]^2\,\dx \nonumber \\
& = &\int_{(-\infty,X]} \Big[ \big(  - (u-v_{\tilde{a}})_{xx} + G''(v_{\tilde{a}}) (u-v_{\tilde{a}}) \notag\\
&&\qquad + G'(u) -G'(v_{\tilde{a}}) -G''(v_{\tilde{a}})(u-v_{\tilde{a}}) \big)_x\Big]^2\,\dx. \label{DidentityR}
\end{eqnarray}

\underline{Step $1$}: To fix ideas, we will work on the left half-line, noticing that small dissipation implies that its restriction to either half-line is small as well. Arguing as in \cite[Proof of (1.19), Lemma~1.3]{OW}, we bound the so-called discrepancy $\xi \coloneqq \frac{1}{2}u_x^2-G(u)$ via
\begin{align} \label{smallnessxiR}
\norm{\xi}_{L^{\infty}((-\infty,0])} \lesssim \max\{\D^{1/6},L^{-1}\}.
\end{align}
We use this to deduce that  $u$ satisfies
\begin{align*}
u_x = \pm \sqrt{2G(u)+\xi} \quad \mbox{on } (-\infty,0]\quad\text{and}\quad u(a)=0,
\end{align*}
where $\xi$ is small in $L^\infty$. We  argue as in  \cite{OW} to derive closeness of $u$ to the kink $v_a$ and $-v_b$ on $(-\infty,0]$ and $[0,\infty)$, respectively, and use orthogonality as in the argument from Step 1 of the energy gap estimate to prove $L^{\infty}$ closeness of $u$ to $v_{\tilde{a}}$ on $(-\infty,0]$ and $-v_{\tilde{b}}$ on $[0,\infty)$. Closeness to $\wt$ on $\R$ follows as usual.

\underline{Step 2}: We use~\eqref{DidentityR} to bound
\begin{eqnarray}
\D & \gtrsim & \frac{1}{2} \int_{(-\infty,X]}  \left[\Big(-(u-v_{\tilde{a}})_{xx} + G''(v_{\tilde{a}})(u-v_{\tilde{a}}) \Big)_x \right]^2 \,\dx \nonumber \\
& & - \int_{(-\infty,X]} \left[\Big(G'(u) -G'(v_{\tilde{a}}) -G''(v_{\tilde{a}})(u-v_{\tilde{a}}) \Big)_x \right]^2 \,\dx\notag\\
&\gtrsim & \int_{(-\infty,0]} \red{\frac{1}{1+x^2}(u-v_{\at})^2}  + [(u-v_{\tilde{a}})_x]^2+[(u-v_{\tilde{a}})_{xx}]^2+[(u-v_{\tilde{a}})_{xxx}]^2\,\dx\notag\\
&&\quad -\eps\int_{(-\infty,X]}\red{\frac{1}{1+x^2}(u-v_{\at})^2}+[(u-v_{\tilde{a}})_x]^2\,\dx\notag\\
&&\quad-
\int_{(-\infty,X]} \left[\Big(G'(u) -G'(v_{\tilde{a}}) -G''(v_{\tilde{a}})(u-v_{\tilde{a}}) \Big)_x \right]^2 \,\dx,
\label{Dstep2toruswt0R}
\end{eqnarray}
where we have applied the linearized estimate \eqref{eq:linDR} and we can make the $\eps$ in the first error term as small as we like by choosing $X$ large enough. For the second integral, we use Taylor's theorem and $\norm{u-v_{\tilde{a}}}_{L^{\infty}((-\infty,0])} \ll 1$ to estimate
\begin{align*}
\lefteqn{\left\vert \Big( G'(u)-G'(v_{\tilde{a}})-G''(v_{\tilde{a}})(u-v_{\tilde{a}}) \Big)_x \right\vert}\\
&\lesssim \abs{(u-v_{\tilde{a}}) (u-v_{\tilde{a}})_x} + \abs{(u-v_{\tilde{a}})^2 v_{\tilde{a}x}} + \abs{(u-v_{\tilde{a}})^2 (u-v_{\tilde{a}})_x }.
\end{align*}	
We use exponential decay of $v_{\tilde{a}}$ at infinity and the Hardy-type inequality from Lemma~\ref{l:hardy} to estimate
\begin{eqnarray}
\lefteqn{\int_{(-\infty,X]} \left[\Big( G'(u) -G'(v_{\tilde{a}}) -G''(v_{\tilde{a}})(u-v_{\tilde{a}})\Big)_x \right]^2 \, \dx} \nonumber \\
& \lesssim & \norm{u-v_{\tilde{a}}}_{L^{\infty}((-\infty,X])} \int_{(-\infty,X]} (u-v_{\tilde{a}})^2 v_{\tilde{a}x}^2 + [(u-v_{\tilde{a}})_x]^2\, \dx\nonumber \\
& \overset{\eqref{eq:hardytype}}\lesssim & \norm{u-v_{\tilde{a}}}_{L^{\infty}((-\infty,X])} \int_{(-\infty,X]} [(u-v_{\tilde{a}})_{x}]^2\,\dx. \label{Dsteptoruswt3R}
\end{eqnarray}
We now substitute \eqref{Dsteptoruswt3R} into \eqref{Dstep2toruswt0R}, which yields
\begin{align*}
\lefteqn{  \int_{(-\infty,0]} \frac{1}{1+x^2}(u-v_{\at})^2 + [(u-v_{\tilde{a}})_x]^2+\ldots+[(u-v_{\tilde{a}})_{xxx}]^2\,\dx}\\
&\lesssim  \D+\hat{\eps}\int_{(-\infty,X]}\frac{1}{1+x^2}(u-v_{\at})^2 + [(u-v_{\tilde{a}})_x]^2\,\dx
\end{align*}
for an $\hat{\eps}$ that we can make arbitrarily small by choosing $X$ large (but order-one with respect to $L$) and $\norm{u-\wt}_{L^{\infty}(\R)}$.

In the same way, we obtain the analogous estimate on $[-X,\infty)$. Adding the two  and recalling the definition of $\wt$, we deduce
\begin{align*}
  \int_{\R} \red{ \frac{1}{1+x^2}\ft^2}+ \ft_x^2+\ft_{xx}^2+\ft_{xxx}^2\,\dx\lesssim \D+\hat{\eps}\int_{\R}\red{\frac{1}{1+x^2}\ft^2+}\ft_x^2\,\dx+\exp(-L/C).
\end{align*}
Absorbing the $\hat{\eps}$-dependent term in the left-hand side completes the proof.

\underline{Step 3}: We use $D\gtrsim 1$ and the bounds on the energy gap to derive
\begin{align*}
\int_{\R} \ft_x^2\,\dx \overset{\eqref{eq:Eerrorlow}}\lesssim \abs{\Et} + \exp(-L/C) \lesssim 1 \lesssim \D.
\end{align*}

\underline{Step 4}: According to the interpolation inequality
\begin{align*}
\int_{(-\infty,0]} [(u-v_{\tilde{a}})_{xx}]^2 \,\dx \lesssim \int_{(-\infty,0]} [(u-v_{\tilde{a}})_{x}]^2 + [(u-v_{\tilde{a}})_{xxx}]^2 \,\dx  +\exp(-L/C)  \end{align*}
it is sufficient to establish
\begin{align*}
\int_{(-\infty,0]}  [(u-v_{\tilde{a}})_{xxx}]^2 \,\dx \lesssim \D + \int_{(-\infty,0]}  [(u-v_{\tilde{a}})_{x}]^2 \,\dx.
\end{align*}
Since the first equality in \eqref{DidentityR} implies
\begin{align*}
\int_{(-\infty,0]} \left[ -(u-v_{\tilde{a}})_{xxx} + \Big(G'(u)-G'(v_{\tilde{a}})\Big)_x  \right]^2 \,\dx \lesssim \D,
\end{align*}
it suffices to show 	
\begin{align} \label{Dstep4toruswt2R}
\int_{(-\infty,0]} \left[ \Big(G'(u)-G'(v_{\tilde{a}})\Big)_x  \right]^2 \,\dx \lesssim \int_{(-\infty,0]} [(u-v_{\tilde{a}})_{x}]^2 \,\dx.
\end{align}
We use Taylor's formula to write
\begin{eqnarray}
\lefteqn{(G'(u)-G'(v_{\tilde{a}}))_x}  \nonumber \\
& = & \ft_{x} \int_0^1 G''\Big(v_{\tilde{a}}+\theta(u-v_{\tilde{a}})\Big)(1-\theta)\, {\rm{d}}\theta \nonumber \\
& &  +\, (u-v_{\tilde{a}}) \int_0^1 G'''(v_{\tilde{a}}+\theta (u-v_{\tilde{a}})) (v_{\tilde{a}x}+ \theta (u-v_{\tilde{a}})_{x})(1-\theta) \, {\rm{d}}\theta. \nonumber
\end{eqnarray}
Boundedness of $\abs{u-v_{\tilde{a}}}$ and the properties of $G$ then lead to
\begin{align} \nonumber
\int_{(-\infty,0]} \left[\Big(G'(u)-G'(v_{\tilde{a}})\Big)_x  \right]^2 \,\dx \lesssim \int_{(-\infty,0]} [(u-v_{\tilde{a}})_{x}]^2 \,\dx + \int_{(-\infty,0]} v_{\tilde{a}x}^2 (u-v_{\tilde{a}})^2 \, \dx.
\end{align}
From the exponential decay of $v_{\tilde{a}x}^2$ and Lemma~\ref{l:hardy}, we recover \eqref{Dstep4toruswt2R}.
\end{proof}
\subsection{EED estimates relative to $-1$}
\begin{proof}[Proof of Lemma \ref{l:basiceed-}]
	The energy bound $E(u)\leq 2\co - \epsilon$ implies {by the Modica-Mortola trick and the boundary conditions} that there is a $\delta>0$ such that $u<1-\delta$. Combined with the bound on $\norm{u}_{L^\infty}$, this yields
	\begin{align*}
	(u+1)^2\sim G(u),
	\end{align*}
	which suffices to deduce \eqref{eq:energyestimate-}.  To obtain the dissipation estimate, we first establish  the linear estimate via integration by parts:
	\begin{align}
	\int_\R \left((u_{xx}-G''(-1)u)_x\right)^2\,\dx&=\int_\R u_{xxx}^2-2G''(-1)u_xu_{xxx}+(G''(-1))^2 u_x^2\,\dx \notag \\
	&=\int_\R u_{xxx}^2+2G''(-1)u_{xx}^2+(G''(-1))^2 u_x^2\, \notag \dx\\
	&\gtrsim \int_\R u_{xxx}^2+u_{xx}^2+u_x^2\,\dx. \label{eq:LinDiss}
	\end{align}
	If the dissipation is order one, we argue as in the proof of Lemma \ref{l:eedwt} by bounding $\int_\R u_x^2\lesssim E\lesssim 1\lesssim \D$ and then improving  to the full inequality. For $\D\ll 1$, we write
	\begin{align}
	\int_\R u_x^2\,dx\overset{\eqref{eq:LinDiss}}\lesssim\int_\R \left((u_{xx}-G''(-1)u)_x\right)^2\,\dx&\lesssim \D+\int_\R \left(G''(u)u_x-G''(-1)u_x\right)^2\,\dx\notag\\
	&=\D+\int_\R \left(G''(u)-G''(-1)\right)^2u_x^2\,\dx\notag
	\end{align}
	and observe that it is enough to prove $\norm{u+1}_{L^\infty(\R)}\ll 1$, since then $\norm{G''(u)-G''(-1)}_{L^{\infty}(\R)}\ll 1$ and the second term on the right-hand side can be absorbed in the left-hand side. To see that $\norm{u+1}_{L^\infty(\R)}\ll 1$ is small if $\D\ll 1$, we use \eqref{smallnessxiR} to obtain smallness of $\xi=\frac 12 u_x^2-G(u)$ in $L^\infty$. By the boundary conditions, $u$ must have a maximum at some point $x_0$. There, $u_x(x_0)=0$ and hence $G(u(x_0))=\abs{\xi(x_0)}\ll 1$. This implies that either $u(x_0)\approx -1$ or $u(x_0)\approx 1$ and in view of $E\le 2e_\ast-\epsilon$ and the boundary conditions we infer $u(x_0)\approx -1$ if the dissipation is small enough (again using the trick of Modica and Mortola). Thus, $u(x_0)\approx -1$ and arguing analogously for the minimum, we obtain $\norm{u+1}_{L^\infty(\R)}\ll 1$.
\end{proof}
	
\subsection{Proofs of the EED estimates for the one-parameter slow manifold}


On the torus (cf. Lemma~\ref{l:basiceedtorusw}), once the energy gap $\Et$ is small enough, we will use energy gap and dissipation estimates with respect to $w_c$.
As for EED with respect to $\wt$, we begin with lower bounds for the linearized energy gap and dissipation.
\begin{lemma}\label{lem:linest}
There exists $L_0<\infty$ such that for all $L\ge L_0$ the following holds true. Suppose $f\in H^1([-L,L])$ satisfies
	\begin{align*}
	\int_{[-L,L]}f\,\dx=0, \quad \mbox{and}\quad
	\int_{[-L,L]} fw_{cx}\,\dx=0.
	\end{align*}
	Then
	\begin{align} \label{eq:linEestwc}
	\int_{[-L,L]} f^2\,\dx \lesssim \int_{[-L,L]} f_x^2+G''(w_c)f^2\,\dx
	\end{align}
	and
	\begin{align} \label{eq:linDestwc}
	\int_{[-L,L]} f_x^2 \,\dx \lesssim \int_{[-L,L]} \left[ \Big(-f_{xx} + G''(w_c)f\Big)_x \right]^2 \,\dx.
	\end{align}
\end{lemma}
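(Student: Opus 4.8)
\textbf{Proof proposal for Lemma~\ref{lem:linest}.}

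The plan is to prove both \eqref{eq:linEestwc} and \eqref{eq:linDestwc} by contradiction and compactness, in the spirit of the half-line estimates in Lemmas~\ref{l:linER} and~\ref{l:linDR} but now on the torus, with the additional subtlety that $w_c$ is not a single kink but a bump with \emph{two} transition layers, separated by a distance of order $L$. First I would record the structural facts about $w_c$ that make the argument work: $w_c$ solves the Euler--Lagrange equation \eqref{el}, it has exactly two zeros a distance of order $L$ apart, and near each zero it looks (exponentially well in $L$) like a shifted kink $\pm v$, while away from the zeros it is exponentially close to $\pm 1$. In particular $G''(w_c)$ is bounded below by $\tfrac12 G''(1)>0$ outside order-one neighborhoods of the two zeros, and the linearized operator $-\partial_x^2 + G''(w_c)$ on the torus has a two-dimensional ``almost-kernel'' spanned by the two translated kink derivatives glued into $w_c$; the honest kernel is one-dimensional, spanned by $w_{cx}$, and the second small eigenvalue is exponentially small in $L$ but \emph{bounded away from the negatives of that size} on the orthogonal complement of $w_{cx}$ --- this is the standard spectral picture for bumps going back to \cite{CGS,CP,OR}.

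For \eqref{eq:linEestwc}: suppose it fails, so there are $L_n\to\infty$ and $f_n\in H^1([-L_n,L_n])$ with $\int f_n\,\dx = \int f_n w_{cx}\,\dx = 0$, $\int f_n^2\,\dx = 1$, and $\int f_{nx}^2 + G''(w_c)f_n^2\,\dx \to 0$. Because the two layers are order-$L_n$ apart, I would localize: using cut-offs supported near each zero and on the ``bulk'' region where $G''(w_c)\gtrsim 1$, the bulk part of the $L^2$ mass tends to zero (coercivity from $G''(1)>0$), so the mass concentrates near the two zeros. Extracting limits near each zero (after recentering at the respective zero) gives limit functions $f^\pm\in H^1(\R)$ with $\int (f^\pm_x)^2 + G''(v)(f^\pm)^2\,\dx \le 0$; positivity of the second variation of the energy at the kink $v$ forces $f^\pm = \alpha^\pm v_x$ (here one invokes the kink analogue used in Lemma~\ref{l:linER} together with Lemma~\ref{l:linaux2}). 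The two constraints $\int f_n\,\dx\to 0$ and $\int f_n w_{cx}\,\dx\to 0$ descend, in the limit, to two linear conditions on $(\alpha^+,\alpha^-)$: one is (roughly) $\alpha^+ \int v_x + \alpha^- \int(-v)_x = \alpha^+ - \alpha^-$-type and the other pairs against $w_{cx}$, which near the left zero is $\approx v_x$ and near the right zero is $\approx -v_x$, giving $\|v_x\|_{L^2}^2(\alpha^+ + \alpha^-)$-type. The $2\times 2$ system is nondegenerate (this is exactly the ``nondegeneracy'' that distinguishes the bump from a single kink), forcing $\alpha^+ = \alpha^- = 0$, hence all the concentrated mass vanishes too, contradicting $\int f_n^2 = 1$. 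The main obstacle is making this localization/gluing rigorous with uniform-in-$n$ control of the cross terms between the two layers and the bulk --- i.e., showing the cut-off errors are controlled by the vanishing quantity $\int f_{nx}^2 + G''(w_c)f_n^2$ plus terms that are small because the layers are far apart; this is bookkeeping but it is where the $L\ge L_0$ hypothesis is genuinely used.

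For \eqref{eq:linDestwc}: I would first reduce, exactly as in the proof of Lemma~\ref{l:linDR} and \cite[Lemma~3.2]{OW}, to controlling $\int f_x^2\,\dx$ alone (the higher derivatives and the Hardy-weighted $L^2$ term follow from the $\dot H^{-1}$ structure of the dissipation once $\int f_x^2$ is controlled, and in any case only \eqref{eq:dissipationestimatewctorus} of the main theorem needs the full third-derivative bound, which is handled as in Lemma~\ref{l:eedwt} Step~4). Then argue by contradiction: with $g_n := f_{nx}$, one has $\int (-g_{nxx} + G''(w_c)g_n + G'''(w_c)w_{cx} f_n)^2\,\dx \to 0$ while $\int g_n^2\,\dx = 1$ (after normalization), and the orthogonality $\int f_n w_{cx} = 0$ together with $\int f_n = 0$ passes to a limit that, combined with $G'''(w_c)w_{cx}$ being supported (exponentially) near the two zeros, lets one identify the limit of $f_n$ with a multiple of $w_{cx}$ and conclude $g_n\to 0$ weakly; the non-vanishing of the norm is then killed by the same far-apart-layers localization, using that away from the layers $-g_{xx}+G''(1)g$ is coercive. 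Alternatively, and more cheaply, one notes \eqref{eq:linDestwc} follows from \eqref{eq:linEestwc} by the standard trick: apply \eqref{eq:linEestwc} to $f$ and also differentiate, or invoke that the dissipation quadratic form dominates $\|(-\partial_x^2 + G''(w_c))f\|_{\dot H^{-1}}^2 \gtrsim$ (spectral gap on $w_{cx}^\perp$) $\|f\|$, which reduces to the already-proven energy coercivity modulo the one-dimensional kernel that the constraint removes. I expect the spectral-picture step --- confirming that the second eigenvalue of $-\partial_x^2+G''(w_c)$, though exponentially small, does not dip negative enough to spoil coercivity on the doubly-constrained subspace --- to be the real content; everything else is adaptation of \cite{OR,OW}.
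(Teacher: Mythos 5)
Your overall strategy---contradiction/compactness, localization at the two transition layers, and using the two constraints to kill the two-dimensional almost-kernel of $-\partial_x^2+G''(w_c)$---is exactly what the paper indicates (the paper itself only asserts that the proof is ``very similar'' to Lemmas~\ref{l:linER} and~\ref{l:linDR} and defers to \cite{B}). The gap sits in the step ``the two constraints descend, in the limit, to two linear conditions on $(\alpha^+,\alpha^-)$.'' For the orthogonality $\int f_n w_{cx}\,\dx=0$ this is fine, since $w_{cx}$ is exponentially localized at the two zeros and the bulk contributes only $O(e^{-R/C})\norm{f_n}_{L^2}$. But the mean-zero constraint is a global, $L^1$-type condition: the bulk has length $\sim L_n$, and the coercivity of $G''$ there only yields $\int_{\mathrm{bulk}}f_n^2\to0$, hence $\abs{\int_{\mathrm{bulk}}f_n\,\dx}\lesssim L_n^{1/2}\,o(1)$, which is \emph{not} $o(1)$. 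So the second equation of your $2\times2$ system is not inherited by the local limits, and the symmetric combination $\alpha^++\alpha^-$ is not forced to vanish. This is precisely the ``spectral-picture step'' you flagged as the real content, and it does not go through.

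Moreover, the obstruction is not a repairable technicality of the compactness argument: with the paper's convention that $\lesssim$ carries an $L$-independent constant, \eqref{eq:linEestwc} as stated fails along the mass-preserving dilation mode. Take $h=\eta_1w_{cx}-\eta_2w_{cx}$ with $\eta_1+\eta_2=1$ a partition of unity whose transition regions lie in the bulk, so $h\approx v_x(\cdot-a_c)+v_x(\cdot-b_c)$. Then $\int hw_{cx}\,\dx=0$ (up to an exponentially small correction by a multiple of $w_{cx}$, which does not change the mean), $\int h\,\dx\approx4$, $\int h^2\,\dx\sim1$, and $\int h_x^2+G''(w_c)h^2\,\dx=O(e^{-L/C})$, because $-w_{cxxx}+G''(w_c)w_{cx}=0$ (differentiate \eqref{el}) and the commutators with $\eta_j$ are supported where $w_{cx}$ is exponentially small. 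Restoring the mean by adding a diffuse bulk correction $g$ of amplitude $\sim1/L$ costs only $\int g_x^2+G''(w_c)g^2\,\dx\sim1/L$, with exponentially small cross terms. The resulting $f=h+g$ satisfies both hypotheses, has $\int f^2\,\dx\sim1$, yet $\int f_x^2+G''(w_c)f^2\,\dx\sim1/L$; the same $f$ has $\int f_x^2\,\dx\sim1$ while $\int[(-f_{xx}+G''(w_c)f)_x]^2\,\dx\lesssim L^{-3}$ (dominated by $\norm{G''(w_c)g_x}_{L^2}^2$), so \eqref{eq:linDestwc} fails by a factor $\sim L^3$. Geometrically this is the mass-preserving widening of the bump---the slow direction along which the solution relaxes from $\N(L)$ to $\N_1(L)$ on the timescale $L^2$, consistent with an energy cost $\sim t^2/L$ for an $L^2$-displacement $t$. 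The most your argument (or any argument) can give is $\int f^2\,\dx\lesssim L\bigl(\int f_x^2+G''(w_c)f^2\,\dx\bigr)$; to get an $L$-uniform constant one needs an additional orthogonality against the dilation mode, and the downstream uses of \eqref{eq:linEestwc}--\eqref{eq:linDestwc} (e.g., \eqref{eq:energyestimatewctorus}, the Cauchy--Schwarz bound on $\W$, and \eqref{eq:f_c_infty}) would otherwise pick up corresponding powers of $L$.
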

	The proofs are very similar to the proofs of Lemmas~\ref{l:linER} and~\ref{l:linDR} and rely on the Euler-Lagrange equation~\eqref{el} and the orthogonality condition~\eqref{elL2}.
For details, see \cite{B}.

	We now turn to the nonlinear  estimates.
\begin{proof}[Proof of Lemma~\ref{l:basiceedtorusw}] We begin with the energy gap estimates~\eqref{eq:energyestimatewctorus}. We will split the proof into the following steps.
	
	\textit{Step} 1: If $\norm{u-w_c}_{L^2([-L,L])} \ll \frac{1}{L}$ and $\int_{[-L,L]} f_c \,\dx = 0$, then $\norm{ f_c}_{L^{\infty}} \ll 1$.
	
	\textit{Step} 2: If $\norm{f_c}_{L^{\infty}} \ll 1$, then $\int_{[-L,L]}  f_c^2 \, \dx \lesssim \E$.
	
	\textit{Step} 3: If $\int_{[-L,L]}  f_c^2 \, \dx \lesssim \E$ and $\E \lesssim 1$, then $\int_{[-L,L]}  f_c^2 + f_{cx}^2 \, \dx \lesssim \E$.
	
	\textit{Step} 4: $\E \lesssim 1$ implies $ \E \lesssim \int_{[-L,L]}  f_c^2 + f_{cx}^2 \, \dx$.
	
	Steps 2-4 can be derived as in the proof of Lemma~\ref{l:eedwt}, using the energy gap identity
	\begin{align*}
	E(u) - E(w_c)\overset{\eqref{el}}= \int_{[-L,L]} \frac{1}{2} f_{cx}^2 + G(u) - G(w_c) - G'(w_c)f_c \,\dx,
	\end{align*}
 	Taylor expansion, and the linearized energy gap estimate \eqref{eq:linEestwc}.
 	
 	The derivation of Step 1 is of a different spirit.  The $L^2$-closest bump $w_c$ has zeros $a_c, b_c$, and let $\wt(a_c,b_c)$ denote the glued kink profile with $\alpha=a_c$, $\beta=b_c$. From $\norm{u-w_c}_{L^2([-L,L])} \ll \frac{1}{L}$ and the properties of $w_c$, it follows that $\norm{u-\wt(a_c,b_c)}_{L^2([-L,L])} \ll \frac{1}{L}$ and hence also for the $L^2$-closest kink profile $\wt$, there holds
 $\norm{u-\wt}_{L^2([-L,L])} \ll \frac{1}{L}$.

Let the distance between the zeros of a bump $w$ be denoted by $\ell$, while the distance between the zeros of $\wt$ is denoted by $\tilde{\ell}$. As in the proof of Lemma \ref{l:u2zeros}, we shift the centered bump $w$ such that it shares its left zero with $\wt$ and denote the shifted bump by $w_d$.
We then use $\int_{[-L,L]} u\,\dx = \int_{[-L,L]} w_d\,\dx$ and the Cauchy-Schwarz inequality  to estimate
	\begin{eqnarray}
	\abs{\ell-\tilde{\ell}} &\sim& \left\vert \int_{[-L,L]} \wt-w_d\,\dx \right\vert =  \left\vert \int_{[-L,L]} \wt-u\,\dx \right\vert
	 \lesssim  L^{1/2} \left(\int_{[-L,L]} \ft^2\,\dx \right)^{1/2}
\ll 1, \nonumber
	\end{eqnarray}
where we have invoked the above estimate on the $L^2$-distance.	It follows that
	\begin{align*}
	\norm{\wt-w_d}_{L^{\infty}([-L,L])} \ll 1\qquad\text{and}\qquad \Et\ll \frac{1}{L},
	\end{align*}
where we have recalled Remark \ref{r:expt} and $\E\ll \frac{1}{L}$.
	From these facts and the triangle inequality, we obtain
	\begin{eqnarray}
	\norm{u-w_d}_{L^{\infty}([-L,L])} &\leq& \norm{u-\wt}_{L^{\infty}([-L,L])} + \norm{\wt-w_d}_{L^{\infty}([-L,L])} \notag\\
&\overset{\eqref{eq:basiceedconsequence}}\lesssim& \abs{\Et}^{1/2} + \exp(-L/C)+ \norm{\wt-w_d}_{L^{\infty}([-L,L])} \ll 1.\label{wd1}
	\end{eqnarray}
	We use this together with orthogonality in the form
\begin{align} \label{orthprop}
	0=\int_{[-L,L]} (u-w_c)w_{cx}\,\dx = \int_{[-L,L]} (u-w_d+w_d-w_c)w_{cx}\,\dx.
	\end{align}
to establish smallness of $\norm{f_c}_{L^{\infty}([-L,L])}$. Indeed,
denoting the zeros of
$w_d$ by
$\{a_d,\,b_d\}$, we observe that
	\begin{eqnarray*}
\left\vert \int_{[-L,L]} (w_d-w_c)w_{cx}\,\dx \right\vert
&\overset{\eqref{orthprop}}=&\left\vert \int_{[-L,L]} (u-w_d)w_{cx}\,\dx \right\vert\\
&\leq& \norm{u-w_d}_{L^{\infty}([-L,L])}\int_{[-L,L]} \abs{w_{cx}}\,dx\ll 1.
	\end{eqnarray*}
Using that $\int (w_d-w_c)w_{cx} \gtrsim \min {1, |d-c|}$, and that \emph{$w_c$ and $w_d$ have the same distance between zeros}, it follows that
	\begin{align}
	\abs{a_c-a_d}+\abs{b_c-b_d}\ll 1\qquad\text{and}\qquad\norm{w_d-w_c}_{L^{\infty}([-L,L])} \ll 1.\label{wd2}
	\end{align}
From here we obtain
	\begin{align*}
	\norm{u-w_c}_{L^{\infty}([-L,L])} \leq \norm{u-w_d}_{L^{\infty}([-L,L])} + \norm{w_d-w_c}_{L^{\infty}([-L,L])}\overset{\eqref{wd1},\eqref{wd2}} \ll 1.
	\end{align*}
	
We now turn to the dissipation estimate~\eqref{eq:dissipationestimatewctorus}.	We will follow these steps:
	
	\textit{Step} 1:  If $\abs{\E} \ll \frac{1}{L}$ and $\int_{[-L,L]} f_c \,\dx = 0$, then $\norm{ f_c}_{L^{\infty}([-L,L])}  \ll 1$. 
	
	\textit{Step} 2: If $\norm{  f_{c}}_{L^{\infty}([-L,L])}  \ll 1$ then $\int_{[-L,L]}  f_{cx}^2\,\dx \lesssim \D$.
	
	
	\textit{Step} 3: If $\int_{[-L,L]}  f_{cx}^2\,\dx\lesssim \D$ and $\E\lesssim 1$, then $\int_{[-L,L]}  f_{cx}^2 + f_{cxx}^2 + f_{cxxx}^2\,\dx\lesssim \D$.
	
	Step 1 is analogous to Step 1 for the energy gap estimate. For Steps 2 and 3, we  argue as in the proof of the dissipation estimate on $\R$ with respect to $\wt$.
	In both Steps~2 and~3, we use the identity
	\begin{eqnarray}
	\lefteqn{-u_{xx} + G'(u)}\nonumber \\
	& \overset{\eqref{el}}=& - f_{cxx} + G'(u) -G'( w_{c}) \nonumber \\
	& = & \left( - f_{cxx} + G''( w_{c}) f_{c} \right) + \left(G'(u) -G'( w_{c}) -G''(w_{c}) f_{c}\right), \nonumber
	\end{eqnarray}
	Taylor expansion, the linearized dissipation estimate \eqref{eq:linDestwc}, and Lemma~\ref{l:hardy}.
\end{proof}

\subsection{Proof of Lemmas \ref{l:nash}~--~\ref{l:dissy}}
\begin{proof}[Proof of Lemma~\ref{l:nash}]
	The elementary inequalities
	\begin{align}
	\sup  \abs{f}^2 \lesssim \left( \int  f_{x}^2 \,\dx \int  f^2 \,\dx \right)^{1/2} \quad \mbox{and} \quad
	\int  f^2 \,\dx \lesssim  \sup  \, \abs{f}\int  \abs{f} \,\dx \label{bumpinterpol0}
	\end{align}
	yield
	\begin{align} \label{bumpinterpol1}
	\sup  \abs{f} \lesssim \left( \int  \abs{f} \, \dx \int  f_{x}^2 \, \dx \right)^{1/3}.
	\end{align}
	Inserting \eqref{bumpinterpol1} into \eqref{bumpinterpol0} and combining this with~\eqref{eq:nashD}, we obtain
	\begin{equation} \label{bumpproofnasheq1}
	\int  f^2 \, \dx \overset{\eqref{bumpinterpol0},\eqref{bumpinterpol1}}\lesssim \left( \int  f_{x}^2 \,\dx \right)^{1/3} \left( \int  \abs{f} \,\dx \right)^{4/3} \overset{\eqref{eq:nashD},\eqref{eq:nashW}}{\lesssim} \D^{1/3} \,\W^{4/3}.
	\end{equation}
	Next we use $\E \lesssim 1$ for the na\"{\i}ve estimate of the gradient term:
	\begin{equation} \label{bumpproofnasheq2}
	\int  f_{x}^2\,\dx \overset{\eqref{eq:nashE},\,\eqref{eq:nashD}}{\lesssim} \D^{1/3} \E^{2/3} \overset{\eqref{eq:nashE1}}\lesssim \D^{1/3} \leq \D^{1/3}(\W+1)^{4/3}.
	\end{equation}
	Combining \eqref{bumpproofnasheq1} and~\eqref{bumpproofnasheq2} with~\eqref{eq:nashE} implies
	\begin{align*}
	\E \overset{\eqref{eq:nashE}}{\lesssim} \int  f^2 + f_{x}^2 \,\dx \overset{\eqref{bumpproofnasheq1},\, \eqref{bumpproofnasheq2}}{\lesssim} \D^{1/3}(\W+1)^{4/3}.
	\end{align*}
\end{proof}
\begin{proof}[Proof of Lemma~\ref{l:odenash}]
	We obtain \eqref{eq:odenashresult} from \eqref{eq:odenashpre1}, \eqref{eq:definitionWT}, \eqref{eq:odenashpre2}, and an integration in time.
\end{proof}

\begin{proof}[Proof of Lemma~\ref{l:intdissipationbound}]
We consider separately the cases (i) $T\leq \W_T^4$ and (ii) $T>\W_T^4$.
In case (i), we  deduce from H\"{o}lder's inequality that
	\begin{align*}
	\int_0^{T} \D^{\gamma}(t) \,{\rm{d}}t \leq T^{1-\gamma} \left( \int_0^{T} \D(t) \, {\rm{d}}t \right)^{\gamma} \lesssim T^{1-\gamma} \E_0^{\gamma} \lesssim T^{1-\gamma}\leq \W_T^{4(1-\gamma)}.
	\end{align*}
In case (ii), we let $\tau \in (0,T)$ be a constant to be optimized below. The same H\"{o}lder's inequality as above on $(0,\tau)$ gives
\begin{align*}
	\int_0^{\tau} \D^{\gamma}(t) \,{\rm{d}}t
\lesssim \tau^{1-\gamma}.
\end{align*}
On $(\tau,T)$ we pick $\theta \in (\frac{1}{\gamma} - 1, \frac{1}{2})$ and use
	\begin{align} \label{eq:intdissproof2}
	t^{\theta}\D(t) = - t^{\theta} \frac{{\rm{d}}}{{\rm{d}}t} \E(t) = \frac{{\rm{d}}}{{\rm{d}}t} \left( - t^{\theta} \E(t) \right) + \theta t^{\theta -1} \E(t)
	\end{align}
to estimate
	\begin{eqnarray}
	\lefteqn{\int_{\tau}^T \D^{\gamma}(t) \, {\rm{d}}t} \nonumber \\
	& = & \int_{\tau}^T \D^{\gamma}(t) \cdot t^{\theta\gamma} \cdot t^{-\theta\gamma} \, {\rm{d}}t \leq \left( \int_{\tau}^T \left( t^{-\theta\gamma} \right)^{\frac{1}{1-\gamma}} \, {\rm{d}}t \right)^{1-\gamma} \left( \int_{\tau}^T t^{\theta} \D(t) \, {\rm{d}}t \right)^{\gamma} \nonumber \\
	& \overset{\eqref{eq:intdissproof2}}{=} & \left( \left[ t^{-\frac{\theta\gamma}{1-\gamma}+1} C_{\theta \gamma} \right]_{\tau}^T \right)^{1-\gamma} \left( \int_{\tau}^T \ddt (-t^{\theta} \E(t)) \, {\rm{d}}t + \int_{\tau}^T \theta t^{\theta -1} \E(t) \, {\rm{d}}t \right)^{\gamma} \nonumber \\
	& \overset{\eqref{eq:intdisspre}}{\lesssim} & \frac{1}{\tau^{\theta\gamma - (1-\gamma)}} \left( \tau^{\theta} \frac{\W_T^2 }{\tau^{1/2}}
	+ \W_T^2 \int_{\tau}^T t^{\theta -1-\frac12}  \, {\rm{d}}t \right)^{\gamma} \lesssim \frac{ \W_T^{2\gamma}}{\tau^{\frac{3\gamma}{2}-1}}, \nonumber
	\end{eqnarray}
where $\lesssim$ is allowed to depend on $C_{\theta\gamma}$.
Combining the two estimates gives
	\begin{align*}
	\int_0^T \D^{\gamma}(t) \, {\rm{d}}t \lesssim \tau^{1-\gamma} + \frac{ \W_T^{2\gamma}}{\tau^{\frac{3\gamma}{2}-1}}
	\end{align*}
and optimizing in $\tau$ yields \eqref{eq:intdissipationboundres}.
\end{proof}

\begin{proof}[Proof of Lemma~\ref{l:dissy}]
	
	We adapt the argument from \cite[Lemma~1.4, (1.24)]{OW} to our setting. In order to be self-contained, we provide the proof of \eqref{eq:dissyexp} on $\R$ in full detail. The proof of \eqref{eq:dissyexp} on the torus follows similarly. In Step 2, we comment on the proof of \eqref{eq:dissywc}, in which things simplify.
	
	\underline{Step 1:} For convenience, we define
	\begin{align*}
		g \coloneqq u_{xx} -G'(u) \quad \mbox{and} \quad h \coloneqq g_{xx}.
	\end{align*}
Without loss of generality, assume $\wt\in\N(0,L)$.
	We calculate
	\begin{eqnarray}
		\frac12 \ddt \D(t)
		& = & \ddt \frac12 \int_\R \Big[ \Big( -u_{xx} + G'(u) \Big)_x \Big]^2 \,\dx = \int_\R g_{xx} \Big( -\partial_x^2+ G''(u)\Big) u_t\,\dx \nonumber \\
		& = &  -\int_{(-\infty,0]} h_x^2+G''(v_{\tilde{a}})h^2\,\dx + \int_{(-\infty,0]} \red{\Big(G''(v_{\tilde{a}})-G''(u)\Big)} h^2\,\dx \nonumber \\
		& & -\int_{[0,\infty)} h_x^2+G''(\red{-}v_{\tilde{b}})h^2\,\dx + \int_{[0,\infty)} \red{\Big(G''(-v_{\tilde{b}})- G''(u)\Big)} h^2\,\dx. \label{D321}
	\end{eqnarray}
	We will establish the estimate on $(-\infty,0]$. The estimate on $[0,\infty)$ follows analogously.

	We begin by considering the first term on the right-hand side of \eqref{D321} (which should be a good term, up to some error). The idea is to capitalize on the linearized energy gap estimate for functions that are orthogonal to $v_{\tilde{a}x}$. With this goal in mind, we decompose
	\begin{align}\label{eq:alpha}
		g_{xx}=h = h_0 + \alpha v_{\tilde{a}x} \quad \mbox{with} \quad \alpha \coloneqq \frac{\int_{(-\infty,0]} hv_{\tilde{a}x} \,\dx}{ \int_{(-\infty,0]} v_{\tilde{a}x}^2\,\dx },
	\end{align}
from which it follows that
\begin{align}\label{eq:orth}
	\int_{(-\infty,0]} h_0v_{\tilde{a}x}\,\dx = 0.
\end{align}
and hence, according to Lemma \ref{l:linER}, there holds
\begin{align} \label{eq:linEh0}
	\int_{(-\infty,0]} h_{0x}^2 + G''(v_{\tilde{a}})h_0^2 \,\dx \gtrsim \int_{(-\infty,0]} h_0^2 + h_{0x}^2 \,\dx.
	\end{align}
	
In order to be able to integrate by parts over most of the domain, we introduce a smooth partition of unity $\varphi_-,\,\varphi: (-\infty,0]\to[0,1]$ with
	\begin{align*}
		\varphi = \begin{cases}
			1 & \mbox{for } x \in (-1,0], \\ 0 & \mbox{for } x\in (-\infty,-2],
		\end{cases} \quad \varphi_- = 1-\varphi,
	\quad	\mbox{and} \quad \norm{(\varphi_-)_{x}}_{L^2((-\infty,0])} +  \norm{\varphi_{x}}_{L^2((-\infty,0])} \lesssim 1.
	\end{align*}
As a preliminary step, we will need	
control of $\alpha$. Using the partition of unity, we estimate
	\begin{eqnarray}
	\alpha^2 & \lesssim &	\left( \int_{(-\infty,0]} g_{xx} v_{\tilde{a}x} \varphi_-\,\dx \right)^2+\left( \int_{(-\infty,0]} g_{xx} v_{\tilde{a}x} \varphi\,\dx \right)^2 \nonumber \\
		&  \lesssim & \left(-\int_{(-\infty,0]} g_x(v_{\tilde{a}xx}\varphi_-+v_{\tilde{a}x}(\varphi_-)_{x}) \,\dx\right)^2 +  \int_{(-\infty,0]} g_{xx}^2\varphi^2\,\dx \int_{(-2,0)} v_{\tilde{a}x}^2  \,\dx  \nonumber \\
		& \overset{\eqref{eq:alpha}}\lesssim & \int_{(-\infty,0]} g_x^2\,\dx + \exp(-L/C) \int_{(-\infty,0]} (h_0^2+\alpha^2(v_{\tilde{a}x})^2)\varphi^2\,\dx \nonumber \\
		& \lesssim &  \D + \exp(-L/C) \left(  \int_{(-\infty,0]} h_0^2\,\dx + \alpha^2\right). \nonumber
	\end{eqnarray}
Absorbing the last term gives
\begin{align} \label{alpha2}
	\alpha^2 \lesssim  \D + \exp(-L/C)  \int_{(-\infty,0]} h_0^2\,\dx .
\end{align}

With this estimate in hand, we expand the first term on the right-hand side of \eqref{D321} as
\begin{eqnarray}
	\lefteqn{\int_{(-\infty,0]} h_x^2+G''(v_{\tilde{a}})h^2\,\dx} \nonumber \\
	& \overset{\eqref{eq:alpha}}= & \int_{(-\infty,0]} h_{0x}^2 + G''(v_{\tilde{a}})h_0^2 \,\dx \red{+} 2\alpha\int_{(-\infty,0]}  \Big( v_{\tilde{a}xx}h_{0x} + G''(v_{\tilde{a}})v_{\tilde{a}x}h_{0} \Big) (\varphi_-+\varphi)\,\dx \nonumber \\
	& & + \alpha^2\int_{(-\infty,0]}  \Big( (v_{\tilde{a}xx})^2 + G''(v_{\tilde{a}})(v_{\tilde{a}x})^2 \Big) (\varphi_-+\varphi)\,\dx. \label{hlinestini}	
\end{eqnarray}
For the terms of order $\alpha$ and $\alpha^2$ on the right-hand side (which vanished in \cite{OW}), we integrate by parts and use
$-v_{\tilde{a}xx} + G'(v_{\tilde{a}}) = 0$ to simplify. Exponential smallness of the derivatives of $v_{\tilde{a}}$ on the support of $(\phi_-)_x$ and $\phi$ yield
\begin{align}
\lefteqn{\left|  \alpha\int_{(-\infty,0]}  \Big( v_{\tilde{a}xx}h_{0x} + G''(v_{\tilde{a}})v_{\tilde{a}x}h_{0} \Big) (\varphi_-+\varphi)\,\dx\right|}\notag\\
  &+\alpha^2\left|\int_{(-\infty,0]}  \Big( (v_{\tilde{a}xx})^2
  + G''(v_{\tilde{a}})(v_{\tilde{a}x})^2 \Big) (\varphi_-+\varphi)\,\dx\right|\notag\\
  &\qquad\lesssim \exp(-L/C)\left(\alpha^2+\int_{(-\infty,0]} h_0^2 + h_{0x}^2\,\dx \right)\overset{\eqref{alpha2}}\lesssim \exp(-L/C)\left(D+\int_{(-\infty,0]} h_0^2 + h_{0x}^2\,\dx \right).\notag
\end{align}

Substituting into \eqref{hlinestini} for $L_0$ large enough gives
	\begin{eqnarray}
		\lefteqn{\int_{(-\infty,0]} h_x^2+G''(v_{\tilde{a}})h^2\,\dx} \nonumber \\
		& \ge & \int_{(-\infty,0]} h_{0x}^2 + G''(v_{\tilde{a}})h_0^2 \,\dx  - \exp(-L/C)\left(D+\int_{(-\infty,0]} h_0^2 + h_{0x}^2\,\dx\right).\nonumber\\
		& \overset{\eqref{eq:linEh0}}\gtrsim & \int_{(-\infty,0]} h_{0x}^2 +h_0^2  \,\dx  - \exp(-L/C) D,
		 \label{D32summand1}
	\end{eqnarray}
which completes our estimate of the first term on the right-hand side of \eqref{D321}.

We claim that it suffices to show
\begin{eqnarray}\label{D32summand2}
\lefteqn{\int_{(-\infty,0]} \abs{ G''(u)-G''(v_{\tilde{a}})}h^2\,\dx}  \nonumber \\
&\lesssim &\D^{3/2}+\left(\eps+\exp(-L/C)\right)\int_{(-\infty,0]} h_0^2+h_{0x}^2\,\dx+\exp(-L/C).
\end{eqnarray}
Indeed, inserting \eqref{D32summand1} and \eqref{D32summand2} and the analogous estimates on $[0,\infty)$ into \eqref{D321} and applying Young's inequality, we obtain
	\begin{eqnarray}
		\ddt \D \lesssim \D^{3/2} +\exp(-L/C). \nonumber
	\end{eqnarray}
	
	To establish \eqref{D32summand2}, we use $\norm{u-v_{\tilde{a}}}_{L^{\infty}((-\infty,0])} \lesssim 1$ and Young's inequality to estimate
	\begin{eqnarray}
		 \int_{(-\infty,0]} &\abs{G''(u)-G''(v_{\tilde{a}})}h^2\,\dx
		\leq  \sup_{\abs{\tau} \leq 1+\norm{u-v_{\tilde{a}}}_{L^\infty((-\infty,0])}} \abs{G'''(\tau)} \int_{(-\infty,0]} \abs{u-v_{\tilde{a}}} h^2 \,\dx \nonumber \\
		&\overset{\eqref{eq:alpha}}\lesssim  \int_{(-\infty,0]} \abs{u-v_{\tilde{a}}} h_0^2\,\dx + \alpha^2 \int_{(-\infty,0]} \abs{u-v_{\tilde{a}}} (v_{\tilde{a}x})^2\,\dx.
		 \label{D32eq2}
	\end{eqnarray}
For the first term on the right-hand side of \eqref{D32eq2}, we use the assumption $\norm{u-\wt}_{L^\infty}\le \eps$ in the form $\norm{u-v_{\tilde a}}_{L^\infty((-\infty,0])}\le \eps+\exp(-L/C)$ to obtain
	\begin{align}\label{D32eq6}
		\int_{(-\infty,0]} \abs{u-v_{\tilde{a}}} h_0^2\,\dx &\leq 
(\eps+\exp(-L/C))\int_{(-\infty,0]} h_0^2\,\dx.
	\end{align}

For the second term on the right-hand side of \eqref{D32eq2}, we use exponential decay of $v_{\tilde{a}x}$, the Cauchy-Schwarz inequality, and  the Hardy-type inequality from Lemma~\ref{l:hardy} to write
	\begin{eqnarray}
		\lefteqn{\alpha^2 \int_{(-\infty,0]} \abs{u-v_{\tilde{a}}} v_{\tilde{a}x}^2\,\dx} \nonumber \\
		& \lesssim &  \alpha^2 \left( \int_{(-\infty,0]} \frac{(u-v_{\tilde{a}})^2}{1+(x-\tilde{a})^2} \,\dx \right)^{1/2} \overset{\eqref{eq:hardy}}\lesssim \alpha^2\left( \int_{(-\infty,0]} (u_x-v_{\tilde{a}x})^2\,\dx \right)^{1/2} \nonumber \\	
			& \overset{\eqref{alpha2},\eqref{eq:Derror},\eqref{eq:Eerrorlow}}\lesssim &  \D^{3/2}+ \exp(-L/C)D+\exp(-L/C)  \left(\int_{(-\infty,0]}h_0^2 \,\dx\right) \left(\abs{\Et}+\exp(-L/C)\right)\nonumber \\
			& \lesssim & \D^{3/2} +\exp(-L/C)D+ \exp(-L/C) \int_{(-\infty,0]}h_0^2 \,\dx.   \label{D32eq4}
	\end{eqnarray}	
Substituting \eqref{D32eq6} and \eqref{D32eq4} into \eqref{D32eq2} and applying Young's inequality yields \eqref{D32summand2}.
	
	\underline{Step 2:}
	For the estimate \eqref{eq:dissywc} on the torus, the derivation simplifies because we compare to $w_c$, which satisfies an Euler-Lagrange equation on the whole torus, and integrations by parts yields no boundary terms, so there is no need for a cut-off function. We define $g$ and $h$ as above and write
	\begin{align}
		\frac12 \ddt \D(t)
		= -\int_{[-L,L]} h_x^2+G''(w_c)h^2\,\dx + \int_{[-L,L]} \Big( G''(u)-G''(w_c)\Big) h^2\,\dx. \label{D321wc}
	\end{align}
	We decompose
	\begin{align*}
		h = h_0 + \alpha w_{cx} \quad \mbox{with} \quad \alpha \coloneqq \frac{\int_{[-L,L]} hw_{cx} \,\dx}{ \int_{[-L,L]} w_{cx}^2\,\dx }.
	\end{align*}
	An integration by parts and the Cauchy Schwarz inequality yield
	\begin{align*}
		\alpha^2
		\lesssim \frac{\int_{[-L,L]} w_{cxx}^2\,\dx}{\left( \int_{[-L,L]} w_{cx}^2\,\dx\right)^2} \D \lesssim \D. \nonumber
	\end{align*}
For the first term in \eqref{D321wc}, we use the decomposition of $h$,
the Euler-Lagrange equation
$-w_{cxx} + G'(w_c) = \lambda$,
integration by parts, and Lemma~\ref{lem:linest} to obtain
	\begin{align*}
		\int_{[-L,L]} h_x^2+G''(w_c)h^2\,\dx
		=  \int_{[-L,L]} h_{0x}^2 + G''(w_c) h_0^2 \,\dx \overset{\eqref{eq:linEestwc}}\gtrsim\int_{[-L,L]} h_0^2 + h_{0x}^2 \,\dx.
	\end{align*}
For the error term, we show
	\begin{align*}
		\left|\int_{[-L,L]} ( G''(u)-G''(w_c))h^2\,\dx\right| \lesssim \D^{3/4} \left( \int_{[-L,L]} h_0^2+h_{0x}^2\,\dx \right)^{1/2} + \D^{3/2},
	\end{align*}
	which we establish  as in Step 1, using the decomposition of $h$ and the energy gap and dissipation estimates from Lemma~\ref{l:basiceedtorusw}.
\end{proof}
\begin{proof}[Proof of Lemma~\ref{l:dissy_bound}]
The  differential inequality from \eqref{assumeddiffq} and integrating in time give
	\begin{align} \label{eq:dissipationdecay2}
	\D(s) \gtrsim \frac{\D(t)}{\left( 1 + \D^{\frac{1}{2}}(t)(t-s) \right)^2},\qquad\text{for }t\geq s.
	\end{align}
	On the other hand, $\ddt \E = -\D$ (together with positivity of the energy gap) yields
	\begin{align} \label{eq:dissipationdecay1}
	\int_\tau^t \D(s) \,{\rm{d}}s = - \E(t) + \E(\tau) \le \E(\tau)\qquad\text{for }t\geq \tau.
	\end{align}
From here we obtain
	\begin{eqnarray}
	\E(\tau)& \overset{ \eqref{eq:dissipationdecay1} }\ge & \int_\tau^t \D(s) \, {\rm{d}}s \overset{\eqref{eq:dissipationdecay2}} \gtrsim \int_\tau^t \frac{\D(t)}{\left( 1 + \D^{1/2}(t)(t-s) \right)^2} \, {\rm{d}}s  = \int_0^{\D^{1/2}(t)(t-\tau)} \frac{\D(t)^{1/2}}{(1+\sigma)^2} \, {\rm{d}}\sigma \nonumber \\
	& \gtrsim & \min \left\{ \D^{1/2}(t),\D(t)(t-\tau)  \right\},   \nonumber
	\end{eqnarray}
	from which the dissipation estimate follows by choosing $\tau=\frac t2$.
\end{proof}

\section*{Acknowledgements}
We gratefully acknowledge interesting discussions with and valuable insights from Felix Otto, as well as the hospitality of the Max Planck Institute for Mathematics in the Sciences, where some of the ideas for this work originated.
S. Biesenbach was funded by the Deutsche Forschungsgemeinschaft (DFG, German Research Foundation) through the RTG Energy, Entropy, and Dissipative Dynamics (EDDy) [Projektnummer 320021702 /GRK2326].
R. Schubert gratefully recognizes partial funding from DFG Grant WE 5760/1-1 and travel support from EDDy.

\begin{bibdiv}
\begin{biblist}
	
\bib{ABF}{article}{
   author={Alikakos, Nicholas},
   author={Bates, Peter W.},
   author={Fusco, Giorgio},
   title={Slow motion for the Cahn-Hilliard equation in one space dimension},
   journal={J. Differential Equations},
   volume={90},
   date={1991},
   number={1},
   pages={81--135},}

\bib{BX1}{article}{
	author={Bates, Peter W.},
	author={Xun, Jian Ping},
	title={Metastable patterns for the Cahn-Hilliard equation. I},
	journal={J. Differential Equations},
	volume={111},
	date={1994},
	number={2},
	pages={421--457},
}

\bib{BX2}{article}{
	author={Bates, Peter W.},
	author={Xun, Jian Ping},
	title={Metastable patterns for the Cahn-Hilliard equation. II. Layer
		dynamics and slow invariant manifold},
	journal={J. Differential Equations},
	volume={117},
	date={1995},
	number={1},
	pages={165--216},
}

\bib{B}{article}{
	author = {Biesenbach, Sarah},
	title = {On optimal convergence rates of nonconvex gradient flows},
	note={Thesis (Ph.D.)-- RWTH Aachen}
	date = {to appear in 2021},
}

\bib{BKT}{article}{
	author={Bricmont, Jean},
	author={Kupiainen, Antti},
	author={Taskinen, Jari},
	title={Stability of Cahn-Hilliard fronts},
	journal={Comm. Pure Appl. Math.},
	volume={52},
	date={1999},
	number={7},
	pages={839--871},
}
\bib{BK}{article}{
   author={Bronsard, Lia},
   author={Kohn, Robert V.},
   title={On the slowness of phase boundary motion in one space dimension},
   journal={Comm. Pure Appl. Math.},
   volume={43},
   date={1990},
   number={8},
   pages={983--997},}

\bib{BH}{article}{
	author={Bronsard, Lia},
	author={Hilhorst, Danielle},
	title={On the slow dynamics for the Cahn-Hilliard equation in one space
		dimension},
	journal={Proc. Roy. Soc. London Ser. A},
	volume={439},
	date={1992},
	number={1907},
	pages={669--682},
}
\bib{CH}{article}{
author={Cahn, J.W.},
author={Hilliard, J.E.},
title={Free Energy of a Nonuniform System. I. Interfacial Free Energy},
journal={J. Chem. Phys.},
volume={28},
date={1958},
pages={258--267},}

\bib{CCO}{article}{
	author={Carlen, E. A.},
	author={Carvalho, M. C.},
	author={Orlandi, E.},
	title={A simple proof of stability of fronts for the Cahn-Hilliard
		equation},
	note={Dedicated to Joel L. Lebowitz},
	journal={Comm. Math. Phys.},
	volume={224},
	date={2001},
	number={1},
	pages={323--340},
}

\bib{CGS}{article}{
	author={Carr, Jack},
	author={Gurtin, Morton E.},
	author={Slemrod, Marshall},
	title={Structured phase transitions on a finite interval},
	journal={Arch. Rational Mech. Anal.},
	volume={86},
	date={1984},
	number={4},
	pages={317--351},
}
\bib{CP}{article}{
   author={Carr, J.},
   author={Pego, R. L.},
   title={Metastable patterns in solutions of $u_t=\epsilon^2u_{xx}-f(u)$},
   journal={Comm. Pure Appl. Math.},
   volume={42},
   date={1989},
   number={5},
   pages={523--576},}

\bib{C}{article}{
   author={Chen, Xinfu},
   title={Generation, propagation, and annihilation of metastable patterns},
   journal={J. Differential Equations},
   volume={206},
   date={2004},
   number={2},
   pages={399--437},}

\bib{EF}{article}{
	author={Elliott, Charles M.},
	author={French, Donald A.},
	title={Numerical studies of the Cahn-Hilliard equation for phase
		separation},
	journal={IMA J. Appl. Math.},
	volume={38},
	date={1987},
	number={2},
	pages={97--128},
}
\bib{FH}{article}{
   author={Fusco, G.},
   author={Hale, J. K.},
   title={Slow-motion manifolds, dormant instability, and singular
   perturbations},
   journal={J. Dynam. Differential Equations},
   volume={1},
   date={1989},
   number={1},
   pages={75--94},}

\bib{H}{article}{
	author={Howard, Peter},
	title={Asymptotic behavior near transition fronts for equations of
		generalized Cahn-Hilliard form},
	journal={Comm. Math. Phys.},
	volume={269},
	date={2007},
	number={3},
	pages={765--808},
}
\bib{M}{book}{
   author={McKinney, William R.},
   title={Optimal error estimates for high-order Runge-Kutta methods applied
   to evolutionary equations},
   note={Thesis (Ph.D.)--The University of Tennessee},
   publisher={ProQuest LLC, Ann Arbor, MI},
   date={1989},
   pages={128},}

\bib{NV}{article}{
	author={Niethammer, B.},
	author={Vel\'{a}zquez, J. J. L.},
	title={Self-similar solutions with fat tails for Smoluchowski's
		coagulation equation with locally bounded kernels},
	journal={Comm. Math. Phys.},
	volume={318},
	date={2013},
	number={2},
	pages={505--532},
}
\bib{OR}{article}{
	author={Otto, Felix},
	author={Reznikoff, Maria G.},
	title={Slow motion of gradient flows},
	journal={J. Differential Equations},
	volume={237},
	date={2007},
	number={2},
	pages={372--420},
}

\bib{OSW}{article}{
	author={Otto, Felix},
	author={Scholtes, Sebastian},
	author={Westdickenberg, Maria G.},
	title={Optimal $L^1$-type relaxation rates for the Cahn-Hilliard equation
		on the line},
	journal={SIAM J. Math. Anal.},
	volume={51},
	date={2019},
	number={6},
	pages={4645--4682},
}

\bib{OW}{article}{
	author={Otto, Felix},
	author={Westdickenberg, Maria G.},
	title={Relaxation to equilibrium in the one-dimensional Cahn-Hilliard
		equation},
	journal={SIAM J. Math. Anal.},
	volume={46},
	date={2014},
	number={1},
	pages={720--756},
}

\bib{SW}{article}{
	author={Scholtes, Sebastian},
	author={Westdickenberg, Maria G.},
	title={Metastability of the Cahn-Hilliard equation in one space
		dimension},
	journal={J. Differential Equations},
	volume={265},
	date={2018},
	number={4},
	pages={1528--1575},
}

\bib{W}{article}{
author={Westdickenberg, Maria G.},
   title={On the Metastability of the $1-d$ Allen-Cahn Equation},
journal = { J. Dyn. Diff. Equ.},
date = {2020},
link = {https://doi.org/10.1007/s10884-020-09874-z},
}

\end{biblist}
\end{bibdiv}

\end{document}